\documentclass{article}

\usepackage[utf8]{inputenc}
\usepackage{authblk}
\usepackage{hyperref}
\usepackage{enumitem}

\usepackage{array}

\setlength{\topmargin}{-20mm}

\usepackage{tikz}
\usepackage{tikz-cd}
\usetikzlibrary{arrows}
\tikzcdset{arrow style=tikz,
diagrams={>={Stealth[round,length=4pt,width=6pt,inset=3pt]}}}

\usepackage{amsthm}

\usepackage{amsmath} 
\usepackage{amssymb}
\usepackage{amsfonts}
\usepackage{faktor}
\usepackage[T1]{fontenc}
\usepackage{faktor}
\usepackage{xfrac}
\usepackage{color}
 \usepackage{todonotes}
\usepackage[english]{babel}
 
\newtheorem{theorem}{Theorem}
\newtheorem{definition}[theorem]{Definition}

\newtheorem{lemma}[theorem]{Lemma}
\newtheorem{notation}[theorem]{Notation}

\newtheorem{proposition}[theorem]{Proposition}

\newtheorem{example}[theorem]{Example}
\newtheorem{remark}[theorem]{Remark}
\newtheorem{corollary}[theorem]{Corollary}

\newtheorem*{theorem*}{Theorem}

\DeclareMathOperator{\ord}{ord}
\DeclareMathOperator{\length}{length}
\DeclareMathOperator{\Hess}{Hess}
\DeclareMathOperator{\Mat}{Mat}

\DeclareMathOperator{\Frob}{Frob}

\newcommand{\equivr}{\overset{\mathcal{R}}{\sim}}
\newcommand{\equivk}{\overset{\mathcal{K}}{\sim}}
\newcommand{\equivd}{\overset{I}{\sim}}

\newcommand{\maxV}{\mathfrak{m}_V}
\newcommand{\naxV}{\mathfrak{n}_V}
\newcommand{\Vx}{V[[\underline{x}]]}
\newcommand{\Vxy}{V[[\underline{x},y]]}
\newcommand{\pder}{\frac{\partial}{\partial \pi}}
\newcommand{\kappax}{\kappa[[\underline{x}]]}

\numberwithin{theorem}{section}

\renewcommand{\mod}{\,\operatorname{mod}\,}

\title{Detecting and Quantifying Isolated Singularities over Discrete Valuation Rings}
\author{Yotam Svoray}
\date{}

\AtEndDocument{\bigskip{\footnotesize%
  \textsc{ Department of Mathematics, University of Utah, UT 84112} \par  
  \textit{E-mail address}: \texttt{svoray@math.utah.edu} 
}}

\begin{document}

\maketitle

\begin{abstract}
        We develop a theory of isolated hypersurface singularities in mixed characteristic $(0,p)$, focusing on quotient rings over a Discrete Valuation Ring (DVR). We introduce and study analogues of the classical Tjurina and Milnor numbers for this setting, prove a generalized analogue of the Determinacy Theorem and the Mather-Yau Theorem for complete Noetherian local rings, and define numerical invariants that provide distinct criteria for detecting isolated singularities in the unramified and ramified cases. 
\end{abstract}

  \tableofcontents

%%% -*-LaTeX-*-

\section{Introduction}\label{Introduction}

The studies of hypersurfaces with isolated singularities originated with the foundational work of Arnold in~\cite{varchenko1985singularities}, Milnor in~\cite{milnor2016singular}, and Tjurina in~\cite{tjurina1969locally} in the complex setting. Their pioneering efforts involved the computation and analysis of hypersurface singularities using numerical invariants, specifically the Milnor and Tjurina numbers, along with key concepts like equivalences and determinacy. This framework was later generalized by Greuel and Kr\"{o}ning in~\cite{greuel1990simple} to algebraically closed fields of arbitrary characteristic, with subsequent expansions by Boubakri in~\cite{boubakri2009hypersurface} and Nguyen in~\cite{Nguyen2013classification}.\\

This chapter continues this line of generalization by extending the theory of isolated singularities of hypersurfaces from algebraically closed fields to a more general setting of hypersurfaces over the ring of power series over a local ring. Specifically, we focus on quantifying and understanding quotient rings $\frac{V[[\underline{x}]]}{\langle f \rangle}$, where $V$ is a Discrete Valuation Ring (DVR) with a chosen uniformizer $\pi$, with isolated singularities, i.e. such that $\left(\frac{\Vx}{\langle f \rangle}\right)_\mathfrak{q}$ is regular for every non-maximal prime ideal $\mathfrak{q}$. The key challenge lies in developing numerical invariants, analogous to the classical Tjurina and Milnor numbers, that can effectively detect and measure the severity of these singularities.\\

When moving to the DVR case, there is a constraint that we do not encounter when working over fields. In this case,  the uniformizer $\pi$ of $V$ is non invertible, and behaves differently than the units in $V$. This causes the groups of $V$-automorphisms of the ring of formal power series over $V$ to be much smaller compared to its field counterpart. Therefore elements $f,g \in V[[\underline{x}]]$ that we would want to be considered as "equivalent up to a change of variables" can not be. For example, as in Example 1.1. in~\cite{carvajal2019covers}, one would want to view  $x_1^2 + x_2^2 + \pi^3$ and $x_1^3 + x_2^2 + \pi^2$ as equivalent.  But we have that
\begin{equation*}
    \frac{V[[x_1, x_2]]}{\langle x_1^2 + x_2^2 + \pi^3 \rangle} \not \cong \frac{V[[x_1, x_2]]}{\langle x_1^3 + x_2^2 + \pi^2 \rangle},
\end{equation*}
\noindent as otherwise$\mod \pi$ they would be isomorphic over the field $\frac{V}{\langle \pi \rangle}$, which is impossible since the first would define a cusp singularity and the second an ordinary double point.  \\

In order to avoid this constraint, we would like to view the uniformizer $\pi$ as an additional variable, which allows us to view formal power series over $V$ in variables $x_1, \dots, x_n$ as "power series in $\pi, x_1, \dots, x_n$ over $\frac{V}{\langle \pi \rangle}$" and since $\pi$ behaves differently than the other variables, we would then like to formally replace $\pi$ by a dummy variable $y$. In the example above, one has that
\begin{equation*}
    \frac{V[[x_1, x_2,y]]}{\langle x_1^2 + x_2^2 + y^3 \rangle} \cong \frac{V[[x_1, x_2,y]]}{\langle x_1^3 + x_2^2 + y^2 \rangle}
\end{equation*}
by "replacing $\pi$ by the dummy variable $y$".\\

Over an arbitrary DVR, it is not clear how we can do this "replacing" procedure formally. For example, if we look at $1+\pi + \pi x$, since $1+\pi$ is a unit in $V \subset V[[\underline{x}]]$, then we can write 
\begin{center}
\vspace{-10mm}
\begin{equation*}
    1+\pi + \pi x = 1\cdot \pi^0x^0 + 1 \cdot \pi^1 x^0 + 0 \cdot \pi^0x^1 + 0 \cdot \pi^2x^0 + 1\cdot \pi^1 x^1+ \cdots 
\end{equation*}
    and
    \begin{equation*}
    (1+\pi) + \pi x = (1+\pi)\cdot \pi^0x^0 + 0 \cdot \pi^1 x^0 + 0 \cdot \pi^0x^1 + 0 \cdot \pi^2x^0 + 1\cdot \pi^1 x^1 + \cdots.
\end{equation*}
\end{center}

\noindent Therefore, intuitively,  there are two different ways to "replace $\pi$ by $y$", which are 
\begin{center}
  \vspace{-10mm}
\begin{equation*}
    1+\pi + \pi x  \rightsquigarrow 1\cdot y^0x^0 + 1 \cdot y^1 x^0 + 0 \cdot y^0x^1 + 0 \cdot y^2x^0 + 1\cdot y^1 x^1 + \cdots 
\end{equation*}
    or
    \begin{equation*}
   (1+\pi) + \pi x  \rightsquigarrow (1+\pi)\cdot y^0x^0 + 0 \cdot y^1 x^0 + 0 \cdot y^0x^1 + 0 \cdot y^2x^0 + 1\cdot y^1 x^1+ \cdots.
\end{equation*}
\end{center}
\noindent Yet, there is a way to systematically "replace $\pi$ by a dummy variable $y$", by first looking at the expansion with respect to $x$ as a power series, and then looking at its coefficients (as we formally define in Notation~\ref{def:subsoil}), recalling that since $V$ is a DVR, then every element in $V$ can be written uniquely as a unit times a power of $\pi$.\\

In the example above,  since we first look at the expansion with respect to $x$, then 
\begin{equation*}
    (1+\pi) + \pi x = (1+\pi)\cdot x^0 + \pi \cdot x^1 + 0 \cdot x^2 + \cdots, 
\end{equation*}
 and since $(1+\pi)=(1+\pi) \cdot \pi^0$ and $\pi = 1 \cdot \pi^1$ as elements of $V$, then we would replace $1+\pi + \pi x$ by $(1+\pi) + y x$.\\

This point of view allows us to formally define an analogue of the Tjurina and Milnor numbers over $V$ that behave similarly their field counterparts.  In addition, it allows us to define equivalences $\sim$ of hypersurface singularities (defined formally in Definition~\ref{def:equiv}). Using the notion of equivalence and the Tjurina number over $V$, we develop a theory of hypersurface singularities over $V$, analogous to the results over fields, as summarized in Section 2 of~\cite{greuel2007introduction}.  Our main results include the following:
\begin{enumerate}
    \item We present a generalization of the Determinacy Theorem (Section \ref{appendix_Det}) to complete Noetherian local rings, inspired by Pellikaan \cite{pellikaan1988finite} and Boubakri-Greuel-Markwig in~\cite{boubakri2012invariants}. This result is then used to prove a generalized Mather-Yau theorem and to define a new numerical invariant for singularities over complete local rings that measures the determinacy.
    \item We define analogues of the Tjurina number and contact equivalence over $V[[\underline{x}]]$ (Section \ref{sec:V_power_series}) by viewing the uniformizer of $V$ "as a variable", and proving they satisfy properties similar to their field analogues, including the splitting lemma and Morse lemma.
    \item We define numerical invariants (in Section \ref{sec:Jeffries_Hochter}) to detect isolated singularities based on whether $V$ is ramified or unramified, drawing on recent works by Hochster and Jeffries in~\cite{hochster2021jacobian}, Saito in~\cite{saito2022frobenius}, and KC in~\cite{kc2024singular}.
\end{enumerate}

\medskip

We summarize these results in the following Theorem: 

\begin{theorem}
\begin{enumerate}
    \item Let  $R$ be a complete Noetherian local ring, let $I \subset R[[\underline{x}]]$ be an ideal, let $f, g \in I^2 \subset R[[\underline{x}]]$
    \begin{enumerate}
        \item (Theorem~\ref{thm_App})  If $f$ satisfies
    \begin{equation*}
        I^{k+2} \subset I \cdot \langle f \rangle + I^2 \cdot J(f),
    \end{equation*}
      then $f$ is $\left(2k-\ord_I\left(f\right)+2\right)-$determined with respect to $I$ (as in Definition~\ref{def:determinacy}).
      \item (Proposition~\ref{prop:det_other_direction}) If $f$ has finite determinacy with respect to $I$ then $\textup{Sing}(V(f)) \subset V(I + \langle f \rangle + J(f))$.
      \item (Proposition~\ref{prop:mather-yau}) Assume that $f$ has finite determinacy with respect to $I$. Then $f \equivd g$ (as in Definition~\ref{def:determinacy}) if and only if for every $k\gg 0$ we have an isomorphism of $R-$algebras
    \begin{equation*}
        \frac{R[[\underline{x}]]}{\langle f, I^k\cdot  J(f) \rangle} \cong \frac{R[[\underline{x}]]}{\langle g, I^k\cdot  J(g)\rangle}.
    \end{equation*}
    \end{enumerate}
    \item Let $(V, \langle \pi\rangle, \kappa)$ be a complete DVR and let $f \in \Vx$. Then
    \begin{enumerate}
        \item (Proposition~\ref{prop:milnor_zero}) $\tau_V(f)=0$ (as in Definition~\ref{def:tjurinaV}) if and only if $f \sim x_1$  if and only if $\frac{\Vx}{\langle f \rangle}$ is a regular local ring. 
        \item (Proposition~\ref{prop:Morse}) If the characteristic of $\kappa$ is not $2$ and $\kappa$ is quadratically closed then $\tau_V(f)=1$ if and only if $f \sim \pi^2 + x_1^2 + \cdots +x_n^2$. 
        \item (Proposition~\ref{prop:finite_tjurina}, Corollary~\ref{prop:upper_bound_tau}) If $V$ is unramified then $f$ defines an isolated singularity if and only if $p \in \sqrt{J(f)}$ and $\tau(f, \delta) < \infty$ for every $p-$derivation $\delta$ (as in Definition~\ref{def:tjurina_delta}) if and only if $p \in \sqrt{J(f)}$ and $\tau^\Delta(f) < \infty$ (as in Corollary~\ref{prop:upper_bound_tau}). 
        \item (Proposition~\ref{prop:ramified_iso_sing}) If $V$ is ramified then $f$ defines an isolated singularity if and only if $\pi \in \sqrt{J(f)}$ and $\tau^\pi(f) <\infty$ (as in Definition~\ref{def:pitjurina}). 
    \end{enumerate}
\end{enumerate}
\end{theorem}

As we see in the theorem above, there are four invariants over $V$ that are analogues of the Tjurina number (namely $\tau_V(f), \tau(f, \delta), \tau^\Delta(f)$, and $\tau^\pi(f)$), each playing a different role in understanding the hypersurface $\frac{V[[\underline{x}]]}{\langle f \rangle}$. The first, $\tau_V(f)$, is defined in Definition~\ref{prop:generic_intersection} as the generic intersection of the ideal $\langle \tilde{f} \rangle + J(\tilde{f})$, where $\tilde{f}$ is the element of $V[[\underline{x},y]]$ associated with $f$ where we systematically "replaced $\pi$ by variable $y$" (as in Notation~\ref{def:subsoil}). This invariant has the most properties that are similar to the Tjurina number as defined over fields. Yet, unlike as the field case, it does not fully detect isolated singularities, but a weaker behavior that is somewhat similar. \\

The second, $\tau(f, \delta)$, is defined in Definition~\ref{def:tjurina_delta} over unramified DVRs with respect to a chosen $p-$derivation $\delta$, which we can view as a operator that "derives with respect to $p$". $\tau(f, \delta)$ detects isolated singularities completely (with an additional technical condition), yet it behaves very differently than the Tjurina number as defined over fields, in addition to highly depending on the choice of $\delta$. The third, $\tau^\Delta(f)$, is the value of $\tau(f, \delta)$ when going$\mod p$, does not depend on the choice of $\delta$ and behaves similarly to $\tau(f, \delta)$.\\

The fourth, $\tau^\pi(f)$, is defined in Definition~\ref{def:pitjurina} as an analogue of $\tau(f, \delta)$ for some ramified DVR with uniformizer $\pi$, in which case we do not need to depend on $p-$derivaties, as in the ramified case we have a derivation that acts as "taking derivative with respect to $\pi$". All of these Tjurina numbers play different roles in quantifying detecting, and understanding isolated singularities over $V$ and similar behaviors that elements in $V[[\underline{x}]]$ may exhibit. \\

\noindent\textbf{Notations}: Given a local ring $(R, \mathfrak{m}, \kappa)$, we denote by $R[[\underline{x}]]=R[[x_1, \dots, x_n]]$ the ring of power series over $R$ in variables $x_1, \dots, x_n$ and we denote its maximal ideal by $\mathfrak{M}=\mathfrak{m}+\langle x_1, \dots, x_n \rangle$. Note that $R[[\underline{x}]]$ is a complete topological ring with respect to the $\mathfrak{M}-$adic topology.  Given an ideal $\mathfrak{a} \subset R[[\underline{x}]]$, we denote by $V(\mathfrak{a})$ the set of all prime ideals $\mathfrak{a} \subset \mathfrak{p}$. Given $f,g \in R[[\underline{x}]]$, we denote $f \equivk g$ if we have an isomorphism of $R-$algebras $\frac{R[[\underline{x}]]}{\langle f \rangle} \cong \frac{R[[\underline{x}]]}{\langle g \rangle}$, and we denote $f \equivr g$ if there exists some $R-$automorphism $\varphi$ such that $\varphi(f)=g$. Given some $f$ and some variable $z$, we denote by $\partial_z(f)$ the partial derivative of $f$ with respect to $z$, we denote $\partial_i(f)=\partial_{x_i}(f)$, and we denote $J(f) = \langle \partial_1(f), \dots, \partial_n(f) \rangle$. Given an ideal $I$, we denote by $\ord_I(f)$ the largest $k \geq 0$ such that $f \in I^k$ and we denote $\ord(f)=\ord_\mathfrak{M}(f)$. In addition, given some module $M$ over $R$, we denote by $\textup{length}_{R}\left(M\right)$ the length of the module $M$ over $R$. When discussing power series, we use multi-index notation (as in~\cite{greuel2007introduction}): If $\alpha = (\alpha_1, \dots, \alpha_n)$ is a tuple of non negative integers and $\underline{x}=(x_1, \dots, x_n)$ is a tuple of ring elements, we denote $|\alpha|=\alpha_1+\cdots +\alpha_n$, we denote $\underline{x}^\alpha =  x_1^{\alpha_1} \cdots x_n^{\alpha_n}$, and we denote $\sum_{\alpha} a_{\alpha} \underline{x}^{\alpha} = \sum_{\alpha_1, \dots, \alpha_n \geq 0} a_{\alpha_1, \dots, \alpha_n} x_1^{\alpha_1} \cdots x_n^{\alpha_n}$. Given $\alpha=(\alpha_1, \dots, \alpha_n)$ and $\beta=(\beta_1, \dots, \beta_n)$, we say that $\beta \geq \alpha$ if $\beta_j \geq \alpha_j$ for every $j=1, \dots, n$. Given $f(\underline{x}) = \sum_\alpha a_\alpha \underline{x}^\alpha$, we denote $f(\underline{0})=a_0$.\\

\noindent\textbf{Acknowledgments.} This work was done as part of the Author's PhD thesis under the guidance of Karl Schwede, and we wish to thank him for his guidance, help, and support. We wish to thank Neil Epstein, Gert-Martin Greuel, Sean Howe, Jack Jeffries, Nawaj KC, Hong Duc Nguyen, and Eugenii Shustin for productive mathematical discussions and their inputs on the different ideas presented. We especially wish to thank Benjamin Baily for the main idea and for the technical details of Proposition~\ref{prop:det_other_direction}. The author was partially support by NSF grant DMS-2101800. 

\section{Determinacy over Complete Local Rings}\label{appendix_Det}

In this section we provide a proof of the finite determinacy theorem over any complete Noetherian local ring $(R, \mathfrak{m}, \kappa)$. These determinacy results are generalized versions of the results presented in Section 3.1 of~\cite{boubakri2009hypersurface} (which in turn are based on Section 2.2 of~\cite{greuel2007introduction} in the characteristic zero case and were expanded upon in Section 2 of~\cite{boubakri2012invariants} and in~\cite{greuel2019finite}).

\begin{definition}\label{def:determinacy}
Let $I \subset R[[\underline{x}]]$ be an ideal.
\begin{enumerate}
    \item We denote by $\mathcal{D}_I$ the group of $R-$automorphisms $\varphi$ of $R[[\underline{x}]]$ that preserve $I$ (i.e. $\varphi(I)=I$). Given $f,g \in I$ we say that $f \equivd g$ if there exists some $\varphi \in \mathcal{D}_I$ such that $\langle f \rangle = \langle \varphi(g) \rangle$. 
    \item  Let $f \in R[[\underline{x}]]$. We say that $f$ is \textbf{$k-$determined with respect to $I$} if for every $g \in R[[\underline{x}]]$ such that $f-g \in I^{k+1}$ we have that $f \equivd g$. We say that $f$ has \textbf{finite determinacy with respect to $I$} if $f$ is $k-$determined with respect to $I$ for some $k \geq 0$.
\end{enumerate} 
\end{definition}

\begin{example}\label{ex:A_1_det}
    \textup{Assume that $(R, \langle \pi\rangle, \kappa)$ is a DVR such that $\kappa$ is quadratically closed of characteristic $\neq 2$, and set $f=\pi^2 + x^2 \in R[[x]]$. We show directly that $f$ is $2-$determined with respect to $\mathfrak{M}=\langle x, \pi \rangle$. Given some $g\in R[[x]]$ such that $f - g \in \mathfrak{M}^3$, we can write $g = x^2+\pi^2 + h$ where $h \in \langle x,\pi \rangle^3$, and so $h=\alpha_1 x^3 + \alpha_2 x^2 \pi + \alpha_3 x\pi^2 + \alpha_4 \pi^3$ for some $\alpha_1, \alpha_2, \alpha_3, \alpha_4 \in R[[\underline{x}]]$. Therefore we have that}
    \begin{equation*}
        g = x^2+y^2 +h = (1+\alpha_3 x + \alpha_4\pi) (x^2(1+\alpha_1 x + \alpha_2\pi)(1+\alpha_3x + \alpha_4\pi)^{-1} + \pi^2).
    \end{equation*}
    \textup{Observe that both $1+\alpha_1 x + \alpha_2\pi$ and $1+\alpha_3x + \alpha_4\pi$ are units. Therefore, since $R$ is a DVR (and therefore Henselian) and since $\kappa$ is quadratically closed of characteristic $\neq 2$, there exists some unit $\beta \in R[[\underline{x}]]$ such that $\beta^2=(1+\alpha_1 x + \alpha_2\pi)(1+\alpha_3x + \alpha_4\pi)^{-1}$. So, if we define the automorphism $\varphi$ by setting $\varphi(x)=\beta x$ then we can conclude that $\varphi(f)=(1+\varphi(\alpha_3)\beta x + \varphi(\alpha_4)\pi) g$ with $1+\varphi(\alpha_3)\beta x+ \varphi(\alpha_4)\pi$ being a unit, and thus $f \equivk g$, as desired. }
\end{example}

\begin{remark}
    \textup{Determinacy over local rings that are algebras over an algebraically closed field of characteristic zero have been studied by Belitskii and Kerner in~\cite{belitskii2012matrices, belitskii2016finite, belitskii2019finite}, and in~\cite{boix2022pairs}, Boix et al. presented a "characteristic-free" approach to determinacy over local ring by looking at group actions on modules over local rings.  In addition, the study of the group $\mathcal{D}_I$ where $I$ is an ideal in the ring of power series over $\mathbb{C}$ was presented in~\cite{pellikaan1988finite}. }
\end{remark}

The main result of this section is the Determinacy theorem for $\equivd$ with respect to an ideal $I$:

\begin{theorem}[Finite Determinacy Theorem]\label{thm_App}
    Let $I \subset R[[\underline{x}]]$ be an ideal. If $f \in I^2 \subset R[[\underline{x}]]$ satisfies
    \begin{equation*}
        I^{k+2} \subset I \cdot \langle f \rangle + I^2 \cdot J(f),
    \end{equation*}
      then $f$ is $\left(2k-\ord_I\left(f\right)+2\right)-$determined with respect to $I$.
\end{theorem}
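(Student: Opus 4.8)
The plan is to carry out the classical homotopy (``path'') method for finite determinacy with the maximal ideal $\mathfrak{M}$ replaced throughout by $I$ and the module of all $R$-derivations replaced by those preserving $I$; in practice this is run as a successive-approximation argument inside $R[[\underline x]]$ (the tempting ``set $t=1$ in the formal path $f+t(g-f)$'' version is not directly available). Write $d=\ord_I(f)\ge 2$ and $N=2k-d+2$, let $g\in R[[\underline x]]$ with $f-g\in I^{N+1}$, and set $\epsilon=g-f$; we must exhibit $\varphi\in\mathcal{D}_I$ with $\langle\varphi(g)\rangle=\langle f\rangle$. We may assume $f\neq 0$ and $I\subseteq\mathfrak{M}$ (otherwise the statement is vacuous or degenerate); then the hypothesis forces $f\notin I^{k+2}$, since $f\in I^{k+2}$ would give $I^{k+2}\subseteq I\langle f\rangle+I^2 J(f)\subseteq I^{d+1}\subseteq I^{k+3}$, hence $I^{k+2}=I^{k+3}$ and, by Nakayama, $I^{k+2}=0$, contradicting $0\neq f\in I^{k+2}$. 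Thus $d\le k+1$, so all the exponents of $I$ appearing below are nonnegative.

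The first ingredient is a tangent-space computation. Since $f\in I^d$ we have $\partial_i(I^j)\subseteq I^{j-1}$, so $\partial_i f\in I^{d-1}$ and $J(f)\subseteq I$; multiplying the hypothesis by $I^{k-d+1}$ gives
\[
I^{N+1}=I^{2k-d+3}\subseteq I^{k-d+2}\langle f\rangle+I^{k-d+3}J(f),
\]
so we may write $\epsilon=b f+\sum_i a_i\,\partial_i f$ with $b\in I^{k-d+2}$ and $a_i\in I^{k-d+3}$. Two symmetries are now available: the substitution $\psi\colon x_i\mapsto x_i-a_i$, which is an $R$-automorphism of $R[[\underline x]]$ because $a_i\in\mathfrak{M}$ and lies in $\mathcal{D}_I$ because $a_i\in I$ forces $\psi(I)\subseteq I$ (and $\psi^{-1}$, being of the same shape, likewise preserves $I$); and multiplication by a unit of the form $1+(\text{element of }I^{k-d+2})$.

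The heart of the argument is a single improvement step. Writing $g=(1+b)f+\sum_i a_i\,\partial_i f$ and applying $\psi$, I expand $f$ as a finite sum of products of $d$ elements of $I$ and Taylor-expand each factor: this yields $\psi(f)=f-\sum_i a_i\,\partial_i f+Q$, where every term of the remainder $Q$ carries at least two of the $a_i$ (each in $I^{k-d+3}$) together with at least $d-2$ leftover elements of $I$, so $Q\in I^{2(k-d+3)+(d-2)}=I^{N+2}$; the analogous estimates show that $\psi(a_i)-a_i$, $\psi(\partial_i f)-\partial_i f$ and $\psi(f)-f$ are small enough that the cross terms $\psi(b)\cdot(\psi(f)-f)$, $\psi(a_i)\cdot(\psi(\partial_i f)-\partial_i f)$ and $(\psi(a_i)-a_i)\cdot\partial_i f$ all lie in $I^{N+2}$. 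The $-\sum a_i\,\partial_i f$ produced by $\psi(f)$ cancels the corresponding term of $g$, the $bf$-term is absorbed into a unit multiple of $f$, and one obtains $u_1\,\psi(g)=f+\epsilon_1$ with $u_1=(1+\psi(b))^{-1}$ a unit and $\epsilon_1\in I^{N+2}$. It is precisely the exponent $2(k-d+3)+(d-2)=N+2$ here that produces the determinacy bound $2k-\ord_I(f)+2$ rather than $k$. Iterating the step with $I^{N+1+j}$ in place of $I^{N+1}$ at stage $j$ produces $\varphi_j\in\mathcal{D}_I$ and units $u_j$ pushing the error into $I^{N+1+j}$; since $\varphi_j$ moves each variable by an element of $I^{k-d+3+j}\subseteq\mathfrak{M}^{j+2}$, the composite $\Psi=\lim_m \varphi_m\circ\cdots\circ\varphi_1$ converges $\mathfrak{M}$-adically to an $R$-automorphism, the accumulated units converge to a unit, and passing to the limit gives $\langle\Psi(g)\rangle=\langle f\rangle$. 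Finally $\Psi\in\mathcal{D}_I$ because $\mathcal{D}_I$ is a group and $I$ is closed in the complete Noetherian local ring $R[[\underline x]]$, so $f\equivd g$.

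I expect the main obstacle to be the degree bookkeeping rather than any new idea: one must choose the exponents in the hypothesis and in the Taylor remainders so that every garbage term provably lands in $I^{N+2}$ — delicate because the quadratic-in-$a$ part of $Q$ is only rescued by the $d-2$ spare factors of $f$ multiplying it — and one must check with care that all automorphisms occurring genuinely preserve $I$ and that the successive-approximation scheme converges with limit inside $\mathcal{D}_I$, which is where completeness of $R[[\underline x]]$ and closedness of $I$ are used.
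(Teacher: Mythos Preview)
Your proposal is correct and follows essentially the same strategy as the paper: decompose the perturbation via the hypothesis, apply the corresponding unit-times-substitution, bound the new error by a Taylor remainder (your exponent count $2(k-d+3)+(d-2)=N+2$ matches the paper's $2L+r=N+2$ with $L=N-k$), iterate, and pass to the $\mathfrak{M}$-adic limit using closedness of $\mathcal{D}_I$. The only point to tighten is the parenthetical ``$\psi^{-1}$, being of the same shape, likewise preserves $I$'': as written this is circular (you would need $\psi^{-1}(a_i)\in I$, which presupposes $\psi^{-1}(I)\subseteq I$), and the paper instead uses a short Noetherianity argument---an automorphism with $\varphi(I)\subseteq I$ must satisfy $\varphi(I)=I$, since otherwise $\{\varphi^{-j}(I)\}_{j\ge 0}$ would be a strictly ascending chain.
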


We start with a few technical lemmata:

\begin{lemma}\label{lem:app_computation}
    Let $f \in R[[\underline{x}]]$ and let $\underline{z}=(z_1, \dots, z_n)$ be a tuple of elements such that $z_i \in \mathfrak{M}^k$ for every $i$. Then we can write $f(x_1+z_1, \dots, x_n+z_n)=\sum_{\alpha} h_\alpha \underline{z}^\alpha$ for some $h_\alpha \in R[[\underline{x}]]$ with
    \begin{enumerate}
        \item $\ord(h_\alpha) \geq \ord(f) - |\alpha|$ for every $\alpha$, 
        \item $h_0=f$,
        \item $h_{e_j}=\partial_j(f)$ where $e_j$ is the $j-$th unit vector,
        \item $\ord(f(x_1+z_1, \dots, x_n+z_n) -  f(x_1, \dots, x_n)) \geq  \ord(f)+k-1$.
    \end{enumerate}
\end{lemma}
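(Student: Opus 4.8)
The plan is to recognize the desired expansion as the formal Taylor expansion of $f$ about $\underline{z}$, written through Hasse (divided) derivatives, and then to read off the four properties from explicit formulas for the coefficients. Throughout I take $k \ge 1$, since for $k=0$ the substitution need not even define an element of $R[[\underline{x}]]$. Since $z_i \in \mathfrak{M}^k \subseteq \mathfrak{M}$, completeness of $R[[\underline{x}]]$ in the $\mathfrak{M}$-adic topology gives a unique continuous $R$-algebra endomorphism $\sigma$ with $\sigma(x_i)=x_i+z_i$. Writing $f=\sum_\beta a_\beta\underline{x}^\beta$ with $a_\beta\in R$ and setting $\binom{\beta}{\alpha}:=\prod_i\binom{\beta_i}{\alpha_i}$, I define
\[
    h_\alpha \;:=\; \sum_{\beta \geq \alpha}\binom{\beta}{\alpha}\,a_\beta\,\underline{x}^{\beta-\alpha},
\]
which is a well-defined power series because the coefficient of $\underline{x}^\gamma$ in it is just $\binom{\gamma+\alpha}{\alpha}\,a_{\gamma+\alpha}$.

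The core step is the identity $\sigma(f)=\sum_\alpha h_\alpha\underline{z}^\alpha$. When $f$ is a polynomial in $\underline{x}$ this is the multivariate binomial theorem: $\prod_i(x_i+z_i)^{\beta_i}=\sum_{\gamma\le\beta}\binom{\beta}{\gamma}\underline{x}^{\beta-\gamma}\underline{z}^\gamma$, and summing against the $a_\beta$ and reindexing by $\gamma$ gives the claim. For general $f$ I would pass to the limit using the truncations $f^{(N)}:=\sum_{|\beta|\le N}a_\beta\underline{x}^\beta$: from $f-f^{(N)}\in\langle\underline{x}\rangle^{N+1}$ one gets $\sigma(f)\equiv\sigma(f^{(N)})\pmod{\mathfrak{M}^{N+1}}$, while on the other side $h_\alpha-h_\alpha^{(N)}\in\langle\underline{x}\rangle^{N-|\alpha|+1}$ and $\underline{z}^\alpha\in\mathfrak{M}^{k|\alpha|}$ force the series $\sum h_\alpha\underline{z}^\alpha$ and $\sum h_\alpha^{(N)}\underline{z}^\alpha$ to agree modulo $\mathfrak{M}^{N+1+(k-1)|\alpha|}\subseteq\mathfrak{M}^{N+1}$; the same bound $h_\alpha\underline{z}^\alpha\in\mathfrak{M}^{k|\alpha|}$ also shows $\sum_\alpha h_\alpha\underline{z}^\alpha$ converges. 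Since $\bigcap_N\mathfrak{M}^N=0$ and the polynomial identity holds for every $f^{(N)}$, it propagates to $f$.

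The four assertions then follow directly. Property (2): $\binom{\beta}{0}=1$, so $h_0=\sum_\beta a_\beta\underline{x}^\beta=f$. Property (3): $\binom{\beta}{e_j}=\beta_j$, so $h_{e_j}=\sum_\beta\beta_j a_\beta\underline{x}^{\beta-e_j}=\partial_j(f)$. Property (1): by the standard description of the $\mathfrak{M}$-adic order, $f\in\mathfrak{M}^d$ iff $a_\beta\in\mathfrak{m}^{\max(d-|\beta|,0)}$ for all $\beta$, so $a_\beta\ne 0$ forces $\ord_{\mathfrak m}(a_\beta)+|\beta|\ge\ord(f)$; hence every monomial $\binom{\beta}{\alpha}a_\beta\underline{x}^{\beta-\alpha}$ of $h_\alpha$ lies in $\mathfrak{M}^{\ord_{\mathfrak m}(a_\beta)+|\beta|-|\alpha|}\subseteq\mathfrak{M}^{\ord(f)-|\alpha|}$, giving $\ord(h_\alpha)\ge\ord(f)-|\alpha|$. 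Property (4): combining (2) with (1), $\sigma(f)-f=\sum_{\alpha\ne 0}h_\alpha\underline{z}^\alpha$, and for $\alpha\ne 0$ each term lies in $\mathfrak{M}^{(\ord(f)-|\alpha|)+k|\alpha|}=\mathfrak{M}^{\ord(f)+(k-1)|\alpha|}\subseteq\mathfrak{M}^{\ord(f)+k-1}$, since $|\alpha|\ge 1$ and $k\ge 1$.

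I expect the main obstacle to be the middle step: passing rigorously from the polynomial binomial identity to an honest power series, which requires checking continuity of $\sigma$, $\mathfrak{M}$-adic convergence of $\sum_\alpha h_\alpha\underline{z}^\alpha$, and compatibility of the truncations — this is the only place where completeness genuinely enters. Everything after the identity is routine multi-index bookkeeping.
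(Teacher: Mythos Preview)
Your proof is correct and follows essentially the same approach as the paper: both define $h_\alpha=\sum_{\beta\ge\alpha}\binom{\beta}{\alpha}a_\beta\,\underline{x}^{\beta-\alpha}$ via the multivariate binomial expansion and then read off the four properties from this explicit formula. You are actually more careful than the paper about the passage from the polynomial identity to general power series (the paper writes the computation formally without isolating the convergence/continuity step), but the argument is otherwise the same.
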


\begin{proof}
First, note that for every $\beta=(\beta_1, \dots, \beta_n) \in \mathbb{N}^n$ we have that 
\begin{equation*}
    \prod_{j=1}^n (x_j+z_j)^\beta =\sum_{\gamma_1=0}^{\beta_1} \cdots \sum_{\gamma_n=1}^{\beta_n} c_{\beta, \gamma}\underline{x}^{\beta-\gamma} \underline{z}^\gamma,
\end{equation*}

\noindent where $\gamma=(\gamma_1, \dots, \gamma_n)$ and $c_{\beta, \gamma} = \prod_{j=1}^n \binom{\beta_j}{\gamma_j}$. Thus, if we write $f(\underline{x})=\sum_{\beta} a_\beta \underline{x}^\beta$ then by setting $h_\alpha=\sum_{ \beta\geq \alpha} a_\beta c_{\beta, \alpha} \underline{x}^{\beta-\alpha}$, we have that 
\begin{equation*}
f(x_1+z_1, \dots, x_n+z_n) =     \sum_{\beta} a_\beta \cdot \sum_{\gamma_1=0}^{\beta_1} \cdots \sum_{\gamma_n=1}^{\beta_n} c_{\beta, \gamma}\underline{x}^{\beta-\gamma} \underline{z}^\gamma =\sum_\alpha h_\alpha \underline{z}^\alpha.  
\end{equation*}
\noindent Thus, since $\ord(f)=\min\{ |\beta| + \ord(a_\beta) \colon a_\beta \neq 0\}$ we have that $\ord(h_\alpha) = \min \{|\beta|-|\alpha| +\ord(a_\alpha) \colon \beta \geq \alpha\}\geq \ord(f)-|\alpha|$. Now, since $c_{\beta, 0}=1$ for every $\beta$, then we must have that 
\begin{equation*}
    h_0=\sum_{\beta} a_\beta  \underline{x}^{\beta} =f. 
\end{equation*}
\noindent In addition, if $e_j$ denotes the unit vector (i.e. the vector $(0, \dots, 0,1,0 \dots, 0)$ where $1$ appears only in the $j-$th component), then we have that $c_{\beta, e_j} = \beta_j$ and $\beta-e_j = (\beta_1, \dots, \beta_{j-1}, \beta_j -1, \beta_{j+1}, \dots, \beta_n)$, in addition to the fact that if $\alpha_j \leq e_j$ then $\alpha=0$. Therefore we must conclude that
\begin{equation*}
    h_{e_j} = \sum_{\beta} a_\beta \cdot \beta_j \cdot   \underline{x}^{\beta - e_j} = \partial_j(f).
\end{equation*}

Now, for the last item, note that $f(x_1+z_1, \dots, x_n+z_n)-f(x_1, \dots, x_n) = \sum_{|\alpha|>0} h_\alpha \underline{z}^\alpha$ (as $h_0=f=h_0 \underline{z}^0$). Since $z_i \in \mathfrak{M}^k$ for every $i$, then $\ord(\underline{z}^\alpha) \geq |\alpha|k$ and so we have that 
\begin{equation*}
    \ord(h_\alpha \underline{z}^\alpha) = \ord(h_\alpha) + \ord(\underline{z}^\alpha) \geq \ord(f) -|\alpha|+k|\alpha| \geq \ord(f)+k-1,
\end{equation*}
with $\ord(f(x_1+z_1, \dots, x_n+z_n)-f(x_1, \dots, x_n)) = \min\{ \ord(h_\alpha \underline{z}^\alpha) \colon |\alpha| \neq 0\}$. 
\end{proof}

\begin{lemma}\label{lem:app_fixpt}
    Let $\varphi \colon R[[\underline{x}]] \to R[[\underline{x}]]$ be an $R-$homomorpshim such that $\varphi(x_j) - x_j \in \mathfrak{M}^k$ for every $j$ (where $k \geq 2$). Then for every $h \in \mathfrak{M}^l$ we have that $\varphi(h)-h \in \mathfrak{M}^{l+k-1}$. 
\end{lemma}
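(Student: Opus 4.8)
The plan is to recognize $\varphi$ as the substitution homomorphism $x_j \mapsto x_j + z_j$ and then quote Lemma~\ref{lem:app_computation}. First I would put $z_j := \varphi(x_j) - x_j$, so that $\underline{z} = (z_1,\dots,z_n)$ is a tuple with $z_j \in \mathfrak{M}^k$ by hypothesis. Since $\varphi$ is an $R$-algebra homomorphism it fixes $\mathfrak{m}$ pointwise, and $\varphi(x_j) = x_j + z_j \in \mathfrak{M}$ (because $k \geq 1$); hence $\varphi(\mathfrak{M}) \subseteq \mathfrak{M}$ and $\varphi$ is continuous for the $\mathfrak{M}$-adic topology. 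As $R[[\underline{x}]]$ is $\mathfrak{M}$-adically complete and separated, a continuous $R$-endomorphism commutes with the convergent sums defining power series, so $\varphi(h) = h(x_1+z_1,\dots,x_n+z_n)$ for every $h \in R[[\underline{x}]]$.

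Granting this identification, the statement is immediate. If $h \in \mathfrak{M}^l$ then $\ord(h) \geq l$, and applying Lemma~\ref{lem:app_computation}(4) to $f = h$ with the tuple $\underline{z}$ (which satisfies $z_i \in \mathfrak{M}^k$) yields
\[
\ord\bigl(\varphi(h) - h\bigr) = \ord\bigl(h(x_1+z_1,\dots,x_n+z_n) - h(x_1,\dots,x_n)\bigr) \geq \ord(h) + k - 1 \geq l + k - 1,
\]
which is precisely the assertion that $\varphi(h) - h \in \mathfrak{M}^{l+k-1}$.

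There is no substantive obstacle here; the only point that merits a sentence is the passage from an abstract $R$-homomorphism satisfying the hypothesis to an honest substitution map, i.e. the continuity of $\varphi$ (this is exactly where the constraint $\varphi(x_j) - x_j \in \mathfrak{M}^k$, rather than just $\varphi(x_j) \in R[[\underline{x}]]$, is used). If one prefers to avoid invoking completeness, the same bound can be extracted directly: by $R$-linearity and $\mathfrak{M}$-adic continuity it suffices to verify the claim on ideal generators of $\mathfrak{M}^l$, which reduce via $R$-linearity to the monomials $\underline{x}^\alpha$; expanding $\varphi(\underline{x}^\alpha) = \prod_j (x_j + z_j)^{\alpha_j} = \sum_\gamma c_{\alpha,\gamma}\,\underline{x}^{\alpha-\gamma}\underline{z}^\gamma$, the $\gamma = 0$ term is $\underline{x}^\alpha$ itself while every term with $|\gamma| \geq 1$ lies in $\mathfrak{M}^{|\alpha| - |\gamma| + k|\gamma|} = \mathfrak{M}^{|\alpha| + (k-1)|\gamma|} \subseteq \mathfrak{M}^{|\alpha| + k - 1}$ since $k - 1 \geq 1$. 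This is simply an unpacking of the computation already carried out in the proof of Lemma~\ref{lem:app_computation}.
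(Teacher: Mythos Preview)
Your proof is correct and follows exactly the paper's approach: set $a_j = \varphi(x_j) - x_j \in \mathfrak{M}^k$, identify $\varphi(h)$ with $h(x_1+a_1,\dots,x_n+a_n)$, and invoke item~(4) of Lemma~\ref{lem:app_computation}. The only difference is that you spell out the continuity argument justifying the substitution identity, which the paper asserts without comment.
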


\begin{proof}
    Since $\varphi(x_j) - x_j \in \mathfrak{M}^k$ then there exists some $a_j \in \mathfrak{M}^k$ such that $\varphi(x_j)=x_j+a_j$. Therefore, given $h \in R[[\underline{x}]]$, we have that $\varphi(h)=h(x_1+a_1, \dots, x_n + a_n)$, and so the result follows from the last item of Lemma~\ref{lem:app_computation}. 
\end{proof}

\begin{lemma}\label{lem:app_const}
Let $N,i \geq 1$ integers. Let $b_{i,0} \in \mathfrak{M}^{N+i-1}$ and $b_{i,j} \in \mathfrak{M}^{N+i}$ for every $j = 1, \dots, n$. Set $v_i = 1 +b_{i,0}$ and set $\varphi_i$ to be the $R-$automorphism of $R[[\underline{x}]]$ defined by $\varphi_i(x_j)=x_j+b_{i,j}$ for every $j = 1, \dots, n$. Set $\psi_i=\varphi_{i} \circ \varphi_{i-1} \circ \cdots \circ \varphi_1$ and set inductively $u_i = v_i \cdot \varphi(u_{i-1})$ with $u_0=1$. Then we have that:
\begin{itemize}
    \item For every $j$ there exists some $b_j \in \mathfrak{M}^{N+1}$ such that $\psi_i(x_j) \to x_j+b_j$ as $i \to \infty$. 
    \item The $R-$homomorphism $\varphi$ defined by $\varphi(x_j) = x_j+b_j$ for every $j$ defines an $R-$automorphism of $R[[\underline{x}]]$. 
    \item There exists some $b_0 \in \mathfrak{M}$ such that $u_i \to u=1+b_0$ as $i \to \infty$. 
\end{itemize}
\end{lemma}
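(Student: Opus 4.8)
The plan is to treat all three claims as assertions about convergence of Cauchy sequences in the ring $R[[\underline{x}]]$, which is complete and separated for the $\mathfrak{M}$-adic topology (so in particular $\bigcap_m\mathfrak{M}^m=0$); thus in each case it suffices to bound the order of the consecutive differences of the relevant sequence and check that this bound tends to infinity. Beyond the two preceding lemmata, the only extra inputs are this completeness and the standard fact — already invoked for each $\varphi_i$ — that an $R$-algebra endomorphism of $R[[\underline{x}]]$ carrying each $x_j$ to $x_j$ plus an element of $\mathfrak{M}^2$ is an automorphism.

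For the first two bullets I would write $\psi_i(x_j)=x_j+c_{i,j}$ and prove by induction on $i$ that $c_{i,j}\in\mathfrak{M}^{N+1}$: since $\psi_i=\varphi_i\circ\psi_{i-1}$ and $\varphi_i$ preserves every power of $\mathfrak{M}$, one gets $c_{i,j}=b_{i,j}+\varphi_i(c_{i-1,j})$, with both summands in $\mathfrak{M}^{N+1}$ (using $b_{i,j}\in\mathfrak{M}^{N+i}\subseteq\mathfrak{M}^{N+1}$). For the differences, $\psi_i(x_j)-\psi_{i-1}(x_j)=b_{i,j}+\bigl(\varphi_i(c_{i-1,j})-c_{i-1,j}\bigr)$, and Lemma~\ref{lem:app_fixpt} applied to $\varphi_i$ (for which $\varphi_i(x_j)-x_j=b_{i,j}\in\mathfrak{M}^{N+i}$, with $N+i\ge 2$) and to $h=c_{i-1,j}\in\mathfrak{M}^{N+1}$ gives $\varphi_i(c_{i-1,j})-c_{i-1,j}\in\mathfrak{M}^{(N+1)+(N+i)-1}\subseteq\mathfrak{M}^{N+i}$; hence $\psi_i(x_j)-\psi_{i-1}(x_j)\in\mathfrak{M}^{N+i}$, so $(\psi_i(x_j))_i$ is Cauchy, and summing the telescoping series shows its limit has the form $x_j+b_j$ with $b_j\in\mathfrak{M}^{N+1}$. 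For the second bullet, $b_j\in\mathfrak{M}^{N+1}\subseteq\mathfrak{M}^2$ since $N\ge 1$, so $\varphi$ is of the same type as the $\varphi_i$ and is therefore an $R$-automorphism — by the same reasoning, or explicitly by taking $\varphi^{-1}$ to be the limit of the convergent iteration $\theta_0=\mathrm{id}$, $\theta_{m+1}(x_j)=x_j-b_j(\theta_m(x_1),\dots,\theta_m(x_n))$.

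The third bullet is handled in parallel, reading the $\varphi$ in $u_i=v_i\cdot\varphi(u_{i-1})$ as $\varphi_i$. An easy induction — using $\varphi_i(1)=1$, $v_i-1=b_{i,0}\in\mathfrak{M}^{N+i-1}\subseteq\mathfrak{M}^N$, and that $\varphi_i$ preserves $\mathfrak{M}^N$ — shows $u_i-1\in\mathfrak{M}^N$ for all $i$. Then
\[
u_i-u_{i-1}=(v_i-1)\,\varphi_i(u_{i-1})+\bigl(\varphi_i(u_{i-1}-1)-(u_{i-1}-1)\bigr),
\]
where the first summand lies in $\mathfrak{M}^{N+i-1}$ because $b_{i,0}\in\mathfrak{M}^{N+i-1}$, and the second lies in $\mathfrak{M}^{N+(N+i)-1}\subseteq\mathfrak{M}^{N+i-1}$ by Lemma~\ref{lem:app_fixpt} with $h=u_{i-1}-1\in\mathfrak{M}^{N}$. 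Hence $u_i-u_{i-1}\in\mathfrak{M}^{N+i-1}$, so $(u_i)_i$ is Cauchy with limit $u=1+b_0$, $b_0\in\mathfrak{M}^{N}\subseteq\mathfrak{M}$ (in particular $u$ is a unit).

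I do not anticipate a real obstacle: the content is a bookkeeping of the two growing exponents $N+i$ and $2N+i$ against the fixed ones $N$ and $N+1$, together with $\bigcap_m\mathfrak{M}^m=0$. The one place that needs care is that the inductive claims $c_{i,j}\in\mathfrak{M}^{N+1}$ and $u_i-1\in\mathfrak{M}^{N}$ must be maintained with the \emph{same} exponent uniformly in $i$, since that exponent is fed into Lemma~\ref{lem:app_fixpt} at every stage; and, although it is not part of the statement, if the application downstream also needs $\psi_i(h)\to\varphi(h)$ for every $h\in R[[\underline{x}]]$, this follows by running the same order estimate with a general $h$ in place of $x_j$.
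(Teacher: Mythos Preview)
Your proposal is correct and follows essentially the same approach as the paper: bound $\psi_i(x_j)-\psi_{i-1}(x_j)$ and $u_i-u_{i-1}$ using Lemma~\ref{lem:app_fixpt}, conclude Cauchy, invoke completeness. Two minor differences worth noting: the paper applies Lemma~\ref{lem:app_fixpt} with $h=\psi_{i-1}(x_j)\in\mathfrak{M}^1$ rather than $h=c_{i-1,j}\in\mathfrak{M}^{N+1}$, obtaining the same $\mathfrak{M}^{N+i}$ bound but without needing the preliminary induction $c_{i,j}\in\mathfrak{M}^{N+1}$; and the paper leaves the second bullet (that $\varphi$ is an automorphism) implicit, whereas you spell it out via $b_j\in\mathfrak{M}^2$.
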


\begin{proof}
    First, observe that $\psi_i = \varphi_i \circ \psi_{i-1}$. Therefore, by the definition of $\psi_i$, there exists some $c_{i,j} \in R[[\underline{x}]]$ such that $\psi_i(x_j)=x_j+c_{i,j}$ for every $i,j$ (with $c_{1,j}=b_{1,j}$ since $\psi_1=\varphi_1$). Note that $c_{i,j}$ satisfies the recursive relation $c_{i,j}=\varphi_i(c_{i-1,j})+b_{i,j}$, since 
    \begin{equation*}
        x_j + c_{i,j} = \psi_i(x_j) = \varphi_i(\psi_{i-1}(x_j))=\varphi_i(x_j+c_{i-1,j})= x_j + b_{i,j} +  \varphi_i(c_{i-1,j}),
    \end{equation*}
    \noindent and so we can conclude that
    \begin{equation*}
        \psi_i(x_j)-\psi_{i-1}(x_j) = c_{i,j} - c_{i-1,j} = b_{i,j} + \varphi_i(c_{i-1,j}) -c_{i-1,j}.
    \end{equation*}
    \noindent Now, since $b_{i,j} \in \mathfrak{M}^{N+i}$ with $N+i \geq 2$, then by applying Lemma~\ref{lem:app_fixpt} (with $h=\psi_i(x_j) \in \mathfrak{M}^1$ and $k=N+i$) to $\varphi_i$, we can conclude that $\varphi_i(\psi_{i-1}(x_j)) - \psi_{i-1}(x_j) \in \mathfrak{M}^{N+i}$, and since $\varphi_i(\psi_{i-1}(x_j))=\psi_i(x_j)$, we have that $\psi_i(x_j)-\psi_{i-1}(x_j) \in \mathfrak{M}^{N+i}$.\\
    
    Thus, for every $r \geq 1$ there exists some $l=\max\{r-N, 1\}$ such that for every $i_1 > i_2 > l$ we have that 
    \begin{equation*}
        \psi_{i_1}(x_j)-\psi_{i_2}(x_j) = \sum_{k=i_2+1}^{i_1} (\psi_k(x_j)- \psi_{k-1}(x_j)) \in \mathfrak{M}^{N+l-1} \subset \mathfrak{M}^r. 
    \end{equation*}
    \noindent Therefore we can conclude that the sequence $\{\psi_i(x_j)\}_i$ is  Cauchy sequence for every $j$, and since $R[[\underline{x}]]$ is a complete ring, we can conclude that for every $j$ there exists some $\psi(x_j)$  such that $\psi_i(x_j) \to \psi(x_j)$ as $i \to \infty$. We can also  conclude that for every $j$ there exists some $b_j$ such that $c_{i,j} \to b_j$ as $i \to \infty$. Therefore we have that $\psi(x_j)=x_j+b_j$. \\

    Now, since for every $i$ we have that $u_i=1+b_{i,0}$, then $\varphi_i(u_{i-1})-u_{i-1} = \varphi_i(b_{i,0})-b_{i,0}$, and so by applying Lemma~\ref{lem:app_fixpt} to $b_{i,0} \in \mathfrak{M}^{N+i-1}$ we can conclude that $ \varphi_i(b_{i,0})-b_{i,0} \in \mathfrak{M}^{N+i}$. Therefore we have that $u_i - u_{i-1} = (1+b_{i,0})\varphi_i(u_{i-1})-u_{i-1} \in \mathfrak{M}^{N+i-1}$. Therefore, as before, we get that $\{u_i\}_i$ is a Cauchy sequence that converges to some unit $u$. Note that by the definition of $u_i$, for every $i$ there exists some $d_i \in \mathfrak{M}$ such that $u_i=1+d_i$, and so $u=1+b_0$ for some $b_0 \in \mathfrak{M}$.  
\end{proof}

\begin{lemma}\label{lem:app_limit}
    Let $\{\varphi_i\}_{i=1} ^\infty$ be a sequence of $R-$automorpshims of $R[[\underline{x}]]$ and let $\varphi$ be an $R-$automorpshims of $R[[\underline{x}]]$ such that for every $j$ we have that $\varphi_i(x_j) \to \varphi(x_j)$ as $i \to \infty$. Let $\{u_i\}_{i=1} ^\infty$ be a sequence of units in $R[[\underline{x}]]$ that converges to some $u$. Then for every $g \in R[[\underline{x}]]$ we have that $u_i \cdot \varphi_i(g) \to u\cdot \varphi(g)$ as $i \to \infty$.
\end{lemma}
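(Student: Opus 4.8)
The plan is to prove this as a routine continuity statement in the $\mathfrak{M}$-adic topology: the only facts used are that multiplication in $R[[\underline{x}]]$ is continuous (it is a topological ring, as noted in the Notations) and that any $R$-automorphism of $R[[\underline{x}]]$ respects the filtration $\{\mathfrak{M}^k\}_{k\geq 0}$. First I would write
\begin{equation*}
    u_i\,\varphi_i(g) - u\,\varphi(g) \;=\; u_i\bigl(\varphi_i(g) - \varphi(g)\bigr) \;+\; (u_i - u)\,\varphi(g),
\end{equation*}
so that it suffices to show each summand tends to $0$. The second is immediate: since $u_i \to u$, for every $r$ there is an $N$ with $u_i - u \in \mathfrak{M}^r$ for $i \geq N$, and then $(u_i - u)\varphi(g) \in \mathfrak{M}^r$ because $\mathfrak{M}^r$ is an ideal. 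For the first summand, again because $\mathfrak{M}^r$ is an ideal, it is enough to prove $\varphi_i(g) \to \varphi(g)$, which is the only real content of the lemma.

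To prove $\varphi_i(g) \to \varphi(g)$, I would fix $r \geq 1$ and split $g = \sum_\alpha a_\alpha \underline{x}^\alpha$ along the filtration as $g = P + h$, where $P = \sum_{|\alpha| < r} a_\alpha \underline{x}^\alpha$ is a polynomial and $h = \sum_{|\alpha| \geq r} a_\alpha \underline{x}^\alpha$. Each tail monomial $a_\alpha \underline{x}^\alpha$ with $|\alpha| \geq r$ lies in $\mathfrak{M}^r$, and since $\mathfrak{M}^r$ is an open, hence closed, additive subgroup of $R[[\underline{x}]]$, the limit $h$ of the partial sums also lies in $\mathfrak{M}^r$. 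Because an $R$-automorphism of $R[[\underline{x}]]$ preserves the unique maximal ideal $\mathfrak{M}$, it sends $\mathfrak{M}^r$ into $\mathfrak{M}^r$; hence $\varphi_i(h) - \varphi(h) \in \mathfrak{M}^r$ for every $i$. For the polynomial part, $\varphi_i(P) - \varphi(P) = \sum_{|\alpha| < r} a_\alpha\bigl(\prod_{j=1}^{n} \varphi_i(x_j)^{\alpha_j} - \prod_{j=1}^{n} \varphi(x_j)^{\alpha_j}\bigr)$ is a \emph{finite} sum, and since $R[[\underline{x}]]$ is a topological ring (so products of convergent sequences converge) and $\varphi_i(x_j) \to \varphi(x_j)$ for each $j$, each $\prod_{j=1}^{n}\varphi_i(x_j)^{\alpha_j}$ converges to $\prod_{j=1}^{n}\varphi(x_j)^{\alpha_j}$; hence $\varphi_i(P) \to \varphi(P)$, and there is an $N$ with $\varphi_i(P) - \varphi(P) \in \mathfrak{M}^r$ for $i \geq N$. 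Adding the two pieces gives $\varphi_i(g) - \varphi(g) \in \mathfrak{M}^r$ for all $i \geq N$, and since $r$ was arbitrary this shows $\varphi_i(g) \to \varphi(g)$, which finishes the proof.

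I do not expect a genuine obstacle here. The two points that need a moment's care are: that the infinite tail $h$ really lies in $\mathfrak{M}^r$ — handled by noting $\mathfrak{M}^r$ is closed, being open in the $\mathfrak{M}$-adic topology; and that automorphisms respect the $\mathfrak{M}$-adic filtration — handled by the fact that an automorphism of the local ring $R[[\underline{x}]]$ fixes its maximal ideal $\mathfrak{M}$ and therefore its powers. Everything else is just the standard bilinear estimate $a_i b_i - a b = a_i(b_i - b) + (a_i - a)b$, applied once to separate $u_i\varphi_i(g)$ into a unit factor and an automorphism factor, and applied termwise to the finitely many monomials of the truncation $P$.
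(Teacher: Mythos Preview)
Your proof is correct and follows essentially the same approach as the paper's: both use the bilinear splitting $u_i\varphi_i(g)-u\varphi(g)=u_i(\varphi_i(g)-\varphi(g))+(u_i-u)\varphi(g)$ and then argue that $\varphi_i(g)\to\varphi(g)$ via the power-series expansion, with the paper handling the infinite sum term-by-term (each $a_\alpha(\varphi(\underline{x})^\alpha-\varphi_i(\underline{x})^\alpha)\in\mathfrak{M}^l$) while you make the same point more carefully by truncating into a polynomial part and a tail in $\mathfrak{M}^r$.
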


\begin{proof}
    Since $\varphi_i(x_j) \to \varphi(x_j)$ for every $j$ and $u_i \to u$, then there exists some $l$ and some $L$ such that $\varphi(x_j)-\varphi(x_j) \in \mathfrak{M}^l$ and $u-u_i \in \mathfrak{M}^l$ for every $i \geq L$. If we write $g=\sum_{\alpha} a_{\alpha}\underline{x}^{\alpha}$, then we can conclude that 
    \begin{equation*}
        \varphi(g)-\varphi_i(g)=\sum_{\alpha} a_{\alpha} (\varphi(\underline{x})^{\alpha} - \varphi_i(\underline{x})^{\alpha}) \in \mathfrak{M}^l,
    \end{equation*}
    \noindent and thus $u\varphi(g)-u_i\varphi(g)=u(\varphi(g)-\varphi_i(g)) + (u-u_i)\varphi_i(g) \in \mathfrak{M}^l$ for every $i \geq L$, and the result follows. 
\end{proof}

\begin{lemma}\label{lem:app_ord_der}
    Let $f \in R[[\underline{x}]]$ and let $I \subset R[[\underline{x}]]$ be an ideal. If $f \in I^k$ for some $k\geq 1$ then $\partial_j(f) \in I^{k-1}$ for every $j$. 
\end{lemma}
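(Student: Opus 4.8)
The plan is to reduce everything to the Leibniz rule applied to a single $k$-fold product of elements of $I$. The key elementary observation is that, because $I$ is an ideal of $R[[\underline{x}]]$, every element of $I^k$ admits a \emph{finite} representation $f=\sum_{m=1}^{N} a_{m,1}\cdots a_{m,k}$ with all $a_{m,i}\in I$: by definition $I^k$ is generated as an $R[[\underline{x}]]$-module by the products $a_1\cdots a_k$ with $a_i\in I$, and any ring coefficient standing in front of such a product may be absorbed into its first factor without leaving $I$. So I would begin the proof by fixing such a finite representation of $f$.

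Next I would differentiate term by term. Since $\partial_j$ is an $R$-linear derivation of $R[[\underline{x}]]$ and the sum is finite (so no convergence question intervenes), the Leibniz rule gives
\begin{equation*}
\partial_j(f)=\sum_{m=1}^{N}\sum_{i=1}^{k} a_{m,1}\cdots a_{m,i-1}\,\partial_j(a_{m,i})\,a_{m,i+1}\cdots a_{m,k}.
\end{equation*}
Each inner summand is the product of the $k-1$ elements $a_{m,l}\in I$ with $l\neq i$ together with the element $\partial_j(a_{m,i})\in R[[\underline{x}]]$, hence lies in $I^{k-1}$; since $I^{k-1}$ is an ideal the whole double sum lies in $I^{k-1}$, which is exactly the assertion.

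I do not expect any real obstacle here: the only point worth stating explicitly is the passage to a finite-sum representation of $f$, which is what legitimizes applying Leibniz naively. An equivalent, perhaps slicker, phrasing would be to prove by induction on $k$ the general fact that any derivation $D$ of a commutative ring satisfies $D(J^k)\subseteq J^{k-1}$ for an ideal $J$ and $k\geq 1$: the base case $k=1$ is vacuous, and for $k\geq 2$ one has $D(J\cdot J^{k-1})\subseteq D(J)\,J^{k-1}+J\,D(J^{k-1})\subseteq J^{k-1}+J\cdot J^{k-2}=J^{k-1}$, using the inductive hypothesis. Either presentation is a couple of lines.
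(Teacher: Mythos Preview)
Your proposal is correct, and your inductive phrasing is essentially identical to the paper's own proof: the paper also argues by induction on $k$, writing $f=\sum_i a_i b_i$ with $a_i\in I$ and $b_i\in I^{k-1}$, applying the Leibniz rule, and invoking the inductive hypothesis on $\partial_j(b_i)$. Your direct Leibniz argument is just the unrolled form of the same induction, so there is no substantive difference.
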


\begin{proof}
    We prove this result by induction. For $k=1$ the result is trivial as $I^{k-1}=R[[\underline{x}]]$. Now, assume that the result is true for some $k-1$ and we will show that it is true for $k$. Since $f \in I^k$ we can write $f = \sum_i a_i b_i$ where $a_i \in I$ and $b_i \in I^{k-1}$ for every $i$. Therefore we have that $\partial_j(f)=\sum_i (\partial_j(a_i)\cdot b_i + a_i \cdot \partial_j(b_i))$. Since $b_i \in I^{k-1}$ then by the induction step we have that $\partial_j(b_i) \in I^{k-2}$. Thus we can conclude that $\partial_j(a_i)\cdot b_i + a_i \cdot \partial_j(b_i) \in I^{k-1}$ for every $i$ and the result follows. 
    %We can uniquely write $f=\sum_{i=0}^\infty a_i x_j^i$ where $a_j \in R[[x_1, \dots, x_{j-1}, x_{j+1},x_n]]$. Denote by $r_i$ the order of $a_i$ and denote the order of $f$ by $r$. Then for every $j$ we must have that $r_i+i \geq r$. We have that $\partial_j(f)=\sum_{i=0}^\infty a_i \cdot i \cdot x_j^{i-1}$ (where we view $i$ as an element of $R$). Therefore, we have that $\ord(a_i \cdot i \cdot x_j^{i-1})=r_i+i-1+\ord(i) \geq r-1$. We can conclude that $a_i \cdot i \cdot x_j^{i-1} \in M^{r-1}$ for every $i$ and so $\partial_j(f) \in M^{r-1}$.
\end{proof}

\begin{remark}
 \textup{Note that we can rewrite Lemma~\ref{lem:app_ord_der} as $\ord_I(\partial_i(f)) \geq \ord_I(f)-1$ for every ideal $I \subset R[[\underline{x}]]$. If $R$ is a ring of either mixed or positive characteristic then the order of the derivative can substantially change and is unbounded from above. For example, if $R$ is of mixed characteristic $(0,p)$ and $f(x) = x^{p^r} \in R[[x]]$ for some $r>1$ then we have that $\text{ord}(\partial_x(f))=\text{ord}(f)+r-1$. }
\end{remark} 

\begin{lemma}\label{lem:D_I_preserves}
    If $\varphi$ is an $R-$automorphism of $R[[\underline{x}]]$ and $I \subset R[[\underline{x}]]$ is an ideal such that $\varphi(I) \subset I$ then $\varphi \in \mathcal{D}_I$. 
\end{lemma}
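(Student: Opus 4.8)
The plan is to promote the one-sided inclusion $\varphi(I)\subset I$ to the equality $\varphi(I)=I$ by a Noetherian ascending chain argument. Two preliminary observations set this up. First, since $R$ is a complete Noetherian local ring, the Hilbert basis theorem for power series rings shows that $R[[\underline{x}]]=R[[x_1,\dots,x_n]]$ is Noetherian. Second, because $\varphi$ is an $R$-automorphism, its inverse $\varphi^{-1}$ is again an $R$-automorphism of $R[[\underline{x}]]$, and a ring automorphism sends ideals to ideals; in particular each $\varphi^{-n}(I)$ is an ideal of $R[[\underline{x}]]$.

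Applying $\varphi^{-1}$ to the inclusion $\varphi(I)\subset I$ gives $I\subset\varphi^{-1}(I)$, and applying $\varphi^{-1}$ to $\varphi^{-(n-1)}(I)\subset\varphi^{-n}(I)$ gives $\varphi^{-n}(I)\subset\varphi^{-(n+1)}(I)$; hence
\begin{equation*}
    I\subset\varphi^{-1}(I)\subset\varphi^{-2}(I)\subset\cdots
\end{equation*}
is an ascending chain of ideals in $R[[\underline{x}]]$. By Noetherianity it stabilizes, so $\varphi^{-N}(I)=\varphi^{-(N+1)}(I)$ for some $N$; applying the automorphism $\varphi^{N}$ to both sides yields $I=\varphi^{-1}(I)$, that is, $\varphi(I)=I$. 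Therefore $\varphi$ preserves $I$ and $\varphi\in\mathcal{D}_I$.

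I do not expect any genuine obstacle here: this is the familiar fact that an automorphism of a Noetherian ring mapping an ideal into itself must in fact map it onto itself. The only points to be careful with are that $\varphi$ is an \emph{automorphism} — used both so that $\varphi^{-1}$ exists and carries ideals to ideals, and so that one may apply $\varphi^{N}$ at the final step — and that $R$, hence $R[[\underline{x}]]$, is Noetherian, which is exactly the standing hypothesis of this section.
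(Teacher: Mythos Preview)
Your proof is correct and follows essentially the same approach as the paper: both use the ascending chain $I\subset\varphi^{-1}(I)\subset\varphi^{-2}(I)\subset\cdots$ together with Noetherianity of $R[[\underline{x}]]$. The paper phrases it as a contradiction (assuming $\varphi(I)\subsetneq I$ forces a strictly ascending chain), whereas you argue directly by stabilization and then apply $\varphi^N$; the underlying idea is the same.
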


\begin{proof}

    Assume towards contradicition that $\varphi(I) \subsetneq I$. Then since $\varphi$ is an isomorphism we have that $I \subsetneq \varphi^{-1}(I)$. Therefore, if we define inductively the sequence of ideals $I_1=I$ and $I_{k+1} = \varphi^{-1}(I_k)$ we can conclude that $I_k \subset I_{k+1}$ for every $k$. This is true since if $k$ is the smallest integer such that $I_{k}=I_{k+1}$ then, since $\varphi$ is an isomorphism, we can conclude that $I_{k-1}=\varphi(I_{k})=\varphi(I_{k+1})=I_k$, which contradicts the minimality of $k$. Also note that $I_k \neq R[[\underline{x}]]$ since $\varphi$ is an isomorphism.  Therefore we can conclude that $\{I_k\}_{k=1}^\infty$ forms a strictly increasing sequence of non trivial ideals, which contradicts the Noetherianity of $R[[\underline{x}]]$.  
\end{proof}

%Since $R[[\underline{x}]]$ then $I$ is a finitely generated module, and therefore $\frac{I}{\mathfrak{M}I}$ is a finite dimensional vector space over $\frac{R[[\underline{x}]]}{\mathfrak{M}}$. Since $\varphi(I) \subset I$ and since $\varphi$ is an automorphism, we can conclude that the induced map $\overline{\varphi} \colon \frac{I}{\mathfrak{M}I} \to \frac{I}{\mathfrak{M}I}$ is injective. Therefore, $\overline{\varphi}$ must be a bijection with $\overline{\varphi}(\frac{I}{\mathfrak{M}I})=\frac{I}{\mathfrak{M}I}$, and so by Nakayama's lemma we can conclude that $\varphi$ is surjective on $I$, i.e $\varphi(I)=I$, as desired

\begin{remark}
\textup{The assumption that $R[[\underline{x}]]$ in Noetherian in Lemma~\ref{lem:D_I_preserves}. For example, let $k[[x_1, x_2, \dots ]]$ be the ring of power series with countably many variables over a field $k$, let $\varphi$ be the automorphism of $k[[x_1, x_2, \dots ]]$ defined by $\varphi(x_i)=x_i+x_{i+1}$ for every $i$, and let $I=\langle x_2, x_3, \dots \rangle$. Then we have that $\varphi(I) \subsetneq I$ since $x_2 \notin \varphi(I)$. }
\end{remark}

\begin{lemma}\label{lem:W[[x]]_good_prop}
\begin{enumerate}
    \item \textup{(Implicit Function Theorem)}: Given a sequence of power series $f_1, \dots, f_m \in R[[\underline{x},y_1, \dots y_m]]$ such that $f_i(\underline{0})=0$ for every $i$ and 
    \begin{equation*}
    \det \left(
        \begin{bmatrix}
            \frac{\partial f_1}{\partial y_1}(\underline{0}) & \cdots & \frac{\partial f_1}{\partial y_m}(\underline{0})\\
            \vdots & & \vdots \\
            \frac{\partial f_m}{\partial y_1}(\underline{0}) & \cdots & \frac{\partial f_m}{\partial y_m}(\underline{0})
        \end{bmatrix}
        \right) \notin \mathfrak{m},
    \end{equation*}
    then we can find a sequence of power series $g_1, \dots, g_m \in \mathfrak{M} \subset R[[\underline{x}]]$ for which we have that $f_i\left(x_1, \dots, x_n, g_1, \dots g_m\right)=0$ for every $i$. 
    \item \textup{(Inverse Function Theorem)}: Let $f_1, \dots, f_n \in \mathfrak{M} \subset R[[\underline{x}]]$. Then the  Jacobian matrix $Df=\left[\frac{\partial f_i}{\partial x_j}\left(0\right)\right]_{i,j} \in \Mat_n\left(R\right)$ is an invertible matrix if and only if the unique $R-$homomorphism $\varphi \colon R[[\underline{x}]] \to R[[\underline{x}]]$ defined by $\varphi\left(x_i\right)=f_i$ defines an $R-$automorphism. 
    %\item (\textbf{Approximation Theorem}): Let $\left(f_1, \dots, f_N\right) \in \Vx^N$ and let $\{Y_c\}_{c=1}^\infty \subset \Vx^N$. Assume that there exists a monotonically increasing function $\beta \colon \mathbb{N} \to \mathbb{N}$ such that $f_i\left(Y_c\right) \in \maxV^{\beta\left(c\right)}$ for every $i$ and for every $c$. Then, there exists some $Y \in \Vx^N$ such that $Y - Y_c \in \maxV^c$ and $f_i\left(Y\right)=0$ for every $i$. 
\end{enumerate}
\end{lemma}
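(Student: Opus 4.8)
The plan is to reduce both statements to successive‑approximation arguments inside the $\mathfrak{M}$-adically complete ring $R[[\underline{x}]]$: part (1) is a direct contraction‑mapping construction of the implicit solution, and part (2) is obtained by linearizing an endomorphism modulo $\mathfrak{M}^2$ and then bootstrapping surjectivity up the $\mathfrak{M}$-adic filtration, using that the truncations $R[[\underline{x}]]/\mathfrak{M}^k$ have finite length.

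For part (1): the hypothesis says that the constant term of $\det\bigl[\tfrac{\partial f_i}{\partial y_j}(\underline{x},\underline{0})\bigr]_{i,j}$ lies outside $\mathfrak{m}$, hence is a unit of $R$, so the matrix $M(\underline{x}):=\bigl[\tfrac{\partial f_i}{\partial y_j}(\underline{x},\underline{0})\bigr]$ is invertible over $R[[\underline{x}]]$. Expanding $F=(f_1,\dots,f_m)$ in the $\underline{y}$-variables yields $F(\underline{x},\underline{y})=\Phi(\underline{x})+M(\underline{x})\underline{y}+Q(\underline{x},\underline{y})$ with $\Phi=(f_i(\underline{x},\underline{0}))\in\langle x_1,\dots,x_n\rangle^m$ (using $f_i(\underline{0})=0$) and each component of $Q$ in $\langle y_1,\dots,y_m\rangle^2$. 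Then solving $F(\underline{x},\underline{g})=0$ for $\underline{g}\in\mathfrak{M}^m$ is equivalent to the fixed‑point equation $\underline{g}=T(\underline{g})$, where $T(\underline{g}):=-M(\underline{x})^{-1}\bigl(\Phi+Q(\underline{x},\underline{g})\bigr)$; the substitution $y_i\mapsto g_i$ is legitimate precisely because $g_i\in\mathfrak{M}$. One checks $T(\mathfrak{M}^m)\subseteq\mathfrak{M}^m$ and that $T$ contracts the $\mathfrak{M}$-adic filtration: if $\underline{g}\equiv\underline{g}'\bmod\mathfrak{M}^k$ then $\underline{g}^\beta-(\underline{g}')^\beta\in\mathfrak{M}^{k+|\beta|-1}\subseteq\mathfrak{M}^{k+1}$ for $|\beta|\geq 2$ by telescoping, so $T(\underline{g})\equiv T(\underline{g}')\bmod\mathfrak{M}^{k+1}$. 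Iterating $T$ from $\underline{0}$ gives a Cauchy sequence converging by completeness to the desired $\underline{g}\in\mathfrak{M}^m$; uniqueness within $\mathfrak{M}^m$ follows since any solution is a fixed point of $T$ and two fixed points agree modulo every $\mathfrak{M}^k$.

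For part (2): write $\varphi(x_i)=f_i\in\mathfrak{M}$, so $\varphi(\mathfrak{M})\subseteq\mathfrak{M}$ and $\varphi$ induces $\bar\varphi$ on the $\kappa$-vector space $\mathfrak{M}/\mathfrak{M}^2\cong(\mathfrak{m}/\mathfrak{m}^2)\oplus\bigoplus_i\kappa\,\bar x_i$, which is finite dimensional since $R$ is Noetherian. As $\varphi$ fixes $R$, the matrix of $\bar\varphi$ is block upper triangular with the identity on the $\mathfrak{m}/\mathfrak{m}^2$ block and $\overline{Df}$ on the $\bar x$-block; hence $\bar\varphi$ is bijective iff $\overline{Df}$ is invertible over $\kappa$ iff $Df$ is invertible over $R$ (here $R$ is local). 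If $\varphi$ is an $R$-automorphism, then $\varphi(\mathfrak{M})=\mathfrak{M}$ (a unit cannot be sent into $\mathfrak{M}$, so $\varphi^{-1}(\mathfrak{M})\subseteq\mathfrak{M}$ as well), so $\bar\varphi$ is surjective, hence bijective, giving invertibility of $Df$. Conversely, if $Df$ is invertible then $\bar\varphi$ is bijective, and one proves by induction on $k$ that $R[[\underline{x}]]=\varphi(R[[\underline{x}]])+\mathfrak{M}^k$; the inductive step is the observation that a product of $k$ generators of $\mathfrak{M}$ is congruent modulo $\mathfrak{M}^{k+1}$ to the $\varphi$-image of the product of chosen preimages under $\bar\varphi$. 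Thus $\varphi$ is surjective modulo every $\mathfrak{M}^k$, and since each $R[[\underline{x}]]/\mathfrak{M}^k$ is an $R$-module of finite length (annihilated by $\mathfrak{m}^k$ and finitely generated over the Artinian ring $R/\mathfrak{m}^k$), the induced $\varphi_k$ is bijective; passing to the inverse limit, using $\mathfrak{M}$-adic completeness, shows $\varphi$ is bijective, hence an $R$-automorphism.

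The telescoping estimates and the verification that $T$ preserves $\mathfrak{M}^m$ are routine; the genuine obstacle is the converse direction of part (2) — manufacturing an inverse of $\varphi$ from invertibility of $Df$. The naive attempt to deduce it from part (1) by solving $f_i(\underline{h})=x_i$ fails because $f_i(\underline{0})$ may be a nonzero element of $\mathfrak{m}$, which is exactly the new feature absent over a field; replacing the field‑style argument by one that works with the $\mathfrak{M}$-adic filtration and the finite‑length quotients $R[[\underline{x}]]/\mathfrak{M}^k$ circumvents this difficulty.
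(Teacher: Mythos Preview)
Your proof is correct but differs from the paper's in both parts. For part~(1) the paper invokes the Henselian property of the complete local ring $R[[\underline{x}]]$ and cites the multi-dimensional Hensel lemma to lift the trivial solution over the residue field $\kappa$, whereas you give a self-contained contraction-mapping construction; your route is more elementary and avoids external references, at the cost of redoing what Hensel's lemma already packages. For part~(2) the paper simply applies part~(1) to $G_i(\underline{x},\underline{y}):=f_i(\underline{y})-x_i$ to produce an inverse, while you run a separate argument through the finite-length quotients $R[[\underline{x}]]/\mathfrak{M}^k$, using that a surjective endomorphism of a finite-length module is bijective.

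Your closing remark overstates the obstruction to the paper's route. You are right that $G_i(\underline{0})=f_i(\underline{0})$ may lie in $\mathfrak{m}\setminus\{0\}$, so part~(1) \emph{as literally stated} does not apply --- this is a genuine wrinkle in the paper's write-up. But your own contraction argument for part~(1) only uses that $\Phi_i=f_i(\underline{x},\underline{0})$ lies in $\mathfrak{M}$ (so that $T$ preserves $\mathfrak{M}^m$), and this already holds under the weaker hypothesis $f_i(\underline{0})\in\mathfrak{m}$; you never actually need $\Phi_i\in\langle x_1,\dots,x_n\rangle$. The same is true of the paper's Hensel-lemma proof. So the deduction of (2) from (1) does not ``fail'' --- it goes through after this costless strengthening of~(1), and is shorter than your filtration argument. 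That said, your approach is perfectly valid and has the merit of making explicit where Noetherianity of $R$ enters.
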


\begin{proof}
    \begin{enumerate}
        \item Since $R[[\underline{x}]]$ is a complete local ring, then by Lemma 10.153.9. in~\cite{stacks-project} (or by corollary 1.9 in~\cite{leuschke2012cohen}) it must be Henselian. Thus, given $f_1, \dots, f_m \in R[[\underline{x},y_1, \dots, y_m ]]=R[[\underline{x}]][[y_1, \dots, y_m]]$ such that satisfy the condition above, if we choose to define $\overline{f_i} = f_i \mod \mathfrak{M} \in \kappa[[y_1, \dots, y_m]]$ for every $i$ we must have that $\overline{f_i}\in \langle y_1, \dots, y_m \rangle \subset \kappa[[\underline{y}]]$ and that the Jacobian matrix of $(\overline{f_1}, \dots, \overline{f_m})$ is an invertible matrix over $\kappa$. Therefore, by the Implicit function theorem over $\kappa$ and by the Multi-dimensional version of Hensel's lemma (see, for example, Section 4 in~\cite{kuhlmann2000valuation}, Section 7 in~\cite{ziegler1978model}, or Section 1.2 of Chapter I in~\cite{greuel2007introduction}), we can find some unique $\overline{g_1}, \dots, \overline{g_m}$ such that $\overline{f_i}\left(x_1, \dots, x_n, \overline{g_1}, \dots, \overline{g_m}\right)=0$ for every $i$ that uniquely lift to some $g_1, \dots, g_m \in R[[\underline{x},y_1, \dots y_m]]$ such that $f_i\left(x_1, \dots, x_n, g_1, \dots, g_m\right)=0$ for every $i$, as desired.
        \item If $\varphi$ is an automorphism then there exists some $\psi$ such that $\varphi \circ \psi$ is the identity, and so the Jacobian matrix of $\varphi$ would be invertible by the chain rule. For the other direction, in order to construct for $\varphi$ an inverse, it is enough to find for every $i$ some $h_i$ such that $f_i(h_1, \dots, h_n)=x_i$ for every $i$, and this would follow from the first item. 
    \end{enumerate}
\end{proof}

\begin{lemma}[Lifting Lemma]\label{lem:lifting}
     Let $I, J \subset  \mathfrak{M}$ be two ideals and assume that $\varphi \colon \frac{R[[\underline{x}]]}{I} \to \frac{R[[\underline{x}]]}{J}$ is some (local) $R-$homomorphism. Then there exists some $R-$homomorphism $\Phi \colon R[[\underline{x}]] \to R[[\underline{x}]]$ such that $\Phi(I) \subset J$ and the following diagram commutes:
    \begin{equation*}
        \begin{tikzcd}
    R[[\underline{x}]] \arrow[r, "\Phi"] \arrow[d]
    & R[[\underline{x}]] \arrow[d, ] \\
    \frac{R[[\underline{x}]]}{I} \arrow[r,  "\varphi" ]
    & |[]| \frac{R[[\underline{x}]]}{J}.
    \end{tikzcd}
    \end{equation*}
    In addition, if we have that $\varphi$ is an $R-$isomorphism then in fact we can choose the $R-$homomorphism $\Phi$ to be an $R-$isomorphism as well with $\Phi(I)=J$. 
\end{lemma}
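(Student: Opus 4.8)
The plan is to reduce the statement to the universal property of the formal power series ring over a complete local ring, together with the Inverse Function Theorem (Lemma~\ref{lem:W[[x]]_good_prop}(2)). Set $A=R[[\underline x]]/I$ and $B=R[[\underline x]]/J$, with quotient maps $\pi_I,\pi_J$ and maximal ideals $\mathfrak m_A=\mathfrak M/I$, $\mathfrak m_B=\mathfrak M/J$; these are again complete Noetherian local rings since $I,J\subseteq\mathfrak M$. Being a \emph{local} $R$-homomorphism of such rings, $\varphi$ carries $\mathfrak m_A^k$ into $\mathfrak m_B^k$, hence is continuous, and $\overline g_j:=\varphi(\pi_I(x_j))\in\mathfrak m_B$; choose lifts $g_j\in\mathfrak M\subseteq R[[\underline x]]$. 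The universal property of $R[[\underline x]]$ produces a unique continuous $R$-homomorphism $\Phi\colon R[[\underline x]]\to R[[\underline x]]$ with $\Phi(x_j)=g_j$. Since polynomials are $\mathfrak M$-adically dense in $R[[\underline x]]$, any two continuous $R$-homomorphisms out of $R[[\underline x]]$ that agree on every $x_j$ coincide; applied to $\pi_J\circ\Phi$ and $\varphi\circ\pi_I$ (both of which send $x_j\mapsto\overline g_j$) this gives the commutativity of the diagram, and evaluating on $h\in I$ yields $\pi_J(\Phi(h))=\varphi(\pi_I(h))=0$, i.e.\ $\Phi(I)\subseteq J$. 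This proves the first assertion.

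For the refinement, suppose $\varphi$ is an isomorphism; we must choose the $g_j$ so that $\Phi$ is an $R$-automorphism. By Lemma~\ref{lem:W[[x]]_good_prop}(2), $\Phi$ is an automorphism iff $\bigl[\partial_k(g_j)(\underline 0)\bigr]\in\Mat_n(R)$ is invertible, equivalently (as $R$ is local) iff its reduction modulo $\mathfrak m$ is invertible over $\kappa$; and that reduction is exactly the ``$x$-block'' of the $\kappa$-linear endomorphism $L$ induced by $\Phi$ on the space $W:=\mathfrak M/\mathfrak M^2\cong(\mathfrak m/\mathfrak m^2)\oplus\bigoplus_j\kappa\,\overline{x_j}$, which is finite-dimensional as $R$ is Noetherian. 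So it suffices to make $L$ bijective. The admissible $L$ are governed by two constraints: (i) $L$ is the identity on $\mathfrak m/\mathfrak m^2$, forced by $R$-linearity of $\Phi$; (ii) writing $\bar I,\bar J\subseteq W$ for the images of $I,J$, $q_{\bar I}\colon W\to W/\bar I$, $q_{\bar J}\colon W\to W/\bar J$ for the projections, and $\bar\varphi\colon W/\bar I\to W/\bar J$ for the isomorphism induced by $\varphi$, one has $q_{\bar J}\circ L=\bar\varphi\circ q_{\bar I}$, forced by the commutativity already shown — and (ii) automatically forces $L(\bar I)\subseteq\bar J$. Conversely, \emph{any} $\kappa$-linear $L$ satisfying (i)--(ii) comes from an admissible choice of the $g_j$: the two requirements on $g_j$, namely $g_j\equiv\overline g_j\pmod J$ and ``$g_j$ has the linear part dictated by $L$ $\pmod{\mathfrak M^2}$'', are compatible since by (ii) they agree $\pmod{J+\mathfrak M^2}$, hence have a common solution.

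The statement thus reduces to a finite-dimensional linear-algebra fact: given $\bar\varphi\colon W/\bar I\xrightarrow{\sim}W/\bar J$ compatible with the natural maps from $\mathfrak m/\mathfrak m^2$, there is a linear automorphism $L$ of $W$ fixing $\mathfrak m/\mathfrak m^2$ pointwise with $q_{\bar J}\circ L=\bar\varphi\circ q_{\bar I}$. One proves this by noting $\dim\bar I=\dim\bar J$ (since $\bar\varphi$ is an isomorphism), choosing complements to obtain a first lift $L_0$ of $\bar\varphi$ that fixes $\mathfrak m/\mathfrak m^2$, and then correcting the images $L_0(x_j)$ by suitable elements of $\bar J$ — which does not alter $q_{\bar J}\circ L_0$ — so that the $x$-block becomes invertible; the correction succeeds because surjectivity of $\bar\varphi\circ q_{\bar I}$ forces the $L_0(x_j)$ together with $\bar J$ to span $W$ modulo $\mathfrak m/\mathfrak m^2$. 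Once $\Phi$ is an automorphism, $\Phi(I)=J$ follows from the commutative square and injectivity of $\varphi$: $\Phi^{-1}(J)=\ker(\pi_J\circ\Phi)=\ker(\varphi\circ\pi_I)=\ker\pi_I=I$.

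The one genuine obstacle is this linear-algebra step in the isomorphism case: because the lifts $g_j$ are free only modulo $J$, when $J\not\subseteq\mathfrak M^2$ one cannot simply prescribe their linear parts, and one must verify that the freedom available inside $\bar J$ is enough to invert the $x$-block of $L$. (If $I,J\subseteq\mathfrak M^2$ the point is trivial: then $\mathfrak m_B/\mathfrak m_B^2=W$, so surjectivity of $\pi_J\circ\Phi$ already forces the induced endomorphism of $W$ to be onto, hence bijective, for \emph{every} choice of lift.) Everything else is a formal consequence of completeness, Noetherianity, and the universal property of $R[[\underline x]]$.
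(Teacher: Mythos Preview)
Your proof is correct and follows the same overall strategy as the paper: lift $\varphi$ arbitrarily on the $x_j$ for the first claim, and for the isomorphism refinement reduce via the Inverse Function Theorem (Lemma~\ref{lem:W[[x]]_good_prop}) to a linear-algebra problem on $\mathfrak{M}/\mathfrak{M}^2$. The paper organizes the second part differently---rather than fixing a single lift $\Phi$ and adjusting the $g_j$ by elements of $\bar J$ to invert the $x$-block, it factors $\Phi=\Theta\circ\Psi^{-1}$, where $\Psi$ and $\Theta$ are automorphisms sending the $x_i$ to bases of $\mathfrak{M}/\mathfrak{M}^2$ adapted to $I$ and to $J$ respectively---but the underlying content is the same. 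Your treatment is in fact the more careful of the two: the paper writes its basis as $g_1,\dots,g_{n+1}$ and sets $\Psi(x_i)=g_i$, tacitly assuming $\dim_\kappa(\mathfrak m/\mathfrak m^2)=1$ (the DVR case that drives the rest of the paper) and leaving the role of the $\mathfrak m/\mathfrak m^2$ summand implicit, whereas you handle the block decomposition $W\cong(\mathfrak m/\mathfrak m^2)\oplus\bigoplus_j\kappa\,\overline{x_j}$ and the compatibility constraint $q_{\bar J}\circ L=\bar\varphi\circ q_{\bar I}$ explicitly for arbitrary complete Noetherian local $R$. Your final deduction of $\Phi(I)=J$ from injectivity of $\varphi$ is also cleaner than the paper's appeal to a Noetherianity/height argument.
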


\begin{proof}
    Denote by $\overline{x_i}$ to be the image of $x_i$ in $\frac{R[[\underline{x}]]}{I}$, and choose some $f_i \in R[[\underline{x}]]$ to be any element in the preimage of $\varphi(\overline{x_i})$ under the projection $R[[\underline{x}]] \to \frac{R[[\underline{x}]]}{J}$. Then we can define $\Phi$ by setting $\Phi(x_i) = f_i$ for every $i$.\\
    
    Now, assume in addition that $\varphi$ is an automorphism. In this case we need to perform a specific choice of $f_i$. Let $g_1, \dots, g_{n+1} \in R[[\underline{x}]]$ induce a basis for $\frac{\mathfrak{M}}{\mathfrak{M}^2}$ over $\kappa$ such that $g_1, \dots, g_l$ for a basis for $\frac{\mathfrak{M}}{I+\mathfrak{M}^2}$ over $\kappa$ and $g_{l+1}, \dots, g_{n+1}\in I$. Note that this is possible since we have a short exact sequence 
    \begin{equation*}
        0 \to \frac{I+\mathfrak{M}^2}{\mathfrak{M}^2} \to \frac{\mathfrak{M}}{\mathfrak{M}^2} \to \frac{\mathfrak{M}}{I+\mathfrak{M}^2} \to 0.
    \end{equation*}
    Note that by Lemma~\ref{lem:W[[x]]_good_prop} we have that the $R-$homomorphism defined by $\Psi(x_i)=g_i$ is an automorphism. \\ 

    Denote $\overline{g_i}=g_i \mod I$ and $\overline{h}_i=\varphi(\overline{g_i})$. Note that $\overline{h}_i=0$ and $\overline{g}_i=0$ for every $i > l$. In addition, $\overline{g_1}, \dots, \overline{g_l}$ form a basis for $\frac{\mathfrak{M}}{I+\mathfrak{M}^2}$ over $\kappa$. Since $\varphi$ is an isomorphism, then we have that $\overline{h_1}, \dots, \overline{h_l}$ form a basis for $\frac{\mathfrak{M}}{J+\mathfrak{M}^2}$ over $\kappa$. Lift $\overline{h_1}, \dots, \overline{h_l}$ from $\frac{R[[\underline{x}]]}{J}$ to some $h_1, \dots, h_l \in R[[\underline{x}]]$, and choose some $h_{l+1}, \dots h_{n+1}$ such that $h_1, \dots, h_{n+1}$ form a basis for $\frac{\mathfrak{M}}{\mathfrak{M}^2}$ over $\kappa$. \\

    Note that by Lemma~\ref{lem:W[[x]]_good_prop} we have that the $R-$homomorphism defined by $\Theta(x_i)=h_i$ is an automorphism. Therefore, if we set $\Phi=\Theta \circ \Psi^{-1}$, then $\Phi$ is an automorphism that satisfies $\Phi(g_i)=h_i$, and so $\Phi$ is a lift of $\varphi$ that is an isomorphism as well. The fact that $\varphi(I)=J$ follows from a similar argument to that of Lemma~\ref{lem:D_I_preserves} (noting that since $\varphi$ is an isomorphism then $\dim\left(\frac{R[[\underline{x}]]}{I}\right) = \dim\left(\frac{R[[\underline{x}]]}{J}\right)$, and so $I$ and $J$ have the same height). 
\end{proof}

\begin{remark}
    \textup{Note that Lemma~\ref{lem:W[[x]]_good_prop} and Lemma~\ref{lem:lifting} give us analogues to some classical results in the field case. The first item of Lemma~\ref{lem:W[[x]]_good_prop} is a generalized version of~\cite{tougeron1968ideaux} (for more details, see Section 3.1 in~\cite{hauser2013artin}, Section 3 in~\cite{belitskii2016strong}, or~\cite{kerner2024orbits}) and the second item is a version of Corollary 3.3.7 in~\cite{de2013local}. Lemma~\ref{lem:lifting} is a generalized version of Lemma 1.14 and Lemma 1.23 in~\cite{greuel2007introduction} and of Lemma 1.3.5. in~\cite{boubakri2009hypersurface}.}
\end{remark}

\begin{lemma}\label{lem:equi_cont}
    Let $f$  and let $\{f_i\}_{i=1}^\infty \subset R[[\underline{x}]]$ be a sequence such that $f_i \to f$ (in the $\mathfrak{M}-$adic topology) with $f_i \equivk g$ for every $i$ for some $g$, then $f \equivk g$.
\end{lemma}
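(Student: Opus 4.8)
\emph{Proof plan.} We may assume $g\neq 0$, the case $g=0$ being trivial (it forces every $f_i$, and hence $f$, to vanish). The idea is to convert the relations $f_i\equivk g$ into honest automorphisms via the Lifting Lemma (Lemma~\ref{lem:lifting}) and then to exploit how strong $\mathfrak{M}$-adic convergence is. By Lemma~\ref{lem:lifting}, each $R$-algebra isomorphism $\frac{R[[\underline{x}]]}{\langle f_i\rangle}\cong\frac{R[[\underline{x}]]}{\langle g\rangle}$ lifts to an $R$-automorphism $\Phi_i$ of $R[[\underline{x}]]$ with $\Phi_i(\langle f_i\rangle)=\langle g\rangle$, equivalently (since $R[[\underline{x}]]$ is local) $\Phi_i(f_i)=u_ig$ for some unit $u_i$. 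Being a local $R$-automorphism, $\Phi_i$ preserves every $\mathfrak{M}^N$; combined with $f_i\to f$ this gives $\Phi_i(f)=u_ig+\varepsilon_i$ with $\varepsilon_i\to 0$. Moreover, for $i$ large (depending on $N$) we have $f_i\equiv f\pmod{\mathfrak{M}^N}$, hence $\langle f_i\rangle+\mathfrak{M}^N=\langle f\rangle+\mathfrak{M}^N$, so $\Phi_i$ descends to an $R$-algebra isomorphism
\[
\frac{R[[\underline{x}]]}{\langle f\rangle+\mathfrak{M}^N}\;\cong\;\frac{R[[\underline{x}]]}{\langle g\rangle+\mathfrak{M}^N}\qquad\text{for every }N.
\]

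In all the applications of this lemma, $g$ is finitely determined with respect to $\mathfrak{M}$ --- for instance $g=x_1$, which is $1$-determined, or $g=\pi^2+x_1^2+\cdots+x_n^2$, which is finitely determined by Theorem~\ref{thm_App} --- and there the conclusion follows at once. Indeed, if $g$ is $k$-determined then so is every member of its $\equivk$-class: writing $h=w\cdot\Phi(g)$ with $w$ a unit and $\Phi$ an $R$-automorphism (possible by Lemma~\ref{lem:lifting}), any $h'$ with $h'-h\in\mathfrak{M}^{k+1}$ satisfies $\Phi^{-1}(w^{-1}h')-g=\Phi^{-1}(w^{-1}(h'-h))\in\mathfrak{M}^{k+1}$, whence $h'\equivk\Phi^{-1}(w^{-1}h')\equivk g\equivk h$. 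In particular every $f_i$ is $k$-determined, so choosing $i$ with $f-f_i\in\mathfrak{M}^{k+1}$ yields $f\equivk f_i\equivk g$.

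For the statement in full generality one must instead glue the finite-level isomorphisms of the first paragraph into one compatible system, which then assembles into the desired isomorphism of completions, since $\frac{R[[\underline{x}]]}{\langle f\rangle}=\varprojlim_N\frac{R[[\underline{x}]]}{\langle f\rangle+\mathfrak{M}^N}$ and likewise for $g$, and any isomorphism of these is again lifted to an automorphism witnessing $f\equivk g$ by Lemma~\ref{lem:lifting}. This gluing is the main obstacle: the isomorphisms coming from the different $\Phi_i$ need not be compatible across $N$, and the automorphism group of $R[[\underline{x}]]$ is not compact in general (its linear part ranges over $\mathrm{GL}_n(R)$), so one cannot simply pass to a convergent subsequence of the $\Phi_i$ and invoke Lemma~\ref{lem:app_limit}. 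I would resolve it by a Mittag-Leffler argument: letting $S_N$ be the non-empty set of $R$-algebra isomorphisms $\frac{R[[\underline{x}]]}{\langle f\rangle+\mathfrak{M}^N}\cong\frac{R[[\underline{x}]]}{\langle g\rangle+\mathfrak{M}^N}$ with the truncation maps $S_{N+1}\to S_N$, one shows the images of these maps eventually stabilize, so that $\varprojlim_N S_N\neq\emptyset$, and then promotes an element of this limit to an actual $R$-automorphism using the completeness of $R[[\underline{x}]]$, in the spirit of Lemma~\ref{lem:app_const}.
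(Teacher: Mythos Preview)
Your reduction to finite-level isomorphisms $\frac{R[[\underline{x}]]}{\langle f\rangle+\mathfrak{M}^N}\cong\frac{R[[\underline{x}]]}{\langle g\rangle+\mathfrak{M}^N}$ is correct, and your treatment of the finitely determined case is clean. However, two points deserve attention.

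First, your claim that ``in all the applications of this lemma, $g$ is finitely determined'' is not accurate. The lemma is invoked in the proof of the Splitting Lemma (Proposition~\ref{prop:splitting}), where one has $F_l=\varphi_l(\tilde{f})\to F$ with $F_l\equivr\tilde{f}$; here $\tilde{f}$ is an arbitrary order-$2$ element and is not assumed finitely determined. So the general case is genuinely needed.

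Second, and more seriously, the general case is where the actual content lies, and your Mittag-Leffler sketch has a real gap. You write ``one shows the images of these maps eventually stabilize,'' but you give no mechanism for this stabilization, and none is available by soft arguments: the sets $S_N$ are not finite (the linear part alone ranges over $\mathrm{GL}_n(R)$ with $R$ infinite), and the inverse system of isomorphism sets has no reason to satisfy Mittag-Leffler a priori. In fact, the statement ``$A/\mathfrak{M}^N\cong B/\mathfrak{M}^N$ for all $N$ implies $A\cong B$'' for complete local $R$-algebras is itself a nontrivial theorem whose standard proof \emph{is} Artin approximation. The paper resolves exactly this point by invoking Artin's strong approximation theorem (as in R\'ond \cite{rond2015artin}): one packages the relation $f=u\cdot g(\varphi_1,\dots,\varphi_n)$ as a system of power-series equations in the unknowns $u,\varphi_1,\dots,\varphi_n$, observes that the $(u_i,\varphi_i)$ furnish approximate solutions to arbitrary $\mathfrak{M}$-adic order, and concludes that an exact solution exists. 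Your approach reaches the doorstep of this argument but does not supply the key that opens it.
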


\begin{proof}
    For every $i$  there exists an automorphism $\varphi_i$ of $R[[\underline{x}]]$ and a unit $u_i$ such that $f_i=u_ig \circ  \varphi_i$. Therefore we have that for every $N>0$ there exists some $\beta(N)$ such that $f - u_i g \circ \varphi_i \in \mathfrak{M}^N$ for every $i> \beta(N)$. Therefore, if we define $F \in R[[x_1, \dots, x_n, w_1, \dots, w_n,t]]$ by
    \begin{equation*}
        F(x_1, \dots, x_n, w_1, \dots, w_n, t)=f (x_1, \dots, x_n)-g(w_1, \dots, w_n),
    \end{equation*}
     then we have that $F(\underline{x}, \varphi_i(\underline{x}),u_i)=f-u_i g \circ \varphi_i \to 0$.  So by the Artin's strong approximation theorem over local rings (see, for example, Theorem 3.3. in~\cite{rond2015artin}, which in turn cites~\cite{schappacher1980some, schappacher1983inegalite}) we have that there exists some automorphism $\varphi$ and some unit $u$ such that $u_i\varphi_i(\underline{x})-u\varphi(\underline{x}) \in \mathfrak{M}^i$ and $F(\underline{x}, \varphi(\underline{x}),u)=0$, which gives us that $f=ug(\varphi)$. 
\end{proof}

\begin{remark}
\textup{Artin's approximation theorem that we use in the proof of Lemma~\ref{lem:equi_cont}, which is based upon Artin's original result in~\cite{artin1969algebraic}, is an essential result in the theory of isolated singularities over fields. For more information and details, see~\cite{belitskii2020approximation, hauser2013artin}.  }
\end{remark}

\begin{lemma}\label{lem:aut_der}
    Let $f \in R[[\underline{x}]]$, let $\varphi$ be an $R-$automorphism of $R[[\underline{x}]]$, and let $u \in R[[\underline{x}]]$ be a unit. Then $J(uf)+\langle uf \rangle = J(f)+\langle f \rangle$ and $\varphi(J(f))=J(\varphi(f))$. 
\end{lemma}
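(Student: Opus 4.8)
The plan is to treat the two assertions separately; each reduces to a short computation with the Leibniz rule and the formal chain rule, the only care needed being the bookkeeping of ideals.

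For $J(uf)+\langle uf\rangle = J(f)+\langle f\rangle$ I would use only that $u$ is a unit. Then $\langle uf\rangle=\langle f\rangle$, so it suffices to compare Jacobian ideals. From $\partial_i(uf)=\partial_i(u)\,f+u\,\partial_i(f)$ we read off $\partial_i(uf)\in\langle f\rangle+J(f)$ for every $i$, hence $J(uf)+\langle uf\rangle\subseteq J(f)+\langle f\rangle$. For the reverse inclusion, rearrange the same identity to $u\,\partial_i(f)=\partial_i(uf)-\partial_i(u)\,f$; the right-hand side lies in $J(uf)+\langle uf\rangle$, and multiplying by $u^{-1}$ gives $\partial_i(f)\in J(uf)+\langle uf\rangle$. (Equivalently, applying the inclusion just proved with $u^{-1}$ in place of $u$ and $uf$ in place of $f$ yields the reverse inclusion immediately.)

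For $\varphi(J(f))=J(\varphi(f))$, the first point to record is that $R[[\underline{x}]]$ is local with maximal ideal $\mathfrak{M}$, so the $R$-automorphism $\varphi$ satisfies $\varphi(\mathfrak{M})=\mathfrak{M}$; in particular $\varphi(x_i)\in\mathfrak{M}$, $\varphi$ is continuous in the $\mathfrak{M}$-adic topology, and $\varphi(f)$ is the honest substitution $f(\varphi(x_1),\dots,\varphi(x_n))$. The formal chain rule then gives $\partial_j(\varphi(f))=\sum_{i=1}^n \varphi(\partial_i(f))\cdot\partial_j(\varphi(x_i))$ for every $j$, so $\partial_j(\varphi(f))\in\varphi(J(f))$ and therefore $J(\varphi(f))\subseteq\varphi(J(f))$. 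To get equality I would not argue directly that the Jacobian matrix $[\partial_j(\varphi(x_i))]_{i,j}$ is invertible over $R[[\underline{x}]]$ (although this also works, via Lemma~\ref{lem:W[[x]]_good_prop} together with the fact that a matrix over the local ring $R[[\underline{x}]]$ is invertible iff its reduction mod $\mathfrak{M}$ is); instead I would apply the inclusion just proved to the automorphism $\varphi^{-1}$ and the element $\varphi(f)$, obtaining $J(f)=J(\varphi^{-1}(\varphi(f)))\subseteq\varphi^{-1}(J(\varphi(f)))$, i.e. $\varphi(J(f))\subseteq J(\varphi(f))$, which closes the argument. The only step here that is not pure ideal manipulation — and hence the main (mild) obstacle — is justifying the formal chain rule for the substitution homomorphism $\varphi$, which is standard for power series but genuinely uses the continuity of $\varphi$, i.e. the fact that $\varphi(x_i)\in\mathfrak{M}$.
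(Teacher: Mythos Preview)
Your proof is correct and essentially matches the paper's argument. The only (cosmetic) difference is in the reverse inclusion for $\varphi(J(f))=J(\varphi(f))$: the paper appeals to Lemma~\ref{lem:W[[x]]_good_prop} to invert the Jacobian matrix $D\varphi$ and multiply through, whereas you obtain the same inclusion by applying the one-sided containment to $\varphi^{-1}$ and $\varphi(f)$ --- an equally valid and slightly slicker route that avoids invoking invertibility of $D\varphi$.
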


\begin{proof}
    First, by the product rule we have that $\partial_i(uf)=f\partial_i(u)+u\partial_i(f)$, which directly gives us that $J(uf)+\langle uf \rangle = J(f)+\langle f \rangle$. Now, given an $R-$automorpshim $\varphi$, then by the chain rule we have an equality of vectors
    \begin{equation*}
        (\partial_1(\varphi(f)), \dots, \partial_n(\varphi(f)))= (\partial_1(f), \dots, \partial_n(f)) \cdot D\varphi,
    \end{equation*}
    where $D\varphi$ is the Jacobian matrix of $f$ (as in Lemma~\ref{lem:W[[x]]_good_prop}). Therefore we can conclude that $J(\varphi(f)) \subset \varphi(J(f))$. By Lemma~\ref{lem:W[[x]]_good_prop} we have that $D\varphi$ is invertible as $\varphi$ is an automorphism. Therefore we have an equality of vectors $ (\partial_1(\varphi(f)), \dots, \partial_n(\varphi(f))) \cdot D\varphi^{-1}= (\partial_1(f), \dots, \partial_n(f))$ from which we can conclude that $\varphi(J(f)) \subset J(\varphi(f))$, and the result follows. 
\end{proof}

\begin{remark}
    \textup{The proof of Lemma~\ref{lem:aut_der} is similar to that of Lemma 1.2.7. in~\cite{boubakri2009hypersurface} and to that of Lemma 2.10. in~\cite{greuel2007introduction}. }
\end{remark}

\begin{lemma}\label{lem:D_Iclosed}
    Let $I \subset R[[\underline{x}]]$ be an ideal. Then $\mathcal{D}_I$ is closed, i.e. if $\{\varphi_j\}_{j=1}^\infty \subset \mathcal{D}_I$ such that for every $0 \leq i \leq n$ we have that $\lim_{j \to \infty} \varphi_j(x_i)=a_i$, then the $R-$automorphism $\varphi$ of $R[[\underline{x}]]$ defined by $\varphi(x_i)=a_i$ for every $i$ is in $\mathcal{D}_I$. 
\end{lemma}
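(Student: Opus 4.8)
The plan is to show that the limit automorphism $\varphi$ satisfies $\varphi(I) \subset I$, so that the conclusion $\varphi \in \mathcal{D}_I$ follows immediately from Lemma~\ref{lem:D_I_preserves}. So the real content is: given that each $\varphi_j$ preserves $I$ and $\varphi_j(x_i) \to a_i = \varphi(x_i)$ for every $i$, deduce $\varphi(I) \subset I$. First I would fix an arbitrary $h \in I$, write $h = \sum_\alpha c_\alpha \underline{x}^\alpha$, and aim to show $\varphi(h) \in I$. Since $\varphi_j(h) = \sum_\alpha c_\alpha \varphi_j(\underline{x})^\alpha \in I$ for every $j$ (as $\varphi_j \in \mathcal{D}_I$), it suffices to show $\varphi_j(h) \to \varphi(h)$ in the $\mathfrak{M}$-adic topology and that $I$ is closed in $R[[\underline{x}]]$; a closed set containing all the $\varphi_j(h)$ contains their limit $\varphi(h)$.

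The two ingredients are therefore: (a) $I$ is $\mathfrak{M}$-adically closed, and (b) $\varphi_j(h) \to \varphi(h)$. For (a), $R[[\underline{x}]]$ is a complete Noetherian local ring, hence $I$ is automatically closed in the $\mathfrak{M}$-adic topology (Krull intersection: $\bigcap_k (I + \mathfrak{M}^k) = I$ in a Noetherian local ring); I would cite this standard fact. For (b), I would argue exactly as in Lemma~\ref{lem:app_limit}: since $\varphi_j(x_i) \to \varphi(x_i)$ for every $i$, for any $N$ there is some $L$ with $\varphi(x_i) - \varphi_j(x_i) \in \mathfrak{M}^N$ for all $i$ and all $j \geq L$; expanding $h = \sum_\alpha c_\alpha \underline{x}^\alpha$ gives $\varphi(h) - \varphi_j(h) = \sum_\alpha c_\alpha(\varphi(\underline{x})^\alpha - \varphi_j(\underline{x})^\alpha) \in \mathfrak{M}^N$ for $j \geq L$, since each factor $\varphi(\underline{x})^\alpha - \varphi_j(\underline{x})^\alpha$ lies in $\mathfrak{M}^N$ (a telescoping/product argument: $a^\alpha - b^\alpha$ is a combination of terms each containing a factor $a_i - b_i$). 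Hence $\varphi_j(h) \to \varphi(h)$.

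I would need to check one implicit point: that $\varphi$ as defined really is an $R$-automorphism, but this is already part of the hypothesis of the lemma, so I may take it for granted (alternatively it follows from Lemma~\ref{lem:W[[x]]_good_prop}(2) together with the observation that the Jacobian matrices $D\varphi_j$ converge entrywise to $D\varphi$ and invertibility is detectable mod $\mathfrak{m}$, but this is not needed here). Putting the pieces together: $\varphi(h) = \lim_j \varphi_j(h)$ is a limit of elements of $I$, $I$ is closed, so $\varphi(h) \in I$; as $h \in I$ was arbitrary, $\varphi(I) \subset I$, and Lemma~\ref{lem:D_I_preserves} gives $\varphi \in \mathcal{D}_I$. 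I do not anticipate a serious obstacle here — the only thing to be careful about is invoking the right form of "$I$ is closed" and not accidentally needing $\varphi_j \to \varphi$ uniformly in some stronger sense than coordinate-wise $\mathfrak{M}$-adic convergence, which Lemma~\ref{lem:app_limit} already shows is enough.
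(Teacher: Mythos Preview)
Your proposal is correct and follows essentially the same approach as the paper: show $\varphi_j(h) \to \varphi(h)$ for $h \in I$ (the paper does this for a finite set of generators of $I$, you for arbitrary $h$, which amounts to the same thing), invoke closedness of $I$ in the $\mathfrak{M}$-adic topology to get $\varphi(I) \subset I$, then apply Lemma~\ref{lem:D_I_preserves}. The only difference is that the paper explicitly verifies that $\varphi$ is an automorphism via the Jacobian criterion (Lemma~\ref{lem:W[[x]]_good_prop}) rather than taking it as part of the hypothesis, but you correctly note this alternative as well.
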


\begin{proof}
    First, note that $\varphi$ is an automorphism since by Lemma~\ref{lem:W[[x]]_good_prop} we have that the Jacobian matrix $D\varphi_j$ is invertible for every $j$, and since we have that $D\varphi_j \to D\varphi$ as $j \to \infty$ (in the $\mathfrak{M}-$adic topology), we have that $D\varphi$ is invertible as well. Now, if $I=\langle a_1, \dots, a_s \rangle$ then by Lemma~\ref{lem:D_I_preserves} the sequence $\varphi_j(a_1), \dots, \varphi_j(a_s)$ is a generating sequence for $I$ for every $j$. Therefore, since $I$ is closed in the $\mathfrak{M}-$adic topology we must have that $\varphi(a_1), \dots, \varphi(a_s) \in I$. Since $\varphi$ is an automorphism, then again by Lemma~\ref{lem:D_I_preserves} we must in fact have that $\varphi(a_1), \dots, \varphi(a_s)$ is a generating sequence of $I$, and so $\varphi \in \mathcal{D}_I$. 
\end{proof}

\begin{remark}
    \textup{We can conclude from Lemma~\ref{lem:D_Iclosed} a version of Lemma~\ref{lem:equi_cont} for $\equivd$. Specifically, given $\{f_i\}_{i=1}^\infty \subset I$ such that $f_i \to f$ (in the $\mathfrak{M}-$adic topology) and $f_i \equivd g$ for every $i$ then $f \equivd g$.    }
\end{remark}

We are now ready to prove Theorem~\ref{thm_App}:

\begin{proof}[Proof of Theorem~\ref{thm_App}]
    Denote $r=\ord_I(f)$ . Then by Lemma~\ref{lem:app_ord_der} we have that $\partial_j(f) \in I^{r-1}$. Thus, we get that 
    \begin{equation*}
        I^{k+2} \subset I \cdot \langle f \rangle + I^2 \cdot J(f) \subset I^{r+1}. 
    \end{equation*}
    Set $N=2k-r+2$, and we know that $N \geq k+1$. Let $g \in R[[\underline{x}]]$ such that $f-g \in I^{N+1}$. We show that $f \equivd g$, i.e., there exists some unit $u$ and some $R-$automorphism $\psi \in \mathcal{D}_I$ of $R[[\underline{x}]]$ such that $g=u\cdot \psi(f)$. We construct a sequence of units $\{u_i\}_{i=1}^\infty$ such that $u_i \to u$ as $i \to \infty$ and a sequence of $R-$automorphisms $\{\psi_i\}_{i=1}^\infty \subset \mathcal{D}_I$ of $R[[\underline{x}]]$ such that $\psi_i(f) \to \psi(f)$ as $i \to \infty$ for some $R-$automorphism $\psi$ of $R[[\underline{x}]]$ with $g - u_i \psi_i(f) \in \mathfrak{M}^{N+1+i}$ for every $i$. By  Lemma~\ref{lem:app_limit} we would get that $g = u \cdot \psi(f)$ and by Lemma~\ref{lem:D_Iclosed} we have that $\psi \in \mathcal{D}_I$. Our construction relies on the specific settings of Lemma~\ref{lem:app_const}.\\
    
    Note that by the condition $I^{k+2} \subset I \cdot \langle f \rangle + I^2 \cdot J(f) \subset I^{r+1}$ we must have that 
    \begin{equation*}
        g-f \in I^{N+1} \subset I^L \cdot \langle f \rangle + I^{L+1} \cdot J(f),
    \end{equation*}
    \noindent where $L=N-k \geq 1$. Observe that $2L+r=N+2$.  Thus, there exists some $b_{1,0} \in I^L \subset \mathfrak{M}^L$ and $b_{1,j} \in I^{L+1} \subset \mathfrak{M}^{L+1}$ for every $j$ such that 
    \begin{equation*}
        g-f = b_{1,0} \cdot f + \sum_{j=1}^n b_{1,j} \cdot \partial_j (f).
    \end{equation*}
    \noindent We set $v_1=1+b_{1,0}$ and set $\varphi_1$ to be the $R-$automorphism of $R[[\underline{x}]]$ by $\varphi_1(x_j)=x_j+b_{1,j}$. Observe that $\varphi_1 \in \mathcal{D}_I$. We show that $g-v_1\cdot \varphi_1(f) \in \mathfrak{M}^{N+2}$. By applying Lemma~\ref{lem:app_computation} where $z_j=b_{1,j}$, we get that $\varphi_1(f)=f+\sum_j \partial_j(f) b_{1,j} +h$ with $h=\sum_{|\alpha|>1} h_\alpha b_{1,1}^{\alpha_1} \cdots b_{1,n}^{\alpha_n} \in \mathfrak{M}^{N+2}$, since 
    \begin{equation*}
        \ord(h_\alpha b_{1,1}^{\alpha_1} \cdots b_{1,n}^{\alpha_n}) \geq \ord(h_\alpha) + \sum_j \alpha_j \cdot \ord(b_{1,j}) \geq r-|\alpha|+(L+1)|\alpha|\geq r+2L. 
    \end{equation*}
    \noindent Since we have that $\ord(b_{1,0}b_{1,j}\partial_j(f)) \geq L+(L+1)+(r-1) = r+2L$ and since $g-f = b_{1,0} \cdot f + \sum_{j=1}^n b_{1,j} \cdot \partial_j (f)$, we must have that
    \begin{equation*}
        g-v_1\cdot \varphi_1(f)=g-(1+b_{1,0})\cdot(f+\sum_j\partial_j(f) b_{1,j} + h ) = -\sum_{j} b_{1,0}b_{1,j}\partial_j(f) - (1+b_{1,0})h, 
    \end{equation*}
    \noindent which must be contained in $\mathfrak{M}^{N+2}$.  Now, we can repeat this process inductively (using the notations of Lemma~\ref{lem:app_const}) to get collection of sequences $\{ \{b_{i,j}\}_{i=1}^\infty \colon j=1, \dots, n\}$ such that $b_{i,0} \in \mathfrak{M}^{N+i-1}$ and $b_{i,j} \in \mathfrak{M}^{N+i}$ for every $j=1, \dots, n$. Therefore by setting $\varphi_i(x_j) = x_j+b_{i,j}$ for every $i,j$, we get that $g-v_i\cdot \varphi_i(f) \in \mathfrak{M}^{N+1+i}$ and that $\varphi_i \in \mathcal{D}_I$, as desired.  
\end{proof}

\begin{corollary}\label{cor:sqrtI}
    If $I \subset \sqrt{\langle f \rangle + J(f)}$ then $f$ has finite determinacy with respect to $I$. 
\end{corollary}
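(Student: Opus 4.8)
The plan is to deduce this from Theorem~\ref{thm_App} by extracting, from the radical containment, a concrete exponent $k$ for which $I^{k+2} \subset I \cdot \langle f \rangle + I^2 \cdot J(f)$. First I would invoke Noetherianity of $R[[\underline{x}]]$ (which holds since $R$ is a complete Noetherian local ring) to write $I = \langle a_1, \dots, a_s \rangle$ for finitely many generators. The hypothesis $I \subset \sqrt{\langle f \rangle + J(f)}$ gives, for each generator, an integer $m_i \geq 1$ with $a_i^{m_i} \in \langle f \rangle + J(f)$. Setting $m = 1 + \sum_{i=1}^s (m_i - 1)$, a pigeonhole argument shows that every degree-$m$ monomial $a_{i_1} \cdots a_{i_m}$ in the $a_i$ is divisible by some $a_i^{m_i}$, and since these monomials generate $I^m$ we obtain $I^m \subset \langle f \rangle + J(f)$.

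Next I would multiply this containment by $I^2$ and use $I^2 \subset I$:
\begin{equation*}
    I^{m+2} = I^2 \cdot I^m \subset I^2 \cdot \langle f \rangle + I^2 \cdot J(f) \subset I \cdot \langle f \rangle + I^2 \cdot J(f).
\end{equation*}
So the hypothesis of Theorem~\ref{thm_App} is satisfied with $k = m$, and (recalling the standing assumption $f \in I^2$) that theorem yields that $f$ is $\left(2m - \ord_I(f) + 2\right)$-determined with respect to $I$; in particular $f$ has finite determinacy with respect to $I$.

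I do not anticipate a real obstacle here: the only step that requires any care is the passage from ``each $a_i$ has a power lying in $\langle f \rangle + J(f)$'' to ``a single power $I^m$ lies in $\langle f \rangle + J(f)$'', and this is the routine uniform-exponent argument made possible by the finite generation of $I$ over the Noetherian ring $R[[\underline{x}]]$.
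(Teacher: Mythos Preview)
Your proposal is correct and follows essentially the same approach as the paper: extract from the radical containment a uniform exponent $m$ with $I^m \subset \langle f \rangle + J(f)$, then invoke Theorem~\ref{thm_App}. You are in fact more explicit than the paper, which omits the step of multiplying by $I^2$ to pass to the precise hypothesis $I^{m+2} \subset I \cdot \langle f \rangle + I^2 \cdot J(f)$.
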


\begin{proof}
    If $I \subset \sqrt{\langle f \rangle + J(f)}$ then by there exists some $k$ such that $I^k \subset \langle f \rangle + J(f)$, and so the result follows from Theorem~\ref{thm_App}.
\end{proof}

\begin{remark}
    \textup{Theorem~\ref{thm_App} generalizes Proposition 2.2 in~\cite{cluckers2022log}. In fact, following the notations of their paper, Theorem~\ref{thm_App} in fact gives us that if $f$ is $k-$determined with respect to $I$ and $\tilde{\alpha}(f) \geq \textup{lct}(\langle f \rangle+ I^{k})$.   }
\end{remark}

We now turn to proving a variant of the reverse direction of Corollary~\ref{cor:sqrtI}. In order to do so, we recall a version of the local Bertini theorem for complete local rings, as proven by Fenner in~\cite{flenner1977satze} for equi-characteristic rings and by Trivedi in~\cite{vijaylaxmi1994local} for mixed-characteristic rings. For more information, see Lemma 3.10 in~\cite{baily2026extremal} and~\cite{ochiai2015bertini}.

\begin{proposition}[Flenner-Trivedi]\label{prop:bertini-schmertini}
    Let $A$ be a  local Noetherian ring and let $I \subset A$ be an ideal. Then there exists some $x \in I$ such that $x \notin \mathfrak{p}^{(2)}= \mathfrak{p}^2 R_{\mathfrak{p}} \cap R$ for every $I \not\subset \mathfrak{p}$,  
\end{proposition}

\begin{remark}\label{rem:symbolicpower}
\begin{enumerate}
    \item \textup{Recall that given some $\mathfrak{p} \subset A$ prime ideal and some $f \in A$ (where $A$ local) we have that $\left( \frac{A}{\langle f \rangle} \right)_\mathfrak{p}$ is regular if and only if $f \notin \mathfrak{p}^{(2)}$. In addition, over $R[[\underline{x}]]$ we have that if $f \in \mathfrak{p}^{(2)}$ then $J(f) \subset \mathfrak{p}$.  For more information on symbolic power of an ideal, see~\cite{dao2015symbolic}. }
    \item \textup{Proposition~\ref{prop:bertini-schmertini} is not true if we look for an element $x \in I$ such that $x \notin \mathfrak{p}^{(2)}$ for every $I \subset \mathfrak{p}$. For example, if $I = \mathfrak{q}^2$ for some prime ideal $\mathfrak{q}$ then $I$ must be contained in $\mathfrak{p}^{(2)}$ for every prime ideal $I \subset \mathfrak{p}$, and so given some $x \in I$, then $x \in \mathfrak{p}^{(2)}$ for every prime ideal $I \subset \mathfrak{p}$.}
    \item \textup{Recall that the singular locus of $\frac{R[[\underline{x}]]}{\langle f \rangle}$, denoted $\textup{Sing}(V(f))$, is the set of prime ideals $\mathfrak{p} \subset R[[\underline{x}]]$ for which $\left( \frac{R[[\underline{x}]]}{\langle f \rangle}\right)_{\mathfrak{p}}$ is a regular local ring. By the previous item, $\textup{Sing}(V(f))$ is exactly the set of all prime ideals for which $f \in \mathfrak{p}^{(2)}$ and $\textup{Sing}(V(f)) \subset V(J(f) + \langle f \rangle)$. In addition, from Lemma~\ref{lem:aut_der} we can conclude that if $\varphi(f)=g$ for some $R-$automorphism of $R[[\underline{x}]]$ then $\varphi^*(\textup{Sing}(V(f))=\textup{Sing}(V(g))$. For more information on the singular locus of a ring, see Sections 15.47 and 15.48 in~\cite{stacks-project}.}
\end{enumerate}
\end{remark}

\begin{proposition}\label{prop:det_other_direction}
    Let $I \subset R[[\underline{x}]]$ be an ideal and let $f \in R[[\underline{x}]]$ be finite determined with respect to $I$. Then $\textup{Sing}(V(f)) \subset V(I + \langle f \rangle + J(f))$.
\end{proposition}

\begin{proof}
    Assume that $f \in R[[\underline{x}]]$ is $k-$determined with respect to $I$ (up to increasing $k$ we can assume that $k>0$). By Proposition~\ref{prop:bertini-schmertini} there exists some $g \in I^{k+1}$ such that $f+g \notin \mathfrak{p}^{(2)}$ for every prime ideal $\langle f \rangle + I^{k+1} \not\subset \mathfrak{p}$. \\
    
    We show that $\textup{Sing}(V(f+g)) \subset V(I + J(f) + \langle f \rangle)$. Given some prime ideal $\mathfrak{p}$ such that $I + J(f) + \langle f \rangle \not\subset \mathfrak{p}$, we have two options: First, if $I \not\subset \mathfrak{p}$ then $\langle f \rangle +I \not\subset \mathfrak{p}$ and so by the choice of $g \in I$ we must have that $f+g \notin \mathfrak{p}^{(2)}$. Second, if $I \subset \mathfrak{p}$ then as $I + J(f) + \langle f \rangle \not\subset \mathfrak{p}$ we must have that $J(f)+\langle f \rangle \not\subset \mathfrak{p}$ and so by the first item of Remark~\ref{rem:symbolicpower} we have that $f \notin \mathfrak{p}^{(2)}$ and so $f+g \notin \mathfrak{p}^{(2)}$. Therefore, from the third item of Remark~\ref{rem:symbolicpower}, we can conclude that if $\mathfrak{p} \in \textup{Sing}(V(f+g))$ then $I + J(f) + \langle f \rangle \subset \mathfrak{p}$.\\

    Since $f \in R[[\underline{x}]]$ is $k-$determined with respect to $I$ then $f+g \equivd f$ and so there exists some $\varphi \in \mathcal{D}_I$ such that $\langle \varphi(f) \rangle  = \langle f+g \rangle$. By Lemma~\ref{lem:aut_der} we have that $\varphi(I + J(f) + \langle f \rangle) = I + J(f+g) + \langle f+g \rangle$ (as $\varphi \in \mathcal{D}_I$). Since $g \in I^{k+1}$ then from Lemma~\ref{lem:app_ord_der} we have that $\partial_i(g) \in I^{k} \subset I$ and therefore $I + J(f) + \langle f \rangle = I + J(f+g) + \langle f+g \rangle$. Thus, by the third item of Remark~\ref{rem:symbolicpower}, we can conclude that $\textup{Sing}(V(f)) \subset V(I + J(f) + \langle f \rangle)$ as desired. 
\end{proof}

\begin{theorem}\label{thm:det_field_case}
    Assume that $\textup{Sing}(V(f)) = V(J(f) + \langle f \rangle)$. Then $f$ has finite determinacy with respect to $I$ if and only if  $I \subset \sqrt{\langle f \rangle + J(f)}$. 
\end{theorem}

\begin{proof}
    This follows directly from Corollary~\ref{cor:sqrtI} and Proposition~\ref{prop:det_other_direction}, recalling that $V(I + J(f) + \langle f \rangle) = V(I) \cap V(J(f)+\langle f \rangle)$. 
\end{proof}

\begin{remark}
\begin{enumerate}
    \item \textup{The assumption $\textup{Sing}(V(f)) = V(J(f) + \langle f \rangle)$ in Theorem~\ref{thm:det_field_case} is true for every $f \in R[[\underline{x}]]$ when $R$ is equi-characteristic (i.e. contains a field) and this follows from the classical Jacobian criterion (see, for example, Theorem 6.28 in~\cite{gortz2010algebraic}). In particular, in the equicharacteristic case, we can view Theorem~\ref{thm:det_field_case} as a generalization of Theorem 2.8 in~\cite{boubakri2012invariants} to hypersurfaces with arbitrary singularities (noting that their proof relies on a careful analysis of the action of the automorphism group). Yet, as we see in Theorem~\ref{thm:J_smooth} and in Proposition~\ref{prop:ramified_J}, we might have that $\textup{Sing}(V(f)) \subsetneq V(J(f) + \langle f \rangle)$ if $R$ is mixed characteristic.  }
    \item \textup{As in Section 3.1 of~\cite{boubakri2009hypersurface}, we can show an analogous results to those of Theorem~\ref{thm_App} and of Corollary~\ref{cor:sqrtI} for "right equivalence with respect to $I$". That is, we can say that $f \in R[[\underline{x}]]$ is $k-$right determined with respect to $I$ if for every $g \in R[[\underline{x}]]$ with $f-g \in I^{k+1}$ there exists some $\varphi \in \mathcal{D}_I$ such that $\varphi(f)=g$, and we say that $f$ is finite right determined if $f$ is $k-$right determined for some $k$. Therefore, a similar proof to that of Theorem~\ref{thm_App} tells us that if $I^{k+1} \subset I \cdot J(f)$ then $f$ is $(2k-\ord_I(f)+2)-$right determined with respect to $I$, and a similar proof to that of Corollary~\ref{cor:sqrtI} would tell us that if $I \subset \sqrt{J(f)}$ then $f$ is finite right determined.  }
\end{enumerate}
\end{remark}

Inspired by Corollary~\ref{cor:sqrtI}, we define an invariant that quantifies the determinacy of an element $f \in R[[\underline{x}]]$ with respect to an ideal $I$.

\begin{definition}
    Let $I \subset R[[\underline{x}]]$ and let $f \in I^2 \subset  R[[\underline{x}]]$. The \textbf{Jacobian number of $f$ with respect to $I$}  is defined to be 
    \begin{equation*}
    j_I(f)=\length_{R[[\underline{x}]]} \left(\frac{I}{\langle f \rangle + J(f)}\right). 
    \end{equation*}
\end{definition}

\begin{remark}
\begin{enumerate}
    \item \textup{Note that if $f \in I^2$ then by Lemma~\ref{lem:app_ord_der} we have that $\langle f \rangle + J(f) \subset I$, and so $j_I(f)$ is well defined. In addition, $j_I(f)$ if finite if and only if $I_\mathfrak{p} = (\langle f \rangle +J(f))_\mathfrak{p}$ for every prime ideal $\mathfrak{p} \neq \mathfrak{M}$. }
    \item \textup{If we replace $R$ by a field $k$ and select $I=\langle x_1, \dots, x_n\rangle$, then we have that $j(f)$ would equal to $\tau(f)-1$, where $\tau(f)=\dim_k(\frac{k[[\underline{x}]]}{\langle f\rangle + J(f)})$ is the Tjurina number of $f$ over $k$. In fact, we see a similar property in Propostion~\ref{lem:reduc_a}. For more information on the Tjurina number of fields and its connection to the topology and algebra of hypersurface singularities, see~\cite{greuel2007introduction, milnor2016singular, tjurina1969locally, varchenko1985singularities}.}
    \item \textup{Note that the definition and the notation of the Jacobian number $j_I(f)$ is inspired by the work of Siersma in~\cite{siersma1983isolated}, Pellikaan in~\cite{pellikaan1990deformations}, and Svoray in~\cite{svoray2024invariants}. }
\end{enumerate}
\end{remark}

\begin{proposition}\label{prop:jacobian_prop}
    Let $I \subset R[[\underline{x}]]$ and let $f \in R[[\underline{x}]]$ with $j_I(f)<\infty$. Then:
    \begin{enumerate}
        \item If $f \equivd g$ then $j_I(f)=j_I(g)$.
        \item $f$ is $(2 \cdot j_I(f) -\ord_I(f)+2)$-determined with respect to $I$. 
        \item If $\frac{R[[\underline{x}]]}{I}$ is Cohen-Macaulay of dimension $\dim(R) + n- s$ then  $j_I(f) \geq \binom{s+\ord_I(f)-2}{s+1}$.
    \end{enumerate}
\end{proposition}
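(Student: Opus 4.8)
The three statements are largely independent, so I would prove them in turn; parts (1) and (2) are formal consequences of the lemmata already assembled, while part (3) carries the genuine content. Note first that $\langle f\rangle+J(f)\subseteq I$ by Lemma~\ref{lem:app_ord_der} applied to $f\in I^2$, so $j_I(f)$ is defined, and that $R[[\underline{x}]]$ is a complete local ring whose maximal ideal $\mathfrak{M}\supseteq I$ is its Jacobson radical. For part (1): if $f\equivd g$ there is $\varphi\in\mathcal{D}_I$ with $\langle f\rangle=\langle\varphi(g)\rangle$, say $f=u\,\varphi(g)$ with $u$ a unit; by Lemma~\ref{lem:aut_der} (the unit-invariance together with the $\varphi$-equivariance of the operation $h\mapsto\langle h\rangle+J(h)$) we get $\langle f\rangle+J(f)=\varphi\big(\langle g\rangle+J(g)\big)$. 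Since $\varphi(I)=I$, the automorphism $\varphi$ induces an isomorphism $I/(\langle f\rangle+J(f))\cong I/(\langle g\rangle+J(g))$ of $R[[\underline{x}]]$-modules after restricting scalars along the ring automorphism $\varphi$, and since twisting by a ring automorphism preserves length we conclude $j_I(f)=j_I(g)$.

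For part (2), set $j=j_I(f)$ and $A=\langle f\rangle+J(f)$. The plan is to produce a containment of the form $I^{k+2}\subseteq I\langle f\rangle+I^2J(f)$ and invoke Theorem~\ref{thm_App}. The key step is that finiteness of $\length_{R[[\underline{x}]]}(I/A)=j$ forces $I^{j+1}\subseteq A$: the $R[[\underline{x}]]$-submodules $V_i=(I^{i+1}+A)/A=I^{i}\cdot(I/A)$ of $I/A$ form a descending chain, $I/A$ has length $j$, so the chain stabilizes after at most $j$ steps, and the stable value is $0$ by Nakayama (as $I$ lies in the Jacobson radical); hence $V_m=0$ for some $m\le j$, i.e.\ $I^{m+1}\subseteq A$, and a fortiori $I^{j+1}\subseteq A$. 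Multiplying by $I^{2}$ yields $I^{j+3}\subseteq I^{2}A\subseteq I\langle f\rangle+I^{2}J(f)$, and Theorem~\ref{thm_App} then gives the stated determinacy.

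For part (3), write $r=\ord_I(f)$. By Lemma~\ref{lem:app_ord_der} we have $\partial_j(f)\in I^{r-1}$ for every $j$ and $f\in I^{r}\subseteq I^{r-1}$, so $\langle f\rangle+J(f)\subseteq I^{r-1}$; consequently $I/(\langle f\rangle+J(f))$ surjects onto $I/I^{r-1}$, whence
\begin{equation*}
j_I(f)\ \geq\ \length_{R[[\underline{x}]]}\!\big(I/I^{r-1}\big)\ =\ \sum_{i=1}^{r-2}\length_{R[[\underline{x}]]}\!\big(I^{i}/I^{i+1}\big),
\end{equation*}
each summand being finite because $j_I(f)<\infty$ forces $(\langle f\rangle+J(f))_{\mathfrak{p}}=I_{\mathfrak{p}}$, hence $I^{i}_{\mathfrak{p}}=I_{\mathfrak{p}}$ for $1\le i\le r-1$, at every non-maximal prime $\mathfrak{p}$. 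It then remains to bound each $\length(I^{i}/I^{i+1})$ from below using that $R[[\underline{x}]]/I$ is Cohen--Macaulay of dimension $\dim(R)+n-s$, i.e.\ that $I$ has height $s$: via the associated graded ring $\operatorname{gr}_I(R[[\underline{x}]])$ — equivalently, by passing to a minimal reduction of $I$, whose generators form a regular sequence by the Cohen--Macaulay hypothesis — one estimates $\length(I^{i}/I^{i+1})$ below in terms of the Hilbert function of a polynomial ring in $s$ variables, and summing these estimates over $i$ via the hockey-stick identity produces the asserted binomial lower bound $\binom{s+\ord_I(f)-2}{s+1}$. The main obstacle is precisely this last Hilbert-function estimate: one must exploit the Cohen--Macaulayness of $R[[\underline{x}]]/I$ — note that $R[[\underline{x}]]$ itself need not be Cohen--Macaulay here, since $R$ is only assumed complete Noetherian local — to control the lengths $\length(I^{i}/I^{i+1})$, whereas everything else reduces to the lemmata already established.
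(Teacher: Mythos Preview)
Your argument follows essentially the same route as the paper's for all three parts: Lemma~\ref{lem:aut_der} for (1), a descending-chain argument in $I/(\langle f\rangle+J(f))$ feeding into Theorem~\ref{thm_App} for (2), and the surjection onto $I/I^{\ord_I(f)-1}$ followed by a length/monomial count for (3). Your treatment of (3) is in fact more honest than the paper's about where the Cohen--Macaulay hypothesis enters, since the paper simply asserts that $\length\big(I/I^{\ord_I(f)-1}\big)$ equals the stated binomial coefficient.

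One small arithmetic slip in part (2): from $I^{j+1}\subseteq A$ you obtain $I^{j+3}\subseteq I\langle f\rangle+I^2J(f)$, which in Theorem~\ref{thm_App} corresponds to $k=j+1$ and hence yields determinacy of order $2j_I(f)-\ord_I(f)+4$, not the claimed $2j_I(f)-\ord_I(f)+2$. The paper instead asserts $I^{j_I(f)}\subseteq\langle f\rangle+J(f)$ directly from the chain (which would give the stated bound after multiplying by $I^2$), but your Nakayama reasoning is the correct one: a length-$j$ module admits a strictly descending chain of at most $j+1$ terms, so one only gets $I^{j_I(f)+1}\subseteq\langle f\rangle+J(f)$ in general (already for $j_I(f)=1$ the containment $I\subseteq\langle f\rangle+J(f)$ would force $j_I(f)=0$). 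So the exact constant in the statement seems to be off by $2$ in both arguments, while the substantive content --- finite determinacy with an explicit bound linear in $j_I(f)$ --- is unaffected.
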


\begin{proof}
    The first item follows from Lemma~\ref{lem:aut_der}. For the second item, assume that $j_I(f) < \infty$, then it is enough to show that $I^{j_I(f)} \subset \langle f \rangle + J(f)$, as then the finite determinacy would follow from Theorem~\ref{thm_App}. We have sequence of strict inclusions of modules
    \begin{equation*}
        \frac{I}{\langle f \rangle +J(f)} \supset \frac{I^2 + \langle f \rangle +J(f)}{\langle f \rangle +J(f)} \supset \frac{I^3  + \langle f \rangle +J(f)}{\langle f \rangle +J(f)} \supset \cdots, 
    \end{equation*}
    and since $j(f)<\infty$ then $\frac{I}{\langle f \rangle +J(f)}$ has length $j_I(f)$. Therefore the sequence must terminate at the $j_I(f)-$th element, and so $\frac{I^{j_I(f)} + \langle f \rangle +J(f)}{\langle f \rangle +J(f)}=0$, i.e. $I^{j_I(f)} \subset \langle f \rangle +J(f)$.
    For the third item, If $0 \neq f \in I^{\ord_I(f)}$ then by Lemma~\ref{lem:app_ord_der} we can conclude that $\langle f \rangle +J(f) \subset I^{\ord_I(f)-1}$, which induces a surjective map of $R-$modules
    \begin{equation*}
        \frac{I}{\langle f \rangle +J(f) } \geq \frac{I}{I^{\ord_I(f)-1}}.
    \end{equation*}
    By applying length over $R[[\underline{x}]]$ to both sides of the equation we get that $j(f)$ is bigger than the $R[[\underline{x}]]-$length of  $\frac{I}{I^{\ord_I(f)-1}}$, which equals to $\binom{s+\ord_I(f)-2}{s+1}$ (as it is the number of monomials in $s$ variables of positive degree at most $\ord_I(f)-2$, where $s$ corresponds to a sequence of length $s$ that generate $I$).
\end{proof}

\begin{remark}
\begin{enumerate}
    \item \textup{Item 2 of Proposition~\ref{prop:jacobian_prop} gives us a quantitative bound on the determinacy of $f$ with respect to $I$. It is analogue of the role of the Milnor and Tjurina numbers in Corollary 2.4 of~\cite{boubakri2012invariants} and of the role of the $c_I(f)$ invariant in Theorem 1.6 in~\cite{de1988some}, Theorem 6.5. in~\cite{pellikaan1988finite}, and Proposition 1.6 in~\cite{siersma1983isolated}). }
    \item  \textup{Since we are looking at any ideal $I \subset R[[\underline{x}]]$, the enumerative behavior of $j_I$ becomes very complicated the more "complicated" the ideal is $I$. For example, as an analogue of Morse's Lemma (see Theorem 2.46 in~\cite{greuel2007introduction}) and of Remark 2.16 in~\cite{pellikaan1990deformations}) one expect that (in the case where the characteristic of $\frac{R}{\mathfrak{m}}$ is not $2$) if $I=\langle a_1, \dots, a_n \rangle$ and $f=a_1^2 +\cdots +a_n^2$ then we have that $J(f)=I$, yet, since $\partial_i(f)=\sum_{j=1}^n 2a_j \partial_i(a_j)$, then by Lemma 4.2 in~\cite{greuel2019finite}, this is true if and only if the Jacobian matrix $D\psi \in \Mat_r(R[[\underline{x}]])$ of the $R-$homorphism $\psi$ of $R[[\underline{x}]]$ defined by $\psi(x_i)=a_i$ for every $i$ is invertible, which by Lemma~\ref{lem:W[[x]]_good_prop}, this is true if and only if $\psi$ is an automorphism, which only happens when $I=\mathfrak{M}$.}
\end{enumerate}
\end{remark}

We can use the determinacy theorem with respect to an ideal to conclude a version of the Mather-Yau theorem over $R[[\underline{x}]]$ with respect to an ideal for finitely determined elements. For a zero characteristic version of the Mather-Yau theorem, see Theorem 2.26 in~\cite{greuel2007introduction} or~\cite{mather1982classification}, and for a positive characteristic version, see~\cite{greuel2013mather}. 

\begin{proposition}[Mather-Yau Theorem over $I$]\label{prop:mather-yau}
    Let $I \subset R[[\underline{x}]]$ be an ideal and let $f, g \in I^2 \subset R[[\underline{x}]]$ be such that $f$ has finite determinacy with respect to $I$. Then $f \equivd g$ if and only if for every $k\gg 0$ we have an isomorphism of $R-$algebras
    \begin{equation*}
        \frac{R[[\underline{x}]]}{\langle f, I^k\cdot  J(f) \rangle} \cong \frac{R[[\underline{x}]]}{\langle g, I^k\cdot  J(g)\rangle}.
    \end{equation*}
\end{proposition}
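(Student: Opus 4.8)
The plan is to mirror the classical Mather–Yau argument, using the determinacy theorem (Theorem~\ref{thm_App}) to pass between the algebraic data of the quotients $R[[\underline{x}]]/\langle f, I^k J(f)\rangle$ and the equivalence $\equivd$. For the forward direction, suppose $f \equivd g$. By definition there is $\varphi \in \mathcal{D}_I$ with $\langle f\rangle = \langle \varphi(g)\rangle$, so $\varphi(g) = u f$ for a unit $u$. Using Lemma~\ref{lem:aut_der} (which gives $J(uf) + \langle uf\rangle = J(f) + \langle f\rangle$ and $\varphi(J(g)) = J(\varphi(g))$) together with the fact that $\varphi$ preserves $I$ and hence every power $I^k$, one checks that $\varphi$ carries the ideal $\langle g, I^k J(g)\rangle$ onto $\langle f, I^k J(f)\rangle$; thus $\varphi$ descends to an $R$-algebra isomorphism of the two quotients, for every $k$ (not merely $k \gg 0$).

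For the converse, assume $f$ has finite determinacy with respect to $I$ — say $f$ is $m$-determined — and that we have $R$-algebra isomorphisms $\Phi_k\colon R[[\underline{x}]]/\langle f, I^k J(f)\rangle \xrightarrow{\sim} R[[\underline{x}]]/\langle g, I^k J(g)\rangle$ for all $k \gg 0$. First I would argue that finite determinacy of $f$ forces $j_I(f) < \infty$: indeed, $m$-determinacy with respect to $I$ applied to the perturbations $f + I^{m+1}$ combined with Theorem~\ref{thm_App} (its contrapositive flavor) shows that $I^{k} \subset \langle f\rangle + J(f)$ for some $k$, hence $I/(\langle f\rangle + J(f))$ has finite length. (Alternatively one incorporates this into the hypotheses as in Proposition~\ref{prop:jacobian_prop}.) Fix $k$ large enough that $I^k \subset \langle f \rangle + J(f)$; then $\langle f, I^k J(f)\rangle = \langle f\rangle + I^k J(f)$ already contains a high power of $I$, so the quotient ring "remembers" $f$ up to high order. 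Now apply the Lifting Lemma (Lemma~\ref{lem:lifting}): since $\langle f, I^kJ(f)\rangle$ and $\langle g, I^k J(g)\rangle$ lie in $\mathfrak{M}$ and $\Phi_k$ is an $R$-isomorphism, there is an $R$-automorphism $\Psi$ of $R[[\underline{x}]]$ lifting $\Phi_k$ with $\Psi(\langle f, I^k J(f)\rangle) = \langle g, I^k J(g)\rangle$. I would then check that $\Psi \in \mathcal{D}_I$: because $\Psi$ carries a fixed large power $I^N$ (contained in $\langle f, I^k J(f)\rangle$) into $\langle g, I^k J(g)\rangle \subset I$, and more care with the generators shows $\Psi(I) \subseteq I$, whence $\Psi \in \mathcal{D}_I$ by Lemma~\ref{lem:D_I_preserves}.

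The crux is then a Nakayama-type comparison: replacing $g$ by $\Psi^{-1}(g)$ (still $\equivd$-equivalent to $g$ and still finitely determined data), we are reduced to the case where $\langle f \rangle + I^k J(f) = \langle g\rangle + I^k J(g) =: \mathcal{I}$ as ideals. I want to conclude $f \equivd g$. Since $f - g \in \mathcal{I}$, and $\mathcal{I} = \langle g\rangle + I^k J(g)$, write $f - g = a g + \sum_j c_j \partial_j(g)$ with $a \in R[[\underline{x}]]$ and $c_j \in I^k$. One checks $a \in \mathfrak{M}$ (comparing constant/low-order terms, using $f, g \in I^2 \subset \mathfrak{M}^2$), so $1 + a$ is a unit, and a rescaling reduces to the situation of the inductive step inside the proof of Theorem~\ref{thm_App}: given $g$ with $f - g \in I^L\langle g\rangle + I^{L+1}J(g)$ for $L$ large (which holds once $k$ is large, since $I^k J(g) \subset I^{L+1}J(g)$ for $k \geq L+1$ and $\langle g \rangle = \langle(1+a)^{-1} f + \cdots\rangle$ can be absorbed), the same homotopy/Cauchy-sequence construction produces $u$ a unit and $\psi \in \mathcal{D}_I$ with $g = u\,\psi(f)$, i.e. $f \equivd g$. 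The main obstacle I anticipate is bookkeeping the order estimates so that the single isomorphism for one sufficiently large $k$ already yields membership $f - g \in I^L\langle g\rangle + I^{L+1}J(g)$ with $L$ large enough to feed Theorem~\ref{thm_App} — this is exactly where the hypothesis "$k \gg 0$'' and the finiteness $j_I(f) < \infty$ are both used, and it must be arranged that the power of $I$ killed in the quotient outstrips the determinacy degree $2\,j_I(f) - \ord_I(f) + 2$ coming from Proposition~\ref{prop:jacobian_prop}(2).
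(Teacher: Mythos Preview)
Your forward direction matches the paper. For the converse, the architecture (lift via Lemma~\ref{lem:lifting}, reduce to equal ideals, invoke determinacy) is right, but there is a genuine gap at the unit step. After writing $f = (1+a)g + H_2$ with $H_2 \in I^k J(g)$, you assert $a \in \mathfrak{M}$ by ``comparing low-order terms.'' This does not follow: since $f, g, ag \in I^2$ and $H_2 \in I^{k+1}$, the relation imposes no constraint on the constant term of $a$; nothing rules out $a \equiv -1 \pmod{\mathfrak{M}}$. The paper handles this by contradiction using the \emph{symmetry} of the ideal equality: if $H_1 = 1+a$ is not a unit, write also $g = H_3 f + H_4$ with $H_4 \in I^k J(f)$, and substitution yields $(1 - H_1 H_3) f \in I^{k + \ord_I(f) - 1}$. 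Since $H_1 \in \mathfrak{M}$ forces $1 - H_1 H_3$ to be a unit, this contradicts the definition of $\ord_I(f)$ once $k \geq 2$. With $H_1$ a unit one has $f - H_1 g \in I^{k + \ord_I(f) - 1}$, and finite determinacy of $f$ applied \emph{directly} (no need to re-enter the inductive machinery of Theorem~\ref{thm_App}) gives $f \equivd H_1 g \equivd g$.

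Two side notes. Your digression deducing $j_I(f) < \infty$ from finite determinacy is neither proved in the paper nor needed for its argument, so it is best dropped. Your concern that the lifted automorphism must lie in $\mathcal{D}_I$ is a legitimate point that the paper also glosses over, but your sketch (``$\Psi$ carries a large power $I^N$ into $I$'') does not establish $\Psi(I) \subseteq I$; that step would require a separate argument.
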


\begin{proof}
    Assume that $f \equivd g$. Then there exists a unit $u \in R[[\underline{x}]]$ and an automorphism $\varphi \in \mathcal{D}_I$ of $R[[\underline{x}]]$ such that $g=u \varphi(f)$. Then by Lemma~\ref{lem:aut_der} we have that 
    \begin{equation*}
        \langle g, I^k J(g) \rangle = \langle u \varphi(f), I^k J\left(u \varphi(f)\right) \rangle = \langle \varphi(f), I^k J\left(\varphi(f)\right) \rangle = \varphi\left(\langle f, I^k J(f) \rangle\right)
    \end{equation*}
    for every $k$, which gives us the desired isomorphism of $R-$rings. \\

    Assume that for every big enough $k\gg 0$ we have an isomorphism  of $R-$rings $\frac{R[[\underline{x}]]}{\langle f, I^k\cdot  J(f) \rangle} \cong \frac{R[[\underline{x}]]}{\langle g, I^k\cdot  J(g)\rangle}$. Therefore, by the Lemma~\ref{lem:lifting}, this isomorphism lifts to an automorphism $\varphi \colon R[[\underline{x}]] \to R[[\underline{x}]]$ such that $\langle g, I^k J(g) \rangle= \varphi\left(\langle f, I^k J(f) \rangle\right)$. By Lemma~\ref{lem:aut_der} we have that 
    \begin{equation*}
        \langle \varphi(f), I^k J\left(\varphi(f)\right) \rangle = \varphi\left(\langle f, I^k J(f) \rangle\right),
    \end{equation*}
    and so we can assume (up to $\equivd$) that $\langle g, I^k J(g) \rangle= \langle f, I^k J(f) \rangle$. Therefore, there exists some $H_1 \in R[[\underline{x}]]$ and $H_2 \in I^k J(g)$ such that $f  = H_1g + H_2$. By Lemma~\ref{lem:app_ord_der} we have that $J(f) \subset I^{\ord_I\left(f\right)-1}$ and so $I^k J(f) \subset I^{k+\ord_I\left(f\right)-1} \subset I^{\ord_I\left(f\right)}$. Therefore $g \in I^{\ord_I\left(f\right)}$ and $H_2 \in I^{\ord_I\left(f\right) + k-1}$. \\

    We show that $H_1$ must be a unit. Assuming otherwise, then since we have that $\langle g, I^k J(g) \rangle= \langle f, I^k J(f) \rangle$, we can conclude that $g=H_3 f + H_4$ for some $H_3 \in R[[\underline{x}]]$ and $H_4 \in I^k J(f)$. Therefore, since $f=H_1g+H_2$ we have that $\left(1- H_1 H_3\right) f = \left(H_2 H_3\right)+H_4 \in I^{k+\ord_I\left(f\right)-1}$. Since $H_1$ is not a unit then $1- H_1 H_3$ must be a unit and we can conclude that $f \in I^{k+\ord_I\left(f\right)-1}$. This is impossible since $k+\ord_I\left(f\right)-1 > \ord_I\left(f\right)$. Therefore we must have that $H_1$ is a unit, and so $f - H_1 g \in I^{\ord_I\left(f\right)+k-1}$. Since $f$ has finite determinacy with respect to $I$ and since $k\gg 0$ (which we can assume is bigger than the determinacy order of $f$), then we have that $H_1 g \equivd f$. Yet, $H_1 g \equivd g$ and so the result follows. 
\end{proof}

\begin{remark}
    \textup{Note that the fact that $\langle f \rangle + J(f) = \langle g \rangle + J(g)$ does not give us that $f \equivk g$, as in the version of Mather-Yau theorem over $\mathbb{C}$ (see, for example, Theorem 2.26 in~\cite{greuel2007introduction}). If the characteristic of $R$ is $p>0$, set $f=x_1^{p+1}+x_2^{p+1}$ and $g=x_1^p + x_1^{p+1}+x_2^{p+1}$. Then we have that $\langle f \rangle + J(f) = \langle g \rangle + J(g)$ but $f \not\equivk g$ as they have different orders.}
\end{remark}

We end this section with a discussion on unfoldings over $R$. Unfoldings play a crucial role in singularity theory over fields, and for a deeper discussion see Chapter II of~\cite{greuel2007introduction} or Section 2 of~\cite{greuel2016right}.

\begin{definition}
    Let $f \in R[[\underline{x}]]$. An \textbf{unfolding} of $f$ is an element $f_t = f\left(\underline{x},t\right) \in R[[\underline{x}]][t]$ of the form $f_t = f +tg$ for some $g \in R[[\underline{x}]]$. Given some $t_0 \in \mathfrak{m}$ and some unfolding $f_t=\sum_{j=0}^\infty\sum_{\alpha} a_{{\alpha},j}\underline{x}^{\alpha}t^j \in R[[\underline{x}]][t]$ of $f$, we set $f_{t_0} = \sum_{j=0}^\infty\sum_{\alpha} a_{{\alpha},j}\underline{x}^{\alpha} t_0^j \in R[[\underline{x}]]$.
\end{definition}

\begin{proposition}\label{prop:semi}
    Let $f \in R[[\underline{x}]]$. Then:
    \begin{enumerate}
        \item For every unfolding $f_t$ of $f$ there exists some $N$ such that for every $t_0 \in \mathfrak{m}^N$ we have that $\ord\left(f\right) = \ord\left(f_{t_0}\right)$.
        \item If $f$ has finite determinacy with respect to $\mathfrak{M}$ then for every unfolding $f_t$ of $f$ there exists some $N$ such that for every $t_0 \in \mathfrak{m}^N$ we have that $f \equivk f_{t_0}$. 
    \end{enumerate}

\end{proposition}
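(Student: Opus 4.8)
The plan is to prove both items by reducing questions about the unfolding $f_t$ to the finiteness/determinacy results already established, using the key observation that for $t_0 \in \mathfrak{m}^N$, the element $f_{t_0} = f + t_0 g$ differs from $f$ by something in $\mathfrak{M}^{N}$ (or better, by $t_0$ times a fixed power series, so the difference lies in $\mathfrak{m}^N \cdot R[[\underline{x}]]$).

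For the first item, write $f_t = f + tg$ and set $r = \ord(f)$. Since $\ord(f) = r$, there is a monomial $a_\beta \underline{x}^\beta$ appearing in $f$ with $|\beta| + \ord(a_\beta) = r$, and $f \notin \mathfrak{M}^{r+1}$. Now $f_{t_0} - f = t_0 g \in \mathfrak{m}^{|t_0|} \cdot R[[\underline{x}]] \subseteq \mathfrak{M}^{N}$ whenever $t_0 \in \mathfrak{m}^N$. Choosing $N = r+1$, we get $f_{t_0} \equiv f \pmod{\mathfrak{M}^{r+1}}$, hence $\ord(f_{t_0}) = \ord(f)$ because two power series congruent modulo $\mathfrak{M}^{r+1}$ that are not themselves in $\mathfrak{M}^{r+1}$ have the same order (the order is determined by the image in $\mathfrak{M}^r/\mathfrak{M}^{r+1}$, which is nonzero and unchanged). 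This is essentially immediate from the definition of order and completeness is not even needed.

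For the second item, suppose $f$ is $k$-determined with respect to $\mathfrak{M}$ for some $k \geq 0$. Take $N = k+1$. Then for any $t_0 \in \mathfrak{m}^N = \mathfrak{m}^{k+1}$ we have $f_{t_0} - f = t_0 g \in \mathfrak{m}^{k+1} \cdot R[[\underline{x}]] \subseteq \mathfrak{M}^{k+1}$. By the definition of $k$-determinacy with respect to $\mathfrak{M}$ (Definition~\ref{def:determinacy}), this gives $f \equivd f_{t_0}$ where the ideal is $\mathfrak{M}$; but $\mathcal{D}_\mathfrak{M}$ is the full group of $R$-automorphisms of $R[[\underline{x}]]$ (every $R$-automorphism preserves the maximal ideal), and $\langle f \rangle = \langle \varphi(f_{t_0}) \rangle$ for such a $\varphi$ yields an $R$-algebra isomorphism $\frac{R[[\underline{x}]]}{\langle f \rangle} \cong \frac{R[[\underline{x}]]}{\langle f_{t_0}\rangle}$, i.e. $f \equivk f_{t_0}$. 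One should double-check the compatibility of the notation: $\equivd$ with $I = \mathfrak{M}$ asks for $\varphi \in \mathcal{D}_\mathfrak{M}$ with $\langle f \rangle = \langle \varphi(f_{t_0})\rangle$, which is exactly $f \equivk f_{t_0}$ by the definitions in the Notations paragraph (since $\varphi$ is then an $R$-algebra automorphism sending $\langle f_{t_0}\rangle$ to $\langle f \rangle$).

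Neither step presents a serious obstacle; the only mild subtlety is making sure the "$t_0 g$ lies in a high power of $\mathfrak{M}$" estimate is stated correctly — namely that multiplication by $t_0 \in \mathfrak{m}^N$ sends all of $R[[\underline{x}]]$ into $\mathfrak{M}^N$, which holds because $\mathfrak{m}^N \subseteq \mathfrak{M}^N$ and $\mathfrak{M}^N$ is an ideal. If one wants $\ord(f_{t_0})$ to equal $\ord(f)$ one just needs $N > \ord(f)$, and if one wants $f \equivk f_{t_0}$ one needs $N$ to exceed the determinacy order; taking $N$ to be the max of these (or just $N = k+1$ in the second item, which already exceeds $\ord(f)$ since a $k$-determined $f \in I^2$ has order at most... well, at worst one takes $N = \max\{\ord(f)+1, k+1\}$) handles both. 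I would present the two items in order, each as a short paragraph invoking respectively the definition of order and the definition of finite determinacy together with the identification $\mathcal{D}_\mathfrak{M} = \operatorname{Aut}_R(R[[\underline{x}]])$.
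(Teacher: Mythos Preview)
Your proof is correct and follows essentially the same approach as the paper: for item 1 you set $N=\ord(f)+1$ so that $t_0 g\in\mathfrak{M}^{N}$ forces $\ord(f_{t_0})=\ord(f)$, and for item 2 you take $N=k+1$ where $k$ is the determinacy order, so that $f-f_{t_0}\in\mathfrak{M}^{k+1}$ yields $f\equivd f_{t_0}$, which is $f\equivk f_{t_0}$ since $\mathcal{D}_{\mathfrak{M}}$ is the full automorphism group. The only difference is that you spell out the identification $\mathcal{D}_{\mathfrak{M}}=\operatorname{Aut}_R(R[[\underline{x}]])$ explicitly, which the paper leaves implicit.
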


\begin{proof}
For the first item, we can write $f_t = f + tg \in R[[\underline{x}]][t]$ for some $g \in R[[\underline{x}]][t]$. If we set $N=\ord\left(f\right)+1$ then for every $t \in \langle t_0 \rangle^N$ we must have that $\ord\left(f\right)=\ord\left(f_{t_0}\right)$, as $tg \in \mathfrak{M}^N$. For the second item, if $f$ is $k-$determined with respect to $\mathfrak{M}$ then for every $N>k$ we have that if $t_0 \in \mathfrak{M}^N$ then $f -f_{t_0} \in \mathfrak{M}^{k+1}$, and therefore $f \equivk f_{t_0}$.  
\end{proof}

\begin{remark}\label{rem:unfolding}
\begin{enumerate}
    \item \textup{The results in Proposition~\ref{prop:semi} are much stronger than their counterparts over a field. That is, if $f_t$ is a unfolding of $f \in k[[\underline{x}]]$ (for some complete topologized field $k$), then in a neighborhood $U$ of $t=0$ we can only guaranty the inequalities $\mu\left(f_0\right) \geq \mu\left(f_{t_0}\right)$, $\tau\left(f_0\right) \geq \tau\left(f_{t_0}\right)$, and $\ord\left(f_0\right) \leq \ord\left(f_{t_0}\right)$, for every $t_0 \in U$ (where $\tau$ and $\mu$ are the Tjurina and Milnor number over $k$, respectively). This strengthened results come from the fact that over $R$, the variable $t$ can affect the order of $f_t$ but over a field they do not. For more information on the field case, see Appendix A in~\cite{greuel2016right}. } 
    \item \textup{Inspired by Section 3 of~\cite{greuel2017singularities} and Section I 2.4 of~\cite{greuel2007introduction}, one can define that $f \in R[[\underline{x}]]$ has \textbf{finite deformation type} if there exists a finite collection of elements $g_1, ..., g_l \in R[[x]]$ such that for every unfolding $f_t \in R[[x]][t]$ of $f$  there exists some $k$ such that for every $t_0 \in \mathfrak{m}^k$ there exists some $i$ such that $f_{t_0} \equivk g_i\left(x\right)$. In the field case we know that finite deformation type singularities are exactly $ADE$ (for more information on $ADE$ and deformations, see~\cite{greuel1990simple, greuel2017singularities}). But over a general local ring, Proposition~\ref{prop:semi} gives us in fact that every element of $R[[\underline{x}]]$ of finite determinacy with respect to $\mathfrak{M}$ has finite deformation type, which by Corollary~\ref{cor:sqrtI}, is true for every $f \in R[[\underline{x}]]$ such that $\sqrt{\langle f \rangle + J(f)} = \mathfrak{M}$.}
\end{enumerate}
 \end{remark}

\section{Power Series over a DVR}\label{sec:V_power_series}

In this section we focus on on the ring of power series $(\Vx, \maxV, \kappa)$ over a DVR $V$ with a chosen uniformizer $\pi$ (where $\maxV=\langle \pi, x_1, \dots, x_n\rangle$). The goal of this section is to develop an analogue of the Tjurina and the Milnor number and of $\equivk$ over $\Vx$ and to show that they have similar properties by "viewing $\pi$ as a variable", inspired by the discussion presented in Section~\ref{Introduction}. We formalize this idea using the following Notational setup:

\begin{notation}\label{def:subsoil}
Rcall that given some $a \in V$, there exists a unique integer $n \geq 0$ and a unique unit $u \in V$ such that $a=u\pi^n$. Therefore, given some $f \in V[[\underline{x}]]$, if we can write $f=\sum_{\alpha}a_{\alpha} \underline{x}^{\alpha}$, then $a_{\alpha}$ can be written uniquely as $u_{\alpha} \pi^{n_{\alpha}}$, and thus $f=\sum_{\alpha} u_{\alpha} \pi^{n_{\alpha}} \underline{x}^{\alpha}$. In this case, we denote 
 \begin{equation*}
     \tilde{f}(\underline{x},y)=\sum_{\alpha} u_{\alpha} y^{n^{\alpha}} \underline{x}^{\alpha} \in V[[\underline{x},y]],
 \end{equation*}
  where $V[[\underline{x},y]]=V[[x_1, \dots, x_n, y]]$ is the ring of formal power series in variables $x_1, \dots, x_n,y$ whose maximal ideal is $\naxV=\langle p, x_1, \dots, x_n, y \rangle$.
\end{notation}

\begin{remark}\label{rem:y-p}
\begin{enumerate}
    \item \textup{Note that the map $\Vx \to \Vxy$ defined by $f \mapsto \tilde{f}$ respects neither addition nor multiplication in general. Therefore, it can be thought of as a set-theoretical section of the projection $\text{pr} \colon \Vxy \to \Vx$ defined by $\text{pr}\left(y\right)=\pi$ and $\text{pr}\left(x_i\right)=x_i$ for every $i$. }
    \item \textup{The choice of uniformizer $\pi$ of $V$ does not effect the construction of $\tilde{f}$ up to $\equivr$. Given two uniformizers $\pi_1$ and $\pi_2$ of $V$, then there exists some unit $v \in V$ such that $\pi_2 = v \pi_1$. Therefore, given some $f=\sum_{\alpha} u_{\alpha} \pi^{n_{\alpha}} \underline{x}^{\alpha}$ as in Notation~\ref{def:subsoil}, then computing $\tilde{f}$ with respect to $\pi_1$ gives us $\tilde{f}_1=\sum_{\alpha} u_{\alpha} y^{n_{\alpha}} \underline{x}^{\alpha}$ and computing $\tilde{f}$ with respect to $\pi_2$ gives us $\tilde{f}_2=\sum_{\alpha} u_{\alpha} (vy)^{n_{\alpha}} \underline{x}^{\alpha}$.  Thus $\tilde{f}_1 \equivr \tilde{f}_2$ by the automorphism $y \mapsto vy$.  }
\end{enumerate}

\end{remark}

Inspired by Notation~\ref{def:subsoil}, we define a version of the Tjurina and Milnor number of $f$ over $\Vx$ based on $\tilde{f}$, keeping in mind the idea that we want to view the uniformizer $\pi$ as a variable. In order to do so, we would like to look at the colength of $J(\tilde{f})+\langle \tilde{f}\rangle$. But, this ideal is of at least coheight 1 in general, and so must have infinite length. Yet, the following proposition will tell us that if we intersect it with a "generic" hypersurface, it will have a constant finite colength. \\

Recall that given an ideal $I$, we say that an ideal $J \subset I$ is a minimal reduction of $I$ if there exists some $N$ such that $JI^N = I^{N+1}$ and if there is no ideal $J_1 \subsetneq J$ with this property (For more information, see Chapter 8 in~\cite{huneke2006integral}).  

\begin{proposition}\label{prop:generic_intersection}
    Let $I \subset \Vxy$ be an ideal such that $\frac{\Vxy}{I}$ is a one-dimensional ring. Then:
    \begin{enumerate}
        \item There exists a Zariski-open subset $U \subset \frac{\naxV}{\naxV^2 + I}$ such that for every $a \in \Vxy$, if $a + \left(\naxV^2 + I\right) \in U$ then $a \mod    I$ generates a minimal reduction of the maximal ideal of $\frac{\Vxy}{I}$. 
        \item If $\langle a \rangle \mod I$ is a minimal reduction of the maximal ideal of $\frac{\Vxy}{I}$ then we have that  $\textup{length}_{\Vxy}\left(\frac{\Vxy}{I+\langle a \rangle}\right)$ equals to the multiplicity of  $\frac{\Vxy}{I}$ (and therefore is finite and does not depend on the choice of $a$). 
        \item For every $a \in \Vxy$ we have that the multiplicity of $\frac{\Vxy}{I}$ is less than or equal to  $\textup{length}_{\Vxy}\left(\frac{\Vxy}{I+\langle a \rangle}\right)$.
    \end{enumerate}
\end{proposition}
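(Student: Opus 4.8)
The plan is to reduce all three statements to standard facts about minimal reductions and Hilbert--Samuel multiplicity for the one-dimensional complete local ring $A = \Vxy/I$, whose maximal ideal I denote $\mathfrak{M}_A$. The key background facts I would invoke are: (i) since $\Vxy$ is complete Noetherian local with residue field $\kappa$, so is $A$; (ii) for a Noetherian local ring of dimension one, a minimal reduction of $\mathfrak{M}_A$ is generated by a single element, and such a reduction is a parameter ideal; and (iii) if $\langle \bar a\rangle$ is a minimal reduction of $\mathfrak{M}_A$ then $\length_A(A/\langle \bar a\rangle) = e(\mathfrak{M}_A)$, the multiplicity of $A$, while for an arbitrary element $a$ one only has $e(\mathfrak{M}_A) \le \length_A(A/\langle \bar a\rangle)$ (possibly infinite). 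These are exactly Chapter 8 and Chapter 11 of Huneke--Swanson \cite{huneke2006integral}, so I would cite them rather than reprove them. The one translation needed is that $\length_{\Vxy}(M) = \length_A(M)$ for any $A$-module $M$ annihilated by $I$, since $\Vxy \to A$ is surjective and length is computed from composition series, which are the same on both sides; I would state this once at the start.

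For item (1), the standard existence proof of minimal reductions when the residue field is infinite uses a Zariski-generic choice of coefficients, but here $\kappa$ may be finite, so I cannot simply quote that. Instead I would argue as follows. Pass to $A$ and its associated graded ring $\mathrm{gr}_{\mathfrak{M}_A}(A)$; choosing $\bar a \in \mathfrak{M}_A$ so that its leading form in degree one is a homogeneous system of parameters (a ``superficial element'') is exactly the condition that $\langle \bar a\rangle$ is a reduction, and it is minimal because $\dim A = 1$. The set of leading forms in $\mathfrak{M}_A/\mathfrak{M}_A^2$ that fail to be a parameter is precisely the union of the (finitely many) minimal primes of $\mathrm{gr}_{\mathfrak{M}_A}(A)$ intersected with degree one, which is a proper Zariski-closed subset; its complement $U$ in $\mathfrak{M}_A/\mathfrak{M}_A^2 = \naxV/(\naxV^2+I)$ is the desired Zariski-open set, and any $a$ lifting a class in $U$ works. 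The slightly delicate point is that superficiality is a statement about leading forms, so $U$ genuinely lives in $\naxV/(\naxV^2+I)$ and the conclusion depends only on $a$ modulo $\naxV^2 + I$, exactly as stated; I would make sure the write-up isolates this.

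Items (2) and (3) are then immediate from the cited facts (iii): once $\langle \bar a\rangle$ is a minimal reduction, $\length_A(A/\langle \bar a\rangle) = e(\mathfrak{M}_A)$ is finite and independent of the choice; and for an arbitrary $a$, the inequality $e(\mathfrak{M}_A) \le \length_A(A/\langle a\rangle)$ holds with the right-hand side possibly $+\infty$ (which happens precisely when $\bar a$ lies in a minimal prime of $A$, i.e.\ $A/\langle a\rangle$ is still one-dimensional). I would translate each back through $\length_{\Vxy} = \length_A$ to match the proposition's phrasing. The main obstacle is item (1): getting a clean, residue-field-independent construction of the Zariski-open set $U$ via superficial elements / the associated graded ring, rather than the usual generic-coefficients argument that silently assumes $\kappa$ infinite. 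Everything else is bookkeeping around standard multiplicity theory.
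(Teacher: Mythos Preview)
Your approach is essentially the paper's: the paper's proof consists entirely of citations to Huneke--Swanson (Theorem~8.6.6 for part~1, Proposition~11.2.2 for part~2) and to semicontinuity of length for part~3, which is precisely the multiplicity-theoretic framework you invoke. Your explicit construction of $U$ via the degree-one part of the associated graded ring, and your care about the possibility that $\kappa$ is finite, go slightly beyond the paper's bare citation of Theorem~8.6.6 (which does assume an infinite residue field), but this is an unpacking of the same reference rather than a different argument.
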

\begin{proof}
    The first part is a special case of Theorem 8.6.6 in~\cite{huneke2006integral} (which in turn cites~\cite{northcott1954reductions, trung2003constructive}). The second part is a special case of Proposition 11.2.2 in~\cite{huneke2006integral}. The third part is a special case of semicontinuity of length (for more information, see~\cite{504741}).
\end{proof}

\begin{definition}\label{def:tjurinaV}
    Let $f \in \maxV \subset \Vx$. 
    \begin{enumerate}
        \item We define the \textbf{Milnor number of $f$} to be
        \begin{equation*}
            \mu_V\left(f\right)=\textup{length}_{\Vxy}\left(\frac{\Vxy}{J(\tilde{f}) + \langle a \rangle}\right)
        \end{equation*}
        for some $a \in \Vxy$ such that $ \langle a\rangle \mod     J(\tilde{f})$ is a minimal reduction of the maximal ideal of $\frac{\Vxy}{J(\tilde{f})}$, assuming that it is a one-dimensional ring. Otherwise, we set $\mu_V\left(f\right)=\infty$. 
        \item We define the \textbf{Tjurina number of $f$} to be
        \begin{equation*}
            \tau_V\left(f\right)=\textup{length}_{\Vxy}\left(\frac{\Vxy}{J(\tilde{f}) + \langle \tilde{f},  a \rangle}\right)
        \end{equation*} for some $a \in \Vxy$ such that $\langle a \rangle  \mod    (J(\tilde{f}) + \langle \tilde{f} \rangle)$ is a minimal reduction of the maximal ideal of $\frac{\Vxy}{J(\tilde{f}) + \langle \tilde{f} \rangle}$, assuming that it is a one-dimensional ring. Otherwise, we set $\tau_V\left(f\right)=\infty$. 
    \end{enumerate}
\end{definition}

\begin{remark}\label{rem:tjurina_def}
\begin{enumerate}
    \item \textup{Note that if $\mu_V\left(f\right) < \infty$ then $\tau_V\left(f\right) < \infty$ since $\tau_V\left(f\right) \leq \mu_V\left(f\right)$ (for a generic choice of $a$). But as in the positive characteristic case (see, for example, Remark 1.2.4. in~\cite{boubakri2009hypersurface}), the other direction is note true. Take, for example, $f\left(x\right)= x^p + p^{p-1}$ in the case where $p$ is the uniformizer. Yet, as in Proposition 2.1 of~\cite{hefez2019hypersurface}, we have that if $\tau_V\left(f\right) < \infty$ then $\mu_V\left(f\right) < \infty$ if and only if $\tilde{f} \in \sqrt{J(\tilde{f}) +\langle a \rangle}$ for a generic $a$ as in Definition~\ref{def:tjurinaV}. }%}
    \item \textup{We do not set the multiplicity of $\frac{\Vx}{\langle \tilde{f} \rangle + J(\tilde{f})}$ (resp. of $\frac{\Vx}{J(\tilde{f})}$) as the definition of $\tau_V\left(f\right)$ (resp. $\mu_V(f)$) since the multiplicity of $\frac{\Vxy}{I}$ will always be finite (for every ideal $I$), and as we see in Proposition~\ref{lem:tau(f,q)}, the finiteness of $\tau_V\left(f\right)$ as defined above tells us that $f$ has a special property that resembles having an isolated singularity. Note that over a field of positive characteristic $k$, one can define an alternative version of the Milnor number of $f \in k[[\underline{x}]]$ as  the multiplicity of $\frac{k[[\underline{x}]]}{J(f) + \langle f \rangle}$, and it behaves much more like its characteristic zero analogue. For more information, see Section 3 of~\cite{hefez2019hypersurface}.}
    \item \textup{Note that it is clear that the length of $\frac{\Vxy}{J(\tilde{f})}$ is infinite since $\Vxy$ is $(n+1)-$dimensional and $J(\tilde{f})$ is generated by $n$ elements, but it is not quite clear for $\frac{\Vxy}{J(\tilde{f}) + \langle \tilde{f} \rangle}$ (assuming that it is not zero). Yet, if it is non zero, by Proposition~\ref{prop:milnor_zero} we must have that $f\in \maxV^2$ and so $\tilde{f} \in \langle x_1, \dots, x_n, y \rangle^2$. Thus, by Lemma~\ref{lem:app_ord_der}, we have that $J(\tilde{f}) +\langle \tilde{f} \rangle \subset \langle x_1, \dots, x_n, y \rangle$.}
\end{enumerate}
\end{remark}

The following lemma gives us that $\tau_V$ detects a property of $f$ that resembles "being an isolated singularity" (but not fully), as a mixed characteristic analogue of Lemma 2.3 in~\cite{greuel2007introduction}. We discuss how to detect isolated singularities using an analogue of $\tau_V$ in depth in Section~\ref{sec:Jeffries_Hochter}.

\begin{proposition}\label{lem:tau(f,q)}
    Let $f \in \Vx$ and let $\tilde{\mathfrak{q}}$ be a prime ideal in $\Vxy$:
    \begin{enumerate}
        \item If $\left( \frac{\Vxy}{\langle \tilde{f} \rangle + J(\tilde{f})}\right) _{\tilde{\mathfrak{q}}}=0$ then $\left(\frac{\Vxy}{\langle \tilde{f} \rangle}\right)_{\tilde{\mathfrak{q}}}$ is regular.
    
    \item If $\tau_V\left(f\right) < \infty$ then for every prime ideal $\tilde{\mathfrak{q}}$ in $\Vxy$ such that $\frac{\Vxy}{\tilde{\mathfrak{q}}}$ is two dimensional we have that  $\left(\frac{\Vxy}{\langle \tilde{f} \rangle}\right)_{\tilde{\mathfrak{q}}}$ is regular. 
    \end{enumerate}
    
\end{proposition}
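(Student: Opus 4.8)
The plan is to prove both items by a local calculation at the prime $\tilde{\mathfrak q}$, reducing everything to the classical Jacobian criterion over the localized ring. For item (1): suppose $\left(\Vxy/\langle\tilde f\rangle + J(\tilde f)\right)_{\tilde{\mathfrak q}} = 0$. This means $\langle\tilde f\rangle + J(\tilde f)$ is not contained in $\tilde{\mathfrak q}$, i.e. some element of $\langle\tilde f,\partial_1\tilde f,\dots,\partial_n\tilde f\rangle$ lies outside $\tilde{\mathfrak q}$. If $\tilde f \notin\tilde{\mathfrak q}$ then $\tilde f$ is a unit in $\Vxy_{\tilde{\mathfrak q}}$, so $\left(\Vxy/\langle\tilde f\rangle\right)_{\tilde{\mathfrak q}} = 0$, which is regular vacuously. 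Otherwise $\tilde f\in\tilde{\mathfrak q}$, and then some $\partial_j\tilde f\notin\tilde{\mathfrak q}$; I would then invoke the Jacobian criterion for regularity of a hypersurface at a prime — concretely, that $\Vxy$ is regular (it is a power series ring over a DVR, hence regular), and a hypersurface quotient $\Vxy/\langle\tilde f\rangle$ is regular at $\tilde{\mathfrak q}$ precisely when $\tilde f\notin\tilde{\mathfrak q}^{(2)}$, which one detects by a nonvanishing partial derivative modulo $\tilde{\mathfrak q}$. Here the only subtlety in mixed characteristic is that the partials $\partial_1,\dots,\partial_n$ do not include a "derivative in the $y$-direction"; but since we are working in the ring $\Vxy$ where $y$ is a genuine variable, the standard Jacobian criterion over the Cohen–Macaulay (indeed regular) base would need the full set of partials. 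I would handle this by noting that if all of $\partial_1\tilde f,\dots,\partial_n\tilde f\in\tilde{\mathfrak q}$ then by hypothesis $\tilde f\notin\tilde{\mathfrak q}$ and we are done by the unit case, so the remaining case always supplies a usable partial derivative among $\partial_1,\dots,\partial_n$, and the Jacobian criterion (e.g. Matsumura, or \cite{de2013local}) gives regularity of the localization.

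For item (2): assume $\tau_V(f)<\infty$. By definition this means $\Vxy/(\langle\tilde f\rangle+J(\tilde f))$ is one-dimensional (otherwise $\tau_V(f)=\infty$) and has finite length after cutting by a generic linear element, i.e. its multiplicity is finite — but more to the point, being one-dimensional means $\dim\Vxy/(\langle\tilde f\rangle+J(\tilde f))\le 1$. Now let $\tilde{\mathfrak q}$ be a prime with $\dim\Vxy/\tilde{\mathfrak q} = 2$. If $\langle\tilde f\rangle+J(\tilde f)\subset\tilde{\mathfrak q}$, then $\tilde{\mathfrak q}$ contains the defining ideal, so $\dim\Vxy/\tilde{\mathfrak q}\le\dim\Vxy/(\langle\tilde f\rangle+J(\tilde f))\le 1$, contradicting $\dim\Vxy/\tilde{\mathfrak q}=2$. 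Hence $\langle\tilde f\rangle+J(\tilde f)\not\subset\tilde{\mathfrak q}$, which is exactly the hypothesis of item (1) localized at $\tilde{\mathfrak q}$; applying item (1) gives that $\left(\Vxy/\langle\tilde f\rangle\right)_{\tilde{\mathfrak q}}$ is regular.

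The main obstacle I anticipate is making the Jacobian criterion in item (1) precise in mixed characteristic, since the "Jacobian ideal" $J(\tilde f)$ used in the definitions deliberately omits differentiation with respect to $y$ (the dummy variable replacing the uniformizer). One must be careful that this omission does not break the regularity detection: the resolution is that whenever $\tilde f\in\tilde{\mathfrak q}$ and the localization of $\langle\tilde f\rangle+J(\tilde f)$ is the unit ideal at $\tilde{\mathfrak q}$, it is forced to be one of the $x$-partials that is a unit there, and that is enough for the criterion. A secondary (routine) point is citing the correct form of the Jacobian criterion over $\Vxy$ — one wants the version valid over an arbitrary regular base, not only over a field — but this is standard (see \cite[Corollary 3.3.7 type statements]{de2013local} or the Stacks Project), and the already-cited Henselian/regularity properties of $\Vxy$ suffice.
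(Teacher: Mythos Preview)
Your argument is correct and follows the same route as the paper: item (2) reduces to item (1) by the dimension count, and item (1) is handled by casing on whether $\tilde f\in\tilde{\mathfrak q}$, with regularity following from $\tilde f$ lying in $\tilde{\mathfrak q}$ but not in its square.

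One clarification that dissolves the obstacle you anticipate: in $\Vxy$ the ideal $J(\tilde f)$ is taken with respect to \emph{all} variables $x_1,\dots,x_n,y$, so $\partial_y\tilde f\in J(\tilde f)$ (see for instance the proof of Proposition~\ref{prop:milnor_zero}, where an element of $\langle\tilde f\rangle+J(\tilde f)$ is expanded with a $\partial_y\tilde f$ term). The $y$-partial is not ``deliberately omitted''; the whole point of passing from $f$ to $\tilde f$ is precisely to have a genuine variable available in place of $\pi$. With this in hand the paper avoids citing an external Jacobian criterion: it simply observes (via Lemma~\ref{lem:app_ord_der} applied with $I=\tilde{\mathfrak q}$) that $\tilde f\in\tilde{\mathfrak q}^2$ would force every partial, including $\partial_y\tilde f$, into $\tilde{\mathfrak q}$, contradicting the hypothesis; hence $\tilde f\in\tilde{\mathfrak q}\setminus\tilde{\mathfrak q}^2$ and the localized quotient of the regular ring $\Vxy$ by $\tilde f$ is regular.
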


\begin{proof}
    For the first part, note that this quotient being zero is equivalent to having $\langle \tilde{f} \rangle + J(\tilde{f}) \not \subset \tilde{\mathfrak{q}}$. If $f \notin \tilde{\mathfrak{q}}$ then we are done, since then $\left(\frac{\Vxy}{\langle \tilde{f} \rangle}\right)_{\tilde{\mathfrak{q}}}=0$. Otherwise, if $\tilde{f} \in \tilde{\mathfrak{q}}^2$ then by Lemma~\ref{lem:app_ord_der} we have that $J(\tilde{f}) \subset \tilde{\mathfrak{q}}$.  Therefore we must have that $\tilde{f} \in \tilde{\mathfrak{q}} \setminus \tilde{\mathfrak{q}}^2$, and so $\left(\frac{\Vxy}{\langle \tilde{f} \rangle}\right)_{\tilde{\mathfrak{q}}}$ is regular.\\

    For the second part, if $\tau_V\left(f\right) < \infty $ then by Remark~\ref{rem:tjurina_def} we have that $\frac{\Vxy}{\langle \tilde{f} \rangle + J(\tilde{f})}$ must be one-dimensional. Thus, since  $\frac{\Vxy}{\tilde{\mathfrak{q}}}$ is two dimensional, we must have that $\langle \tilde{f} \rangle + J(\tilde{f}) \not \subset \tilde{\mathfrak{q}}$, and so by the previous part we must have that
    $\left(\frac{\Vxy}{\langle f \rangle}\right)_{\tilde{\mathfrak{q}}}$ is regular.
\end{proof}

\begin{remark}
\begin{enumerate}
    \item \textup{Note that the reverse direction of Proposition~\ref{lem:tau(f,q)} is false. For example, if $p$ is uniformizer of $V$, let $f=p^2+x_1^p+x_2^p+x_3^p \in V[[x_1, x_2, x_3]]$, and choose $\tilde{\mathfrak{q}}=\langle y , x_1^p+x_2^p+x_3^p \rangle$. Then $J(\tilde{f}) \subset \tilde{\mathfrak{q}}$ which tells us that $\left(\frac{\Vxy}{\langle \tilde{f} \rangle + J(\tilde{f})}\right)_{\tilde{\mathfrak{q}}} \neq 0$ but since $\tilde{f} \in \tilde{\mathfrak{q}} \setminus \tilde{\mathfrak{q}}^2$ we have that $\left(\frac{V[[x_1, x_2,x_3,y]]}{\langle \tilde{f} \rangle}\right)_{\tilde{\mathfrak{q}}}$ is regular. } 
    \item \textup{Note that from Lemma~\ref{lem:tau(f,q)} we can conclude that if $\tau_V(f) < \infty$ then for every $y-\pi 
    \notin \tilde{\mathfrak{q}} \subset \Vxy$ we have that $\left(\frac{\Vx}{\langle f \rangle}\right)_{\mathfrak{q}}$ is regular where $\mathfrak{q} = \tilde{\mathfrak{q}} \mod (y-\pi)$.}
\end{enumerate}
\end{remark}

The following Corollary is an analogue of item 3 of Proposition~\ref{prop:jacobian_prop}. 

\begin{proposition}\label{cor:tjurina_bound}
    Let $f \in \Vx$ be of order $s \geq 2$. Then we have that
    \begin{equation*}
        \mu_V\left(f\right) \geq \tau_V\left(f\right) \geq \binom{n+s-2}{n+1}.
    \end{equation*}
\end{proposition}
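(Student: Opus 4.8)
The plan is to mimic the proof of the third item of Proposition~\ref{prop:jacobian_prop}, transposed to the present ``generic intersection'' setting. First I would reduce to controlling the multiplicity of $\frac{\Vxy}{J(\tilde f)+\langle\tilde f\rangle}$. By the third item of Proposition~\ref{prop:generic_intersection}, for any $a\in\Vxy$ we have $\textup{length}_{\Vxy}\!\left(\frac{\Vxy}{J(\tilde f)+\langle\tilde f, a\rangle}\right)\geq e\!\left(\frac{\Vxy}{J(\tilde f)+\langle\tilde f\rangle}\right)$, and by the second item this multiplicity is exactly $\tau_V(f)$ whenever $\frac{\Vxy}{J(\tilde f)+\langle\tilde f\rangle}$ is one-dimensional (if it is not one-dimensional then $\tau_V(f)=\infty$ and the inequality is trivial); similarly $\mu_V(f)\geq\tau_V(f)$ follows since $\langle\tilde f, a\rangle\supset\langle a\rangle$ gives a surjection, so it suffices to prove $\tau_V(f)\geq\binom{n+s-2}{n+1}$ in the finite case. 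So I would assume $\frac{\Vxy}{J(\tilde f)+\langle\tilde f\rangle}$ is one-dimensional with multiplicity $\tau_V(f)$.

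Next I would bound that multiplicity from below by the multiplicity of $\frac{\Vxy}{\langle x_1,\dots,x_n,y\rangle^{s-1}}$. Since $f$ has order $s\geq 2$, we have $\tilde f\in\langle x_1,\dots,x_n,y\rangle^{s}$ (the operator $f\mapsto\tilde f$ does not decrease order, as replacing $\pi^{n_\alpha}$ by $y^{n_\alpha}$ only adds to the total degree), hence by Lemma~\ref{lem:app_ord_der} applied to the ideal $\langle x_1,\dots,x_n,y\rangle$ we get $J(\tilde f)+\langle\tilde f\rangle\subset\langle x_1,\dots,x_n,y\rangle^{s-1}$. This inclusion induces a surjection of $\Vxy$-modules
\begin{equation*}
    \frac{\Vxy}{J(\tilde f)+\langle\tilde f\rangle}\twoheadrightarrow\frac{\Vxy}{\langle x_1,\dots,x_n,y\rangle^{s-1}}.
\end{equation*}
Both sides are modules over the local ring $\Vxy$ whose quotients are graded-type objects, and multiplicity is monotone under surjection of modules of the same dimension once we know the target is also one-dimensional; more carefully, I would instead argue via length of the associated graded or simply observe that the Hilbert–Samuel multiplicity of the source, taken with respect to a minimal reduction of its maximal ideal, dominates the length of the target's bottom graded pieces. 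The cleanest route is: a minimal reduction $\langle a\rangle$ of the maximal ideal of $\frac{\Vxy}{J(\tilde f)+\langle\tilde f\rangle}$ gives $\tau_V(f)=\textup{length}\!\left(\frac{\Vxy}{J(\tilde f)+\langle\tilde f,a\rangle}\right)$, and since $J(\tilde f)+\langle\tilde f\rangle\subset\langle x_1,\dots,x_n,y\rangle^{s-1}$ we obtain a surjection $\frac{\Vxy}{J(\tilde f)+\langle\tilde f,a\rangle}\twoheadrightarrow\frac{\Vxy}{\langle x_1,\dots,x_n,y\rangle^{s-1}+\langle a\rangle}$, so $\tau_V(f)\geq\textup{length}\!\left(\frac{\Vxy}{\langle x_1,\dots,x_n,y\rangle^{s-1}+\langle a\rangle}\right)$, and the latter is at least the multiplicity $e\!\left(\frac{\Vxy}{\langle x_1,\dots,x_n,y\rangle^{s-1}}\right)$ again by the third item of Proposition~\ref{prop:generic_intersection} (noting $\frac{\Vxy}{\langle x_1,\dots,x_n,y\rangle^{s-1}}$ is already one-dimensional since $V$ contributes the remaining dimension).

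Finally I would compute $e\!\left(\frac{\Vxy}{\langle x_1,\dots,x_n,y\rangle^{s-1}}\right)$ explicitly and identify it with $\binom{n+s-2}{n+1}$. The ring $\frac{\Vxy}{\langle x_1,\dots,x_n,y\rangle^{s-1}}$ is a free $V$-module with basis the monomials in the $n+1$ variables $x_1,\dots,x_n,y$ of total degree at most $s-2$; its multiplicity as a one-dimensional local ring equals this rank, which is $\binom{(n+1)+(s-2)}{n+1}=\binom{n+s-2}{n+1}$. I would phrase this last step exactly as in the proof of Proposition~\ref{prop:jacobian_prop}, item 3, which counts monomials in $s$ (there: $s$ generators of $I$) variables of bounded degree — here with $n+1$ variables $x_1,\dots,x_n,y$ and degree bound $s-2$. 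The main obstacle I anticipate is bookkeeping the multiplicity-versus-length comparisons correctly: one must be careful that multiplicity is only additive/monotone in the expected way for modules of the correct dimension, and the cleanest fix (as sketched) is to cut everything down by the same generic element $a$ first so that all the objects in play become finite-length, after which the surjections give honest length inequalities and the final count is purely combinatorial.
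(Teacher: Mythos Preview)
Your approach is correct and essentially identical to the paper's: both use Lemma~\ref{lem:app_ord_der} to obtain $\langle\tilde f\rangle+J(\tilde f)\subset\langle x_1,\dots,x_n,y\rangle^{s-1}$, pass to the induced surjection of quotients, invoke Proposition~\ref{prop:generic_intersection} to compare multiplicities, and finish by counting monomials in $x_1,\dots,x_n,y$ of degree at most $s-2$. Your write-up is a bit more explicit about first cutting everything by the generic element $a$ so that the surjections become honest length inequalities, but this is just a more careful rendering of exactly the same argument the paper gives.
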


\begin{proof}
By Lemma~\ref{lem:app_ord_der} we have that $\langle \tilde{f} \rangle+J(\tilde{f}) \subset \langle x_1, \dots, x_n, y \rangle^{s-1}$, and so we have a surjective map 
\begin{equation*}
   \frac{\Vxy}{\langle \tilde{f} \rangle+J(\tilde{f})} \to   \frac{\Vxy}{\langle x_1, \dots, x_n, y \rangle^{s-1}}.  
\end{equation*}
 Therefore, by Proposition~\ref{prop:generic_intersection} we can conclude that $\tau_V\left(f\right)$ is bigger than the multiplicity of the module $\frac{\Vxy}{\langle x_1, \dots, x_n, y \rangle^{s-1}}$ over the maximal ideal $\naxV$, which equals to $\binom{n+s-2}{n+1}$ (as it is exactly the number of monomials in $x_1, \dots, x_n, y$ of degree at most $s-2$).    
\end{proof}

Inspired by Section~\ref{appendix_Det} and by Notation~\ref{def:subsoil}, we now turn to defining an analogue of $\equivk$ that allows us to "view $\pi$ as a variable" by moving from $f$ to $\tilde{f}$.

\begin{definition}\label{def:equiv}
Given some $f,g \in \Vx$. We say that $f$ and $g$ are equivalent, denoted $f \sim g$, if $\tilde{f} \equivk \tilde{g}$ as elements of $\Vxy$. 
\end{definition}

The following proposition allows us to relate between equivalence and "change of variables" (especially in the case where we set $g=\tilde{g_1}$ for some $g_1 \in \Vx$), as viewed, for example, in the classification process of Theorem B in~\cite{carvajal2019covers} and in the notion of "can be written as", as presented in Definition 3.10 of~\cite{svoray2025ade}. 

\begin{proposition}\label{prop:equivalence_easy}
    Given $f \in \Vx$ and $g \in \Vxy$, if $\tilde{f} \equivk g$ then there exists a sequence $a_1, \dots, a_{n+1}$ that generates $\maxV$ and there exists some $H \in V[[z_1, \dots, z_{n+1}]]$ (the ring of formal power series in  some dummy variables $z_1, \dots, z_{n+1}$) whose coefficients are units such that
    \begin{equation*}
         f =  H\left(\pi, x_1, \dots, x_n\right)  \text{ and } \langle \text{pr}(g) \rangle= \langle H\left(a_1, \dots, a_{n+1}\right) \rangle.
    \end{equation*}
    In addition, if $f\left(\underline{x}\right)=\sum_{\alpha} u_{ \alpha} \pi^{n_{\alpha}} \underline{x}^{\alpha}$ as in Notation~\ref{def:subsoil} then for $\underline{z}=(z_1, \dots, z_n)$ we can choose $H\left(z_1, \dots, z_{n+1}\right)=\sum_{\alpha} u_{ \alpha} z_{n+1}^{n_{\alpha}} \underline{z}^{\alpha}$.
\end{proposition}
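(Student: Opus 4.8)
The plan is to witness $\tilde f \equivk g$ by an automorphism of $\Vxy$, read off the relevant data on the $\Vxy$–side, and then transport everything along the projection $\text{pr}$, taking $H$ to be nothing more than $\tilde f$ with its variables relabelled. First I would dispose of the trivial case $f\notin\maxV$: then $f$, hence $\tilde f$, is a unit, so $\langle\tilde f\rangle=\Vxy$ and $\tilde f\equivk g$ forces $g$ to be a unit, whence $\text{pr}(g)$ is a unit and $\langle\text{pr}(g)\rangle=\Vx$; since the $H$ built below has unit constant term $u_0$, evaluating it on any generating sequence of $\maxV$ again produces a unit, so that case is settled. So assume $f\in\maxV$, i.e.\ $\tilde f\in\naxV$. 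Writing $f=\sum_\alpha u_\alpha\pi^{n_\alpha}\underline x^\alpha$ as in Notation~\ref{def:subsoil}, set $H(z_1,\dots,z_{n+1})=\sum_\alpha u_\alpha z_{n+1}^{n_\alpha}z_1^{\alpha_1}\cdots z_n^{\alpha_n}$: its coefficients are the units $u_\alpha$, one has $\tilde f=H(x_1,\dots,x_n,y)$, and applying $\text{pr}$ (which sends $y\mapsto\pi$) gives $f=\text{pr}(\tilde f)=H(x_1,\dots,x_n,\pi)$, which is the asserted identity $f=H(\pi,x_1,\dots,x_n)$ once $\pi$ is placed in the uniformizer slot. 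The point to record is that for any continuous local $V$–algebra map $\Phi\colon\Vxy\to\Vx$ one has $\Phi(\tilde f)=H(\Phi(x_1),\dots,\Phi(x_n),\Phi(y))$.

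For the main line: by the Lifting Lemma~\ref{lem:lifting} the $V$–isomorphism $\frac{\Vxy}{\langle\tilde f\rangle}\xrightarrow{\ \sim\ }\frac{\Vxy}{\langle g\rangle}$ lifts to a $V$–automorphism $\varphi$ of $\Vxy$ with $\varphi(\langle\tilde f\rangle)=\langle g\rangle$, hence $\langle\varphi(\tilde f)\rangle=\langle g\rangle$. Now apply $\text{pr}$: it is a continuous $V$–algebra homomorphism carrying units to units, so $\langle\text{pr}(g)\rangle=\langle\text{pr}(\varphi(\tilde f))\rangle$, and by the remark above $\text{pr}(\varphi(\tilde f))=H\big(\text{pr}(\varphi(x_1)),\dots,\text{pr}(\varphi(x_n)),\text{pr}(\varphi(y))\big)$. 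I would therefore set $a_i:=\text{pr}(\varphi(x_i))$ for $i\le n$ and $a_{n+1}:=\text{pr}(\varphi(y))$, which immediately gives the ideal equality $\langle\text{pr}(g)\rangle=\langle H(a_1,\dots,a_{n+1})\rangle$.

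What remains — and the step I expect to be the main obstacle — is to ensure that $a_1,\dots,a_{n+1}$ actually generate $\maxV$ and not merely a proper subideal. Since $\varphi$ is a $V$–automorphism it fixes $\pi$ and carries a regular system of parameters to one, so $\pi,\varphi(x_1),\dots,\varphi(x_n),\varphi(y)$ generate $\naxV$; as $\text{pr}$ maps $\naxV$ onto $\maxV$, the elements $\pi,a_1,\dots,a_{n+1}$ generate $\maxV$, and the issue is to discard the redundant $\pi$. Passing to $\naxV/\naxV^2$, this reduces to a transversality statement about the principal prime $\ker(\text{pr}\circ\varphi)=\varphi^{-1}(\langle y-\pi\rangle)=\langle\varphi^{-1}(y)-\pi\rangle$: its generator must have nonzero $\pi$–component in $\naxV/\naxV^2$, equivalently the linear part of $\varphi^{-1}(y)$ must have $\pi$–coefficient different from $1$; granting this, $a_1,\dots,a_{n+1}$ span $\maxV/\maxV^2$ and hence generate $\maxV$ by completeness. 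Since Lemma~\ref{lem:lifting} produces only \emph{some} lift, the plan is to spend the residual freedom — precomposing $\varphi$ with an element of $\mathcal{D}_{\langle\tilde f\rangle}$, or postcomposing with one of $\mathcal{D}_{\langle g\rangle}$, neither of which disturbs $\langle\varphi(\tilde f)\rangle=\langle g\rangle$ — to move $\varphi^{-1}(y)$ into transversal position with respect to $\pi$. Verifying that such a correction is always available, and coordinating it with the fixed choice of $H$ and of the $a_i$, is where I expect the genuine content of the argument to lie.
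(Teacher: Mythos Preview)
Your approach is essentially identical to the paper's: define $H$ by relabelling the variables of $\tilde f$, take a $V$-automorphism $\varphi$ of $\Vxy$ witnessing $\langle\varphi(\tilde f)\rangle=\langle g\rangle$, set $a_i=\text{pr}(\varphi(x_i))$ for $i\le n$ and $a_{n+1}=\text{pr}(\varphi(y))$, and read off $\langle\text{pr}(g)\rangle=\langle H(a_1,\dots,a_{n+1})\rangle$ by pushing along $\text{pr}$.

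The one substantive difference is that you flag the step ``$a_1,\dots,a_{n+1}$ generate $\maxV$'' as the crux, whereas the paper simply asserts ``Since $\varphi$ is an isomorphism then $\maxV=\langle\text{pr}(\varphi(x_1)),\dots,\text{pr}(\varphi(x_n)),\text{pr}(\varphi(y))\rangle$'' and moves on. Your caution is warranted: the assertion fails for an arbitrary lift (e.g.\ $n=0$, $\varphi(y)=y-\pi$ gives $a_1=\text{pr}(y-\pi)=0$), so what one really gets from surjectivity of $\text{pr}\circ\varphi$ is only $\maxV=\langle\pi,a_1,\dots,a_{n+1}\rangle$. In other words, you have correctly located a gap that the paper's proof glosses over. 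Your proposed remedy---using the residual freedom in the lift to arrange $\overline{\varphi^{-1}(y-\pi)}\notin\text{span}(\overline{x_1},\dots,\overline{x_n},\overline{y})$ in $\naxV/\naxV^2$---is a reasonable line of attack (note that when $\ord(g)\ge 2$ the constraint $\psi\in\mathcal{D}_{\langle g\rangle}$ imposes no condition on the cotangent level, so there is ample room), and the paper's argument would need the same patch.
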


\begin{proof}
     Then there exists some automorphism $\varphi$ of $\Vx$ and some unit $u \in \Vxy$ such that $ug=\varphi(\tilde{f})$. If we write $f$ as $f\left(\underline{x}\right)=\sum_{\alpha} u_{ \alpha} \pi^{n_{\alpha}} \underline{x}^{\alpha}$ as in Notation~\ref{def:subsoil}, then we have that  $\varphi(\tilde{f}) =\sum_{\alpha} u_{\alpha} \varphi\left(y\right)^{n_{\alpha}}\varphi\left(\underline{x}\right)^{\alpha}$. Since $\text{pr}(g) = \text{pr}\left(u^{-1}\varphi(\tilde{f})\right)=\text{pr}(u^{-1})\text{pr}\left(\varphi(\tilde{f})\right)$ and since $\text{pr}\left(u^{-1}\right)$ has to be a unit in $\Vx$, then we can conclude that 
    \begin{equation*}
        \langle \text{pr}(g) \rangle = \langle \sum_{\alpha} u_{\alpha} \text{pr}\left(\varphi\left(y\right)\right)^{n_\alpha} \text{pr}\left(\varphi\left(\underline{x}\right)\right)^{\alpha} \rangle.
    \end{equation*}
     Since $\varphi$ is an isomorphism then we must have that that the ideal defined by $\langle \text{pr}\left(\varphi\left(x_1\right) \right), \dots, \text{pr}\left(\varphi\left(x_n\right)\right), \text{pr}\left(\varphi\left(y\right)\right) \rangle$ is exactly $\maxV$.
    Therefore we can set $H\left(z_1, \dots, z_{n+1}\right)=\sum_{\alpha} u_{ \alpha} z_{n+1}^{n_{\alpha}} \underline{z}^{\alpha}$ where $\underline{z}=\left(z_1, \dots, z_n\right)$ together with $a_i=\text{pr}(\varphi(x_i))$ for $i=1,\dots, n$ and $a_{n+1}=\text{pr}(\varphi(y))$, and the result follows. 
\end{proof}

%\begin{remark}\label{rem:H_setting}
%\textup{Note that by the proof of Proposition~\ref{prop:equivalence_easy} we can assume that if $f\left(\underline{x}\right)=\sum_{j, \alpha} u_{j, \alpha} p^j \underline{x}^{\alpha}$ where $u_{j, \alpha}$ are multiplicative lifts then  $H\left(z_1, \dots, z_{n+1}\right)=\sum_{j, \alpha} u_{j, \alpha} z_{n+1}^j \underline{z}^{\alpha}$. This observations is useful when we want to prove equivalences with specific forms, such as the ADE classification in Section~\ref{sec:ADE}.}
%\end{remark}

The following proposition is a mixed characteristic version of Lemma 2.10 in~\cite{greuel2007introduction} and of Lemma 1.2.7 in~\cite{boubakri2009hypersurface}.

\begin{proposition}\label{prop:order_equivalence}
    If $f \sim g$ then $\ord\left(f\right)=\ord\left(g\right)$ and $\tau_V\left(f\right) = \tau_V\left(g\right)$.

    %\begin{enumerate}
     %   \item  
        %\item If $f$ defines an isolated singularity and $f \sim g$ then so does $g$.
    %\end{enumerate}
    
\end{proposition}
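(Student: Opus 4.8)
The plan is to deduce both invariance statements from the relation $\tilde f \equivk \tilde g$ by first upgrading it, via the Lifting Lemma (Lemma~\ref{lem:lifting}), to an actual $R$-isomorphism of power series rings, and then transporting the relevant algebraic data. Concretely, if $\tilde f \equivk \tilde g$ then $\frac{\Vxy}{\langle \tilde f\rangle}\cong \frac{\Vxy}{\langle \tilde g\rangle}$, so by Lemma~\ref{lem:lifting} there exists an $R$-automorphism $\varphi$ of $\Vxy$ together with a unit $u$ such that $\langle \tilde f\rangle = \langle \varphi(\tilde g)\rangle$, i.e. $\tilde f = u\,\varphi(\tilde g)$. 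Here one must check that the hypotheses of Lemma~\ref{lem:lifting} apply: the ideals $\langle \tilde f\rangle$ and $\langle \tilde g\rangle$ lie in the maximal ideal $\naxV$ once $f, g \in \maxV$ — and indeed if $\tilde f$ is a unit then so is $\tilde g$ and both sides are regular with all invariants trivial, so we may assume we are in the non-unit case.

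For the order statement I would argue as follows. An $R$-automorphism $\varphi$ of $\Vxy$ preserves the maximal ideal $\naxV$ and hence preserves $\naxV$-adic order; multiplying by a unit $u$ also preserves order. Therefore $\ord_{\naxV}(\tilde f) = \ord_{\naxV}(\varphi(\tilde g)) = \ord_{\naxV}(\tilde g)$. It then remains to relate $\ord_{\naxV}(\tilde f)$ to $\ord_{\maxV}(f)$. By the construction in Notation~\ref{def:subsoil}, if $f=\sum_\alpha u_\alpha \pi^{n_\alpha}\underline x^\alpha$ then $\tilde f = \sum_\alpha u_\alpha y^{n_\alpha}\underline x^\alpha$, and the $\naxV$-order of the monomial $u_\alpha y^{n_\alpha}\underline x^\alpha$ is exactly $n_\alpha + |\alpha|$ (since $u_\alpha$ is a unit), which is precisely the $\maxV$-order of $u_\alpha \pi^{n_\alpha}\underline x^\alpha$ in $\Vx$. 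Hence $\ord(\tilde f) = \ord(f)$ and likewise $\ord(\tilde g)=\ord(g)$, giving $\ord(f)=\ord(g)$.

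For the Tjurina number I would use Lemma~\ref{lem:aut_der} applied in $\Vxy$: from $\tilde f = u\,\varphi(\tilde g)$ we get
\begin{equation*}
    \langle \tilde f\rangle + J(\tilde f) = \langle u\varphi(\tilde g)\rangle + J(u\varphi(\tilde g)) = \langle \varphi(\tilde g)\rangle + J(\varphi(\tilde g)) = \varphi\bigl(\langle \tilde g\rangle + J(\tilde g)\bigr),
\end{equation*}
using first that $J(uh)+\langle uh\rangle = J(h)+\langle h\rangle$ and then $\varphi(J(h))=J(\varphi(h))$. Thus $\varphi$ induces an isomorphism $\frac{\Vxy}{\langle \tilde f\rangle + J(\tilde f)} \xrightarrow{\ \sim\ } \frac{\Vxy}{\langle \tilde g\rangle + J(\tilde g)}$, so in particular one ring is one-dimensional iff the other is, handling the case $\tau_V = \infty$ on both sides simultaneously. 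In the finite case, both rings have the same multiplicity (an invariant of the local ring), so by Definition~\ref{def:tjurinaV} and Proposition~\ref{prop:generic_intersection}(2), $\tau_V(f)$ and $\tau_V(g)$ — each computed as the length after cutting by a generic minimal reduction, which equals that common multiplicity — agree. (Alternatively, push a generic $a$ for one side through $\varphi$ to get a generic $a'$ for the other.)

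The main obstacle I anticipate is the delicate point underlying the second step: the colengths defining $\tau_V$ are lengths after intersecting with a \emph{generic} hypersurface, so "isomorphic quotient rings $\Rightarrow$ equal $\tau_V$" is not quite immediate from the isomorphism alone — one has to invoke that this generic colength is exactly the Hilbert–Samuel multiplicity of $\frac{\Vxy}{\langle\tilde f\rangle+J(\tilde f)}$ (Proposition~\ref{prop:generic_intersection}(2)), which \emph{is} a ring-theoretic invariant, and then note multiplicity is preserved under the isomorphism $\varphi$ induces. The order statement and the reduction to $\tilde f = u\varphi(\tilde g)$ are routine; the only care needed there is the bookkeeping that the section $f\mapsto\tilde f$ preserves order monomial-by-monomial, which follows directly from uniqueness of the unit-times-power-of-$\pi$ representation in the DVR $V$.
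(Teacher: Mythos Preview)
Your proposal is correct and follows essentially the same approach as the paper: obtain $\tilde f = u\,\varphi(\tilde g)$, use that automorphisms and units preserve $\naxV$-adic order together with the observation $\ord(f)=\ord(\tilde f)$, and then invoke Lemma~\ref{lem:aut_der} plus Proposition~\ref{prop:generic_intersection} for the Tjurina number. If anything you are more careful than the paper on two points: you make explicit the appeal to the Lifting Lemma to produce $\varphi$ and $u$ from the definition of $\equivk$, and you correctly record $\langle \tilde f\rangle + J(\tilde f) = \varphi\bigl(\langle \tilde g\rangle + J(\tilde g)\bigr)$ and pass through multiplicity as a ring-theoretic invariant, whereas the paper's proof writes an equality of ideals and leaves the multiplicity step implicit.
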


\begin{proof}
         If $f \sim g$ then there exists a unit $u \in \Vxy$ and some automorphism $\varphi$ such that $\tilde{f}=u \varphi(\tilde{g})$. First, If $g \in \maxV^2 \setminus \maxV^{r+1}$ then $\tilde{g} \in \naxV^2 \setminus \naxV^{r+1}$. Therefore $\tilde{f}=u \varphi(\tilde{g}) \in \naxV^2 \setminus \naxV^{r+1}$ and so $f \in \maxV^2 \setminus \maxV^{r+1}$. Second, by Lemma~\ref{lem:aut_der} we have that $\langle \tilde{f}\rangle +J(\tilde{f})=\langle \tilde{g}\rangle+J\left(\tilde{g}\right)$ and the result follows from Proposition~\ref{prop:generic_intersection} (for a generic choice of $a \in \Vxy$ as in Definition~\ref{def:tjurinaV}). 
\end{proof}

\begin{remark}\label{rem:equiv_non_iso}
\textup{As in the positive characteristic case, if $f \sim g$ then we can not conclude that $\mu_V\left(f\right)=\mu_V\left(g\right)$. For example, set $f\left(x\right)=x^p + p^{p-1} \in V[[x]]$, assuming $p$ is a uniformizer of $V$. Then $\mu_V\left(f\right) = \infty$ since $\frac{\Vxy}{J(\tilde{f})}$ defines a ring of dimension 2, but $\mu_V\left(\left(1+x\right)f\right)$ is finite since we have that $\sqrt{J\left(\left(1+x\right)\left(x^p + y^{p-1}\right)\right)}=\langle x,y \rangle$.  Yet, as we see in Remarks~\ref{rem:smooth} and~\ref{rem:Morse_milnor}, there are some very specific cases where this is true. }
\end{remark}

The following lemma tells us that $\equivk$ over $V[[\underline{x}]]$ induces the equivalence $\sim$. 

\begin{lemma}\label{lemma:equiv_iso}
    Let $f,g \in \Vx$ and assume $\frac{\Vx}{ \langle f \rangle} \cong \frac{\Vx}{ \langle g \rangle}$ as $V-$algebras. Then $f \sim g$.
\end{lemma}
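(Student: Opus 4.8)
The plan is to reduce the statement to a change-of-variables statement about $\tilde f$ and $\tilde g$ in $\Vxy$, and then to invoke the Lifting Lemma (Lemma~\ref{lem:lifting}) together with Proposition~\ref{prop:equivalence_easy}. The starting point is a $V$-algebra isomorphism $\Phi\colon \frac{\Vx}{\langle f\rangle}\to\frac{\Vx}{\langle g\rangle}$. First I would apply the Lifting Lemma (with $R=V$, $I=\langle f\rangle$, $J=\langle g\rangle$) to lift $\Phi$ to a $V$-automorphism $\phi$ of $\Vx$ with $\phi(\langle f\rangle)=\langle g\rangle$; equivalently $g=u\cdot\phi(f)$ for some unit $u\in\Vx$. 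The content of the lemma is therefore that $f$ and $g$ are related by a genuine $V$-automorphism of $\Vx$ together with multiplication by a unit — i.e. $f\equivr$-up-to-unit $g$ — and what remains is to promote this to $\tilde f\equivk\tilde g$ in $\Vxy$.

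The key step is then to track what a $V$-automorphism $\phi$ of $\Vx$ and a unit multiplication do under the (non-additive, non-multiplicative) passage $h\mapsto\tilde h$. Write $\phi(x_i)=\sum_\alpha c_{i,\alpha}\underline x^\alpha$ with $c_{i,\alpha}=w_{i,\alpha}\pi^{m_{i,\alpha}}$ the DVR-normal form, and similarly expand the unit $u=\sum_\alpha v_\alpha\pi^{k_\alpha}\underline x^\alpha$ with $v_0$ a unit of $V$. The idea, exactly as in Proposition~\ref{prop:equivalence_easy}, is to build a $V$-automorphism $\tilde\phi$ of $\Vxy$ by setting $\tilde\phi(x_i)=\sum_\alpha w_{i,\alpha}y^{m_{i,\alpha}}\underline x^\alpha$ and $\tilde\phi(y)=y$, and a unit $\tilde u=\sum_\alpha v_\alpha y^{k_\alpha}\underline x^\alpha$ of $\Vxy$ (it is a unit because its constant term $v_0$ is a unit of $V$), so that $\mathrm{pr}\circ\tilde\phi=\phi\circ\mathrm{pr}$ on generators and $\mathrm{pr}(\tilde u)=u$. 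One must check $\tilde\phi$ is actually an automorphism: its Jacobian over $V$ has the same reduction mod $\nanxV$ as that of $\phi$ (the $\pi$-adic data only contributes higher-order terms once $y$ is a variable), so invertibility follows from Lemma~\ref{lem:W[[x]]_good_prop}. Finally one verifies $\tilde u\cdot\tilde\phi(\tilde f)=\widetilde{u\cdot\phi(f)}=\tilde g$: this is the assertion that the systematic ``replace $\pi$ by $y$'' operation is compatible with the specific $\tilde\phi,\tilde u$ we have chosen, which holds termwise since in each monomial of $u\cdot\phi(f)$ the $V$-coefficient is assembled from products of the $V$-coefficients of $u$, of $f$, and of the $\phi(x_i)$, and the $y$-exponent in $\tilde u\cdot\tilde\phi(\tilde f)$ records precisely the total $\pi$-valuation of that product; the only subtlety is that collecting like $\underline x$-monomials in $\Vx$ may create cancellation or carrying in the $\pi$-adic valuation, but the $\underline x$-expansion is performed before the $\pi$-normalization in both $\widetilde{(\cdot)}$ and in our construction, so the exponents match. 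This gives $\tilde f\equivk\tilde g$, i.e. $f\sim g$.

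The main obstacle is exactly this last compatibility check: the operation $h\mapsto\tilde h$ is defined by first expanding in $\underline x$ and then normalizing each $V$-coefficient, and it respects neither addition nor multiplication (Remark~\ref{rem:y-p}), so one cannot simply say ``$\widetilde{u\phi(f)}=\tilde u\,\tilde\phi(\tilde f)$'' — rather, one must argue that for the \emph{particular} lift $\tilde\phi,\tilde u$ built from the $\pi$-adic data of $\phi,u$, equality holds after the $\underline x$-collection step, and that is where the bookkeeping lives. A clean way to organize this is to prove the intermediate statement: for any $h\in\Vx$ and any automorphism $\phi$ of $\Vx$, one has $\langle\mathrm{pr}(\tilde\phi(\tilde h))\rangle=\langle\phi(h)\rangle$ with $\tilde\phi$ as above, and then upgrade ``$\langle\cdot\rangle$-equality'' to honest equality by choosing the unit $\tilde u$ to absorb the discrepancy — this is essentially the mechanism already used in the proof of Proposition~\ref{prop:equivalence_easy}, run in reverse. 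Once that is in place the lemma follows immediately, so I would structure the writeup as: (1) invoke Lemma~\ref{lem:lifting} to get $g=u\phi(f)$; (2) construct $\tilde\phi,\tilde u$ and verify $\tilde\phi\in\mathrm{Aut}$; (3) verify $\tilde g=\tilde u\,\tilde\phi(\tilde f)$ by the termwise valuation argument; (4) conclude $f\sim g$.
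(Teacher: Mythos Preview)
Your step (4) --- the claimed equality $\tilde g = \tilde u\,\tilde\phi(\tilde f)$ --- fails in general, and the failure is exactly the phenomenon you flag and then dismiss. Take $n=2$, $f = x_1 + x_2$, $u=1$, and the $V$-automorphism $\phi$ of $V[[x_1,x_2]]$ given by $\phi(x_1)=x_1$, $\phi(x_2)=(-1+\pi)x_1 + x_2$. Then $g=\phi(f)=\pi x_1 + x_2$ and hence $\tilde g = yx_1 + x_2$. But $-1+\pi$ is a \emph{unit} of $V$, so $\widetilde{\phi(x_2)} = (-1+\pi)x_1 + x_2$; your $\tilde\phi$ therefore satisfies $\tilde\phi(\tilde f) = x_1 + (-1+\pi)x_1 + x_2 = \pi x_1 + x_2$, which is not an associate of $yx_1 + x_2$ in $\Vxy$ (reduce modulo $\langle x_1,x_2\rangle$). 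The mechanism is this: after substituting the $\widetilde{\phi(x_i)}$ into $\tilde f$ and collecting like $\underline{x}$-monomials, the resulting $V$-coefficient can be a sum of units that lands in $\langle\pi\rangle$ (here $1+(-1+\pi)=\pi$); the tilde of $g$ records that as a power of $y$, but $\tilde\phi(\tilde f)$ has no way to manufacture that $y$. Your fallback --- prove $\langle\text{pr}(\tilde\phi(\tilde h))\rangle=\langle\phi(h)\rangle$ in $\Vx$ and absorb the discrepancy into a unit of $\Vxy$ --- does not rescue this: agreement after $\text{pr}$ says nothing about $\equivk$ upstairs (e.g.\ $\pi$ and $y$ have the same projection but $\Vxy/\langle\pi\rangle\not\cong\Vxy/\langle y\rangle$).

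For comparison, the paper's proof is two lines: lift the isomorphism to a $V$-automorphism $\varphi$ of $\Vx$, extend it to $\Vxy$ by $\varphi(y)=y$ while leaving each $\varphi(x_i)\in\Vx\subset\Vxy$ unchanged, and assert $\langle\varphi(\tilde f)\rangle=\langle\tilde g\rangle$. This is a \emph{different} lift from yours (no tilde applied to the $\varphi(x_i)$), but the same counterexample applies verbatim --- the paper's extended $\varphi$ also yields $\varphi(\tilde f)=\pi x_1+x_2$, not an associate of $\tilde g$. So your more elaborate construction is in the same position as the paper's one-line extension: what is missing in both is an automorphism of $\Vxy$ that genuinely moves $y$ so as to convert the stray powers of $\pi$ produced by unit-cancellation back into powers of $y$.
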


\begin{proof}
    If $\frac{\Vx}{ \langle f \rangle} \cong \frac{\Vx}{ \langle g \rangle}$ then by Lemma~\ref{lem:W[[x]]_good_prop} there exists some automorphism $\varphi$ of $\Vx$ such that $\langle \varphi\left(f\right) \rangle = \langle g \rangle$. Therefore, we can extend $\varphi$ to $\Vxy$ by setting $\varphi\left(y\right)=y$, which would give us that $\langle \varphi(\tilde{f}) \rangle = \langle \tilde{g} \rangle$, and the result follows. 
\end{proof}

The following proposition shows that regular hypersurfaces are exactly of order $1$ and have a specific form up to equivalence (as a mixed characteristic version of Lemma 2.44 in~\cite{greuel2007introduction}). 

%Note the similarity of the proof of Proposition~\ref{prop:milnor_zero} with the proof of Proposition~\ref{lem:tau(f,q)}. 

\begin{proposition}\label{prop:milnor_zero}
    Let $f \in \maxV \subset \Vx$. Then the following are equivalent:
    \begin{enumerate}
        \item $\frac{\Vx}{\langle f \rangle}$ is a regular local ring, 
        \item $f$ is of order $1$, 
        \item $f \sim x_1$,
        \item $\tau_V\left(f\right)=0$.
    \end{enumerate}
\end{proposition}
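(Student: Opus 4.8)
The plan is to prove the three equivalences $(1)\Leftrightarrow(2)$, $(2)\Leftrightarrow(3)$ and $(2)\Leftrightarrow(4)$, so that all four statements coincide; throughout I take $f\ne 0$ (if $f=0$ only $(1)$ can hold, so this degenerate case is tacitly excluded). The thread tying everything together is the elementary observation that $\ord(f)=1$ if and only if $\ord(\tilde f)=1$: writing $f=\sum_\alpha u_\alpha\pi^{n_\alpha}\underline{x}^\alpha$ as in Notation~\ref{def:subsoil}, the $\naxV$-order of the monomial $u_\alpha y^{n_\alpha}\underline{x}^\alpha$ of $\tilde f$ equals $n_\alpha+|\alpha|$, which is exactly the $\maxV$-order of the monomial $u_\alpha\pi^{n_\alpha}\underline{x}^\alpha$ of $f$; since $f\in\maxV$ forces $n_0\ge 1$ we get $\tilde f\in\naxV$, and comparing the minimal orders over all $\alpha$ gives the claim. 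In particular, when $\ord(\tilde f)=1$ the linear part of $\tilde f$ in $\naxV/\naxV^2$ is a nonzero $\kappa$-linear combination of the classes of $x_1,\dots,x_n,y$ (no $\pi$ can appear, since every coefficient of $\tilde f$ is a unit of $V$), and the nonzero coefficients are precisely the variables $z$ for which $\partial_z\tilde f(\underline{0})$ is a unit of $V$.

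\textbf{$(1)\Leftrightarrow(2)$.} Here $\Vx$ is a regular local ring of dimension $n+1$ (power series over the DVR $V$) and $0\ne f\in\maxV$ is a nonzerodivisor, so $\frac{\Vx}{\langle f\rangle}$ has dimension $n$. By the standard fact that the quotient of a regular local ring $(A,\mathfrak{M})$ by a nonzero principal ideal $\langle f\rangle$ is regular exactly when $f\notin\mathfrak{M}^2$ — obtained by comparing $\dim_\kappa\!\big(\mathfrak{M}/(\mathfrak{M}^2+\langle f\rangle)\big)$ with $\dim A-1$ — this holds iff $f\notin\maxV^2$, i.e. iff $\ord(f)=1$.

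\textbf{$(2)\Leftrightarrow(3)$.} For $(3)\Rightarrow(2)$ there is nothing to do: $f\sim x_1$ forces $\ord(f)=\ord(x_1)=1$ by Proposition~\ref{prop:order_equivalence}. For $(2)\Rightarrow(3)$, assume $\ord(f)=1$, so $\ord(\tilde f)=1$; pick a variable $z\in\{x_1,\dots,x_n,y\}$ with $\partial_z\tilde f(\underline{0})$ a unit. The $V$-homomorphism $\Phi$ of $\Vxy$ defined by $\Phi(z)=\tilde f$ and $\Phi(w)=w$ for the remaining variables $w$ is well-defined (as $\tilde f\in\naxV$), and its Jacobian matrix at the origin is the identity with the $z$-row replaced by the gradient of $\tilde f$ at $\underline{0}$; cofactor expansion along that row shows its determinant equals $\partial_z\tilde f(\underline{0})$, a unit, so by Lemma~\ref{lem:W[[x]]_good_prop}(2) $\Phi$ is an automorphism. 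Since $\Phi(\langle z\rangle)=\langle\tilde f\rangle$ we get $\tilde f\equivk z$, and because $\frac{\Vxy}{\langle z\rangle}$ is a power series ring in $n$ variables over $V$ it is $V$-isomorphic to $\frac{\Vxy}{\langle x_1\rangle}=\frac{\Vxy}{\langle \tilde{x_1}\rangle}$; hence $\tilde f\equivk\tilde{x_1}$, i.e. $f\sim x_1$.

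\textbf{$(2)\Leftrightarrow(4)$, and the main point.} Since $\tilde f\in\naxV$ is not a unit, $J(\tilde f)+\langle\tilde f\rangle$ is the unit ideal iff some partial derivative $\partial_z\tilde f$ ($z\in\{x_1,\dots,x_n,y\}$) is a unit; by Lemma~\ref{lem:app_ord_der}, which holds verbatim for $\partial_y$ too, this forces $\ord(\tilde f)\le 1$, hence $\ord(\tilde f)=1$, and conversely $\ord(\tilde f)=1$ makes one such $\partial_z\tilde f(\underline{0})$ a unit. Unwinding Definition~\ref{def:tjurinaV}, a generic element $a$ lies in $\naxV$ and so cannot contribute to generating the unit ideal; thus $\tau_V(f)=0$ — the relevant quotient being the zero ring, of length $0$ — exactly when $J(\tilde f)+\langle\tilde f\rangle$ is already the unit ideal, while in every other case the quotient is nonzero and $\tau_V(f)\ge 1$. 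Combining with the order observation, $\tau_V(f)=0\Leftrightarrow\ord(\tilde f)=1\Leftrightarrow\ord(f)=1$, closing the loop. The only genuinely delicate points are the bookkeeping bridging the two ambient rings — $(1)$ lives over $\Vx$ while $(3)$ and $(4)$ live over $\Vxy$, reconciled via the order comparison and the inverse function theorem — and the degenerate reading of $\tau_V(f)=0$, where the quotient is the zero ring rather than a genuinely one-dimensional ring (together with the convention, already in force in Remark~\ref{rem:tjurina_def}, that $J(\tilde f)$ includes $\partial_y\tilde f$); none of these is a substantive obstacle.
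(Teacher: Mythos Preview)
Your proof is correct and follows essentially the same approach as the paper's: both hinge on the fact that $\ord(f)=\ord(\tilde f)$, use the inverse function theorem (Lemma~\ref{lem:W[[x]]_good_prop}) to show $\tilde f\equivk x_1$ when the order is $1$, and argue that $\tau_V(f)=0$ forces $\langle\tilde f\rangle+J(\tilde f)=\Vxy$ since the generic $a$ lies in $\naxV$. The only differences are cosmetic---the paper runs the cycle $2\to 3\to 4\to 2$ while you prove three separate biconditionals, and the paper first linearizes before applying the implicit function theorem whereas you substitute $z\mapsto\tilde f$ directly---but the substance is identical; your explicit remark that no $\pi$ can occur in the linear part of $\tilde f$ (because every coefficient is a unit) is a nice clarification that the paper leaves implicit.
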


\begin{proof}
    For $1 \to 2$, if $S=\frac{\Vx}{\langle f \rangle}$ is regular (with maximal ideal $\mathfrak{m}_S$) and $f \in \maxV^2$ then we would have that
    \begin{equation*}
        \dim_\kappa\left(\frac{\maxV}{\maxV^2}\right) =  \dim_\kappa\left(\frac{\mathfrak{m}_S}{\mathfrak{m}_S^2}\right) \neq \dim \Vx,
    \end{equation*}
    \noindent which is a contradiction. For $2 \to 1$, if $f$ is of order $1$ then $f \in \maxV \setminus\maxV^2$, and so $f$ is a non zero divisor. Therefore, since $\Vx$ is regular, then so is $\frac{\Vx}{\langle f \rangle}$.  For $2 \to 3$, if $f$ is of order $1$ then $\tilde{f}$ must be of order $1$ (in $\Vxy$), and so we can write $\tilde{f}  = uy + u_1x_1 + \cdots +u_n x_n +g$ for some $g \in \langle x_1, \dots, x_n, y \rangle^2$ and some $u, u_1, \dots, u_n \in V$ that are either zero or units. So, by looking at the $V-$automorphism of $\Vxy$ defined by $x_1 \mapsto uy + u_1x_1 + \cdots u_n x_n$, we can conclude that $\tilde{f} \equivk x_1 + g_1$ for some $g_1 \in \langle x_1, \dots, x_n, y \rangle^2$. Therefore by the implicit and the inverse function theorem (Lemma~\ref{lem:W[[x]]_good_prop}) we get that $f \sim x_1$. For $3 \to 4$, if $f \sim x_1$ then by Proposition~\ref{prop:order_equivalence} we can conclude that $\tau_V\left(f\right)=\tau_V\left(x_1\right)=0$. For $4 \to 2$,  assume that $\tau_V \left(f\right)=0$. Then $1 \in \langle \tilde{f},a \rangle + J(\tilde{f})$ for some $a$ as in Proposition~\ref{prop:generic_intersection}. Since $a\in \naxV$, we must have that $1 \in \langle \tilde{f} \rangle + J(\tilde{f})$ and so there exists some $b_1, \dots, a_n,b, c \in \Vxy$ such that 
    \begin{equation*}
        1 = b\partial_y(\tilde{f}) + c\tilde{f} + \sum_i b_i \partial_i(\tilde{f}).
    \end{equation*}
     \noindent But since $f \in \maxV$, then either $\partial_y(\tilde{f}) \notin \naxV$ or there exists some $i$ such that $\partial_i(\tilde{f}) \notin \naxV$. Then by Lemma~\ref{lem:app_ord_der} we can conclude that that $\tilde{f}$ is of order $1$ and therefore so is $f$.
\end{proof}

\begin{remark}\label{rem:smooth}
%\begin{enumerate}
    %\item \textup{Note that by a proof similar to that of Proposition~\ref{lem:tau(f,q)} one can show that given some $p \notin \mathfrak{q} \subset \Vx$ prime ideal, then $\left(\frac{\Vx}{\langle f \rangle}\right)_\mathfrak{q}$ is regular if and only if $\langle f \rangle + J\left(f\right) \not\subset \mathfrak{q}$. This can be though of as a partial version of mixed characteristic Jacobian criterion, and was first presented in~\cite{nagata1958general}. Jeffries and Hochter in~\cite{hochster2021jacobian}, proved a complete version of the Jacobian criterion in the mixed characteristic, in addition to constructing an  analogue to derivations. We discuss some of their work in Section~\ref{sec:Jeffries_Hochter}.}
    %\item
    \textup{Note that given $f \in \maxV$, since $\mu_V\left(f\right) \geq \tau_V\left(f\right)$ then $\mu_V\left(f\right)=0$ gives us that $\tau_V\left(f\right)=0$ as well. In fact, the reverse is also true. It $\tau_V\left(f\right)=0$ then by the proof of Proposition~\ref{prop:milnor_zero} we must have that $J(\tilde{f})=\Vxy$, and so $\mu_V(f)=0$.}
%\end{enumerate}
\end{remark}

We turn to a discussion on determinacy over $V$, based upon the results of Section~\ref{appendix_Det}. 

\begin{definition}\label{def:det_V}
      We say that $f \in \Vx$ is \textbf{$k-$determined} if for every $g \in \Vx$, if $\tilde{f} - \tilde{g} \in \langle x_1, \dots, x_n, y \rangle^{k+1}$ then $f \sim g$. We say that $f$ has \textbf{finite determinacy} if it is $k-$determined for some $k$.  
\end{definition}

Note that a similar definition can be defined for elements in $\Vxy$. 

\begin{remark}\label{rem:determinacy}
%\begin{enumerate}
 %   \item \textup{Note we can write $f\left(\underline{x}\right)=\sum_{j, \alpha} u_{j, \alpha} \pi^j \underline{x}^{\alpha}$ as in Notation~\ref{def:subsoil}, and so we view $jet_k\left(f\right)$ as the polynomial element defined by $\sum_{j+ |\alpha| \leq k} u_{j, \alpha} \pi^j \underline{x}^{\alpha}$. Note that this gives us in fact that if $jet_k(f)=jet_k(g)$ then $\tilde{f} - \tilde{g} \in \langle x_1, \dots, x_n,y \rangle^{k+1}$. }
     \textup{Note that $\tilde{f}$ is $k-$determined (as in Section~\ref{appendix_Det}) then we can conclude that  ${f}$ is $k-$determined. Thus, by Theorem~\ref{thm_App} we have the following version of the finite determinacy for $\Vx$: If $f \in \maxV^2 \subset \Vx$ with
    \begin{equation*}
        \langle x_1, \dots, x_n,y \rangle^{k+2} \subset \langle x_1, \dots, x_n,y \rangle \cdot \langle \tilde{f} \rangle + \langle x_1, \dots, x_n,y \rangle^2 \cdot J(\tilde{f}) \subset \Vxy,
    \end{equation*}
      then $f$ is $\left(2k-\ord\left(f\right)+2\right)-$determined. }
%\end{enumerate}
\end{remark}

The following proposition is an analogue of Corollary 2.24 in~\cite{greuel2007introduction} and of Lemma 2.6 in~\cite{greuel1990simple}.

\begin{proposition}\label{lem:reduc_a}
    Let $f \in \Vx$ with $\langle x_1, \dots, x_n,y \rangle \subset  \sqrt{\langle \tilde{f} \rangle + J(\tilde{f})}$ and denote by $\overline{f} = \tilde{f} \mod \pi$, which we view as an element of $\kappa[[\underline{x},y]]$. Then:
    \begin{enumerate}
        \item $\tau_V\left(f\right) < \infty$,
        \item $f$ is finitely-determined,
        \item $\overline{f}$ defines an isolated singularity over $\kappa[[\underline{x},y]]$.
    \end{enumerate} 
\end{proposition}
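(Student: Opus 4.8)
The plan is to prove the three items essentially in sequence, using the hypothesis $\langle x_1,\dots,x_n,y\rangle \subset \sqrt{\langle \tilde f\rangle + J(\tilde f)}$ as the central tool, since it gives us a $k$ with $\langle x_1,\dots,x_n,y\rangle^k \subset \langle \tilde f\rangle + J(\tilde f)$ inside $\Vxy$. For item (1), the radical hypothesis forces $\frac{\Vxy}{\langle \tilde f\rangle + J(\tilde f)}$ to be supported only at $\naxV$ once we also know it is nonzero; but in fact it gives directly that this quotient has dimension at most $1$ (its support after inverting any variable is empty, so the only prime containing $\langle \tilde f\rangle + J(\tilde f)$ besides $\naxV$ could be one containing $\pi$ but none of the $x_i,y$). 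The cleanest route is: the radical condition says $V\{[\underline x,y]\}/(\langle\tilde f\rangle+J(\tilde f))$ is a quotient of $\Vxy/\langle x_1,\dots,x_n,y\rangle^k \cong V/\text{(nothing)}\,[[\text{finite}]]$-type ring, which is a finitely generated $V$-module, hence one-dimensional. Then by Remark~\ref{rem:tjurina_def} (or Definition~\ref{def:tjurinaV} directly) and Proposition~\ref{prop:generic_intersection}, $\tau_V(f) = \textup{length}_{\Vxy}(\Vxy/(J(\tilde f)+\langle \tilde f, a\rangle))$ is finite for a generic $a$, since intersecting a one-dimensional ring with a generic hypersurface gives finite length.

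For item (2), I would apply Corollary~\ref{cor:sqrtI} (or rather its translation to $\Vxy$) with $I = \langle x_1,\dots,x_n,y\rangle$: the hypothesis is exactly $I \subset \sqrt{\langle \tilde f\rangle + J(\tilde f)}$, so $\tilde f$ has finite determinacy with respect to $I$ in the sense of Section~\ref{appendix_Det}; by the observation in Remark~\ref{rem:determinacy} that $\tilde f$ being $k$-determined implies $f$ is $k$-determined (Definition~\ref{def:det_V}), we conclude $f$ is finitely determined. One small point to check: Corollary~\ref{cor:sqrtI} is stated for $f \in I^2$, so I should first note that if $f \notin \maxV^2$ then $f$ has order $1$, hence $f \sim x_1$ by Proposition~\ref{prop:milnor_zero} and is trivially finitely determined; otherwise $\tilde f \in I^2$ and the corollary applies.

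For item (3), set $\overline f = \tilde f \mod \pi \in \kappa[[\underline x, y]]$. Reducing the inclusion $\langle x_1,\dots,x_n,y\rangle^k \subset \langle \tilde f\rangle + J(\tilde f)$ modulo $\pi$ gives $\langle x_1,\dots,x_n,y\rangle^k \subset \langle \overline f\rangle + J(\overline f)$ in $\kappa[[\underline x,y]]$, because the partial derivatives $\partial_i(\tilde f)$ and $\partial_y(\tilde f)$ reduce to $\partial_i(\overline f)$ and $\partial_y(\overline f)$ (derivatives commute with the quotient $V \to \kappa$). Hence $\sqrt{\langle \overline f\rangle + J(\overline f)} \supset \langle x_1,\dots,x_n,y\rangle$, i.e.\ equals the maximal ideal, which is precisely the classical criterion (Lemma 2.3 in~\cite{greuel2007introduction}) for $\overline f$ to define an isolated hypersurface singularity over $\kappa[[\underline x,y]]$.

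The main obstacle I anticipate is in item (1): making rigorous that the radical hypothesis forces one-dimensionality of $\frac{\Vxy}{\langle \tilde f\rangle + J(\tilde f)}$ rather than zero-dimensionality (which is impossible since $\Vxy$ has dimension $n+2$ and we quotient by only $n+2$ elements, so the quotient has dimension $\geq 1$) — one must argue that $\sqrt{\langle \tilde f\rangle + J(\tilde f)} = \naxV$ is genuinely achievable and that the only associated/minimal prime of the quotient other than $\naxV$ would have to contain $\langle x_1,\dots,x_n,y\rangle$, contradicting the dimension count, so there are no such primes and the ring is one-dimensional, local-Artinian-over-$V$ in the relevant sense. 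Once that dimension bookkeeping is pinned down, everything else follows from the already-established machinery (Proposition~\ref{prop:generic_intersection}, Corollary~\ref{cor:sqrtI}, and the mod-$\pi$ reduction).
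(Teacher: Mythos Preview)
Your proposal matches the paper's proof essentially step for step: both extract a $k$ with $\langle x_1,\dots,x_n,y\rangle^k \subset \langle \tilde f\rangle + J(\tilde f)$, use this to get one-dimensionality of the quotient (hence $\tau_V(f)<\infty$ via Proposition~\ref{prop:generic_intersection}) and finite determinacy (via Theorem~\ref{thm_App}/Corollary~\ref{cor:sqrtI} and Remark~\ref{rem:determinacy}), and then reduce modulo $\pi$ for item (3). One minor correction to your dimension bookkeeping: the argument ``dimension $\geq 1$ because we quotient an $(n+2)$-dimensional ring by $n+2$ elements'' only yields $\geq 0$ by Krull's height theorem; the correct lower bound (in the relevant case $f\in\maxV^2$) comes from Lemma~\ref{lem:app_ord_der}, which gives $\langle\tilde f\rangle + J(\tilde f)\subset\langle x_1,\dots,x_n,y\rangle$, so the quotient surjects onto $\Vxy/\langle x_1,\dots,x_n,y\rangle\cong V$ and is therefore at least one-dimensional.
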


\begin{proof}
    Since  $\langle x_1, \dots, x_n,y \rangle \subset  \sqrt{\langle \tilde{f} \rangle + J(\tilde{f})}$ then we can find some $k$ for which we have that $\langle x_1, \dots, x_n,y \rangle^k \subset \langle \tilde{f} \rangle +  J(\tilde{f})$, hence $f$ has finite determinacy by Proposition~\ref{thm_App} and Remark~\ref{rem:determinacy}. In addition, $\frac{\Vxy}{\langle \tilde{f} \rangle + J(\tilde{f})}$ is a one-dimensional ring, and so by Remark~\ref{rem:tjurina_def} we must have that $\tau_V\left(f\right) < \infty$. Now, since $\langle x_1, \dots, x_n,y \rangle^k \subset \langle \tilde{f} \rangle +  J(\tilde{f})$ then going$\mod    \pi$ gives us that $\langle x_1, \dots, x_n, y \rangle^k \subset \langle \overline{f} \rangle +  J\left(\overline{f}\right)$ in $\kappa[[\underline{x},y]]$. Therefore we can conclude that $\overline{f}$ defines an isolated singularity (for more details, see Section 1.2 of~\cite{boubakri2009hypersurface}).
\end{proof}

\begin{remark}
\begin{enumerate}
    \item \textup{Note that not every $f \in \Vx$ with $\tau_V\left(f\right) < \infty$ has the property that $\langle x_1, \dots, x_n,y \rangle \subset  \sqrt{\langle \tilde{f} \rangle + J(\tilde{f})}$. For example, if $p$ is the uniformizer of $V$, then for $f\left(x\right)=x^p+p^p$ we have that the radical of  ${\langle \tilde{f} \rangle + J(\tilde{f})}$ equals to $\langle py,  x+y \rangle$, which does not contain $\langle x,y \rangle$. }
    \item \textup{Using the notations of Proposition~\ref{lem:reduc_a}, note that if $f \sim g$ then $\overline{f} \equivk \overline{g}$ over $\kappa[[\underline{x}]]$. However, the opposite is not true. For example, if $\kappa$ is of characteristic $p$ then $x^p+y^p$ and $y^p$ are contact equivalent over $\kappa[[x,y]]$ (since $x^p+y^p=(x+y)^p$ over $\kappa$) but $x^p+\pi^p \not\sim x^p$ (as one has finite $\tau_V$ and the other does not).}
\end{enumerate}
\end{remark}

We end this section with a discussion of elements in $V[[\underline{x}]]$ of order $2$, and how their lifts to $V[[\underline{x},y]]$ have a special form. From now until the end of this section, we assume that the characteristic of $\kappa$ is not $2$.

\begin{definition}\label{def:hessian}
    Given some $f \in V[[\underline{x}]]$, we define the \textbf{Hessian matrix of $f$} is the matrix defined as follows: we can write 
    \begin{equation*}
        \tilde{f}=v_0y^2+\sum_i \frac{v_i}{2} yx_i + \sum_{i \neq j} \frac{v_{i,j}}{2} x_i x_j + \sum_iv_{i,i}x_i^2 + \tilde{g},
    \end{equation*}
    \noindent for some units $v_0 , v_i, v_{i,j}$ for every $i,j$ and some $g \in \maxV^3$, as in Notation~\ref{def:subsoil}. Therefore we set
    \begin{equation*}
         \Hess(f) =
        \begin{bmatrix}
            v_{1,1} & \dots & v_{1, n} & v_1\\
            \vdots &  & \vdots  & \vdots \\
            v_{n,1} & \dots & v_{n,n} & v_n\\
            v_1 & \dots & v_n & v_0
        \end{bmatrix}
        \in \textup{Mat}_{n+1}(V).
    \end{equation*}
\end{definition}

\begin{remark}\label{rem:Hessian}
    \begin{enumerate}
        \item \textup{Note that the Hessian matrix $H=\Hess(f)$ satisfies the property that there exists some $g \in \maxV^3$ such that $\tilde{f}=\vec{x}H \vec{x}^\intercal +\tilde{g}$ where $\vec{x}=(x_1, \dots, x_n, y)$. } 
        \item \textup{We can view $ \Hess(f)$ as the matrix of second partial derivatives of $\tilde{f}$ at $\underline{0}$. That is, the $(i,j)-$th coordinate of $\Hess(f)$ is exactly $\frac{\partial^2(\tilde{f})}{ \partial x_i \partial x_j} (\underline{0},0) \in V$ (where $x_{n+1}=y$).  }
        \item \textup{We introduce the following notation: For every $r >0$, we define the ideal $\mathfrak{a}_r=\sum_{i=0}^{r-1} \langle \pi^i\rangle \cdot \langle x_1, \dots, x_n,y\rangle^{r-i} \subset V[[\underline{x},y]]$. For example, we have that  $\mathfrak{a}_3= \langle x_1, \dots, x_n, y \rangle^3 + \langle \pi \rangle \cdot \langle x_1, \dots, x_n, y \rangle^2 + \langle \pi^2 \rangle \cdot \langle x_1, \dots, x_n, y \rangle$ in $V[[ x_1, \dots, x_n, y ]]$. Intuitively, $g \in \mathfrak{a}_r$ if $g(\underline{0})=0$ and $g \in \naxV^r$. Note that $\mathfrak{a}_r$ is preserved under any automorphism of $V[[\underline{x},y]]$.} 
    \end{enumerate}
\end{remark}

The following proposition is an analogue over $V[[\underline{x}]]$ of the splitting lemma over field (see, for example, Theorem 2.47 of~\cite{greuel2007introduction} and Lemma 3.9 of~\cite{greuel2016right}), and it is a generalized analogue of Lemma 3.1 in~\cite{carvajal2019covers}. For more information on the splitting lemma over fields, see~\cite{greuel2025splitting}. In addition, its proof is inspired by the proof of Theorem 3.7 in~\cite{svoray2025ade}.

\begin{proposition}[Splitting Lemma]\label{prop:splitting}
Assume that the field $\kappa$ is quadratically closed and of characteristic different than $2$. Let $f \in V[[\underline{x}]]$ be of order $2$. Then there exists some $0 \leq k \leq n$ and some $g\left(x_{k+1}, \dots, x_n\right) \in \mathfrak{a}_3 \subset V[[x_{k+1}, \dots, x_n]]$  such that 
\begin{equation*}
    \tilde{f} \equivr y^2 + x_1^2 +\cdots + x_k^2 + g\left(x_{k+1}, \dots, x_n\right).
\end{equation*}  
\end{proposition}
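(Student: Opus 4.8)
The plan is to follow the classical Morse-theoretic strategy, adapted to the mixed-characteristic setting via the lift $\tilde{f}$, and to argue by induction on the number of ``active'' variables using a sequence of formal completions of squares. Since $f$ has order $2$, its lift $\tilde{f}$ has order $2$ in $\Vxy$, so we may write $\tilde{f} = Q + \tilde{g}$ where $Q = \vec{x}\,H\,\vec{x}^\intercal$ is the quadratic part associated to $\Hess(f)$ (as in Remark~\ref{rem:Hessian}(1)) and $\tilde{g} \in \mathfrak{a}_3$, the ideal from Remark~\ref{rem:Hessian}(3) that is preserved by all automorphisms of $\Vxy$. Because the characteristic of $\kappa$ is not $2$, the quadratic part $Q$ is nonzero modulo $\naxV^3$, so after a $V$-linear change of the variables $x_1, \dots, x_n, y$ we may assume some diagonal entry — say the coefficient of $y^2$, or else of $x_1^2$ — is a unit; here I would use that $\Hess(f) \not\equiv 0 \mod \pi$ forces at least one entry (possibly off-diagonal, in which case a substitution $x_i \mapsto x_i + x_j$, legitimate since $2$ is invertible, converts a unit off-diagonal entry into a unit diagonal one) to be a unit in $V$.

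The main step is then a single round of ``completing the square'': suppose the coefficient of (say) $x_1^2$ in $\tilde f$ is a unit $v$. I would collect all terms of $\tilde{f}$ involving $x_1$, writing $\tilde{f} = v x_1^2 + x_1 \ell(x_2, \dots, x_n, y) + f_1(x_2, \dots, x_n, y) + (\text{higher order in }x_1)$, where more precisely one argues that $\tilde f = u\bigl(x_1 + h\bigr)^2 + f_1$ for a unit $u$ with $u \equiv v$, an element $h \in \naxV$ depending only on the remaining variables plus corrections, and $f_1$ in the remaining variables; the existence of $u$ and $h$ is exactly the content of the formal splitting lemma and follows from the Henselian/complete structure of $\Vxy$ together with $\kappa$ being quadratically closed (which lets us extract the square root of $u$ to absorb it into an automorphism $x_1 \mapsto \beta x_1$, as in Example~\ref{ex:A_1_det}). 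Applying the automorphism $x_1 \mapsto x_1 - h$ followed by $x_1 \mapsto \beta^{-1} x_1$ yields $\tilde f \equivr x_1^2 + f_1(x_2, \dots, x_n, y)$. One must check that these automorphisms are genuine automorphisms of $\Vxy$ (via the Inverse Function Theorem, Lemma~\ref{lem:W[[x]]_good_prop}, since their linear parts are invertible) and that $f_1$ still has order $\ge 2$.

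Now I would iterate: either $f_1$ has a unit on the diagonal of its Hessian, in which case we repeat the procedure on $f_1$ and split off another square, or $f_1$ has Hessian $\equiv 0 \mod \pi$, i.e. $f_1 \in \mathfrak{a}_3$, at which point the induction terminates. Relabelling so that the variable $y$ (the one that must survive because of Remark~\ref{rem:y-p}, though in fact the argument does not need $y$ to be treated specially — the statement permits any $0 \le k \le n$) sits among the remaining variables, we arrive at $\tilde f \equivr y^2 + x_1^2 + \cdots + x_k^2 + g(x_{k+1}, \dots, x_n)$ with $g \in \mathfrak{a}_3 \subset V[[x_{k+1}, \dots, x_n]]$; the hypothesis that \emph{some} diagonal entry is a unit at the first stage (guaranteed since $\operatorname{char}\kappa \ne 2$ and $f$ has order exactly $2$) ensures $k \ge 1$ counting the $y^2$ term, so the $y^2$ summand is legitimately present. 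The main obstacle I anticipate is bookkeeping the interaction between the $\pi$-adic and $\naxV$-adic filtrations: unlike over a field, ``higher order'' terms in the completion of the square can feed $\pi$-divisible lower-$\deg$ contributions back into $f_1$, so one must verify carefully that after each step the leftover $f_1$ genuinely lies in the appropriate $\mathfrak{a}_r$ and that the process stabilizes — this is where the automorphism-invariance of $\mathfrak{a}_r$ from Remark~\ref{rem:Hessian}(3) does the crucial work, and where one should be careful that the infinitely many substitutions assemble into a single automorphism, appealing to completeness of $\Vxy$ exactly as in Lemma~\ref{lem:app_const}.
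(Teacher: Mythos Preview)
Your approach is correct and takes a genuinely different route from the paper's. The paper first diagonalizes the entire Hessian in one stroke (citing Baeza's theory of quadratic forms over local rings) to reach $\tilde f \equivr y^2 + x_1^2 + \cdots + x_k^2 + h_2$ with $h_2 \in \mathfrak{a}_3$, and only \emph{then} removes the surviving cross-terms $y g_{n+1} + \sum_{i\le k} x_i g_i$ by an infinite tower of substitutions $x_i \mapsto x_i - g_i/2$, assembled into a single automorphism via a Cauchy-sequence/limit argument (Lemma~\ref{lem:equi_cont}). Your variable-by-variable completion of the square instead handles the quadratic and higher-order pieces simultaneously: once the implicit function theorem (Lemma~\ref{lem:W[[x]]_good_prop}) produces $\tilde f = U\,(x_1+h)^2 + f_1$ with $f_1 \in V[[x_2,\dots,x_n,y]]$, a single automorphism $x_1 \mapsto \sqrt{U}\,(x_1+h)$ splits off $x_1^2$ completely, and after at most $n+1$ such steps the induction terminates. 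In particular, the ``infinitely many substitutions'' worry in your final paragraph is unnecessary in your own scheme --- that limiting argument is exactly what the paper needs but you sidestep. What the paper's route buys is that the rank $k$ is transparently the $\kappa$-rank of $\Hess(f)$; what yours buys is avoiding both the external input on diagonalization of quadratic forms and the Cauchy-sequence bookkeeping. One small correction: your phrase ``$y$ sits among the remaining variables'' is backwards --- you need $y$ among the \emph{split-off} squares so that the residual $g$ lands in $V[[x_{k+1},\dots,x_n]]$; the permutation automorphism swapping $y$ with one of the split-off $x_i$'s is what legitimizes this.
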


\begin{proof}
    First, since $f \in \maxV^2 \setminus \maxV^3$ then we have that $\tilde{f} \in \naxV^2 \setminus \naxV^3$. So, following Notation~\ref{def:subsoil}, we can write $\tilde{f}=\sum_{\alpha} u_{\alpha} y^{n_{\alpha}} \underline{x}^{\alpha}$, and so we set $f_1=\sum_{|\alpha|=2} u_{\alpha} y^{n_{\alpha}} \underline{x}^{\alpha}$ and $h_1=\tilde{f}-f_1 \in \naxV^3$. \\

    Now, by Definition~\ref{def:hessian} we can write $f_1 = \vec{x} H \vec{x}^\intercal$ where $H=\Hess (f)$. Since $H$ is a symmetric matrix whose coefficients are either units or zero, then $f_1$ is a quadratic form, and so by Proposition 3.4 in Chapter I of~\cite{baeza2006quadratic} (noting that $2$ is invertible in $V$ since the characteristic of $\kappa$ is not $2$) there exists some invertible matrix $W$ and some diagonal matrix $D$ such that $H = W D W^\intercal$. Denote $\vec{w}=\vec{x}W=(w_1, \dots, w_{n+1})$. Note that since $W$ is invertible then by Lemma~\ref{lem:W[[x]]_good_prop} we can conclude that the $V-$homomorphism that sends $x_i \mapsto w_i$ for every $i \leq n$ and $y \mapsto w_{n+1}$ is a $V-$automorphism. In addition, up to rearranging the variables, we can denote the diagonal entries of $D$ by $u_1, \dots, u_k, u_{k+1}, \dots, u_{n+1}$ such that $u_1, \dots, u_k$ are units and $u_{k+1}, \dots, u_{n+1} \in \naxV$. Therefore we have that
    \begin{equation*}
        f_1 = \vec{x} H \vec{x}^\intercal = \vec{x} (W D W^\intercal) \vec{x}^\intercal = \sum_{i=1}^{n+1} u_i w_i^2.   
    \end{equation*}
    \noindent Now, since we can take a square root of $u_i$ for every $i \leq k$ (as $\kappa$ is quadratically closed and as $V[[\underline{x}]]$ is complete - and therefore Henselian) then
    \begin{equation*}
        \sum_{i=1}^{n+1} u_iw_i^2 + h_1 = \sum_{i=1}^{k} (v_iw_i)^2 + h_2,
    \end{equation*}
    where $v_i^2=u_i$ for every $i \leq k$ and $h_2 \in \naxV^3$. Therefore, up to $\equivr$ over $V[[\underline{x},y]]$, we can conclude that $\tilde{f} =  y^2 +  x_1^2 +\cdots  + x_k^2 + h_2$, where $h_2 \in \naxV^3$. Note that by the construction of automorphisms, we have that $h_2 \in \mathfrak{a}_3$. \\
    
    Now, we can write 
    \begin{equation*}
        \tilde{f} = y^2 +x_1^2 + \cdots + x_k^2 + f_3\left(x_{k+1}, \dots, x_n\right)  + yg_{n+1} + \sum_{i=1}^k x_i g_i,
    \end{equation*}
    \noindent for some $f_3 \in  \mathfrak{a}_3 \subset V[[x_{k+1}, \dots, x_n]]$ and some $g_0, g_1, \dots, g_k \in \naxV^2$.  Now, by looking at the automoprhism $\varphi$ defined by $\varphi(x_i)= x_i - \frac{g_i}{2}$ for $i=1,\dots, k$, $\varphi(y) = y - \frac{g_{n+1}}{2}$, and $\varphi(x_i)=x_i$ for every $i>k$, we have that 
    \begin{equation*}
        \varphi(\tilde{f}) = y^2 +x_1^2 + \cdots + x_k^2 + f_3(x_{k+1}, \dots, x_n) + f_4(x_{k+1}, \dots, x_n)  + yh_{n+1} + \sum_{i=1}^k x_i h_i,
    \end{equation*}
    \noindent where $f_4\left(x_{k+1}, \dots, x_n\right) \in \mathfrak{a}_4$ and $h_i \in \naxV^3$ for every $i$. Therefore, by repeating this process, we get that for every $l$ there exists an automorphism $\varphi_l$ such that
     \begin{equation*}
        \varphi_l(\tilde{f}) = y^2 +x_1^2 + \cdots + x_k^2 + \sum_{i=3}^l f_i\left(x_{k+1}, \dots, x_n\right) + y h_{l,n+1} + \sum_{i=1}^k x_i h_{l,i},
    \end{equation*}
    \noindent where $f_i \in \mathfrak{a}_i$ and $h_{l,i} \in \naxV^{l+1}$ for $i=0, \dots, k, n+1$. Thus if we denote $F_l=\varphi_l(f)$ for every $l$, then we have that
    \begin{equation*}
        F_{l+1}-F_l = f_{l+1} +  y(h_{l+1,n+1}-h_{l,n+1})+\sum_{i=1}^k x_i (h_{l+1,i}-h_{l,i}) \in \mathfrak{a}_l  \subset \naxV^l. 
    \end{equation*}
    \noindent Hence we can conclude that $\{F_l\}_{l=1}^\infty$ is a Cauchy sequence, and since $V[[\underline{x},y]]$ is complete, then it must converge to some $F$. Note that since $h_{l,i} \in \naxV^{l+1}$ for $i=0, \dots, k$, we must have that $h_l=  y h_{l,n+1} + \sum_{i=1}^k x_i h_{l,i} \to 0$ as $l \to \infty$. Therefore we can conclude that there exists some $g(x_{k+1}, \dots, x_n) \in \mathfrak{a}_3 \subset V[[x_{k+1}, \dots, x_n]]$  such that 
    \begin{equation*}
        F(x_1, \dots, x_n) =  y^2 +x_1^2 + \cdots + x_k^2  + g(x_{k+1}, \dots, x_n),
    \end{equation*}
    \noindent and since $F_l=\varphi_l(f)$ with $F_l \to F$, by Lemma~\ref{lem:equi_cont}, there exists some automorphism $\Phi$ such that $F=\Phi(f)$, which gives us that $\tilde{f} \equivr  y^2 + x_1^2 +\cdots + x_k^2 + g\left(x_{k+1}, \dots, x_n\right)$.  
\end{proof}

\begin{definition}
    The integer $k \in \mathbb{N}$ from Proposition~\ref{prop:splitting}  is called \textbf{the rank of $f$} and is denoted by $\text{rk}\left(f\right)$. The element $g \in \naxV^3$ is called \textbf{the residual part of $f$}. 
\end{definition}

\begin{lemma}
    If $f \sim g$ then $\textup{rk}(f) = \textup{rk}(g)$. 
\end{lemma}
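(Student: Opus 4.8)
The plan is to combine the splitting lemma over $V$ (Proposition~\ref{prop:splitting}) with the classical splitting lemma over the residue field $\kappa$, bridging the two by reduction modulo $\pi$. Since $\rk$ is defined only for elements of order $2$, we may assume $\ord(f)=2$; then, as $f\sim g$, Proposition~\ref{prop:order_equivalence} gives $\ord(g)=2$ as well, so $\rk(f)$ and $\rk(g)$ are both defined via Proposition~\ref{prop:splitting}.

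Write $k=\rk(f)$ and $l=\rk(g)$. By Proposition~\ref{prop:splitting} there are $V$-automorphisms of $\Vxy$ carrying $\tilde f$ to $N_f := y^2+x_1^2+\cdots+x_k^2+g_1$ and $\tilde g$ to $N_g := y^2+x_1^2+\cdots+x_l^2+g_2$, with $g_1\in\mathfrak{a}_3\subset V[[x_{k+1},\dots,x_n]]$ and $g_2\in\mathfrak{a}_3\subset V[[x_{l+1},\dots,x_n]]$. In particular $\tilde f\equivk N_f$ and $\tilde g\equivk N_g$ over $\Vxy$, and since $f\sim g$ means $\tilde f\equivk\tilde g$, transitivity yields $N_f\equivk N_g$ over $\Vxy$.

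Next I reduce modulo $\pi$. If $\varphi$ is a $V$-automorphism of $\Vxy$ and $u$ a unit with $N_g=u\,\varphi(N_f)$, then $\varphi$ is $V$-linear, hence fixes $\pi$ and descends to an automorphism $\overline\varphi$ of $\kappa[[\underline{x},y]]$; reducing the relation modulo $\pi$ gives $\overline{N_g}=\overline u\,\overline\varphi(\overline{N_f})$ with $\overline u$ a unit, so $\overline{N_f}\equivk\overline{N_g}$ in $\kappa[[\underline{x},y]]$ (cf.\ the remark following Proposition~\ref{lem:reduc_a}). Since $g_1\in\mathfrak{a}_3$ involves only $x_{k+1},\dots,x_n$, its reduction satisfies $\overline{g_1}\in\langle\overline x_{k+1},\dots,\overline x_n\rangle^3$, and likewise $\overline{g_2}\in\langle\overline x_{l+1},\dots,\overline x_n\rangle^3$, so
\begin{equation*}
\overline y^{\,2}+\overline x_1^{\,2}+\cdots+\overline x_k^{\,2}+\overline{g_1}\ \equivk\ \overline y^{\,2}+\overline x_1^{\,2}+\cdots+\overline x_l^{\,2}+\overline{g_2}
\end{equation*}
in $\kappa[[\underline{x},y]]$, with $\overline{g_1}$ and $\overline{g_2}$ of order $\ge 3$.

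Finally I invoke the splitting lemma over the field $\kappa$ (quadratically closed, characteristic $\ne 2$); see Theorem~2.47 of \cite{greuel2007introduction}. Both sides of the displayed equivalence have order $2$, and for an order-$2$ power series over a field of characteristic $\ne 2$ the rank of the Hessian at the origin is a contact invariant: if $h_2=u\cdot\psi(h_1)$ for a unit $u$ and an automorphism $\psi$, with $h_1$ of order $2$, then the Hessian of $h_2$ at the origin equals $u(\underline 0)\,(D\psi)^{\intercal}\,\Hess(h_1)\,(D\psi)$, which has the same rank as $\Hess(h_1)$ since $u(\underline 0)\ne 0$ and $D\psi$ is invertible. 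The Hessian at the origin of $\overline y^{\,2}+\overline x_1^{\,2}+\cdots+\overline x_j^{\,2}+(\text{order}\ge 3)$ is $2\,\mathrm{diag}(1,\dots,1,0,\dots,0,1)$ with exactly $j+1$ nonzero entries (the $x_1,\dots,x_j,y$ slots; the higher-order part contributes nothing and $2$ is a unit in $\kappa$), hence has rank $j+1$. Applying this to the two sides of the displayed equivalence gives $k+1=l+1$, that is $\rk(f)=\rk(g)$. The point requiring the most care is simply keeping track of the extra ``$+1$'' produced by the variable $y$, and checking that reduction modulo $\pi$ is compatible with $\equivk$, with orders, and with the formation of the Hessian; granting these, the argument is formal.
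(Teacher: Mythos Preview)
Your proof is correct, and the core idea---that the rank of the Hessian at the origin is a contact invariant---is the same one the paper uses. The route you take is somewhat different, though. The paper works directly with $\Hess(f)$ and $\Hess(g)$ over $V$: from $\tilde f\circ\varphi = u\,\tilde g$ it computes, via chain and product rules, that the Hessian of $\tilde f\circ\varphi$ at the origin is $(D\varphi)^{\!\top}\Hess(f)\,D\varphi$ and that of $u\,\tilde g$ is $u(\underline 0)\Hess(g)$, concluding that the two Hessians have the same rank (in the sense relevant to the splitting lemma, namely rank modulo $\pi$). You instead first apply the splitting lemma to pass to normal forms $N_f,N_g$, then reduce modulo $\pi$, and only then run the Hessian-invariance argument over the field $\kappa$. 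Your detour through $\kappa$ has the advantage that ``rank'' is unambiguous there, but it costs you the extra steps of invoking Proposition~\ref{prop:splitting} and checking compatibility of $\equivk$ and of $\mathfrak a_3$ with reduction; the paper's argument is shorter because it never leaves $V[[\underline x,y]]$ and never needs the normal forms. Note also that your citation of the classical splitting lemma is not really used---what you actually use is the direct Hessian computation, which is exactly the paper's computation transported to $\kappa$.
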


\begin{proof}
    Observe that by Remark~\ref{rem:Hessian} and the proof of Propositions~\ref{prop:splitting} it is enough to show that the rank of the matrix $ \Hess(f)$ over $V$ is the same as that of $ \Hess(g)$. Since $f \sim g$ then there exists some $V-$automorpshim $\varphi$ of $V[[\underline{x},y]]$ and some unit $u \in V[[\underline{x},y]]$ such that $\tilde{f}\circ \varphi= u \tilde{g}$. Therefore, if we denote $\varphi(x_i)=\varphi_i$ for every $1 \leq i\leq n+1$ (where we write $x_{n+1}=y$), then for every $i,j$ we have that 
    \begin{equation*}
        \frac{\partial^2(\tilde{f} \circ \varphi)}{\partial x_i \partial x_j} (\underline{x}) = \sum_{k,l} \frac{\partial^2(\tilde{f})}{\partial x_k \partial x_l}(\varphi(\underline{x})) \cdot \frac{\partial \varphi_k}{\partial x_i}(\underline{x}) \cdot \frac{\partial \varphi_l}{\partial x_j}(\underline{x}) + \sum_k \frac{\partial \tilde{f}}{\partial x_k}(\varphi(\underline{x})) \cdot \frac{\partial^2(\varphi_k)}{\partial x_i \partial x_j} (\underline{x}).
    \end{equation*}
    Thus, if we denote by $D\varphi$ the Jacobian matrix of $\varphi$ at $\underline{0}$ (as in Lemma~\ref{lem:W[[x]]_good_prop}), then we can conclude that the Hessian matrix corresponding to $\tilde{f} \circ \varphi$ is $(D\varphi)^\intercal \cdot  \Hess(f) \cdot (D\varphi)$, which has the same rank as $ \Hess(\tilde{f})$ (as $D\varphi$ is invertible by Lemma~\ref{lem:W[[x]]_good_prop}). In addition, by the product rule, $ \Hess(g)$  has the same rank as the matrix corresponding to $u \tilde{g}$, which equals to $u(\underline{0}) \cdot  \Hess(g)$, where $u(\underline{0})$ satisfies $u - u(\underline{0}) \in \mathfrak{n}$. 
\end{proof}

The following lemma tells us that the residual part of $f$ is well defined up to equivalence. It is a mixed characteristic version of part 2 of Lemma 9.2.10 in~\cite{de2013local} and of part 1 of Theorem 2.1 in~\cite{greuel2025splitting}. 

\begin{lemma}\label{prop:rank_good_def}
    Let $f, g \in \mathfrak{a}_3 \subset V[[x_1, \dots, x_l,y]]$. Then we have  $f\left(x_1, \dots, x_l\right) \equivk  g\left(x_1, \dots, x_l\right)$ in $V[[x_1, \dots, x_l]]$ if and only if in $V[[\underline{x},y]]$ we have the equivalence $$f\left(x_1, \dots, x_l\right)+y^2+x_{l+1}^2 + \dots + x_n^2 \equivk  g\left(x_1, \dots, x_l\right)+y^2+x_{l+1}^2 + \dots + x_n^2.$$  
\end{lemma}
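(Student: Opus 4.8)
The plan is to prove the two implications separately: the forward \emph{suspension} direction (adding $y^{2}+x_{l+1}^{2}+\dots+x_{n}^{2}$ preserves $\equivk$) is routine, and the reverse \emph{descent} direction carries essentially all of the content. Throughout write $Q=y^{2}+x_{l+1}^{2}+\dots+x_{n}^{2}$, a nondegenerate quadratic form in the ``new'' variables $x_{l+1},\dots,x_{n},y$.

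\emph{Suspension ($\Longrightarrow$).} Suppose $f\equivk g$ in $V[[x_{1},\dots,x_{l}]]$. By Lemma~\ref{lem:W[[x]]_good_prop} (and as this ring is a regular, hence integral, domain) there are an automorphism $\psi$ of $V[[x_{1},\dots,x_{l}]]$ and a unit $w$ with $f=w\,\psi(g)$. Extend $\psi$ to an automorphism $\hat\psi$ of $V[[\underline{x},y]]$ fixing $x_{l+1},\dots,x_{n},y$; then $\hat\psi(g+Q)=\psi(g)+Q$, so $g+Q\equivk\psi(g)+Q$. Since $\operatorname{char}\kappa\neq2$, $\kappa$ is quadratically closed, and $V[[x_{1},\dots,x_{l}]]$ is complete (hence Henselian), the monic polynomial $T^{2}-w$ has a simple root $\sqrt{w}$ there; the automorphism $\chi$ of $V[[\underline{x},y]]$ scaling each of $x_{l+1},\dots,x_{n},y$ by $\sqrt{w}$ and fixing $x_{1},\dots,x_{l}$ satisfies $\chi\bigl(\psi(g)+Q\bigr)=\psi(g)+wQ=w\bigl(w^{-1}\psi(g)+Q\bigr)$, hence $\psi(g)+Q\equivk w^{-1}\psi(g)+Q$. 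Running this with $w^{-1}$ in place of $w$ gives $\psi(g)+Q\equivk w\,\psi(g)+Q=f+Q$, and transitivity yields $g+Q\equivk f+Q$.

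\emph{Descent ($\Longleftarrow$).} Suppose $\Phi(f+Q)=u(g+Q)$ for an automorphism $\Phi$ of $V[[\underline{x},y]]$ and a unit $u$ (Lemma~\ref{lem:W[[x]]_good_prop}). Since $f,g\in\mathfrak{a}_{3}$ they involve only $x_{1},\dots,x_{l}$ and have order $\geq3$, so $\partial_{y}(f+Q)=2y$ and $\partial_{x_{i}}(f+Q)=2x_{i}$ for $i>l$, whence
\[
\langle f+Q\rangle+J(f+Q)=\mathfrak{b}+\langle f,\partial_{1}f,\dots,\partial_{l}f\rangle,\qquad\mathfrak{b}:=\langle x_{l+1},\dots,x_{n},y\rangle ,
\]
and likewise for $g$, with the \emph{same} ideal $\mathfrak{b}$; by Lemma~\ref{lem:aut_der}, $\Phi$ carries the left-hand ideal for $f$ onto the one for $g$. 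The core of the argument is to use this to normalize $\Phi$: comparing the order-two parts of $\Phi(f+Q)=u(g+Q)$ shows the linear part of $\Phi$ preserves the radical of the bilinear form of $Q$, which is spanned by $\pi,x_{1},\dots,x_{l}$; then, running the iterative completing-the-square and cross-term-clearing procedure from the proof of Proposition~\ref{prop:splitting} (Henselian extraction of square roots of units, composition with successive automorphisms of $V[[\underline{x},y]]$, and passage to the limit using completeness and Lemma~\ref{lem:equi_cont}), one may assume $\Phi(\mathfrak{b})\subset\mathfrak{b}$ with $\Phi(x_{i})\in V[[x_{1},\dots,x_{l}]]$ for $i\leq l$. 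By Lemma~\ref{lem:D_I_preserves} then $\Phi(\mathfrak{b})=\mathfrak{b}$, so $\Phi$ descends to an automorphism of $V[[\underline{x},y]]/\mathfrak{b}=V[[x_{1},\dots,x_{l}]]$; since $Q\in\mathfrak{b}$ forces $\Phi(Q)\in\mathfrak{b}$, reducing $\Phi(f+Q)=u(g+Q)$ modulo $\mathfrak{b}$ gives $\langle\Phi(f)\rangle=\langle g\rangle$ in $V[[x_{1},\dots,x_{l}]]$, i.e.\ $f\equivk g$.

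The routine parts are the suspension direction together with the ideal computation above; the genuine obstacle is the normalization step in the descent direction — that the contact equivalence between the two suspended germs can be taken compatible with the splitting $V[[x_{1},\dots,x_{l}]]\subset V[[\underline{x},y]]$. This is the mixed-characteristic, parametrized analogue of completing the square, and it is exactly where the hypotheses $\operatorname{char}\kappa\neq2$ and the quadratic-closedness of $\kappa$ are needed, with the convergence bookkeeping modeled on the iteration in the proof of the Splitting Lemma (Proposition~\ref{prop:splitting}).
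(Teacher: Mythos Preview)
Your forward (suspension) direction is fine and matches the paper's argument: extend the automorphism trivially and scale the new variables by a square root of the unit.

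The descent direction, however, has a genuine gap at exactly the point you flag as ``the genuine obstacle.'' You assert that by ``running the iterative completing-the-square procedure from Proposition~\ref{prop:splitting}'' one may assume $\Phi(\mathfrak{b})\subset\mathfrak{b}$ and $\Phi(x_i)\in V[[x_1,\dots,x_l]]$ for $i\le l$. But the Splitting-Lemma iteration acts on \emph{elements} (bringing an order-$2$ germ into split form); it does not normalize a \emph{given} automorphism $\Phi$ while preserving the relation $\Phi(f+Q)=u(g+Q)$. Any correction $\chi$ you compose with $\Phi$ must satisfy $\chi(g+Q)=u'(g+Q)$ for some unit $u'$, i.e.\ $\chi$ must lie in the contact group of $g+Q$, and showing that this group is large enough to achieve the block form is essentially equivalent to the lemma itself. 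Your radical-preservation observation is correct at the level of linear parts over $\kappa$, but it only shows $\Phi(\mathfrak{b})\subset\mathfrak{b}+\mathfrak{n}_V^{2}$, and promoting this to all orders is where the real work lies; you have not done it.

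The paper takes a different and more direct route that sidesteps normalization entirely. Writing $\varphi_i=\Phi(x_i)$ and $\psi_i=u^{-1/2}\varphi_i=a_i+h_i$ with $a_i$ linear, the order-$2$ comparison gives $\sum_{i>l}a_i^2=Q$, so the Jacobian of $(2a_{l+1}+h_{l+1},\dots,2a_{n+1}+h_{n+1})$ in the new variables is invertible. The Implicit Function Theorem (Lemma~\ref{lem:W[[x]]_good_prop}) then yields $\theta_{l+1},\dots,\theta_{n+1}\in V[[x_1,\dots,x_l]]$ killing these expressions; substituting $x_i\mapsto\theta_i$ for $i>l$ and $y\mapsto\theta_{n+1}$ into $\Phi(f+Q)=u(g+Q)$ makes the quadratic contributions on both sides cancel, leaving $g(\beta_1,\dots,\beta_l)=u'\!\cdot\! f$ with $\beta_i=\psi_i|_{\theta}$, and one checks directly that $\beta$ is an automorphism of $V[[x_1,\dots,x_l]]$. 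No iteration or normalization of $\Phi$ is needed.
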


\begin{proof}
    First, if $f\left(x_1, \dots, x_l\right) \equivk g\left(x_1, \dots, x_l \right)$ then we can find some automorphism $\varphi$ of $V[[x_1, \dots, x_l, y]]$ and some unit $u \in V[[x_1, \dots, x_l, y]]$ such that $f = u \varphi(g)$. So, by extending $\varphi$ to an automorphism $\psi$ of $V[[\underline{x},y]]$ by setting $\psi \left(x_i\right)=v^{-1} x_i$ for every $i>l$, where $v^2=u$, we can conclude that 
    \begin{equation*}
        f\left(x_1, \dots, x_l\right) +x_{l+1}^2 + \dots + x_n^2  = u \psi\left(g\left(x_1, \dots, x_l\right) +x_{l+1}^2 + \dots + x_n^2\right). 
    \end{equation*}

    \noindent Second, assume that 
    \begin{equation*}
        F=f\left(x_1, \dots, x_l \right)+ y^2+ x_{l+1}^2 + \dots + x_n^2 \equivk g\left(x_1, \dots, x_l\right)+y^2 +x_{l+1}^2 + \dots + x_n^2=G.
    \end{equation*}
    Therefore, there exists some $\varphi_1, \dots, \varphi_{n+1} \in V[[\underline{x},y]]$ and some unit $u$ such that 
    \begin{equation*}
         g\left(\varphi_1, \dots, \varphi_{l}\right) + \sum_{i=l+1}^{n+1} \varphi_i^2 =   uf\left(x_1, \dots, x_l\right) + u\cdot \sum_{i=l+1}^n x_i^2 +uy^2.
    \end{equation*}
    
    \noindent Now,  write $\psi_i = u^{-0.5} \varphi_i = a_i + h_i$ where $a_i$ is a linear term over $\langle x_1, \dots, x_n,y \rangle$ and $h_i \in \naxV^2$. Then  we have that $$\sum_{i=l+1}^{n+1} a_i^2 - \sum_{i=l+1}^{n+1} x_i^2 - y^2 \in \naxV^2.$$ In addition, we have that 
    \begin{equation*}
        \sum_{i=l+1}^{n+1} \psi_i^2 = \sum_{i=l+1}^{n+1} a_i^2 + \sum_{i=l+1}^{n+1} h_i(2a_i+h_i),
    \end{equation*}
    and since $\ord_{\langle x_1, \dots, x_n,y \rangle}(a_{l+1}) =  \cdots =\ord_{\langle x_1, \dots, x_n,y \rangle}(a_{n+1})=1$,  we can conclude that $\sum_{i=1}^{n+1} a_i^2 = \sum_{i=l+1}^{n} x_i^2 +y^2$. Now, if we look at $F_i = 2a_i + h_i $ (recalling that the characteristic of $\kappa$ is not $2$) then  
    \begin{equation*}
        \det\left( \left[\frac{\partial F_i}{\partial x_j} \left(0\right)\right]_{n+1 \geq i,j \geq l+1}\right) =2\det\left( \left[\frac{\partial a_i}{\partial x_j} \left(0\right)\right]_{n+1 \geq i,j \geq l+1}\right) \in V^{\times},
    \end{equation*}
    \noindent where $x_{n+1}=y$. Therefore, by the implicit function theorem and the inverse function theorem (Lemma~\ref{lem:W[[x]]_good_prop}), there exists some $\theta_{l+1}, \dots, \theta_{n+1} \in V[[x_1, \dots, x_l]]$ for which we have that $F_i\left(x_1, \dots, x_l, \theta_{l+1}, \dots, \theta_{n+1}\right)=0$ for every $i$. For every $i \leq l$ define  $\beta_i = \psi_i\left(x_1, \dots, x_l, \theta_{l+1}, \dots, \theta_{n+1}\right)$. Then the map $\beta$ on $V[[x_1, \dots, x_l,y]]$ defined by $\beta\left(x_i\right)=\beta_i$ must be an automorphism. This tells us that $\left(\psi_1, \dots, \psi_l, a_{l+1}+\frac{h_{l+1}}{2}, \dots,  a_{n+1}+\frac{h_{n+1}}{2} \right)$ defines a $V-$automoprhism of $V[[\underline{x},y]]$. Thus, by the inverse function theorem (Lemma~\ref{lem:W[[x]]_good_prop}) we have that $\langle \psi_1, \dots, \psi_l, a_{l+1}+\frac{h_{l+1}}{2}, \dots,  a_{n+1}+\frac{h_{n+1}}{2} \rangle = \naxV$. Therefore, for every $i \leq l$ there exists some $\alpha_{1,i}, \dots,\alpha_{n+1,i}$ such that 
    \begin{equation*}
        x_i = \sum_{j=1}^l \alpha_{j,i} \psi_j + \sum_{j=l+1}^{n+1} \alpha_{j,i} \left(a_{j}+\frac{h_{j}}{2}\right), 
    \end{equation*}
    which gives us that 
    \begin{equation*}
        x_i=\sum_{j=1}^l \alpha_j\left(x_1, \dots, x_l, \theta_{l+1}, \dots, \theta_{n+1}\right) \beta_i. 
    \end{equation*}
    Therefore we have that $\langle \beta_1, \dots, \beta_l \rangle = \langle x_1, \dots, x_l \rangle$, and by the inverse function theorem (Lemma~\ref{lem:W[[x]]_good_prop}), we can conclude that $\beta(f)=f\left(\beta_1, \dots, \beta_l\right)= u\left(g\left(x_1, \dots, x_l\right)\right)$ and the result follows.
\end{proof}

\begin{remark}\label{rem:rk}
 \textup{Note that we can easily prove that the residual part of $f$ is well defined assuming $f$ has finite determinacy (as in Definition~\ref{def:det_V}), by applying Mather-Yau (Proposition~\ref{prop:mather-yau}). This is true since if $ f\left(x_1, \dots, x_l\right) + y^2 +x_{l+1}^2 + \cdots  + x_n^2 \sim  g\left(x_1, \dots, x_l\right)+ y^2 + x_{l+1}^2 + \cdots x_n^2$, then we have an isomorphism of $V$-ring 
\begin{equation*}
    \frac{V[[\underline{x},y]]}{J(\tilde{F})} \cong \frac{V[[\underline{x},y]]}{J(\tilde{G})}, 
\end{equation*}
and so after tensoring by $\frac{V[[\underline{x}]]}{\langle x_{l+1}, \dots, x_n\rangle}$ we can conclude that we get an isomorphism of $V-$algebras $\frac{V[[x_1, \dots, x_l,y]]}{J(\tilde{f})} \cong \frac{V[[x_1, \dots, x_l,y]]}{J(\tilde{g})}$. }
    %\item \textup{Note that if $u$ is a unit and $f \in \mathfrak{m}$ whose coefficients are multiplicative lifts, then $p^2+uf \sim p^2+f$. This is true since $p^2+uf=u\left(\left(u^{-0.5}p\right)^2+f\right) \sim \left(u^{-0.5}p\right)^2+f$, and so $\tilde{g}=\left(vy\right)^2+f$ for some unit $v \in V[[\underline{x},y]]$ where $g=\left(u^{-0.5}p\right)^2+f$ and we can apply the automorphism $y \mapsto vy$.}
\end{remark}

The following proposition gives us a classification of all $f \in V[[\underline{x}]]$ of full rank, which we can view as an analogue version of Morse's lemma over $V[[\underline{x}]]$, as presented in Theorem 2.46 of Chapter I in~\cite{greuel2007introduction} (which in turn is an analytic version of Morse's lemma, as presented in~\cite{ morse1934calculus, varchenko1985singularities}).  Note that it is also an analogue of Proposition 3.3. in~\cite{carvajal2019covers}, which can be thought of as a mixed characteristic version of Morse's lemma for surfaces, and its generalization in Proposition 3.14 of~\cite{svoray2025ade}. 

\begin{proposition} [Morse's Lemma]\label{prop:Morse}
    Let $f \in \mathfrak{m}^2 \subset V[[\underline{x}]]$, then the following are equivalent:
    \begin{enumerate}
        \item $\tau_V\left(f\right)=1$, 
        \item $\text{rk}\left(f\right)=n$, 
        \item $f \sim  \pi^2 + x_1^2 +\cdots + x_n^2$. 
    \end{enumerate}
\end{proposition}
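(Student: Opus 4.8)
The plan is to prove the cycle $(3)\Rightarrow(2)\Rightarrow(1)\Rightarrow(3)$, using the Splitting Lemma (Proposition~\ref{prop:splitting}) as the main structural input and the already-established facts that $\tau_V$ is an invariant of $\sim$ (Proposition~\ref{prop:order_equivalence}), that $\mathrm{rk}$ is an invariant of $\sim$ (the preceding Lemma), and that the residual part is well defined up to equivalence (Lemma~\ref{prop:rank_good_def}).

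First I would treat $(3)\Rightarrow(1)$: if $f\sim \pi^2+x_1^2+\cdots+x_n^2$, then by Proposition~\ref{prop:order_equivalence} we have $\tau_V(f)=\tau_V(\pi^2+x_1^2+\cdots+x_n^2)$, so it suffices to compute the latter directly. Here $\widetilde{f_0}=y^2+x_1^2+\cdots+x_n^2$, so $J(\widetilde{f_0})+\langle \widetilde{f_0}\rangle = \langle 2x_1,\ldots,2x_n,2y, y^2+\sum x_i^2\rangle = \langle x_1,\ldots,x_n,y\rangle$ since $2$ is a unit (characteristic of $\kappa$ is not $2$, and $V$ is a DVR so $2\notin\langle\pi\rangle$ unless $\pi\mid 2$, but even then $\langle 2x_i\rangle$ together with the DVR structure — actually I should be careful: $2$ is a unit in $V$ precisely when $\mathrm{char}\,\kappa\neq 2$). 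Thus $\Vxy/(J(\widetilde{f_0})+\langle\widetilde{f_0}\rangle)\cong V/\langle\text{something}\rangle$ — wait, it is $\Vxy/\langle x_1,\dots,x_n,y\rangle \cong V$, which is one-dimensional, so a generic $a$ picks out a minimal reduction of its maximal ideal $\langle\pi\rangle$, and $\tau_V(f_0)=\mathrm{length}_V(V/\langle\pi\rangle)=1$. Hence $\tau_V(f)=1$.

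Next, $(1)\Rightarrow(2)$: assume $\tau_V(f)=1$. Since $f\in\mathfrak{m}^2$, Proposition~\ref{cor:tjurina_bound} forces $\mathrm{ord}(f)=2$ (order $\geq 3$ would give $\tau_V(f)\geq\binom{n+1}{n+1}=1$ when $s=3$ — hmm, that gives $1$, not a contradiction; I need the sharper bound or to argue separately). Let me instead invoke the Splitting Lemma directly: $f$ has order $2$ (if $\mathrm{ord}(f)\geq 3$ then $\tilde f\in\naxV^3$ and one checks $\tau_V(f)\geq\binom{n+1}{n+1}$... I will need to check that for $n\geq 1$ order $3$ gives $\tau_V\geq 2$, e.g. via $\binom{n+s-2}{n+1}$ with $s=3$ equals $\binom{n+1}{n+1}=1$ — so this bound alone is insufficient and I must argue that $\tau_V(f)=1$ with $\langle\tilde f\rangle+J(\tilde f)=\langle x_1,\dots,x_n,y\rangle$ is impossible when $\tilde f\in\naxV^3$, since then $J(\tilde f)+\langle\tilde f\rangle\subseteq\naxV^2$ cannot contain a regular parameter transversally in the needed way — more carefully, $\langle\tilde f\rangle+J(\tilde f)\subseteq\langle x_1,\dots,x_n,y\rangle^2$, so the quotient surjects onto $\Vxy/\langle x_1,\dots,x_n,y\rangle^2$ whose multiplicity over $\naxV$ is $\geq 2$, contradicting $\tau_V(f)=1$ by Proposition~\ref{prop:generic_intersection}(3)). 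So $\mathrm{ord}(f)=2$ and the Splitting Lemma applies: $\tilde f\equivr y^2+x_1^2+\cdots+x_k^2+g(x_{k+1},\dots,x_n)$ with $g\in\mathfrak a_3$ and $k=\mathrm{rk}(f)$. If $k<n$, I would show $\tau_V(f)\geq 2$: the residual part $g$ lies in $\naxV^3$ in the variables $x_{k+1},\dots,x_n,y$ (after the splitting, $y$ is already used, so actually $k=n$ means the residual is in no variables; if $k=n$ we are done). When $k\leq n-1$, $g$ is a nonzero — or possibly zero — element of $\naxV^3$; computing $\langle\tilde f\rangle+J(\tilde f)$ after the orthogonal splitting, the part in $x_1,\dots,x_k,y$ contributes $\langle x_1,\dots,x_k,y\rangle$ and the remaining factor is $\langle g\rangle + J_{x_{k+1},\dots,x_n}(g)$ inside $V[[x_{k+1},\dots,x_n]]$, which lies in $\langle x_{k+1},\dots,x_n\rangle^2$, so the quotient surjects onto something of multiplicity $\geq\binom{(n-k)+1}{(n-k)+\text{?}}\geq 2$; hence $\tau_V(f)\geq 2$, contradiction. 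Therefore $k=n$, i.e. $\mathrm{rk}(f)=n$.

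Finally $(2)\Rightarrow(3)$: if $\mathrm{rk}(f)=n$, the Splitting Lemma gives $\tilde f\equivr y^2+x_1^2+\cdots+x_n^2$ with an empty residual part (there are no remaining variables), and this says precisely $f\sim \pi^2+x_1^2+\cdots+x_n^2$ by Definition~\ref{def:equiv}, since $\widetilde{\pi^2+x_1^2+\cdots+x_n^2}=y^2+x_1^2+\cdots+x_n^2$. The main obstacle I anticipate is the bookkeeping in $(1)\Rightarrow(2)$ — specifically, showing that any nonzero residual part, or a rank defect $k<n$, forces $\tau_V\geq 2$; this requires carefully tracking which variables survive into $\langle\tilde f\rangle+J(\tilde f)$ after the orthogonal decomposition and correctly applying the multiplicity lower bound of Proposition~\ref{prop:generic_intersection}(3) to the residual factor. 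A secondary subtlety is confirming that $2$ is genuinely a unit in $V$ under the hypothesis $\mathrm{char}\,\kappa\neq 2$, which is what makes the quadratic-form manipulations and the computation of $J(\widetilde{f_0})$ go through cleanly.
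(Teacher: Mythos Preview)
Your approach is correct and matches the paper's proof: both establish $(2)\Leftrightarrow(3)$ via the Splitting Lemma, deduce $(3)\Rightarrow(1)$ by direct computation combined with the $\sim$-invariance of $\tau_V$, and prove $(1)\Rightarrow(2)$ by contradiction, showing that a rank defect $k<n$ forces the Tjurina quotient $\Vxy/(\langle\tilde f\rangle+J(\tilde f))\cong V[[x_{k+1},\dots,x_n]]/(\langle g\rangle+J(g))$ to have multiplicity $\geq 2$. One small correction to your bookkeeping: since $g\in\mathfrak a_3$, the derivatives $\partial_j g$ land in $\mathfrak a_2=\langle x_{k+1},\dots,x_n\rangle^2+\pi\langle x_{k+1},\dots,x_n\rangle$, so the correct containment is $\langle g\rangle+J(g)\subset\langle\pi,x_{k+1},\dots,x_n\rangle^2$ rather than $\langle x_{k+1},\dots,x_n\rangle^2$; this is still enough for the multiplicity bound, and the paper uses exactly this inclusion.
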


\begin{proof}
Note that  we have that $\tau_V\left(f\right)=1$ if and only if $J(\tilde{f})+\langle \tilde{f},  a \rangle = \mathfrak{n}$. In addition, the equivalence of $2$ and $3$ follows by Proposition~\ref{prop:splitting} and the definition of $\textup{rk}$, as the residual part must be contained in $\mathfrak{a}_3$. Now, $3 \to 1$ is a direct computation and follows from the fact that equivalence preserves $\tau_V$ (Proposition~\ref{prop:order_equivalence}). Finally, for $1 \to 2$,  assume that $J(\tilde{f})+\langle \tilde{f}, a \rangle = \mathfrak{n}$ for some $a$ as in Proposition~\ref{prop:generic_intersection}. By Proposition~\ref{lem:app_ord_der}, we have that $f \in \mathfrak{m}^2 \setminus \mathfrak{m}^3$. Assume towards a contradiction that $\textup{rk}(f)=k < n$. Then $\tilde{f} \equivr y^2+  x_1^2 +\cdots +x_k^2 +g\left(x_{k+1}, \dots, x_n\right)$ for some $g \in \mathfrak{a}_3$, which must be non zero since $\tau_V\left(f\right) < \infty$. Yet, by direct computation we must have that $\frac{V[[\underline{x},y]]}{\langle \tilde{f} \rangle + J(\tilde{f})} \cong \frac{V[[x_{k+1}, \dots, x_n, y]]}{\langle g \rangle + J(g)}$ (where we view $g \in V[[x_{k+1}, \dots, x_n]]$), which is a contradiction as $\langle \partial_{k+1} (g) , \dots, \partial_{n} (g)\rangle \subset \naxV^2$, and so the multiplicity of $\frac{V[[x_{k+1}, \dots, x_n, y]]}{\langle g \rangle + J(g)}$ must be bigger than $1$. Therefore we must have that $k=n$. 
\end{proof}

\begin{remark}\label{rem:Morse_milnor}
\begin{enumerate}
    \item \textup{Note that if $\mu_V\left(f\right)=1$ then we must have that $\tau_V\left(f\right) \leq 1$, and since Remark~\ref{rem:smooth} tells us that $\mu_V\left(f\right)=0$ if and only if $\tau_V\left(f\right)=0$,  we must have that $\tau_V\left(f\right)=1$. The reverse direction, i.e., if $\tau_V\left(f\right)=1$ then $\mu_V\left(f\right)=1$ follows from steps $1 \to 2$ in Proposition~\ref{prop:Morse}. }
    \item \textup{We can prove a variant of Proposition~\ref{prop:Morse} based on Mather-Yau (as presented in Proposition~\ref{prop:mather-yau}). If we denote $f=\pi^2 + x_1^2 + \cdots + x_n^2$, then by Proposition~\ref{rem:determinacy} (or via similar computation to the one preformed in Example~\ref{ex:A_1_det}) we have that $f$ is $2-$determined. Therefore, given some $g \in V[[\underline{x}]]$ with $J(\tilde{g})=\langle x_1, \dots, x_n, y \rangle$, we have that $\frac{V[[\underline{x},y]]}{J(\tilde{f})} \cong \frac{V[[\underline{x},y]]}{J(\tilde{g})}$ and so $f \sim g$. }
    \item \textup{For a more general version of $2 \to 3$, given some $g \in \naxV^3$, since $f=\pi^2 + x_1^2 + \cdots + x_n^2$ is $2-$determined (by the previous item), we must have that $f+g \sim f$.}
    \item \textup{Inspired by Proposition~\ref{prop:Morse} one might want to preform an $ADE-$like classification up to $\sim$ over $\Vx$ based upon the the rank of order $2$ elements (as in Section 2.4 of chapter I in~\cite{greuel2007introduction} over $\mathbb{C}$, as in~\cite{greuel1990simple, Nguyen2013classification} over algebraically closed field of positive characteristic, or as in~\cite{carvajal2019covers, svoray2025ade} in the mixed characteristic case). Yet, in Proposition~\ref{prop:splitting} we get that the residual part is a member of $\mathfrak{a}_3$, and therefore it is unclear if it would be $\equivk$ to some $\tilde{h} \in \Vx$, which would give us an equivalence $\sim$ over $\Vx$. }
\end{enumerate}
\end{remark}

\section{Isolated Singularities over a DVR}\label{sec:Jeffries_Hochter}

In this section we show how the work of Hochster and Jefferies in~\cite{hochster2021jacobian}, Saito in~\cite{saito2022frobenius}, and KC in~\cite{kc2024singular} can inspire a similar construction to our Tjurina number that detects singularities over unramified and ramified DVRs, as opposed to Proposition~\ref{lem:tau(f,q)}. \\ 

We start with the unramified case. Specifically, we assume that $(V, \maxV, \kappa)$ is a complete DVR of mixed characteristic $(0,p)$ whose uniformizer is $p$ (i.e. $\maxV = \langle p \rangle$) and $\kappa$ is algebraically closed. In this case we want to quantify when $f \in \Vx$ defines an isolated singularity. We first review a few facts about $p-$derivation, as described in~\cite{hochster2021jacobian}. 

\begin{remark}
    \textup{By the Cohen structure theorem (see~\cite{cohen1946structure}), every complete DVR with uniformizer $p$ and an algebraically closed (therefore perfect) quotient field $\kappa$ must be isomorphic to the ring of Witt vectors over $\kappa$. For more about this connection between Witt vectors and Cohen rings, see~\cite{anscombe2022model} or Section 6 of~\cite{hazewinkel2009witt}, and for a deeper review of Witt vectors, see~\cite{rabinoff2014theory} or Chapter II, Section 6 of~\cite{serre1979local}.}
\end{remark}

For notational convention, we define the polynomial $$C_p\left(X_1, \dots, X_n\right) = \frac{X_1^p + \cdots +X_n^p - \left(X_1+\cdots +X_n\right)^p}{p} \in \mathbb{Z}[X_1, \dots, X_n].$$ Note that $C_p\left(X_1,X_2\right) \in \langle X_1X_2 \rangle
$ and, in general, for every integer $n$ we have that $C_p\left(X_1, \dots, X_n\right) \in \langle X_1, \dots, X_n \rangle^2$. \\

The following definition is a combination of Definition 2.2 and Definition 2.5 in~\cite{hochster2021jacobian}.

\begin{definition}
    A $p-$derivation is a map $\delta \colon \Vx \to \Vx$ such that for every $f, g \in \Vx$t:
    \begin{enumerate}
        \item $\delta\left(1\right)=\delta\left(0\right)=0$,
        \item $\delta\left(f+g\right)=\delta\left(f\right) + \delta\left(g\right) + C_p\left(f,g\right)$, 
        \item $\delta\left(f\cdot g\right) = f^p \delta\left(g\right) + g^p \delta\left(f\right) + p\delta\left(f\right) \delta\left(g\right)$,
        \item $\delta \mod    p^2 \colon \frac{\Vx}{\langle p^2 \rangle} \to \frac{\Vx}{\langle p \rangle}$ satisfies the conditions above.
    \end{enumerate}
\end{definition}

\begin{remark}
    \textup{ The notion of $p-$derivations goes back to the works of Buium in~\cite{buium1995differential} and J\'{o}yal in~\cite{joyal1985delta}. In addition, J\'{o}yal proved that the forgetful functor from the category of rings with $p-$derivations to the category of rings is the left adjoint of the Witt vector functor (with the natural $p-$derivation). One can view $p-$derivations as "derivations with respect to $p$", and play an analogous role to derivations, as we see with the mixed characteristic Jacobian criterion in Theorem~\ref{thm:J_smooth} or with the mixed characteristic Zariski-Nagata Theorem in~\cite{de2020zariski, de2025differential}. } 
\end{remark}

The following lemma summarizes a few results about $p-$derivations that are presented in Section 2 of~\cite{hochster2021jacobian}.

\begin{lemma}\label{lem:basic_delta}
    \begin{enumerate}
        \item For every $a \in V \subset \Vx$ we have that $\delta\left(a\right)=\frac{\Frob(a)-a^p}{p}$, where $\Frob$ is the unique lift of the Frobenius map to V (i.e. the unique endomorphism $\Frob$ on $V$ that satisfies $\Frob(a)\equiv a^p \mod  p$ for every $a\in V$). 
        \item Given $\mathfrak{a} \subset \Vx$  an ideal containing $p$, if $f \in \mathfrak{a}^{k+1}$ then $\delta\left(f\right) \in \mathfrak{a}^k$.
        \item For every $f_1, \dots, f_n$ there exists a unique $p$-derivation $\delta$ to $\Vx$ such that $\delta\left(x_i\right)=f_i$. 
        %\item For every $p-$derivation $\delta$ and for every $f \in \Vx$ we have that $\delta\left(pf\right) \equiv f^p \mod    p$ and $\delta\left(f^p\right) \equiv 0 \mod    p$.
    \end{enumerate}
\end{lemma}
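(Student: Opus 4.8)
The plan is to translate the whole lemma into statements about the \emph{associated Frobenius lift}. Since $\Vx$ is an integral domain it is $p$-torsion free, and a short computation using $p\,C_p(f,g)=f^p+g^p-(f+g)^p$ shows that the assignment $\phi(f)=f^p+p\,\delta(f)$ sets up a bijection between $p$-derivations $\delta$ on $\Vx$ and ring endomorphisms $\phi$ of $\Vx$ with $\phi(f)\equiv f^p\pmod{p}$ (condition $(4)$ then being automatic). Since $\phi(p)\in\langle p\rangle$ and $\phi(x_i)=x_i^p+p\,\delta(x_i)\in\maxV$, the map $\phi$ sends $\maxV$ into $\maxV$ and is in particular continuous. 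I will pass freely between $\delta$ and $\phi$.

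For $(1)$, the first step is to show $\delta(\mathbb{Z})\subseteq\mathbb{Z}$: iterating $(2)$ gives $\delta(m)=\delta(m-1)+C_p(m-1,1)$ with $C_p(m-1,1)\in\mathbb{Z}$, and telescoping yields $\delta(p)=1-p^{p-1}$, hence $\phi(p)=p$. Next, since $\kappa$ is algebraically closed (so $V$ is the ring of Witt vectors $W(\kappa)$ by the Cohen structure theorem), every Teichm\"uller representative $[\bar a]$ admits $p^n$-th roots for all $n$; as $\phi$ is a ring homomorphism reducing to $x\mapsto x^p$ modulo $p$, the element $\phi([\bar a])$ has the same property and the same reduction as $[\bar a]^p$, which forces $\phi([\bar a])=[\bar a^p]=[\bar a]^p$. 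Expanding an arbitrary $a\in V$ as $a=\sum_{i\geq 0}[\bar a_i]p^i$ and using continuity of $\phi$ together with $\phi(p)=p$ gives $\phi(a)=\sum_{i\geq 0}[\bar a_i]^p p^i\in V$. Thus $\phi$ restricts to a ring endomorphism of $V$ lifting Frobenius, which by uniqueness is $\Frob$, and $\delta(a)=\frac{\phi(a)-a^p}{p}=\frac{\Frob(a)-a^p}{p}$.

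For $(2)$ I would induct on $k$, the case $k=0$ being vacuous. In the inductive step ($k\geq 1$) write $f=\sum_i g_i h_i$ with $g_i\in\mathfrak{a}$ and $h_i\in\mathfrak{a}^k$. Iterating $(2)$ gives $\delta(f)=\sum_i\delta(g_ih_i)+C$ where, because $C_p(a,b)\in\langle ab\rangle$, the total correction $C$ lies in $\langle g_ih_i:i\rangle^2\subseteq(\mathfrak{a}^{k+1})^2\subseteq\mathfrak{a}^k$; and by $(3)$, $\delta(g_ih_i)=g_i^p\,\delta(h_i)+h_i^p\,\delta(g_i)+p\,\delta(g_i)\delta(h_i)$, where the inductive hypothesis $\delta(\mathfrak{a}^k)\subseteq\mathfrak{a}^{k-1}$, together with $p\in\mathfrak{a}$ and $p\geq 2$, places each of the three summands in $\mathfrak{a}^k$ (respectively inside $\mathfrak{a}^2\cdot\mathfrak{a}^{k-1}$, $\mathfrak{a}^{pk}\cdot\Vx$, and $\mathfrak{a}\cdot\Vx\cdot\mathfrak{a}^{k-1}$), so $\delta(f)\in\mathfrak{a}^k$. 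For the existence half of $(3)$, I would define $\phi$ on $V[x_1,\dots,x_n]$ by applying $\Frob$ to coefficients and sending $x_i\mapsto x_i^p+pf_i$; since $x_i^p+pf_i\in\maxV$ this map is continuous for the $\langle\underline{x}\rangle$-adic topology, hence extends to a ring endomorphism $\phi$ of $\Vx=\widehat{V[\underline{x}]}$ reducing modulo $p$ to the $p$-power map, so $\delta(f):=\frac{\phi(f)-f^p}{p}$ is well defined in $\Vx$ and is readily checked to be a $p$-derivation with $\delta(x_i)=f_i$. Uniqueness is immediate: two such $p$-derivations yield Frobenius lifts agreeing on each $x_i$ and, by $(1)$, on $V$, hence on $V[\underline{x}]$ and, by continuity, on all of $\Vx$.

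I expect the main obstacle to be the middle step of $(1)$, namely showing that the abstractly-defined $\delta$ actually carries $V$ into $V$ (equivalently, that $\phi$ preserves $V$): this cannot be deduced formally from the axioms and seems to require the Witt-vector description of $V$ together with the continuity of $\phi$. Once that is secured, the remainder of $(1)$ and all of $(2)$ and $(3)$ are bookkeeping with the sum and product rules and the definition of $C_p$.
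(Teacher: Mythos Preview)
The paper does not actually prove this lemma: it simply records the three statements with a pointer to Section~2 of Hochster--Jeffries, so there is no in-paper argument to compare against. Your strategy---translating everything into the associated Frobenius lift $\phi(f)=f^p+p\,\delta(f)$, identifying $V$ with $W(\kappa)$ via Cohen, pinning down $\phi$ on Teichm\"uller representatives, and then doing the bookkeeping for (2) and (3)---is the standard route and is essentially correct.

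One small slip in your existence argument for (3): you assert that $\phi$ is continuous for the $\langle\underline{x}\rangle$-adic topology because $x_i^p+pf_i\in\maxV$, but if $f_i$ has nonzero constant term then $\phi(x_i)$ has constant term $pf_i(\underline 0)\neq 0$, so $\phi(x_i)\notin\langle\underline{x}\rangle$ and $\phi$ is \emph{not} $\langle\underline{x}\rangle$-adically continuous. What your observation does give is $\phi(\langle\underline{x}\rangle^N)\subseteq\maxV^N$, so $\phi:V[\underline{x}]\to\Vx$ is continuous when the target carries the $\maxV$-adic topology; since $V$ is complete, $\Vx$ is $\maxV$-adically complete and the extension goes through. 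Equivalently, just invoke the universal property of the power series ring over the complete local base $(V,\Frob)$ with chosen images $x_i^p+pf_i\in\maxV$, and skip the explicit continuity discussion.
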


\begin{remark}
\begin{enumerate}
    \item \textup{For more on the construction of $\Frob$ as the lift of the Frobenius map and for its properties, see~\cite{davis2014witt}.}
    \item \textup{The second item of Lemma~\ref{lem:basic_delta} can be viewed as a $p-$derivation analogue of Lemma~\ref{lem:app_ord_der}.}
\end{enumerate}
\end{remark}

Note that by Lemma~\ref{lem:basic_delta}, a $p-$derivation $\delta$ on $\Vx$ is uniquely defined by its values on $x_1, \dots, x_n$, and so we can associate the set of all $p-$derivation $\delta$ on $\Vx$ with $(\Vx)^{\times n}=V[[\underline{x}]] \times \cdots \times \Vx$ (together with the product $\maxV-$adic topology) via the association $\delta \mapsto (\delta(x_1), \dots, \delta(x_n))$. Specifically, given a point $\underline{a} \in (\Vx)^{\times n}$ we denote $\delta_{\underline{a}}$ the unique $p-$derivation that satisfies $\delta_{\underline{a}}(x_i)=a_i$ for every $i$.

\begin{proposition}\label{prop:continuous_delta}
    Given some $f\in \Vx$, then the map $(\Vx)^n \to \Vx$ defined by $\underline{a} \mapsto \delta_{\underline{a}}(f)$ is continuous (with respect to the the product $\maxV-$adic topology).
\end{proposition}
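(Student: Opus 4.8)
The plan is to use the explicit combinatorial description of a $p$-derivation in terms of its values on the variables and the additivity/multiplicativity axioms, together with the fact that the "correction term" $C_p$ pushes things into higher powers of the maximal ideal. Concretely, I would fix $f \in \Vx$ and two parameter points $\underline{a}, \underline{b} \in (\Vx)^n$ that are close, say $a_i - b_i \in \maxV^k$ for all $i$, and show that then $\delta_{\underline{a}}(f) - \delta_{\underline{b}}(f) \in \maxV^{k'}$ for some $k'$ that tends to infinity as $k$ does. Since $\maxV$-adic convergence is metrizable, sequential continuity at every point suffices, and by the group-like structure this reduces to continuity at a single reference point once we know the increments behave well.

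First I would write $f$ as a convergent sum of monomials $f = \sum_\alpha c_\alpha \underline{x}^\alpha$ with $c_\alpha \in V$, and use axioms (2) and (3) of a $p$-derivation to expand $\delta_{\underline{a}}(f)$. The key structural point is that for a monomial $m = c\, x_1^{\alpha_1}\cdots x_n^{\alpha_n}$, the value $\delta_{\underline{a}}(m)$ is a polynomial expression in $c$, $\delta(c)$, $a_1, \dots, a_n$, $x_1, \dots, x_n$, and $p$ — this follows by repeatedly applying the product rule (3), noting $\delta(c)=\tfrac{\Frob(c)-c^p}{p}$ from Lemma~\ref{lem:basic_delta}. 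For a finite sum of monomials, axiom (2) introduces only finitely many $C_p$ correction terms, each of which is a polynomial in the already-handled pieces. So on any truncation of $f$ modulo $\maxV^N$, the map $\underline{a} \mapsto \delta_{\underline{a}}(f) \bmod \maxV^{N'}$ factors through a polynomial map in finitely many entries of $\underline{a}$ (again truncated mod a high power of $\maxV$), hence is continuous; one then passes to the limit.

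The step that needs the most care is controlling the interaction between the infinite sum defining $f$ and the infinitely many $C_p$-corrections from repeated use of axiom (2): one cannot naively split $\delta$ over an infinite sum. The clean way around this is to use Lemma~\ref{lem:basic_delta}(2): if $g \in \maxV^{k+1}$ then $\delta_{\underline{a}}(g) \in \maxV^{k}$, uniformly in $\underline{a}$. Thus, given a target precision $\maxV^{N'}$, I truncate $f = f_N + g$ with $f_N$ a polynomial of $\maxV$-order bounded and $g \in \maxV^{N}$ for $N$ large; then $\delta_{\underline{a}}(f) - \delta_{\underline{a}}(f_N) = \delta_{\underline{a}}(f) - \delta_{\underline{a}}(f_N)$, and using $\delta_{\underline{a}}(f_N + g) = \delta_{\underline{a}}(f_N) + \delta_{\underline{a}}(g) + C_p(f_N, g)$ with both $\delta_{\underline{a}}(g) \in \maxV^{N-1}$ and $C_p(f_N,g) \in \maxV^{N}$ (since $C_p(X,Y) \in \langle XY\rangle$ and $g \in \maxV^N$), the tail is uniformly small. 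So $\delta_{\underline{a}}(f) \equiv \delta_{\underline{a}}(f_N) \bmod \maxV^{N-1}$ uniformly in $\underline{a}$.

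It remains to handle the finite polynomial $f_N$. Here $\delta_{\underline{a}}(f_N)$ is, by finitely many applications of axioms (2) and (3), an honest polynomial expression $P(\,\delta(c_\alpha)\text{'s},\ a_1,\dots,a_n,\ x_1,\dots,x_n,\ p\,)$ with $\mathbb{Z}$-coefficients, and polynomial maps $(\Vx)^n \to \Vx$ are continuous in the product $\maxV$-adic topology because ring operations are continuous. Combining: given $N'$, choose $N > N'+1$; then $\underline{a} \mapsto \delta_{\underline{a}}(f) \bmod \maxV^{N'}$ agrees with the continuous map $\underline{a} \mapsto \delta_{\underline{a}}(f_N)\bmod \maxV^{N'}$, hence is continuous, and letting $N' \to \infty$ gives continuity of $\underline{a} \mapsto \delta_{\underline{a}}(f)$. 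The main obstacle, as indicated, is purely the bookkeeping of the infinitely many correction terms, which Lemma~\ref{lem:basic_delta}(2) dispatches cleanly by giving uniform $\maxV$-adic control on $\delta$ of high-order elements.
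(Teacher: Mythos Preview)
Your proof is correct and follows essentially the same strategy as the paper: use Lemma~\ref{lem:basic_delta}(2) to reduce uniformly in $\underline{a}$ to the case of a polynomial truncation, and then observe that on a polynomial the value $\delta_{\underline{a}}(f_N)$ is, via finitely many applications of the sum and product rules, a polynomial expression in $a_1,\dots,a_n$, which is continuous since ring operations are $\maxV$-adically continuous. The paper phrases the second step as an induction on monomials, while you state it directly as ``polynomial maps are continuous''; you are also more explicit than the paper about the correction term $C_p(f_N,g)$ in the truncation step, but the content is the same.
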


\begin{proof}
    Note that by item 2 of Lemma~\ref{lem:basic_delta} it is enough to prove the result for $f \in \Vx$ that are polynomial, as we can approximate $\delta_{\underline{a}}(f)$ by looking at the $\delta_{\underline{a}}$ of the finite Taylor expansions of $f$. In addition, since sum and product are continuous with respect to the $\maxV-$adic topology, it is enough to prove this result for monomials in $p, x_1, \dots, x_n$, which we prove via induction. The map $\delta \mapsto \delta(x_i)$ is continuous since it is simply the projection from the product topology. In addition, note that by Lemma~\ref{lem:basic_delta} we can conclude that $\delta(p^j)=\frac{p^j-p^{jp}}{p}$. Now, if $p^j\underline{x}^{\alpha}$ is a monomial (with $\alpha \neq \underline{0}$), we can write it as $p^j\underline{x}^{\alpha}=x_l \underline{x}^{\beta}$, and so 
    \begin{equation*}
    \delta(p^j\underline{x}^\alpha)=\delta(x_l p^j\underline{x}^\beta) = x_l^p \delta(p^j\underline{x}^\beta) + p^{jp} \underline{x}^{p\beta} \delta(x_l) + p\delta(x_l)\delta(p^j \underline{x}^{\beta}),
    \end{equation*}
    which is continuous as a combination of continuous functions. 
\end{proof}

The following proposition gives us a general formula for $\delta$ of some element in $V[[\underline{x}]]$, given the values $\delta(x_i)$ and the coefficients of $f$ as a power series. 

\begin{proposition}\label{prop:delta_general_form}
    For every $\delta$ and for every $f=\sum_{\alpha} a_{\alpha} \underline{x}^{\alpha}$. Then we have that 
    \begin{equation*}
        \delta(f)= \frac{\Frob_0(f) - f^p(\underline{x})}{p} + \sum_{\alpha} a_{\alpha}^p \delta(\underline{x}^{p\alpha}) + \sum_{\alpha} (\Frob(a_{\alpha}) - a_{\alpha}^p) \delta(\underline{x}^{\alpha}),
    \end{equation*}
   \noindent where $\Frob_0$ is the unique extension of $\Frob$ (as in item $1$ of Lemma~\ref{lem:basic_delta}) for $V$ to $\Vx$ by setting $\Frob_0(x_i)=x_i^p$ for every $i$.  In particular, $\delta_{\underline{0}}(f)=\frac{\Frob_0(f) - f^p(\underline{x})}{p}$ for every $f$. 
\end{proposition}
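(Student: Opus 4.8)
The plan is to derive the general formula by applying the three axioms for a $p$-derivation systematically, working up from monomials to finite polynomials to arbitrary power series by continuity (Proposition~\ref{prop:continuous_delta}). First I would recall the additivity axiom in its $n$-ary form: for any finite collection $g_1, \dots, g_m \in \Vx$ one has $\delta(g_1 + \cdots + g_m) = \sum_i \delta(g_i) + \frac{1}{p}\bigl((\sum_i g_i^p) - (\sum_i g_i)^p\bigr)$, which follows from axiom (2) by induction on $m$ together with the telescoping identity for $C_p$. Applying this with $g_\alpha = a_\alpha \underline{x}^\alpha$ (for a polynomial $f = \sum_\alpha a_\alpha \underline{x}^\alpha$ with finitely many terms) gives
\begin{equation*}
    \delta(f) = \frac{\bigl(\sum_\alpha a_\alpha^p \underline{x}^{p\alpha}\bigr) - f^p(\underline{x})}{p} + \sum_\alpha \delta(a_\alpha \underline{x}^\alpha).
\end{equation*}
The first term here is exactly $\frac{\Frob_0(f) - f^p(\underline{x})}{p}$, since $\Frob_0(f) = \sum_\alpha \Frob(a_\alpha) x^{p\alpha}$ and $\Frob(a_\alpha) \equiv a_\alpha^p \pmod p$ makes $\sum_\alpha a_\alpha^p \underline{x}^{p\alpha} = \sum_\alpha \Frob(a_\alpha)\underline{x}^{p\alpha} - \sum_\alpha(\Frob(a_\alpha) - a_\alpha^p)\underline{x}^{p\alpha}$; I will need to track this correction carefully (see the obstacle below).

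Next I would expand each $\delta(a_\alpha \underline{x}^\alpha)$ using the product axiom (3): $\delta(a_\alpha \cdot \underline{x}^\alpha) = a_\alpha^p \delta(\underline{x}^\alpha) + \underline{x}^{p\alpha}\delta(a_\alpha) + p\,\delta(a_\alpha)\delta(\underline{x}^\alpha)$. By item~1 of Lemma~\ref{lem:basic_delta}, $\delta(a_\alpha) = \frac{\Frob(a_\alpha) - a_\alpha^p}{p}$, so $p\,\delta(a_\alpha) = \Frob(a_\alpha) - a_\alpha^p$, and the last two terms combine to $\underline{x}^{p\alpha}\delta(a_\alpha) + (\Frob(a_\alpha) - a_\alpha^p)\delta(\underline{x}^\alpha)$. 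Summing over $\alpha$ and recombining the $\underline{x}^{p\alpha}\delta(a_\alpha)$ pieces with the leading term yields precisely
\begin{equation*}
    \delta(f) = \frac{\Frob_0(f) - f^p(\underline{x})}{p} + \sum_\alpha a_\alpha^p\,\delta(\underline{x}^{p\alpha}) + \sum_\alpha (\Frob(a_\alpha) - a_\alpha^p)\,\delta(\underline{x}^{\alpha}),
\end{equation*}
after checking that $\sum_\alpha \underline{x}^{p\alpha}\delta(a_\alpha) = \frac{1}{p}\sum_\alpha(\Frob(a_\alpha) - a_\alpha^p)\underline{x}^{p\alpha}$ is exactly the discrepancy between $\frac{\Frob_0(f) - f^p}{p}$ and $\frac{(\sum a_\alpha^p \underline{x}^{p\alpha}) - f^p}{p}$. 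Finally, to pass from polynomials to arbitrary $f \in \Vx$, I would truncate $f$ to its partial sums $f_N$ (terms of degree $\le N$ in $\maxV$), note $f_N \to f$ in the $\maxV$-adic topology, invoke Proposition~\ref{prop:continuous_delta} to get $\delta(f_N) \to \delta(f)$, and observe that each term on the right-hand side is $\maxV$-adically continuous in $f$ (the map $f \mapsto \Frob_0(f)$, $f \mapsto f^p$, and the coefficient-indexed sums all converge, using that $\delta(\underline{x}^\alpha), \delta(\underline{x}^{p\alpha})$ grow in order with $|\alpha|$ by item~2 of Lemma~\ref{lem:basic_delta}). The special case $\delta_{\underline 0}$ is then immediate: $\delta_{\underline 0}(x_i) = 0$ forces $\delta_{\underline 0}(\underline{x}^\alpha) = 0$ for all $\alpha \ne \underline 0$ (by axiom (3) and induction), killing the last two sums.

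The main obstacle I anticipate is bookkeeping with the three competing "Frobenius-type" quantities — $a_\alpha^p$, $\Frob(a_\alpha)$, and their difference $\Frob(a_\alpha) - a_\alpha^p = p\,\delta(a_\alpha)$ — and making sure the leading term is packaged as $\Frob_0(f)$ rather than the naive $\sum a_\alpha^p \underline{x}^{p\alpha}$; getting a sign or a $p$-power wrong here is the easy way to produce a formula that is off by the very term $\sum_\alpha(\Frob(a_\alpha) - a_\alpha^p)\delta(\underline{x}^\alpha)$. A secondary subtlety is justifying the interchange of $\delta$ with the infinite sum: this is not a linearity statement (since $\delta$ is not additive), so one genuinely needs the continuity of $\delta$ established in Proposition~\ref{prop:continuous_delta} together with the order estimates of Lemma~\ref{lem:basic_delta}, rather than a naive term-by-term argument.
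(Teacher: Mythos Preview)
Your approach is essentially identical to the paper's: reduce to polynomials by continuity, apply the $n$-ary additivity rule to get the $\frac{\sum (a_\alpha \underline{x}^\alpha)^p - f^p}{p}$ term, expand each $\delta(a_\alpha \underline{x}^\alpha)$ via the product rule, and then substitute $\delta(a_\alpha) = \frac{\Frob(a_\alpha) - a_\alpha^p}{p}$. One small correction: Proposition~\ref{prop:continuous_delta} concerns continuity of $\underline{a} \mapsto \delta_{\underline{a}}(f)$ for fixed $f$, not continuity of $f \mapsto \delta(f)$; the latter (which is what you actually need to pass to power series) follows directly from item~2 of Lemma~\ref{lem:basic_delta}, which you already cite for the order estimates.
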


\begin{proof}
    By item 2 of Lemma~\ref{lem:basic_delta}, the function $\delta \colon \Vx \to \Vx$ is continuous (with respect to the $\maxV-$adic topology), and so it is enough to prove this result for polynomials $f$. Therefore, if we write the polynomial $f$ as $\sum_\alpha a_{\alpha}\underline{x}^\alpha$ for $a_\alpha \in V$, then we have that 
    \begin{equation*}
        \begin{split}
            \delta(f)&=\sum_{\alpha} \delta(a_{\alpha} \underline{x}^{\alpha}) + \frac{\sum_{\alpha} (a_{\alpha} \underline{x}^{\alpha})^p-f^p}{p}=\\
            &=\sum_{\alpha} (\underline{x}^{p\alpha}\delta(a_{\alpha}) + a_{\alpha}^p \delta(\underline{x}^{\alpha}) + p \delta(a_{\alpha})\delta(\underline{x}^{\alpha}) ) + \frac{\sum_{\alpha} (a_{\alpha} \underline{x}^{\alpha})^p-f^p}{p}.
        \end{split}
    \end{equation*}
    %$\delta(f)=\sum_{\alpha} \delta(a_{\alpha} \underline{x}^{\alpha}) + \frac{\sum_{\alpha} (a_{\alpha} \underline{x}^{\alpha})^p-f^p}{p}=\sum_{\alpha} (\underline{x}^{p\alpha}\delta(a_{\alpha}) + a_{\alpha}^p \delta(\underline{x}^{\alpha}) + p \delta(a_{\alpha})\delta(\underline{x}^{\alpha}) ) + \frac{\sum_{\alpha} (a_{\alpha} \underline{x}^{\alpha})^p-f^p}{p}$.
    \noindent Since $a_{\alpha} \in V$ then we have that $\delta(a_{\alpha})=\frac{\Frob(a_{\alpha})-a_{\alpha}^p}{p}$, and so by plugging this into the previous equation, the result follows. 
\end{proof}

We now recall the main result of~\cite{hochster2021jacobian} (in our case and using our notational convention) which we use in order to understanding and quantify isolated singularities using $p-$derivations.

\begin{theorem}[Theorem 4.9. in~\cite{hochster2021jacobian}]\label{thm:J_smooth}
    Let $f \in \Vx$ and let $f \in \mathfrak{q} \subset \Vx$ be a prime ideal. 
    \begin{enumerate}
        \item If $p \in \mathfrak{q}$ then $\left(\frac{\Vx}{\langle f \rangle}\right)_{\mathfrak{q}}$ is regular if and only if $\mathfrak{q}$ does not contain the ideal $\langle \left(\partial_1 f\right)^p, \dots, \left(\partial_n f\right)^p, \delta\left(f\right) \rangle$ for any $\delta$. 
        \item If $p \notin \mathfrak{q}$ then $\left(\frac{\Vx}{\langle f \rangle}\right)_{\mathfrak{q}}$ is regular if and only if $\mathfrak{q}$ does not contain the ideal $\langle \partial_1 f, \dots, \partial_n f \rangle$. 
    \end{enumerate}
\end{theorem}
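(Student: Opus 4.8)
This is Theorem~4.9 of \cite{hochster2021jacobian}; I record the strategy one would follow to establish it, treating the two cases separately.

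\emph{The case $p\notin\mathfrak{q}$.} Since the only primes of $V$ are $0$ and $\maxV=\langle p\rangle$, the hypothesis $p\notin\mathfrak{q}$ forces $V\setminus\{0\}\subset \Vx\setminus\mathfrak{q}$, so $(\Vx)_\mathfrak{q}$ is an algebra over the characteristic-$0$ field $K=\textup{Frac}(V)$ and is a localization of the excellent regular $\mathbb{Q}$-algebra $\Vx[1/p]$. One then applies the classical Jacobian criterion for a hypersurface over such a ring: for $0\neq f\in\mathfrak{q}$, $\left(\Vx/\langle f\rangle\right)_\mathfrak{q}$ is regular if and only if $f$ is not in the square of the maximal ideal of $(\Vx)_\mathfrak{q}$, which is equivalent to $\langle\partial_1 f,\dots,\partial_n f\rangle\not\subset\mathfrak{q}$. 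The only point requiring care is that power series rings are not essentially of finite type over $K$, so one argues via completions, using that a subset of $x_1,\dots,x_n$ extends to a regular system of parameters of $(\Vx)_\mathfrak{q}$ and that the $\partial_i$ span the relevant module of ($\maxV$-adically continuous) $V$-derivations.

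\emph{The case $p\in\mathfrak{q}$.} This is the substantive half, and here I would follow the $p$-derivation machinery of \cite{hochster2021jacobian}. The guiding principle is that a $p$-derivation $\delta$ is the correct replacement for ``differentiation in the $p$-direction'': there is no ordinary derivation detecting the $p$-direction (as $p$ is an integer, $dp=0$ in $\Omega_{\Vx/\mathbb{Z}}$), so one is forced to use the non-additive operators $\delta$. Because the Leibniz rule $\delta(gh)=g^p\delta(h)+h^p\delta(g)+p\,\delta(g)\delta(h)$ is Frobenius-twisted and additivity carries the correction term $C_p$, the conormal/cotangent-type sequence that $\delta$ feeds into is twisted by Frobenius, which is precisely why the ordinary partials enter raised to the $p$-th power. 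Concretely the plan is: (i) reduce regularity of $\Vx/\langle f\rangle$ at $\mathfrak{q}$ to the assertion that the dimension of the cotangent space of the hypersurface at $\mathfrak{q}$ (over the residue field at $\mathfrak{q}$) equals its Krull dimension there; (ii) using the explicit formula for $\delta(f)$ in Proposition~\ref{prop:delta_general_form} (in terms of the lift $\Frob$ on the coefficients, $f^p$, and the $\delta(\underline{x}^\alpha)$), check that as $\delta$ ranges over all $p$-derivations---equivalently as $(\delta(x_1),\dots,\delta(x_n))$ ranges over $(\Vx)^{n}$---the elements $\delta(f)$, together with $(\partial_1 f)^p,\dots,(\partial_n f)^p$, span exactly the Frobenius-twisted ``missing direction'' in that cotangent space; (iii) pass to $\Vx/\langle p\rangle\cong\kappa[[\underline{x}]]$, identify $V$ with the Witt vectors of $\kappa$ via Cohen's structure theorem, carry out the comparison with the equal-characteristic hypersurface situation, and lift the conclusion back to $\Vx$ using completeness and Hensel's lemma (Lemma~\ref{lem:W[[x]]_good_prop}).

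\emph{Main obstacle.} The hard part is steps (ii)--(iii): making precise \emph{why} the $p$-th powers $(\partial_i f)^p$, rather than the partials themselves, are the correct companions of $\delta(f)$---i.e.\ that the ``$p$-jet'' object attached to $\delta$ twists the usual module of differentials by Frobenius, so that the mixed-characteristic cotangent space is the cokernel of a matrix with entries $(\partial_i f)^p$ and $\delta(f)$ rather than $\partial_i f$ and a would-be ``$\partial_p f$''. This is the content that occupies the bulk of \cite{hochster2021jacobian}; in the present paper we simply invoke it.
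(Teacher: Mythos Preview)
The paper does not prove this theorem at all: it is stated as a citation of Theorem~4.9 in \cite{hochster2021jacobian} (with the second part attributed to \cite{nagata1958general} in Remark~\ref{rem:nagata}) and then used as a black box in the subsequent results. Your proposal correctly recognizes this in its final paragraph, and the sketch you provide of the Hochster--Jeffries argument is a reasonable outline of their strategy, but it goes well beyond what the paper itself does---the paper simply invokes the reference without any indication of proof. So your approach is consistent with the paper's, and indeed more informative; there is nothing to compare against on the paper's side.
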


\begin{remark}\label{rem:nagata}
    \textup{The second part Theorem~\ref{thm:J_smooth} was first presented in~\cite{nagata1958general}, and is in fact true for every mixed-characteristic complete DVR. }
\end{remark}

\begin{corollary}\label{cor:regular_length_zero}
    Given some $f \in \maxV$, we have that $\frac{\Vx}{\langle f \rangle}$ is regular  if and only if for any $p-$derivation $\delta$, the length of $\frac{\Vx}{\langle f,  \left(\partial_1 f\right)^p, \dots, \left(\partial_n f\right)^p, \delta\left(f\right) \rangle}$ over $\Vx$ is zero.
\end{corollary}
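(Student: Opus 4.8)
The plan is to unwind the two sides of the asserted equivalence and then quote part (1) of Theorem~\ref{thm:J_smooth} at the single prime $\mathfrak{q}=\maxV$. First I would record that, since $\Vx$ is local with maximal ideal $\maxV$, the quotient $\frac{\Vx}{\langle f\rangle}$ is again local with maximal ideal $\maxV/\langle f\rangle$, so localizing it there changes nothing: $\left(\frac{\Vx}{\langle f\rangle}\right)_{\maxV}=\frac{\Vx}{\langle f\rangle}$. Hence $\frac{\Vx}{\langle f\rangle}$ is a regular local ring if and only if $\left(\frac{\Vx}{\langle f\rangle}\right)_{\maxV}$ is regular. (As usual $f\neq 0$ is tacitly assumed, which is already needed for Theorem~\ref{thm:J_smooth} to hold as stated.)

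Since $f\in\maxV$, since $p\in\maxV$, and since $\maxV$ is prime, part (1) of Theorem~\ref{thm:J_smooth} applies with $\mathfrak{q}=\maxV$ and gives: $\left(\frac{\Vx}{\langle f\rangle}\right)_{\maxV}$ is regular if and only if $\langle(\partial_1 f)^p,\dots,(\partial_n f)^p,\delta(f)\rangle\not\subseteq\maxV$ for every $p$-derivation $\delta$. Because $\Vx$ is local, an ideal is proper exactly when it is contained in $\maxV$, so this condition says precisely that $\langle(\partial_1 f)^p,\dots,(\partial_n f)^p,\delta(f)\rangle=\Vx$ for every $\delta$.

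Next I would observe that adjoining $f$ to this generating set changes nothing, since $f$ is a non-unit of the local ring $\Vx$: the implication $\langle(\partial_1 f)^p,\dots,(\partial_n f)^p,\delta(f)\rangle=\Vx \Rightarrow \langle f,(\partial_1 f)^p,\dots,(\partial_n f)^p,\delta(f)\rangle=\Vx$ is trivial, and conversely if $1=af+\sum_i b_i c_i$ with the $c_i$ among $(\partial_1 f)^p,\dots,(\partial_n f)^p,\delta(f)$, then $1-af\notin\maxV$ is a unit of $\Vx$, so $\sum_i b_i c_i = 1-af$ already exhibits a unit in $\langle(\partial_1 f)^p,\dots,(\partial_n f)^p,\delta(f)\rangle$. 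Finally, for any ideal $\mathfrak{a}\subseteq\Vx$ one has $\length_{\Vx}\left(\frac{\Vx}{\mathfrak{a}}\right)=0$ iff $\frac{\Vx}{\mathfrak{a}}=0$ iff $\mathfrak{a}=\Vx$. Chaining these equivalences with $\mathfrak{a}=\langle f,(\partial_1 f)^p,\dots,(\partial_n f)^p,\delta(f)\rangle$ yields exactly the statement of the corollary.

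There is no real obstacle: the proof is a short chain of equivalences resting entirely on Theorem~\ref{thm:J_smooth} together with the fact that $\frac{\Vx}{\langle f\rangle}$ is local. The two points requiring care are the logical handling of the phrase ``for any $\delta$'' — it is a universal quantifier over all $p$-derivations, and must not be read as an existential — and the elementary remark that including the non-unit $f$ among the generators does not affect whether the ideal is the unit ideal.
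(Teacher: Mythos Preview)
Your proof is correct and follows essentially the same route as the paper's: reduce to $\mathfrak{q}=\maxV$ via locality, invoke part~(1) of Theorem~\ref{thm:J_smooth}, and translate ``not contained in $\maxV$'' into ``equals $\Vx$'' into ``length zero.'' You are in fact slightly more careful than the paper in explicitly justifying why adjoining the non-unit $f$ does not affect whether the ideal is the whole ring, a step the paper leaves implicit.
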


\begin{proof}
    Since $\Vx$ is local then we have that $\frac{\Vx}{\langle f \rangle} = \left(\frac{\Vx}{\langle f \rangle}\right)_{\maxV}$. Yet, since $f, p \in \maxV$ then by Theorem~\ref{thm:J_smooth}, $ \left(\frac{\Vx}{\langle f \rangle}\right)_{\maxV}$ is regular if and only if for every $\delta$ we have that $\langle \left(\partial_1 f\right)^p, \dots, \left(\partial_n f\right)^p, \delta\left(f\right) \rangle \not\subset \maxV$. But we can conclude that $\langle \left(\partial_1 f\right)^p, \dots, \left(\partial_n f\right)^p, \delta\left(f\right) \rangle \not\subset \maxV$ if and only if $\langle \left(\partial_1 f\right)^p, \dots, \left(\partial_n f\right)^p, \delta\left(f\right) \rangle = \Vx$, and since $0$ is the only module that has length zero, the result follows. 
\end{proof}

Inspired by Corollary~\ref{cor:regular_length_zero}, we can look at $\frac{\Vx}{\langle f,  \left(\partial_1 f\right)^p, \dots, \left(\partial_n f\right)^p, \delta\left(f\right) \rangle}$ and at its length as another version of the Tjurina number (that depends on the choice of $p-$derivation $\delta$). 

\begin{definition}\label{def:tjurina_delta}
    Let $f \in \Vx$. Then for every $p-$derivation $\delta$ we define the \textbf{Tjurina number of $f$ with respect to $\delta$}, denoted by $\tau (f, \delta)$, to be
    \begin{equation*}
        \tau(f, \delta) = \frac{1}{p^n} \cdot \textup{length}_{\Vx}\left(\frac{\Vx}{\langle f \rangle + J_\delta\left(f\right)}\right),
    \end{equation*}
     where $J_\delta\left(f\right) =\langle   \left(\partial_1 f\right)^p, \dots, \left(\partial_n f\right)^p, \delta\left(f\right) \rangle$. In addition, we define the \textbf{Milnor number of $f$ with respect to $\delta$} to be
    \begin{equation*}
        \mu(f, \delta) = \frac{1}{p^n} \cdot \textup{length}_{\Vx}\left(\frac{\Vx}{J_\delta\left(f\right)}\right).
    \end{equation*} 
\end{definition}

\begin{remark}\label{rem:77}
    \textup{The reason we divide by $p^n$ in the definition of $\tau(\cdot,\delta)$ and of $\mu(\cdot, \delta)$ is because since we are raising the derivatives of $f$ to the power of $p$, one would expect that length would be multiplied by $p$ as well. As we see in the Example~\ref{ex:delta_tjurina_A_1} at the end of this section, this gives us that the values need not be integers.}
\end{remark} 

\begin{example}
    \textup{The following example is inspired by Proposition~\ref{prop:splitting} (and therefore we assume that the characteristic of $\kappa$ is not $2$): Given so $p-$derivation $\delta$ on $\Vx$ such that $\delta(x_i) \in \maxV$ for every $i$, we show that}
    \begin{equation*}
        \mu(p^2 +x_1^2 +\cdots +x_k^2+g(x_{k+1}, \dots, x_n), \delta)= \mu_\kappa(\overline{g}),
    \end{equation*}
    \noindent\textup{where $\overline{g} = g \mod p$ and $\mu_\kappa(\overline{g})=\dim_\kappa\left( \frac{\kappa [[x_{k+1}, \dots, x_n]]}{J(\overline{g})}\right)$ is the Milnor number over $\kappa$.  Denote $h=p^2 +x_1^2 +\cdots +x_k^2+g(x_{k+1}, \dots, x_n)$, then we have that }
    \begin{equation*}
        J_\delta(h) = \langle x_1^p, \dots, x_k^p, (\partial_{k+1} g)^p, \dots, (\partial_n g)^p, \delta(h) \rangle.
    \end{equation*}
    \textup{Now, since $p^2=1+\cdots +1$ then $\delta(p^2)=\frac{p^2 - p^{2p}}{p}=p-p^{2p-1}$, and so we can conclude that }
    \begin{equation*}
        \begin{split}
            \delta(h) &= \delta(p^2 +x_1^2 +\cdots +x_k^2+g)=\\
            &= \delta(p^2) + \delta(x_1^2)+  \cdots + \delta(x_k^2) + \delta(g) + \frac{p^{2p} +x_1^{2p} +\cdots +x_k^{2p}+g^p - h^p}{p}=\\
            &= p\left(1-p^{2p-2} + \sum_{i=1}^k \delta(x_i)^2\right) + 2\left(\sum_{i=1}^k x_i^p \delta(x_i)\right) + \frac{p^{2p} +x_1^{2p} +\cdots +x_k^{2p}+g^p - h^p}{p}.
        \end{split}
    \end{equation*}
    \textup{Yet, we have that
    \begin{equation*}
        \frac{p^{2p} +x_1^{2p} +\cdots +x_k^{2p}+g^p - h^p}{p} = -\sum_{j=1}^{p-1} \frac{\binom{p}{j}}{p} p^{2j} g^{p-j} \mod \langle x_1^p, \dots, x_k^p \rangle.
    \end{equation*}
    \noindent As $\langle x_1^p, \dots, x_k^p \rangle \subset J_\delta(f)$, we can conclude that the ideal $J_\delta(f)$ is equal to  }
    \begin{equation*}
        \left\langle x_1, \dots, x_k, (\partial_{k+1} g)^p, \dots, (\partial_n g)^p,  p\left(1-p^{2p-2} + \sum_{i=1}^k \delta(x_i)^2 -\sum_{j=1}^{p-1} \frac{\binom{p}{j}}{p} p^{2j-1} g^{p-j}\right) \right\rangle,
    \end{equation*}
    \textup{and since $(1-p^{2p-2} + \sum_{i=1}^k \delta(x_i)^2 -\sum_{j=1}^{p-1} \frac{\binom{p}{j}}{p} p^{2j-1} g^{p-j})$ is a unit, we have that $J_\delta(h) = \langle x_1^p, \dots, x_k^p, (\partial_{k+1} g)^p, \dots, (\partial_n g)^p, p \rangle$. Thus we get that}
    \begin{equation*}
        \mu(h, \delta) = \frac{1}{p^n} \dim_\kappa \left(\frac{\kappa[[\underline{x}]]}{\langle x_1^p, \dots, x_k^p, (\partial_{k+1} g)^p, \dots, (\partial_n g)^p \rangle}\right).
    \end{equation*} 
    \noindent\textup{Now, if $v_1, \dots, v_\mu$ is a basis for the $\kappa-$vector space $\frac{\kappa[[x_{k+1}, \dots, x_n]]}{\langle  \partial_{k+1} g, \dots, \partial_n g \rangle}$, then we can conclude that the set $\{x_1^{l_1} \dots x_k^{l_k} v_j^{l_{k+1}} \colon 1 \leq j \leq \mu, 0 \leq l_i \leq p-1\}$ is a basis for the $\kappa-$vector space $\frac{\kappa[[\underline{x}]]}{\langle x_1^p, \dots, x_k^p, (\partial_{k+1} g)^p, \dots, (\partial_n g)^p \rangle}$, and so the result follows. }
\end{example}

The following result is an analogue of Proposition~\ref{lem:tau(f,q)}, and tells us that in fact $\tau(\cdot,\delta)$ completely detects isolated singularities, but with an additional condition.

\begin{proposition}\label{prop:finite_tjurina}
    Let $f \in \maxV$. Then $f$ defines an isolated singularity if and only if $p \in \sqrt{J\left(f\right)}$ and for any $p-$derivation $\delta$ we have that $\tau(f, \delta) < \infty$.
\end{proposition}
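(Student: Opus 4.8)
The plan is to deduce this from Theorem~\ref{thm:J_smooth} (the mixed characteristic Jacobian criterion of Hochster--Jefferies) together with the standard ``finiteness of length iff isolated'' dichotomy, being careful about the two loci $p \in \mathfrak{q}$ and $p \notin \mathfrak{q}$. Recall that $f$ defines an isolated singularity means $\left(\frac{\Vx}{\langle f\rangle}\right)_{\mathfrak{q}}$ is regular for every non-maximal prime $\mathfrak{q}$. I would first observe that $\tau(f,\delta) < \infty$ (for one, equivalently any, choice of $\delta$) is equivalent to saying that $\frac{\Vx}{\langle f\rangle + J_\delta(f)}$ has finite length, which since $\Vx$ is local and $(n+1)$-dimensional is equivalent to $\langle f\rangle + J_\delta(f)$ being $\maxV$-primary, i.e. $V\!\!\left(\langle f\rangle + J_\delta(f)\right) \subset \{\maxV\}$. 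Separately, $p \in \sqrt{J(f)}$ says $V(\langle p\rangle + J(f)) \subset \{\maxV\}$ as well (note $p \in \maxV$, so if $J(f) + \langle p \rangle$ is not $\maxV$-primary its vanishing locus contains a non-maximal prime containing $p$).

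\textbf{Forward direction.} Assume $f$ defines an isolated singularity. I would first show $p \in \sqrt{J(f)}$: if not, there is a non-maximal prime $\mathfrak{q} \supset \langle p\rangle + J(f)$, and I may assume $f \in \mathfrak{q}$ (if $f \notin \mathfrak{q}$ the localization is zero, hence regular, but then I want a contradiction with regularity failing — more precisely, I should pick $\mathfrak{q}$ minimal over $\langle f, p\rangle + J(f)$; if $f\notin\mathfrak q$ then $\mathfrak q\supset\langle p\rangle+J(f)$ and one checks using Theorem~\ref{thm:J_smooth}(2) localized at a suitable prime not containing $p$... ) — cleaner: take $\mathfrak{q}$ a minimal prime of $\langle f \rangle + \langle p\rangle + J(f)$ that is non-maximal if one exists; then $f,p \in \mathfrak{q}$, so by Theorem~\ref{thm:J_smooth}(1) the local ring $\left(\frac{\Vx}{\langle f\rangle}\right)_{\mathfrak{q}}$ is \emph{not} regular unless $\mathfrak{q}$ fails to contain $\langle(\partial_1 f)^p,\dots,(\partial_n f)^p,\delta(f)\rangle$ for some $\delta$ — but $J(f)\subset\mathfrak q$ forces $(\partial_i f)^p\in\mathfrak q$, and one shows $\delta(f)\in\mathfrak q$ using that $f\in\mathfrak q$, $p\in\mathfrak q$ and Lemma~\ref{lem:basic_delta}(2) applied with $\mathfrak{a}=\mathfrak q$... wait, $\delta(f)\in\mathfrak q^{k}$ needs $f\in\mathfrak q^{k+1}$, so I instead argue: $\langle f,p\rangle+J(f)\subset\mathfrak q$ and by the remark before Theorem~\ref{thm:J_smooth} the $p$-derivation criterion says $\mathfrak q$ is in the singular locus precisely when it contains $J_\delta(f)$ for all $\delta$; here $f\in\mathfrak q$ and Buium's computation gives $\delta(f)\equiv -C$ where $C$ lies in the ideal generated by $f,p,$ lower data — I'd invoke directly that $\sqrt{\langle f\rangle+J_\delta(f)}=\sqrt{\langle f,p\rangle+J(f)}$, which is the content of Theorem 4.9 of \cite{hochster2021jacobian}. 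This gives a non-maximal singular prime, contradiction. Hence $V(\langle f,p\rangle+J(f))\subset\{\maxV\}$, and combined with the $p\notin\mathfrak q$ case handled by Theorem~\ref{thm:J_smooth}(2) we get both $p\in\sqrt{J(f)}$ and finiteness of $\tau(f,\delta)$.

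\textbf{Converse.} Assume $p \in \sqrt{J(f)}$ and $\tau(f,\delta)<\infty$. Then $\langle f\rangle + J_\delta(f)$ is $\maxV$-primary, so by the radical identity $\sqrt{\langle f\rangle+J_\delta(f)}=\sqrt{\langle f,p\rangle+J(f)}$ we get $\langle f,p\rangle+J(f)$ is $\maxV$-primary. Now let $\mathfrak{q}\neq\maxV$ be prime with $f\in\mathfrak q$ (else the localization vanishes). If $p\in\mathfrak q$ then $\langle f,p\rangle+J(f)\subset\mathfrak q\subsetneq\maxV$ would contradict $\maxV$-primality, so $\mathfrak q$ does \emph{not} contain $\langle f,p\rangle+J(f)$; since $f\in\mathfrak q$, some generator of $\langle p\rangle+J(f)$ is outside $\mathfrak q$. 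If that generator is $p$ itself, we are in case (2) of Theorem~\ref{thm:J_smooth} and need $J(f)\not\subset\mathfrak q$ — which again follows from $\maxV$-primality of $\langle f,p\rangle + J(f)$ together with $\sqrt{\cdot}$ reasoning, or directly: $\mathfrak q\supset\langle f\rangle+J(f)$ with $p\notin\mathfrak q$ would still contradict primality after passing to $\sqrt{\langle f,p\rangle+J(f)}=\maxV$. Either way Theorem~\ref{thm:J_smooth} gives $\left(\frac{\Vx}{\langle f\rangle}\right)_{\mathfrak q}$ regular. Hence $f$ defines an isolated singularity.

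\textbf{Main obstacle.} The crux is the identity $\sqrt{\langle f\rangle+J_\delta(f)} = \sqrt{\langle f, p\rangle+J(f)}$ in $\Vx$, which is exactly what makes $\tau(f,\delta)$ well-behaved and makes its finiteness independent of $\delta$ on the level of the radical. The inclusion $\sqrt{\langle f,p\rangle+J(f)}\supseteq\sqrt{\langle f\rangle+J_\delta(f)}$ needs care: $(\partial_i f)^p\in\langle f\rangle+J_\delta(f)$ gives $\partial_i f\in\sqrt{\phantom{x}}$, but one must also see $p\in\sqrt{\langle f\rangle+J_\delta(f)}$, which is not automatic — it is precisely the extra hypothesis ``$p\in\sqrt{J(f)}$'' that is needed, and indeed \cite{hochster2021jacobian} phrases Theorem~4.9 so that the $p$-derivation criterion only controls the locus $V(p)$. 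So strictly, the clean statement is: $V(\langle f\rangle+J_\delta(f)) = V(\langle f,p\rangle+J(f)) \cup V(\langle f\rangle + \langle p\rangle)$ may differ, and the hypothesis $p\in\sqrt{J(f)}$ is exactly what collapses the discrepancy. I would spend the bulk of the write-up pinning down this set-theoretic identity of zero loci (equivalently of radicals) carefully, citing Theorem~\ref{thm:J_smooth} prime-by-prime, and the rest is the routine translation between ``$\maxV$-primary ideal'', ``finite length quotient'', and ``regular at all non-maximal primes''.
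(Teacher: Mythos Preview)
Your converse direction is essentially the paper's argument, but you route it through an unproven ``radical identity'' $\sqrt{\langle f\rangle+J_\delta(f)}=\sqrt{\langle f,p\rangle+J(f)}$ that is neither needed nor established. The paper argues directly prime-by-prime: given $\mathfrak{q}\neq\maxV$ with $f\in\mathfrak{q}$, if $p\notin\mathfrak{q}$ then $J(f)\subset\mathfrak{q}$ would force $p\in\sqrt{J(f)}\subset\sqrt{\mathfrak{q}}=\mathfrak{q}$, a contradiction, so $J(f)\not\subset\mathfrak{q}$ and Theorem~\ref{thm:J_smooth}(2) applies; if $p\in\mathfrak{q}$ then $\tau(f,\delta)<\infty$ gives $\langle f\rangle+J_\delta(f)\not\subset\mathfrak{q}$ directly and Theorem~\ref{thm:J_smooth}(1) applies. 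No radical computation is needed.

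The forward direction for $p\in\sqrt{J(f)}$ is where your proposal has a genuine gap. First, you unpack the condition incorrectly: $p\in\sqrt{J(f)}$ means $V(J(f))\subset V(p)$, i.e.\ every prime containing $J(f)$ contains $p$; it does \emph{not} say that $\langle p\rangle+J(f)$ is $\maxV$-primary. Consequently, the negation $p\notin\sqrt{J(f)}$ yields a prime $\mathfrak{q}\supset J(f)$ with $p\notin\mathfrak{q}$---not, as you write, a non-maximal prime containing $\langle p\rangle+J(f)$. Second, even with the correct negation, your contradiction via Theorem~\ref{thm:J_smooth}(2) needs $f\in\mathfrak{q}$ as well, and there is no reason a prime containing $J(f)$ but not $p$ must also contain $f$; enlarging $\mathfrak{q}$ to swallow $f$ may force $p$ into it. Your attempt to patch this by invoking the radical identity is circular, since that identity is exactly what is at stake.

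The paper avoids this entirely by a different route: it inverts $p$ and passes to characteristic zero. Since $f$ has an isolated singularity, $\left(\frac{\Vx}{\langle f\rangle}\right)[\tfrac{1}{p}]$ is regular, hence so is its completion $\frac{L[[\underline{x}]]}{\langle f\rangle}$ where $L=\operatorname{Frac}(V)$; over a field of characteristic zero the classical Jacobian criterion (cited from Boubakri and Greuel) then gives $1\in J(f)\cdot L[[\underline{x}]]$, i.e.\ $p^N\in J(f)$ for some $N$. This is the step you are missing, and it is not recoverable from Theorem~\ref{thm:J_smooth} alone in the way you sketch.
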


\begin{proof}
    Given some ideal $I \subset \Vx$, we have that the length of $\frac{\Vx}{I}$ is finite if and only if there exists some $k$ such that $\maxV^k \cdot \left(\frac{\Vx}{I}\right)=0$. This in turn is equivalent to having $\maxV^k \subset I$, which itself is equivalent having that if $\mathfrak{p}$ is a prime ideal such that $I \subset \mathfrak{p}$ then $\mathfrak{p}=\maxV$.
    Now, note that for every $\mathfrak{p} \neq \maxV$ we have that $\left(\frac{\Vx}{\langle f \rangle}\right)_\mathfrak{p} = 0$ (which is regular) if and only if $f \notin \mathfrak{p}$. Therefore,  $f$ defines an isolated singularity if and only if for every prime ideal $f \in \mathfrak{p} \neq \maxV$ we have that $\left(\frac{\Vx}{\langle f \rangle}\right)_\mathfrak{p}$ is a regular local ring.\\
    
    If $f$ defines an isolated singularity, by Theorem~\ref{thm:J_smooth}  for every $\delta$ and for every $\mathfrak{q} \neq \maxV$ we have that $J_\delta\left(f\right) + \langle f \rangle \not\subset \mathfrak{q}$ if $p \in \mathfrak{q}$ and $J\left(f\right) \not\subset \mathfrak{q}$ if $p \notin \mathfrak{q}$, and so $\tau(f, \delta) < \infty$. In addition, $p \in \sqrt{J\left(f\right)}$ since if $f$ defines an isolated singularity, then in particular, $\left(\frac{\Vx}{\langle f \rangle}\right)[\frac{1}{p}]$ is regular, and so as its completion with respect to $x_1, \dots, x_n$, which is isomorphic to $\frac{L[[\underline{x}]]}{\langle f \rangle}$, where $L$ is the fraction field of $V$. Since $L$ is a field of characteristic zero, then by Theorem 5.1.7. of~\cite{boubakri2009hypersurface} together with Lemma 2.44 in~\cite{greuel2007introduction} we must have that $1 \in J\left(f\right) \otimes_V L[[\underline{x}]]$, and so  $1 \in J\left(f\right)[\frac{1}{p}]$.\\
    
    For the other direction, assume that $p \in \sqrt{J\left(f\right)}$ and that $\tau(f, \delta) < \infty$ for every $\delta$. Then $J_\delta\left(f\right) + \langle f \rangle \not\subset \mathfrak{q}$ for every $\mathfrak{q} \neq \maxV$. So given $\mathfrak{q} \neq \maxV$, if $p \notin \mathfrak{q}$ then $J\left(f\right) \notin \mathfrak{q}$ since $p \in\sqrt{J\left(f\right)}$. If $p \in \mathfrak{q}$ then we have that $J_\delta\left(f\right) + \langle f \rangle \not\subset \mathfrak{q}$. Therefore by Theorem~\ref{thm:J_smooth} we have that $\left(\frac{\Vx}{\langle f \rangle}\right)_\mathfrak{q}$ is regular for every $\mathfrak{q} \neq \maxV$, and the result follows. 
\end{proof}

\begin{remark}
    \textup{An alternative proof of Proposition~\ref{prop:finite_tjurina}, similar to the proof of Proposition~\ref{lem:tau(f,q)}, follows from Theorem 3.23 in~\cite{de2020zariski}. It tells us that given some prime ideal $p \in \mathfrak{q} \subset \Vx$, then $\langle f \rangle + J_\delta\left(f\right) \subset \mathfrak{q}$ if and only if $f \in \mathfrak{q}^2$, and so we get that $\left(\frac{\Vx}{\langle f \rangle}\right)_{\mathfrak{q}}$ is not regular. }
\end{remark}

\begin{definition}
    Let $f \in \Vx$ define an isolated singularity. Then we denote by $\ord_{f}\left(p\right)$ the smallest $N$ such that $p^N \in \langle  f, \left(\partial_1 f\right)^p, \dots, \left(\partial_n f\right)^p \rangle$.  
\end{definition}

\begin{proposition}
    Let $f \in \Vx$ define an isolated singularity. Then we have that $\ord_{f}\left(p\right) \geq  \ord\left(f\right) -1$. 
\end{proposition}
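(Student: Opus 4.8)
The plan is to reduce the whole problem to a single ideal containment: I will show that $\langle f, (\partial_1 f)^p, \dots, (\partial_n f)^p\rangle \subseteq \maxV^{\ord(f)-1}$, and then read off the bound from the fact that $p^N$ lies in $\maxV^N$ but not in $\maxV^{N+1}$. Throughout, write $r = \ord(f)$; we may assume $\ord_f(p)$ is finite (otherwise there is nothing to prove), which is the only point at which the isolated-singularity hypothesis is needed.

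For the containment I would apply Lemma~\ref{lem:app_ord_der} with the ideal $\maxV$: since $f \in \maxV^r$ it gives $\partial_i(f) \in \maxV^{r-1}$ for each $i$, and therefore $(\partial_i f)^p \in \maxV^{p(r-1)} \subseteq \maxV^{r-1}$ (using only $p \geq 1$). As $f \in \maxV^r \subseteq \maxV^{r-1}$ as well, every generator of the ideal $\langle f, (\partial_1 f)^p, \dots, (\partial_n f)^p\rangle$ lies in $\maxV^{r-1}$, hence so does the ideal. It is worth noting that when $r \geq 2$ one actually has $p(r-1) \geq r$, so the ideal even lies in $\maxV^{r}$ and the argument below then yields the slightly stronger $\ord_f(p) \geq \ord(f)$; the stated bound $\ord(f)-1$ is the uniform one that also covers $r=1$, where it is sharp (for instance $f = x_1$ gives $\ord_f(p)=0$).

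The remaining — and essentially only substantive — step is to record that $p^N \notin \maxV^{N+1}$. For this I would use the $V$-algebra augmentation $\varepsilon \colon \Vx \to V$ sending $x_i \mapsto 0$: since the uniformizer of $V$ is $p$ we get $\varepsilon(\maxV^{N+1}) \subseteq \langle p^{N+1}\rangle$, so $p^N \in \maxV^{N+1}$ would force $p^N = p^{N+1}u$ in $V$, i.e. $p^N(1-pu)=0$; as $V$ is a domain and $1-pu$ a unit, this gives $p^N=0$, a contradiction. Putting the pieces together: with $N = \ord_f(p)$ we have $p^N \in \maxV^{r-1}$ by the containment but $p^N \notin \maxV^{N+1}$, so $N < r-1$ is impossible (it would give $\maxV^{r-1} \subseteq \maxV^{N+1}$), and hence $\ord_f(p) = N \geq r - 1 = \ord(f)-1$. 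I do not anticipate any genuine obstacle; the only care required is in the order estimate for the $p$-th powers of the partials and in the one-line justification that $\ord_{\maxV}(p^N)=N$.
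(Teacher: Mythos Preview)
Your proof is correct and follows essentially the same approach as the paper's: both use Lemma~\ref{lem:app_ord_der} to place the generators of $\langle f,(\partial_1 f)^p,\dots,(\partial_n f)^p\rangle$ inside $\maxV^{r-1}$ and then conclude from the fact that $p$ has $\maxV$-order exactly $1$. Your version is slightly more detailed in justifying $p^N\notin\maxV^{N+1}$ via the augmentation map, and you additionally observe the sharper bound $\ord_f(p)\geq\ord(f)$ for $r\geq 2$, which the paper does not record.
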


\begin{proof}
First, by Proposition~\ref{prop:finite_tjurina} we have that $N=ord_{f}\left(p\right)$ is finite and well defined. Now, denoting $r=\ord\left(f\right)$, then from Lemma~\ref{lem:app_ord_der} we can conclude that $\langle \left(\partial_1 \left(f\right)\right)^p, \dots, \left(\partial_n \left(f\right)\right)^p \rangle   \subset \maxV^{p\left(r-1\right)} \subset \maxV^{r-1}$. Yet, as $p^N \in \langle  f, \left(\partial_1 f\right)^p, \dots, \left(\partial_n f\right)^p \rangle$,  there exists some $a \in \Vx$ such that $p^N + af \in \langle \left(\partial_1 \left(f\right)\right)^p, \dots, \left(\partial_n \left(f\right)\right)^p \rangle$. Therefore, as $f \in \maxV^r \subset \maxV^{r-1}$, we can conclude that $p^N \subset \maxV^{r-1} + \langle f \rangle \subset \maxV^{r-1}$, and the result follows since the order of $p \in \maxV$ is $1$. 
\end{proof}

\begin{proposition}\label{thm:constant_delta}
    Let $f \in \Vx$ and let $\underline{a} \in (\Vx)^{ \times n}$ such that $\mu(f, \underline{a}) < \infty$. Then there exists some $N$ such that for every $\underline{b} \in (\Vx)^{\times n}$ with $\underline{a} - \underline{b} \in (\maxV^N)^{\times n}=\maxV^N \times \cdots \times \maxV^N$ we have that $\tau(f, \delta_{\underline{a}}) \geq\tau(f, \delta_{\underline{b}})$ and $\mu(f, \delta_{\underline{a}}) \geq\mu(f, \delta_{\underline{b}})$. 
\end{proposition}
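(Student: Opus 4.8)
The plan is to prove the sharper statement that, once $\underline{b}$ is sufficiently close to $\underline{a}$, one has the two ideal inclusions
\[
  \langle f\rangle + J_{\delta_{\underline{a}}}(f)\ \subseteq\ \langle f\rangle + J_{\delta_{\underline{b}}}(f),
  \qquad
  J_{\delta_{\underline{a}}}(f)\ \subseteq\ J_{\delta_{\underline{b}}}(f),
\]
where $J_\delta(f)=\langle(\partial_1 f)^p,\dots,(\partial_n f)^p,\delta(f)\rangle$ as in Definition~\ref{def:tjurina_delta}. Each inclusion gives a surjection of the corresponding quotients of $\Vx$, and since the quotient by the smaller ideal has finite length --- by hypothesis $\mu(f,\underline{a})<\infty$, and $\langle f\rangle+J_{\delta_{\underline{a}}}(f)\supseteq J_{\delta_{\underline{a}}}(f)$ so its quotient has finite length too --- additivity of length on the short exact sequences $0\to J_{\delta_{\underline{b}}}(f)/J_{\delta_{\underline{a}}}(f)\to \Vx/J_{\delta_{\underline{a}}}(f)\to \Vx/J_{\delta_{\underline{b}}}(f)\to 0$ (and its $\langle f\rangle$-analogue) yields $\mu(f,\delta_{\underline{a}})\ge\mu(f,\delta_{\underline{b}})$ and $\tau(f,\delta_{\underline{a}})\ge\tau(f,\delta_{\underline{b}})$, all four quantities being finite. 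Conceptually this is just upper semicontinuity of the fibre length of the family $\underline{b}\mapsto \Vx/J_{\delta_{\underline{b}}}(f)$.

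The first step is to fix $k$ with $\maxV^k\subseteq J_{\delta_{\underline{a}}}(f)$: this exists because $\Vx/J_{\delta_{\underline{a}}}(f)$ has finite length over the Noetherian local ring $\Vx$ and is therefore killed by a power of the maximal ideal. The same $k$ satisfies $\maxV^k\subseteq \langle f\rangle+J_{\delta_{\underline{a}}}(f)$, so a single choice of $N$ will handle both the $\mu$ and the $\tau$ statement. The second step is to invoke continuity: by Proposition~\ref{prop:continuous_delta} the map $(\Vx)^{\times n}\to\Vx$, $\underline{c}\mapsto\delta_{\underline{c}}(f)$, is continuous for the product $\maxV$-adic topology, so there is an $N$ such that $\underline{a}-\underline{b}\in(\maxV^N)^{\times n}$ forces $\delta_{\underline{a}}(f)-\delta_{\underline{b}}(f)\in\maxV^{k+1}$. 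Since $\maxV^{k+1}=\maxV\cdot\maxV^k\subseteq\maxV\cdot J_{\delta_{\underline{a}}}(f)$, this gives $\delta_{\underline{a}}(f)\in\delta_{\underline{b}}(f)+\maxV\cdot J_{\delta_{\underline{a}}}(f)$.

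The third step is Nakayama. The generators $(\partial_1 f)^p,\dots,(\partial_n f)^p$ are common to $J_{\delta_{\underline{a}}}(f)$ and $J_{\delta_{\underline{b}}}(f)$, and the previous step controls the remaining generator $\delta_{\underline{a}}(f)$ modulo $J_{\delta_{\underline{b}}}(f)+\maxV\cdot J_{\delta_{\underline{a}}}(f)$; hence $J_{\delta_{\underline{a}}}(f)\subseteq J_{\delta_{\underline{b}}}(f)+\maxV\cdot J_{\delta_{\underline{a}}}(f)$. Applying Nakayama's lemma to the finitely generated $\Vx$-module $J_{\delta_{\underline{a}}}(f)/\bigl(J_{\delta_{\underline{a}}}(f)\cap J_{\delta_{\underline{b}}}(f)\bigr)$, which then equals its own $\maxV$-multiple, yields $J_{\delta_{\underline{a}}}(f)\subseteq J_{\delta_{\underline{b}}}(f)$. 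The identical argument with $\langle f\rangle$ adjoined to both ideals (using $\maxV^k\subseteq\langle f\rangle+J_{\delta_{\underline{a}}}(f)$ to absorb $\maxV^{k+1}$ into $\maxV\cdot(\langle f\rangle+J_{\delta_{\underline{a}}}(f))$) gives the other inclusion, and passing to lengths completes the argument.

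The step I expect to be the main obstacle --- or at least the one requiring care --- is precisely this passage from ``$\delta_{\underline{a}}(f)$ and $\delta_{\underline{b}}(f)$ differ by an element of a high power of $\maxV$'' to ``the two ideals are nested''. A naive attempt to rewrite a representation $\sum_i h_i(\partial_i f)^p+h_0\,\delta_{\underline{a}}(f)$ of an element of $\maxV^k$ by substituting $\delta_{\underline{b}}(f)$ for $\delta_{\underline{a}}(f)$ fails, because $h_0$ need not lie in $\maxV$ and the resulting error term $h_0\bigl(\delta_{\underline{a}}(f)-\delta_{\underline{b}}(f)\bigr)$ is not controlled. Routing the argument through Nakayama --- using $\maxV^k\subseteq J_{\delta_{\underline{a}}}(f)$ to swallow the error into $\maxV\cdot J_{\delta_{\underline{a}}}(f)$ --- is what sidesteps this. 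The remaining points (finite length $\Leftrightarrow$ annihilated by $\maxV^k$; that a single $N$ works for both $\mu$ and $\tau$; additivity of length) are routine.
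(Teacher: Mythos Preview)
Your argument is correct. Both you and the paper begin from $\maxV^k\subseteq J_{\delta_{\underline a}}(f)$ and then compare $\delta_{\underline a}(f)$ with $\delta_{\underline b}(f)$, but the paper's proof only establishes the inclusion $J_{\delta_{\underline b}}(f)\subseteq J_{\delta_{\underline a}}(f)$: it asserts (after what appear to be typos in the hypothesis on $\underline b$ and in the claim ``$\delta_{\underline b}(f)\in\maxV^N$'') that the perturbed generator lies in $J_{\delta_{\underline a}}(f)$, which yields $\mu(f,\delta_{\underline b})\ge\mu(f,\delta_{\underline a})$, the reverse of the stated inequality. Your Nakayama step is precisely what is needed to obtain the opposite inclusion $J_{\delta_{\underline a}}(f)\subseteq J_{\delta_{\underline b}}(f)$ and hence the direction actually claimed. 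In fact your proof, combined with the easy inclusion, shows the two Jacobian ideals coincide for $\underline b$ sufficiently close to $\underline a$, so $\mu$ and $\tau$ are locally \emph{constant} in $\delta$, which is stronger than the semicontinuity asserted. Your explicit appeal to Proposition~\ref{prop:continuous_delta} to bound $\delta_{\underline a}(f)-\delta_{\underline b}(f)$ is also cleaner than the paper's unexplained containment.
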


\begin{proof}
    If $\mu(f, \underline{a})<\infty$, then there exists some $N$ such that $\maxV^N \subset J_{\delta_{\underline{a}}}(f)$. Therefore, given some $\underline{b} =(b_1, \dots, b_n)$ such that  $b_i\in \maxV^N$ for every $i$, then $\delta_{\underline{b}}(f) \in \maxV^N$, and so $J_{\delta_{\underline{b}}}(f) \subset J_{\delta_{\underline{a}}}(f)$, and the result follows. 
\end{proof}

\begin{remark}
    \textup{From Proposition~\ref{thm:constant_delta} we can conclude that both $\mu(\cdot, \delta)$ and $\tau( \cdot, \delta)$ are semi-continuous with respect to $\delta$. This can be viewed as an analogue of Theorem 2.6 in~\cite{greuel2007introduction}, of Lemma A.13 in~\cite{greuel2016right}, and of item 1 in Remark~\ref{rem:unfolding}. }
\end{remark}

As mentioned in Remark~\ref{rem:77}, a problem with Proposition~\ref{prop:finite_tjurina} is that the value of $\tau(f, \delta)$ depends on the choice of $\delta$.  But, by Remark 4.3 in~\cite{hochster2021jacobian},  we have that the value $$\tau^\Delta\left(f\right) = \frac{1}{p^n} \textup{length}_{\Vx} \left(\frac{V [[\underline{x}]]}{\langle f, p \rangle + J_\delta\left(f\right)}\right)$$ does not depend on $\delta$. Note that  $\tau^\Delta\left(f\right) \leq \tau(f, \delta)$ for every $p-$derivation $\delta$.

\begin{example}
    \textup{Assume that $f=pg$ for some $g \in \Vx$. Then $\tau^\Delta(f)=\ord(\overline{g})$ if $n=1$ and $\tau^\Delta(f)=\infty$ otherwise, where $\overline{g} = g \mod p \in \kappa[[\underline{x}]]$. This is true since $J(f)=\langle p \rangle \cdot J(g)$ and $\delta(pg)=p^p \delta(g) + g^p \delta(p) + p \delta(g)\delta(p)$. As $\delta(p)$ is a unit,  we can conclude that $\langle f, p \rangle + J_\delta(f)=\langle pg, p , \delta(pg) \rangle + \langle p \rangle \cdot J(g) = \langle p , g^p \rangle$, and so }
    \begin{equation*}
        \tau^\Delta\left(f\right) = \frac{1}{p^n} \textup{length}_{\Vx} \left(\frac{V [[\underline{x}]]}{\langle p , g^p \rangle}\right) = \frac{1}{p^n} \dim_\kappa \left(\frac{\kappa[[\underline{x}]]}{ \langle \overline{g}^p \rangle} \right).
    \end{equation*}
\end{example}

The following lemma is a general result about local rings, inspired by Theorem 1.1. in~\cite{liu2018milnor} (and by Proposition 1 in~\cite{almiron2023tjurina}), which we use to bound $\tau(f, \delta)$. 

\begin{lemma}\label{lemma:Liu}
    Let $(R, \mathfrak{m})$ be a local ring, let $I$ be an $\mathfrak{m}-$primary ideal, and assume that $f \in R$ satisfies $f^N \in I$. Then we have that 
    \begin{equation*}
        \length_R\left(\frac{R}{I}\right) \leq N \cdot \length_R\left(\frac{R}{I+\langle f \rangle}\right), 
    \end{equation*}
    \noindent with equality if and only if $(I \colon \langle f\rangle) = \langle f^{N-1}\rangle + I$. 
\end{lemma}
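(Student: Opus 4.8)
The plan is to induct on $N$. For $N = 1$ the hypothesis $f^N = f \in I$ gives $I + \langle f\rangle = I$, so the claimed inequality $\length_R(R/I) \le 1\cdot\length_R(R/I)$ is an equality. For the inductive step, suppose the statement holds for $N-1$ and that $f^N \in I$. The natural object to consider is the ideal $I' = I + \langle f^{N-1}\rangle = I + \langle f\rangle^{N-1}$; note $f^N = f\cdot f^{N-1}$, so $f\cdot I' \subset I$, and also $(f^{N-1})^{1} = f^{N-1}\cdot 1$... more to the point, I will bound $\length_R(R/I)$ by splitting along the submodule generated by $f^{N-1}$.

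Concretely, I would use the short exact sequence of $R$-modules
\begin{equation*}
0 \longrightarrow \frac{I + \langle f^{N-1}\rangle}{I} \longrightarrow \frac{R}{I} \longrightarrow \frac{R}{I + \langle f^{N-1}\rangle} \longrightarrow 0,
\end{equation*}
which gives $\length_R(R/I) = \length_R\big((I+\langle f^{N-1}\rangle)/I\big) + \length_R\big(R/(I+\langle f^{N-1}\rangle)\big)$. For the second summand: since $(f^{N-1})^{?}$... actually $f^{N-1}$ satisfies $(f^{N-1})\cdot f \in I$, but I want a clean application of the inductive hypothesis. A cleaner route: apply the induction hypothesis to the ideal $I + \langle f\rangle$ (which is still $\mathfrak m$-primary, being a larger ideal than the $\mathfrak m$-primary $I$) and the element $f$ with exponent $N-1$, since $f^{N-1}\in I + \langle f\rangle$ trivially — no, that is vacuous. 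Instead, I expect the right bookkeeping is: the surjection $R/(I+\langle f\rangle) \twoheadrightarrow (I+\langle f^{j}\rangle)/(I + \langle f^{j+1}\rangle)$ given by multiplication by $f^{j}$ shows $\length_R\big((I+\langle f^{j}\rangle)/(I+\langle f^{j+1}\rangle)\big) \le \length_R\big(R/(I+\langle f\rangle)\big)$ for each $j = 0,1,\dots,N-1$.

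Summing these over $j$ and using the filtration $I \subset I + \langle f^{N-1}\rangle \subset \dots \subset I + \langle f\rangle \subset I + \langle f^0\rangle = R$ — wait, the inclusions go $I + \langle f\rangle \supset I + \langle f^2\rangle \supset \dots \supset I + \langle f^{N}\rangle = I$ (using $f^N\in I$), together with $R = I + \langle f^0\rangle \supset I+\langle f\rangle$. So the chain $R \supset I+\langle f\rangle \supset I+\langle f^2\rangle \supset \cdots \supset I+\langle f^N\rangle = I$ has $N$ successive quotients: the top one $R/(I+\langle f\rangle)$, and then $(I+\langle f^{j}\rangle)/(I+\langle f^{j+1}\rangle)$ for $j = 1,\dots,N-1$, each of length at most $\length_R(R/(I+\langle f\rangle))$ by the multiplication-by-$f^j$ surjection. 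Adding the $N$ bounds gives exactly $\length_R(R/I) \le N\cdot\length_R(R/(I+\langle f\rangle))$. Finiteness of all lengths is guaranteed since $I$ is $\mathfrak m$-primary, hence so is every ideal between $I$ and $R$.

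The main obstacle I anticipate is purely bookkeeping: making sure the surjection $R/(I+\langle f\rangle) \to (I+\langle f^j\rangle)/(I+\langle f^{j+1}\rangle)$, $\bar r \mapsto \overline{r f^j}$, is well-defined (it is: if $r \in I + \langle f\rangle$ then $rf^j \in I + \langle f^{j+1}\rangle$) and surjective (clear, since $I + \langle f^j\rangle$ is generated modulo $I + \langle f^{j+1}\rangle$ by the single element $f^j$), and that the telescoping of the filtration quotients is set up with the correct indices so that the terminal term is $I + \langle f^N\rangle = I$. No deep input is needed beyond additivity of length on short exact sequences and the fact that a quotient of a module of length $\ell$ has length at most $\ell$.
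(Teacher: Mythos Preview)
Your argument is correct and is essentially the same as the paper's: both filter $R/I$ by the chain $R \supset I+\langle f\rangle \supset I+\langle f^2\rangle \supset \cdots \supset I+\langle f^N\rangle = I$ (equivalently, $M \supset fM \supset \cdots \supset f^N M = 0$ with $M=R/I$) and bound each of the $N$ successive quotients by $\length_R\big(R/(I+\langle f\rangle)\big)$. The only cosmetic difference is that you bound $(I+\langle f^j\rangle)/(I+\langle f^{j+1}\rangle)$ via the explicit surjection from $R/(I+\langle f\rangle)$ given by multiplication by $f^j$, whereas the paper obtains the same bound by identifying this quotient as the cokernel of multiplication by $f$ on $f^j M$ and invoking $\length(\ker) = \length(\mathrm{coker})$ on a finite-length module; your route is marginally more direct.
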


\begin{proof}
    Define $\varphi \colon \frac{R}{I} \to \frac{R}{I}$ be the $R-$homomorphism defined by $\varphi(g + I) = fg + I$. Then, $\varphi$ induces a long exact sequence 
    \begin{equation*}
        0 \to \ker\left(\varphi\right) \to \frac{R}{I} \xrightarrow{\varphi} \frac{R}{I}  \to \frac{R}{I + \langle f \rangle} \to 0,
    \end{equation*}
    and by applying length over $R$ to this short exact sequence sequence we can conclude that $\textup{length}_{R}\left(\ker\left(\varphi\right)\right) = \textup{length}_{R}\left(\frac{R}{I + \langle f \rangle}\right)$. Since $f^N \in I$ we can conclude that $\langle f^{N-1} \rangle \cdot M \subset \ker\left(\varphi\right)$, where $M= \frac{R}{I}$. Therefore, we have a sequence of inclusions
    \begin{equation*}
        0 = \langle f^N \rangle \cdot  M \subset \langle f^{N-1} \rangle \cdot M \subset \cdots \subset \langle f \rangle \cdot M,
    \end{equation*}
    and so for every $i < N$ we have a short exact sequence 
    \begin{equation*}
        0 \to \ker\left(\varphi\right) \cap \langle f^i \rangle \cdot M \to \langle f^i \rangle \cdot M \xrightarrow{\varphi} \langle f^i \rangle \cdot M \to \frac{\langle f^i \rangle \cdot M }{ \langle f^{i+1} \rangle \cdot M },
    \end{equation*}
     which gives us that $\textup{length}_{R}\left(\frac{\langle f^i \rangle \cdot M }{ \langle f^{i+1} \rangle \cdot M }\right) = \textup{length}_{R}\left(\ker\left(\varphi\right) \cap \langle f^i \rangle \cdot M\right)$. But, since $\textup{length}_{\Vx}\left(\ker\left(\varphi\right) \cap \langle f^i \rangle \cdot M\right) =\textup{length}_{R}\left(\ker\left(\varphi\right)\right) = \textup{length}_{R}\left(\frac{R}{I + \langle f \rangle}\right)$, we can conclude that 
     \begin{equation*}
     \textup{length}_{R}\left(\frac{\langle f^i \rangle \cdot M }{ \langle f^{i+1} \rangle \cdot M }\right) \leq \textup{length}_{R}\left(\frac{R}{I + \langle f \rangle}\right). 
     \end{equation*} 
    \noindent Thus, we get that 
    \begin{equation*}
    \begin{split}
        \textup{length}_{R}\left(\frac{R}{I}\right) &=\textup{length}_R\left(M\right)=\\
        &=\textup{length}_{R}\left(\frac{R}{I + \langle f \rangle}\right) + \sum_{i=1}^{N-1} \textup{length}_{R}\left(\frac{\langle f^i \rangle \cdot M }{ \langle f^{i+1} \rangle \cdot M }\right) \leq\\
        &\leq N \cdot \textup{length}_{R}\left(\frac{R}{I + \langle f \rangle}\right), 
    \end{split}
    \end{equation*}
    Note that we have equality if and only if $\ker(\varphi) = \frac{\langle f^{N -1} \rangle +I}{I}$. This is true since we have an equality if and only if $\ker\left(\varphi\right) \cap \frac{\langle f^i \rangle+I}{I} = \ker\left(\varphi\right)$ for every $i$ (i.e. $\ker(\varphi) \subset \frac{\langle f^i \rangle+I}{I}$), but $ \frac{\langle f^{N-1} \rangle + I}{I} \subset \ker\left(\varphi\right)$. Yet, we have that $\ker(\varphi) = \{g+I \colon fg \in I \} = \frac{(I \colon \langle f \rangle)}{I}$, and the result follows. 
\end{proof}

% \begin{remark}
%     \textup{If we assume in Lemma~\ref{lemma:Liu} that $R$ is a domain, then we in fact have equality if and only if $N=1$ (i.e. $f \in I$). This is true since in this case we have that $(I \colon \langle f \rangle) = \frac{1}{f} (I \cap \langle f \rangle)$, and so }
% \end{remark}

\begin{corollary}\label{prop:upper_bound_tau}
    Let $f \in \maxV$. Then:
    \begin{enumerate}
        \item $\tau^\Delta \left(f\right) \leq \tau(f, \delta) \leq ord_{f}\left(p\right) \cdot \tau^\Delta\left(f\right)$ for every $p-$derivation $\delta$. 
        \item $f$ defines an isolated singularity if and only if $ p \in \sqrt{J\left(f\right)}$ and $\tau^\Delta\left(f\right) < \infty$.
        \item If there exists some $p-$derivation $\delta$ such that $\tau^\Delta\left(f\right) = \tau(f, \delta)$, then $f$ must be of order at most $2$. 
    \end{enumerate}
\end{corollary}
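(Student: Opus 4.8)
The plan is to derive all three parts from Lemma~\ref{lemma:Liu} applied with the element $p$, from Proposition~\ref{prop:finite_tjurina}, and from the $\delta$-independence of $\tau^\Delta$ recorded just before this corollary. Throughout write $I = \langle f\rangle + J_\delta(f)$, so that $I + \langle p\rangle = \langle f,p\rangle + J_\delta(f)$, $\tau(f,\delta) = \frac{1}{p^n}\length_{\Vx}(\Vx/I)$, and $\tau^\Delta(f) = \frac{1}{p^n}\length_{\Vx}(\Vx/(I+\langle p\rangle))$. For the chain in (1), the inequality $\tau^\Delta(f)\le\tau(f,\delta)$ is immediate from $I\subseteq I+\langle p\rangle$, which yields a surjection $\Vx/I\twoheadrightarrow\Vx/(I+\langle p\rangle)$ and hence $\length_{\Vx}(\Vx/(I+\langle p\rangle))\le\length_{\Vx}(\Vx/I)$ (this also covers $\tau(f,\delta)=\infty$). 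For the right-hand inequality I would reduce to the case that $f$ defines an isolated singularity, since that is exactly when $\ord_f(p)$ is defined; otherwise one reads $\ord_f(p)=+\infty$ and the bound is vacuous. In the isolated case $\tau(f,\delta)<\infty$ by Proposition~\ref{prop:finite_tjurina}, so $I$ is $\maxV$-primary, and $\ord_f(p)<\infty$ because $p\in\sqrt{J(f)}\subseteq\sqrt{\langle f,(\partial_1 f)^p,\dots,(\partial_n f)^p\rangle}$; since by definition $p^{\ord_f(p)}\in\langle f,(\partial_1 f)^p,\dots,(\partial_n f)^p\rangle\subseteq I$, Lemma~\ref{lemma:Liu} (with $R=\Vx$, ideal $I$, element $p$, exponent $N=\ord_f(p)$) gives $\length_{\Vx}(\Vx/I)\le\ord_f(p)\cdot\length_{\Vx}(\Vx/(I+\langle p\rangle))$, and dividing by $p^n$ gives (1).

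For (2), the forward direction combines Proposition~\ref{prop:finite_tjurina} with the left inequality of (1): if $f$ defines an isolated singularity then $p\in\sqrt{J(f)}$ and $\tau(f,\delta)<\infty$, so $\tau^\Delta(f)\le\tau(f,\delta)<\infty$. For the converse I would argue directly that $I$ is $\maxV$-primary and then invoke Proposition~\ref{prop:finite_tjurina}. Assuming $p\in\sqrt{J(f)}$ and $\tau^\Delta(f)<\infty$: since $\sqrt{J(f)}=\sqrt{\langle(\partial_1 f)^p,\dots,(\partial_n f)^p\rangle}\ni p$ there is an $N$ with $p^N\in\langle(\partial_i f)^p\rangle\subseteq I$, while $\tau^\Delta(f)<\infty$ gives $\maxV^k\subseteq I+\langle p\rangle$ for some $k$; then $\maxV^{kN}\subseteq(I+\langle p\rangle)^N\subseteq I+\langle p^N\rangle=I$, so $I$ is $\maxV$-primary. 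Hence $\tau(f,\delta)<\infty$ for every $\delta$, and with $p\in\sqrt{J(f)}$ this gives, by Proposition~\ref{prop:finite_tjurina}, that $f$ defines an isolated singularity.

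For (3), suppose $\tau^\Delta(f)=\tau(f,\delta)$ for some $\delta$; the statement has content only when this common value is finite (by (2) it then forces $f$ to be an isolated singularity), so I take it to be finite. Then the surjection $\Vx/I\twoheadrightarrow\Vx/(I+\langle p\rangle)$ between modules of equal finite length is an isomorphism, forcing $I=I+\langle p\rangle$, i.e. $p\in\langle f,(\partial_1 f)^p,\dots,(\partial_n f)^p,\delta(f)\rangle$. Now I would assume $\ord(f)\ge 3$, i.e. $f\in\maxV^3$, and derive a contradiction: by Lemma~\ref{lem:app_ord_der} each $\partial_i f\in\maxV^2$, so $(\partial_i f)^p\in\maxV^{2p}\subseteq\maxV^2$; by item 2 of Lemma~\ref{lem:basic_delta} applied with the ideal $\maxV\ni p$, $\delta(f)\in\maxV^2$; and $f\in\maxV^3\subseteq\maxV^2$; hence $p\in\maxV^2$, which is false since $p$ is a non-unit of $V$ and so $p\notin\maxV^2$. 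Therefore $\ord(f)\le 2$. The main obstacle is not any single computation but the bookkeeping between the finite and infinite regimes — in particular noticing that the upper bound in (1) is only meaningful once $\ord_f(p)$ is defined, and that (3) tacitly presumes the common value $\tau^\Delta(f)=\tau(f,\delta)$ is finite; the one genuinely new input is Lemma~\ref{lemma:Liu} used with the element $p$, which simultaneously propagates finiteness from $\tau^\Delta$ to $\tau(\cdot,\delta)$ and quantifies the gap between them.
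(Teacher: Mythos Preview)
Your proof is correct and follows essentially the same approach as the paper: Lemma~\ref{lemma:Liu} applied with the element $p$ for part (1), combination with Proposition~\ref{prop:finite_tjurina} for part (2), and the observation that $p\in\langle f\rangle+J_\delta(f)\subseteq\maxV^{\ord(f)-1}$ (via Lemma~\ref{lem:app_ord_der} and Lemma~\ref{lem:basic_delta}) for part (3). You are slightly more careful than the paper about when $\ord_f(p)$ is defined and about the finite/infinite dichotomy, and your direct $\maxV$-primary argument for the converse in (2) avoids any apparent circularity; these are refinements rather than genuinely different ideas.
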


\begin{proof}
    The first part follows directly from Lemma~\ref{lemma:Liu}, and the second follows from the first together with Proposition~\ref{prop:finite_tjurina}. For the third part, we have the following short exact sequence:
    \begin{equation*}
        0 \to \frac{J_\delta\left(f\right) +\langle p \rangle}{J_\delta\left(f\right)} \to \frac{\Vx}{J_\delta\left(f\right)} \to \frac{\Vx}{J_\delta\left(f\right) + \langle p \rangle} \to 0. 
    \end{equation*}
    By looking at this short exact sequence, we have that $\tau^\Delta\left(f\right)=\tau(f, \delta)$ holds if and only if $\frac{J_\delta\left(f\right) +\langle p \rangle}{J_\delta\left(f\right)}=0$, which happens if and only if $p \in J_\delta\left(f\right)$. Yet, if $\ord\left(f\right)=r$ then by Lemma~\ref{lem:app_ord_der} and Lemma~\ref{lem:basic_delta} we have that $J_\delta\left(f\right) \subset \maxV^{r-1}$, and $p \in \maxV^{r-1}$ if and only if $r-1 \leq1$.
\end{proof}

\textup{There are a number of problems with looking at $p-$derivations to numerically detect isolated singularities:}
    \begin{itemize}
    \item \textup{The value of $\tau( \cdot, \delta)$ may (or may not) vary when setting different values for $\delta$.}
    \item \textup{Both $\tau(\cdot, \delta)$ and $\tau^\Delta$ are not preserved under $\sim$.}
    \item \textup{Unlike $\tau(\cdot,\delta)$, the finiteness of $\mu(f, \delta)$ depends on the choice of $\delta$.}
\end{itemize}

We see these problems in the following example:

\begin{example}\label{ex:delta_tjurina_A_1}
\textup{
Assume the characteristic of $\kappa$ is not $2$. Note that, $x^2 + p^2 \sim px$ in $V[[x]]$ as $x^2+y^2 = (x+iy)(x-iy)$ in $\Vxy$ (where $i^2=-1$, which exists since $\kappa$ is algebraically closed of characteristic different than $2$ and $V$ is complete - and therefore Henselian). For every $\delta$ we have that $J_\delta\left(px\right)=\langle p, x^p \rangle$ and so $\tau(px, \delta)=1= \tau^\Delta\left(px\right)$, but we also have that
\begin{equation*}
    J_\delta\left(x^2 + p^2\right) = \langle x^2 +p^2, x^p , p(-1 + p^{2p-2} + \delta\left(x\right)^2) + C_p\left(x^2,p^2\right) \rangle.
\end{equation*}
Yet, since $C_p\left(x^2,p^2\right) \in \langle p^2 x^2 \rangle$, then $C_p\left(x^2, p^2\right)=x^2p^2g$ for some $g \in V[[x]]$. Thus 
\begin{equation*}
    J_\delta\left(x^2 + p^2\right) = \langle x^2 +p^2, x^p , p(-1 + p^{2p-2} + px^2 g + \delta\left(x\right)^2) \rangle,
\end{equation*}
and we can conclude that  $\tau^\Delta\left(x^2+p^2\right)=\frac{2}{p}$. Therefore, for any for any $N <\ord_{f}\left(p\right)$ we can define $\delta_N\left(x\right)= \left(p^{N-1} + 1 - p^{2p-2} - px^2 g\right)^{0.5}$ (this square root exists since $p^{N-1} + 1 - p^{2p-2} - px^2 g$ is a unit), and we can conclude that $J_{\delta_N}\left(x^2 + p^2\right) = \langle x^2 +p^2, x^p, p^{N} \rangle$, giving different values for $\tau(f, \delta)$. In addition, for every $\delta\left(x\right) \in \maxV$ we have that $\mu(f, \delta) = 1$ (which is clearly finite). But for $\delta\left(x\right)=\left(-1+p^{2p-2}-px^2g +x\right)^{0.5}$ we have that $J_{\delta}\left(f\right)=\langle x^p, px\rangle$, and so $\frac{V[[x]]}{J_{\delta}\left(f\right)}$ is one-dimensional and therefore must have infinite length.}

\end{example}

%\begin{remark}
%    \textup{Note that It is easy to see that both $\mu_\delta$ and $\tau^\Delta$ are semicontinuous directly. That is, given some $f \in \Vx$ such that $\mu_{\delta}(f)<\infty$ for some $\delta$, there exists some $N$ such that if $\tilde{\delta}(x_i) \in \maxV^N$ for every $i$ then $\mu_{\tilde{\delta}}(f) \geq \mu_{\delta}(f)$ and $\tau_{\tilde{\delta}}(f) \geq \tau_{\delta}(f)$. This is true since if $\mu_{\delta}(f)<\infty$, then there exists some $N$ such that $\maxV^N \subset J_{\delta}(f)$. Therefore since $\tilde{\delta}(x_i) \in \maxV^N$ for every $i$ then $\tilde{\delta}(f) \in \maxV^N$ then $J_{\tilde{\delta}}(f) \subset J_{\delta}(f)$. Therefore we can conclude that $\mu_{\tilde{\delta}}(f) \geq \mu_{\delta_0}(f)$ and that $\tau_{\tilde{\delta}}(f) \geq \tau_{\delta}(f)$.  }
%\end{remark}

We end this section with an analogue of Proposition~\ref{prop:finite_tjurina} over a ramified DVR, i.e., a complete DVR $(V, \langle \pi \rangle, \kappa)$ of mixed characteristic $(0,p)$ such that $\kappa$ is algebraically closed and $p \in \langle \pi^2 \rangle$, based upon the work of KC in~\cite{kc2024singular}. Unlike the unramified case, the ramified case is much easier and can be directly compared with $\tau_V$. In particular, we have a special derivation that allows us to "derive with respect to $\pi$" directly. \\

We recall the main results in~\cite{kc2024singular} that are relevent for our discussion (specifically, Corollary 2.4 and Corollary 2.13). Note that in his paper, all results are over the ring of polynomials, but the results are valid over the ring of power series, as it is the completion of the ring of power series. It is a ramified analogue of Theorem~\ref{thm:J_smooth}.

\begin{proposition}\label{prop:ramified_J}
    \begin{enumerate}
        \item There exists a $\mathbb{Z}-$derivation $\pder \colon V[[\underline{x}]] \to \kappa[[\underline{x}]]$ such that $\pder(\pi)=1$. 
        \item Given some $f \in V[[\underline{x}]]$ and some prime ideal $\pi \in \mathfrak{q} \subset V[[\underline{x}]]$, then $\left( \frac{V[[\underline{x}]]}{\langle f \rangle} \right)_{\mathfrak{q}}$ is regular if and only if $\langle \partial_1(\overline{f}), \dots, \partial_n(\overline{f}), \pder(f) \rangle \not\subset \overline{\mathfrak{q}}$, where $\overline{\mathfrak{q}} = \mathfrak{q} \mod \pi \subset \kappax$ and $\overline{f} = f \mod \pi \in \kappax$. 
    \end{enumerate}
\end{proposition}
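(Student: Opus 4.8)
The statement packages two results of KC \cite{kc2024singular} (Corollaries 2.4 and 2.13), stated there over the polynomial ring $V[x_1,\dots,x_n]$, so the plan has two components: give the two constructions explicitly enough to see that they survive passage to power series, and then transfer the criterion of part (2) to $\Vx$ — the routine kind of passage indicated in the paragraph preceding the statement.

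For part (1) I would build $\pder$ by hand. By the Cohen structure theorem (and since $\kappa$ is algebraically closed, hence perfect) we may write $V = W(\kappa)[\pi]$, where $W(\kappa)$ is the ring of Witt vectors of $\kappa$, a complete DVR with uniformizer $p$ and residue field $\kappa$, and $\pi$ satisfies an Eisenstein polynomial $g(T) = T^e + a_{e-1}T^{e-1} + \cdots + a_0$ with $e = v(p) \geq 2$ and every $a_i \in pW(\kappa) \subseteq \langle \pi^2 \rangle$; in particular $1,\pi,\dots,\pi^{e-1}$ is a free $W(\kappa)$-basis of $V$. Define $\pder \colon V \to \kappa = V/\langle \pi \rangle$ by $\pder\bigl(\sum_{i=0}^{e-1} w_i \pi^i\bigr) = \overline{w_1}$, the image in $\kappa$ of the linear coefficient. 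Additivity and $\pder(\pi)=1$ are immediate; for the Leibniz rule, multiply two such expansions and put the product back in normal form using $\pi^e = -\sum_{k<e} a_k \pi^k$. Since $\pi^m \in pV$ for all $m \geq e$ (an immediate induction, as each $a_k \in pW(\kappa)$), every term $\pi^{i+j}$ with $i+j \geq e$ contributes to the $\pi^1$-coefficient of the product only an element of $pW(\kappa)$, which vanishes modulo $\pi$; what remains is $w_0 v_1 + w_1 v_0$, so $\pder(ab) = \overline{w_0 v_1 + w_1 v_0} = \overline a\,\pder(b) + \overline b\,\pder(a)$, which is exactly the Leibniz identity for a derivation $V \to \kappa$ with $V$ acting on $\kappa$ through reduction. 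I then extend $\pder$ to $\Vx \to \kappax$ coefficientwise (setting $\pder(x_i)=0$); a short estimate shows this respects the $\maxV$-adic topology, so it extends uniquely and the derivation identities, being closed conditions, pass to power series. Equivalently and more conceptually, $\widehat{\Omega}^1_{V/\mathbb{Z}_p} = V\,d\pi/(g'(\pi)\,d\pi)$ with $g'(\pi) \in \langle \pi \rangle$ (the non-leading terms involve the $a_i \in \langle \pi^2\rangle$, and $e\pi^{e-1} \in \langle\pi\rangle$ since $e\geq2$), using that $\widehat{\Omega}^1_{W(\kappa)/\mathbb{Z}_p}=0$ as Witt vectors of a perfect field are $p$-adically formally \'etale over $\mathbb{Z}_p$; hence $\operatorname{Hom}_V(\widehat{\Omega}^1_{V/\mathbb{Z}_p},\kappa)\cong\kappa$ is nonzero, any generator yields a derivation sending $\pi$ to a unit of $\kappa$, and rescaling gives $\pder$. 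This second description also globalizes cleanly over $\Vx$.

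For part (2) I would invoke \cite[Cor.~2.13]{kc2024singular}, the ramified analogue of Theorem~\ref{thm:J_smooth}: over $V[\underline x]$, for $f$ and a prime $\mathfrak q \ni \pi$, the localization $(V[\underline x]/\langle f\rangle)_{\mathfrak q}$ is regular if and only if $\langle \partial_1\overline f,\dots,\partial_n\overline f,\pder f\rangle \not\subseteq \overline{\mathfrak q}$. The only point to check is that this is unchanged on replacing $V[\underline x]$ by $\Vx$. KC's argument is local and uses only the structure of $V$ as an Eisenstein extension of $W(\kappa)$, through the module of ($p$-adically completed) K\"ahler differentials of $V[\underline x]/\langle f\rangle$ over $\mathbb{Z}_p$, which (by the above) is presented by $dx_1,\dots,dx_n,d\pi$ modulo the conormal relation $df$ and the single relation $g'(\pi)\,d\pi$, together with the reduction modulo $\pi$ and Nagata's criterion (Remark~\ref{rem:nagata}) for the part of $\mathfrak q$ with $p$ inverted; none of this distinguishes polynomial from power series rings, so the criterion runs verbatim over $\Vx$, and reducing the resulting Jacobian-type ideal modulo $\pi$ (legitimate since $\pi \in \mathfrak q$) eliminates the term $g'(\pi) \in \langle\pi\rangle$ and leaves precisely $\langle \partial_1\overline f,\dots,\partial_n\overline f,\pder f\rangle$.

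The step I expect to be the genuine obstacle is this power-series transfer in part (2): making it precise that KC's differential / Fitting-ideal computation commutes with $\underline x$-adic completion, and in particular tracking the term $g'(\pi)\,d\pi$ carefully enough to be certain it contributes nothing after reduction modulo $\pi$ — that is, re-running KC's proof over $\Vx$ with no gaps rather than merely asserting its validity there. Part (1), by contrast, is a purely explicit verification whose only delicate point is the vanishing modulo $\pi$ of the Eisenstein-reduction terms.
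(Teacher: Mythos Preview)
Your proposal is correct and follows the same approach as the paper: cite KC \cite{kc2024singular} (Corollaries 2.4 and 2.13) and note that the results transfer from the polynomial ring to the power series ring via completion. The paper does not give a proof beyond the one-sentence remark preceding the statement, so your explicit construction of $\pder$ via the Eisenstein presentation $V=W(\kappa)[\pi]$ and your discussion of why KC's differential argument commutes with $\underline{x}$-adic completion are in fact substantially more detailed than what the paper itself provides.
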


Inspired by Proposition~\ref{prop:ramified_J}, we can define an invariant that has an analogous result to that of Proposition~\ref{prop:finite_tjurina}. 

\begin{definition}\label{def:pitjurina}
    Let $f \in \maxV$. Then we define the \textbf{ramified Tjurina number of $f$} to be 
    \begin{equation*}
        \tau^\pi(f)=\dim_\kappa\left(\frac{\kappa[[\underline{x}]]}{\langle \overline{f} \rangle + J_\pi(f)} \right),
    \end{equation*}
    where $J_\pi(f) = \langle \partial_1(\overline{f}), \dots, \partial_n(\overline{f}), \pder(f) \rangle$.
\end{definition}

Therefore we can conclude the following Proposition, whose proof is the same as that of Proposition~\ref{prop:finite_tjurina}:

\begin{proposition}\label{prop:ramified_iso_sing}
    Let $f \in \maxV$. Then:
    \begin{enumerate}
        \item $\frac{V[[\underline{x}]]}{ \langle f \rangle}$ is regular if and only if $\tau^\pi(f)=0$
        \item $f$ defines an isolated singularities if and only if $\pi \in \sqrt{J\left(f\right)}$ and  $\tau^\pi\left(f\right) < \infty$.
    \end{enumerate}
\end{proposition}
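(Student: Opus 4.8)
The plan is to transcribe the proof of Proposition~\ref{prop:finite_tjurina} almost verbatim, using Proposition~\ref{prop:ramified_J} in place of Theorem~\ref{thm:J_smooth} and letting the single distinguished derivation $\pder$ (with $J_\pi$) play the role an arbitrary $p$-derivation $\delta$ (with $J_\delta$) played there. The only genuine change is bookkeeping: $\tau^\pi(f)$ is a $\kappa$-dimension over $\kappax=\kappa[[x_1,\dots,x_n]]$, not a length over $\Vx$. So I would first record the elementary facts that (i) $\Vx$ is local with maximal ideal $\maxV$ and $\kappax$ is local with maximal ideal $\langle x_1,\dots,x_n\rangle$; (ii) the surjection $\Vx\twoheadrightarrow\kappax$ with kernel $\langle\pi\rangle$ gives an inclusion-preserving bijection $\mathfrak q\mapsto\overline{\mathfrak q}=\mathfrak q\mod\pi$ between primes of $\Vx$ containing $\pi$ and primes of $\kappax$, with $\mathfrak q=\maxV$ corresponding to $\overline{\mathfrak q}=\langle x_1,\dots,x_n\rangle$; (iii) since $f\in\maxV$ we have $\overline f\in\langle x_1,\dots,x_n\rangle$, hence $\langle\overline f\rangle+J_\pi(f)=\kappax$ iff $J_\pi(f)=\kappax$; and (iv) $\tau^\pi(f)=\dim_\kappa\!\big(\kappax/(\langle\overline f\rangle+J_\pi(f))\big)<\infty$ iff $\langle\overline f\rangle+J_\pi(f)$ is $\langle x_1,\dots,x_n\rangle$-primary, iff $\langle x_1,\dots,x_n\rangle$ is the only prime of $\kappax$ containing it (this uses only that $\kappax$ is Noetherian local).

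For part (1): since $\Vx$ is local, $\Vx/\langle f\rangle$ is regular iff its localization at $\maxV$ is regular, and as $\pi,f\in\maxV$, Proposition~\ref{prop:ramified_J}(2) with $\mathfrak q=\maxV$ says this is equivalent to $J_\pi(f)\not\subset\overline{\maxV}=\langle x_1,\dots,x_n\rangle$, i.e. $J_\pi(f)=\kappax$, i.e. $\langle\overline f\rangle+J_\pi(f)=\kappax$ by (iii), i.e. $\tau^\pi(f)=0$.

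For part (2): "$f$ defines an isolated singularity" means $(\Vx/\langle f\rangle)_{\mathfrak q}$ is regular for every prime $\mathfrak q\neq\maxV$; the primes $\mathfrak q$ with $f\notin\mathfrak q$ localize to the zero ring, which is trivially regular, so I would restrict to primes with $f\in\mathfrak q$, $\mathfrak q\neq\maxV$, and split into two cases. If $\pi\notin\mathfrak q$ (equivalently $p\notin\mathfrak q$, as $p\in\langle\pi\rangle$), the Nagata criterion — Remark~\ref{rem:nagata}, i.e. the second half of Theorem~\ref{thm:J_smooth}, valid over any mixed-characteristic complete DVR — says $(\Vx/\langle f\rangle)_{\mathfrak q}$ is regular iff $J(f)\not\subset\mathfrak q$; this holds for all such $\mathfrak q$ simultaneously iff every prime containing $J(f)$ contains $\pi$, i.e. iff $\pi\in\sqrt{J(f)}$. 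If $\pi\in\mathfrak q$, Proposition~\ref{prop:ramified_J}(2) says $(\Vx/\langle f\rangle)_{\mathfrak q}$ is regular iff $J_\pi(f)\not\subset\overline{\mathfrak q}$, and since $f\in\mathfrak q$ forces $\overline f\in\overline{\mathfrak q}$ this is the same as $\langle\overline f\rangle+J_\pi(f)\not\subset\overline{\mathfrak q}$; by (iv), requiring this for every prime $\overline{\mathfrak q}\neq\langle x_1,\dots,x_n\rangle$ of $\kappax$ is precisely $\tau^\pi(f)<\infty$. Combining the two cases yields the equivalence in part (2).

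The only step that is not formal bookkeeping is — exactly as in Proposition~\ref{prop:finite_tjurina} — the implication that an isolated singularity forces $\pi\in\sqrt{J(f)}$. Here I would argue: if $f$ defines an isolated singularity then, since $\pi\in\maxV$, every localization of $(\Vx/\langle f\rangle)[\pi^{-1}]$ at a prime is regular, so $(\Vx/\langle f\rangle)[\pi^{-1}]$ is regular, hence so is its $\langle x_1,\dots,x_n\rangle$-adic completion, which is $L[[\underline{x}]]/\langle f\rangle$ with $L=V[\pi^{-1}]$ the fraction field of $V$; as $L$ has characteristic $0$, Theorem 5.1.7 of~\cite{boubakri2009hypersurface} together with Lemma 2.44 of~\cite{greuel2007introduction} forces $1\in J(f)\otimes_V L[[\underline{x}]]=J(f)[\pi^{-1}]$, i.e. $\pi^N\in J(f)$ for some $N$. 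I expect this to be the "main obstacle" only in the weak sense that it is the single place the argument leaves pure commutative-algebra bookkeeping; everything else is a line-by-line translation of the unramified proof, with "$p$-derivation $\delta$", "$J_\delta$", and "length over $\Vx$" replaced by "$\pder$", "$J_\pi$", and "$\kappa$-dimension over $\kappax$ after reduction mod $\pi$".
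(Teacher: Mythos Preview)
Your proposal is correct and is exactly what the paper intends: the paper's entire proof is the single sentence ``whose proof is the same as that of Proposition~\ref{prop:finite_tjurina},'' and you have carried out that transcription in full detail, replacing $\delta$, $J_\delta$, and Theorem~\ref{thm:J_smooth} by $\pder$, $J_\pi$, and Proposition~\ref{prop:ramified_J}, and swapping $\Vx$-length for $\kappa$-dimension over $\kappax$. One small wording issue: the clause ``this holds for all such $\mathfrak q$ simultaneously iff every prime containing $J(f)$ contains $\pi$'' only literally yields $\pi\in\sqrt{\langle f\rangle+J(f)}$ (since your ``such $\mathfrak q$'' already have $f\in\mathfrak q$), but you correctly supply the completion argument from Proposition~\ref{prop:finite_tjurina} to upgrade this to $\pi\in\sqrt{J(f)}$, so the proof stands.
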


\begin{proposition}\label{prop:inequality_tau_pi}
    Given some $f \in \maxV$. Then $\tau^\pi(f) \geq \tau_V(f)$.
\end{proposition}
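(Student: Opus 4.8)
The plan is to deduce the inequality from the generic-intersection description of $\tau_V$ in Proposition~\ref{prop:generic_intersection}, by specializing a generic linear form to one supported on the two variables $\pi$ and $y$. If $\tau_V(f)=\infty$ there is nothing to prove, so assume that $A:=\frac{\Vxy}{\langle\tilde f\rangle+J(\tilde f)}$ is one-dimensional; then by Proposition~\ref{prop:generic_intersection}(2) the number $\tau_V(f)$ is the Hilbert--Samuel multiplicity of $A$ at its maximal ideal $\mathfrak m_A$, and by part (3) of that proposition,
\[
\tau_V(f)\ \le\ \textup{length}_{\Vxy}\!\left(\frac{\Vxy}{\langle\tilde f\rangle+J(\tilde f)+\langle a\rangle}\right)
\]
for \emph{every} $a\in\Vxy$. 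Thus it suffices to choose $a$ so that the right-hand side computes $\tau^\pi(f)$.

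The heart of the argument is to recognise $\frac{\kappax}{\langle\overline f\rangle+J_\pi(f)}$ as a quotient of $A$ by a form in $\langle\pi,y\rangle$. Passing to $\frac{\Vxy}{\langle\pi,y\rangle}=\kappax$ sends $\tilde f\mapsto\overline f$ and $\partial_{x_i}(\tilde f)\mapsto\partial_{x_i}(\overline f)$; the one point that needs a computation is that the image of $\partial_y(\tilde f)$ coincides, modulo $\langle\overline f,\partial_1(\overline f),\dots,\partial_n(\overline f)\rangle$, with $\pder(f)$. I would check this by writing $f=\sum_\alpha u_\alpha\pi^{n_\alpha}\underline{x}^\alpha$ as in Notation~\ref{def:subsoil}, using that $\pder$ is the $\mathbb Z$-derivation of Proposition~\ref{prop:ramified_J}(1) --- so it annihilates every $x_i$ and the maximal unramified subring of $V$, and therefore $\pder(u_\alpha\pi^{n_\alpha}\underline{x}^\alpha)$ reduces mod $\pi$ to a contribution supported on the indices with $n_\alpha=1$ --- and then matching this term by term against $\partial_y(\tilde f)=\sum_\alpha n_\alpha u_\alpha y^{n_\alpha-1}\underline{x}^\alpha$ evaluated at $y=0$ and reduced mod $\pi$. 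Granting this identification, $\langle\overline f\rangle+J_\pi(f)$ is exactly the image of $\langle\tilde f\rangle+J(\tilde f)$, whence
\[
\tau^\pi(f)\ =\ \textup{length}_{\Vxy}\!\left(\frac{\Vxy}{\langle\tilde f\rangle+J(\tilde f)+\langle\pi,y\rangle}\right)\ =\ \textup{length}_{\Vxy}\bigl(A/\langle\pi,y\rangle A\bigr).
\]

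It then remains to compare $\tau_V(f)=e(\mathfrak m_A;A)$ with $\textup{length}\bigl(A/\langle\pi,y\rangle A\bigr)=\tau^\pi(f)$. Here I would use that, since $p\in\langle\pi^2\rangle$, one has $\naxV=\langle\pi,x_1,\dots,x_n,y\rangle$, so (noting $f\in\maxV^2$ whenever $\tau_V(f)>0$, by Proposition~\ref{prop:milnor_zero}) the images of $x_1,\dots,x_n$ already lie in $\langle\tilde f\rangle+J(\tilde f)$ modulo $\langle\tilde f\rangle$ and hence $\langle\pi,y\rangle A$ surjects onto $\mathfrak m_A$; picking $a\in\langle\pi,y\rangle$ generic so that its image is a minimal reduction of $\mathfrak m_A$ as in Proposition~\ref{prop:generic_intersection}(1) gives $\tau_V(f)=\textup{length}\bigl(A/\langle a\rangle A\bigr)$, and the claim reduces to a comparison of the colengths of $\langle a\rangle A$ and $\langle\pi,y\rangle A$ inside $A$. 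The step I expect to be the main obstacle is precisely this final colength comparison, together with verifying that $\langle\pi,y\rangle A$ is $\mathfrak m_A$-primary exactly when $\tau^\pi(f)<\infty$ (if $\tau^\pi(f)=\infty$ there is nothing to prove): one must route the argument through a minimal reduction that is controlled relative to $\langle\pi,y\rangle A$, rather than comparing the over-generated ideal $\langle\pi,y\rangle A$ with $\mathfrak m_A$ directly, and use the monotonicity $e(\mathfrak m_A;A)\le e(\mathfrak q;A)$ for $\mathfrak m_A$-primary $\mathfrak q\subseteq\mathfrak m_A$ to keep the chain of inequalities pointed in the stated direction.
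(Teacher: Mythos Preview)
Your strategy coincides with the paper's: rewrite $\tau^\pi(f)$ as a colength in $A=\Vxy/(\langle\tilde f\rangle+J(\tilde f))$ and then invoke Proposition~\ref{prop:generic_intersection}(3). The paper does this in one line, setting $\partial_\pi(f)=\mathrm{pr}(\partial_y(\tilde f))$, asserting $\partial_\pi(f)\bmod\pi=\pder(f)$, and concluding $\tau^\pi(f)=\length_{\Vx}\bigl(\Vx/(J(f)+\langle\pi,f,\partial_\pi(f)\rangle)\bigr)$, after which it simply cites Proposition~\ref{prop:generic_intersection}(3).

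The step you flag as ``the main obstacle'' is a genuine gap, and your proposed route through monotonicity of multiplicities does not close it. Proposition~\ref{prop:generic_intersection}(3) gives $\tau_V(f)=e(\mathfrak m_A;A)\le\length(A/\langle a\rangle A)$ only for a \emph{single} element $a$; for an $\mathfrak m_A$-primary ideal $\mathfrak q$ needing two generators one can have $\length(A/\mathfrak q)<e(\mathfrak m_A;A)$, and your chain of inequalities still needs $\length(A/\mathfrak q)\ge e(\mathfrak q;A)$, which is precisely what fails for non-parameter ideals. Concretely, take $f=x^2+\pi^3\in V[[x]]$ with $\mathrm{char}\,\kappa\notin\{2,3\}$: then $\tilde f=x^2+y^3$, $\langle\tilde f\rangle+J(\tilde f)=\langle x,y^2\rangle$, and $A\cong V[[y]]/\langle y^2\rangle$ has multiplicity $2$, so $\tau_V(f)=2$; on the other side $\overline f=x^2$ and $\pder(f)=0$, whence $\tau^\pi(f)=\dim_\kappa\bigl(\kappa[[x]]/\langle x\rangle\bigr)=1$. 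Here $\langle\pi,y\rangle A=\mathfrak m_A$ and $\length(A/\mathfrak m_A)=1<2=e(\mathfrak m_A;A)$, so the comparison cannot be salvaged and the stated inequality fails outright. Two smaller points: your claim that the images of $x_1,\dots,x_n$ lie in $\langle\tilde f\rangle+J(\tilde f)$ is unjustified in general (try $f=\pi^2+x_1^3$), and the identification of $\pder(f)$ with the image of $\partial_y(\tilde f)$ modulo $\langle\pi,y\rangle$ requires $\pder(u_\alpha)=0$ for the unit coefficients $u_\alpha$ of Notation~\ref{def:subsoil}, which is not automatic (e.g.\ $u_\alpha=1+\pi$).
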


\begin{proof}
    Let $f \in \maxV$ and let $\tilde{f}$ be as in Notation~\ref{def:subsoil}. Then we define $\partial_\pi(f)= \text{pr} (\partial_y(\tilde{f})) \in V[[\underline{x}]]$, where $\text{pr}$ is as in Remark~\ref{rem:y-p}. More explicitly, if $f=\sum_{\alpha} u_{\alpha} \pi^{n^{\alpha}} \underline{x}^{\alpha}$ as in Notation~\ref{def:subsoil} then $\partial_\pi(f)=\sum_{\alpha} u_{\alpha} n_{\alpha}\pi^{n_{\alpha}-1} \underline{x}^{\alpha}$. Since we have $\partial_\pi(f) \mod \pi =\pder(f)$, we get that $\tau^\pi(f)=\length_{V[[\underline{x}]]} \left( \frac{V[[\underline{x}]]}{J(f) + \langle \pi, f, \partial_\pi(f) \rangle} \right)$. Thus, the result follows from the third item of Proposition~\ref{prop:generic_intersection}. 
\end{proof}

\begin{remark}
\begin{enumerate}
    \item \textup{The proof of Proposition~\ref{prop:inequality_tau_pi} tells us that we can view $\tau^\pi$  as a ramified analogue of $\tau^\Delta$, since they both can be thought of as the co-length of $\langle f\rangle+J(f)$ plus the uniformizer and the respective "derivative with respect to the uniformizer". }
    \item \textup{Proposition~\ref{prop:inequality_tau_pi} gives us that if $f$ defines an isolated singularity then $\tau_V(f) <\infty$. This can be though of a generalization of Proposition~\ref{lem:tau(f,q)} in the ramified case. }
    \item \textup{As in Corollary~\ref{prop:upper_bound_tau}, we can use Lemma~\ref{lemma:Liu} and Proposition~\ref{prop:ramified_iso_sing} to show that if $f$ defines an isolated singularity then $\tau^\pi(f)$ is bounded above by $\ord_f(\pi) \cdot \length_{V[[\underline{x}]]} \left(\frac{V[[\underline{x}]]}{J(f) + \langle f, \partial_\pi(f) \rangle}\right)$, where $\ord_f(\pi)$ is the smallest $N$ such that $\pi^N \in J(f)+\langle f \rangle$. This allows us to "measure" when $f$ defines an isolated singularity by preforming a computation over $V[[\underline{x}]]$, which is analogous to the classic Tjurina number over a field. }
\end{enumerate}
\end{remark}

Yet, like $\tau(\cdot, \delta)$ and $\tau^\Delta$ (as in Example~\ref{ex:delta_tjurina_A_1},), the invariant $\tau^\pi$ is not preserved under $\sim$:

\begin{example}
    \textup{Let $f_1=x^2+\pi^p$ and $f_2=x^p +\pi^2$. Note that $f_1 \sim f_2$. Yet we have that $\overline{f_1}=x^2$ and $\overline{f_2}=x^p$ and so $\pder(f_1)=\pder(f_2)=0$.  Therefore $\tau^\pi(f_1)=\dim_\kappa\left(\frac{\kappa[[x]]}{\langle x \rangle}\right)=1$ and $\tau^\pi(f_2)=\dim_\kappa\left(\frac{\kappa[[x]]}{\langle x^p\rangle}\right)=p$. }
\end{example}

\bibliographystyle{alpha}
\bibliography{bibib}

\end{document}